\newcommand{\RR}{\mathbb{R}}
\newcommand{\CC}{\mathbb{C}}
\newcommand{\ZZ}{\mathbb{Z}}
\newcommand{\QQ}{\mathbb{Q}}
\newcommand{\OO}{\mathcal{O}}
\newcommand{\HH}{\mathbb{H}}
\newcommand{\re}{\text{re}}
\newcommand{\im}{\text{im}}
\newcommand{\nrm}{\text{nrm}}
\newcommand{\tr}{\text{tr}}
\newcommand{\disc}{\text{disc}}
\newcommand{\inv}{\text{inv}}
\newcommand{\End}{\text{End}}
\newcommand{\Mat}{\text{Mat}}
\newcommand{\Isom}{\text{Isom}}
\newtheorem{definition}{Definition}[section]
\newtheorem{theorem}{Theorem}[section]
\newtheorem{corollary}{Corollary}[section]
\newtheorem{lemma}{Lemma}[section]
\begin{document}
\title{Quaternion Orders and Sphere Packings}
\author{Arseniy Sheydvasser}
\address{Department of Mathematics, Graduate Center at CUNY, 365 5th Ave, New York, NY 10016}
\email{sheydvasser@gmail.com}

\subjclass[2010]{Primary 11R52, 16H10, 20H10, 22E40}

\date{\today}

\keywords{Quaternion algebras, involutions, sphere packings}

\begin{abstract}
We introduce an analog of Bianchi groups for rational quaternion algebras and use it to construct sphere packings that are analogs of the Apollonian circle packing known as integral crystallographic packings.
\end{abstract}

\maketitle

\section{Introduction:}

While the history of the Apollonian gasket goes back at least to Leibniz, it is only relatively recently that there has been any significant progress in studying its number theoretic properties. In particular, there has been great interest in studying the bends of this packing; here the bend is defined as the reciprocal of the radius. It was apparently first observed by Soddy \cite{Soddy} that if the bends of the initial four circles in the configuration---that is the cluster of mutually tangent circles---are integers, then the bends of all of the circles in the gasket are integers. It was conjectured in \cite{GLMWY2} and \cite{FuchsSanden} that all sufficiently large integers satisfying certain congruence restrictions appear as the bends of some circle in the Apollonian circle packing. Although this local-global conjecture remains open, it was shown by Bourgain and Kontorovich in \cite{BourgainKontorovich} that the integers that appear as bends are density one in the subset of integers satisfying the required congruence restrictions.

\begin{figure}
\centering
\begin{tabular}{ccc}
\includegraphics[width=0.22\textwidth]{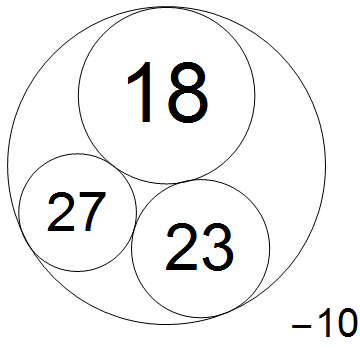} & \includegraphics[width=0.22\textwidth]{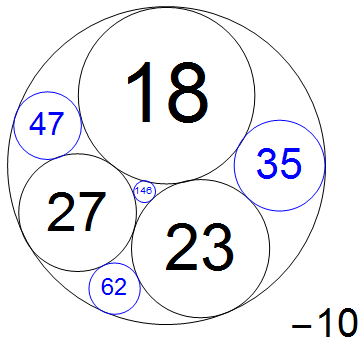} &\includegraphics[width=0.22\textwidth]{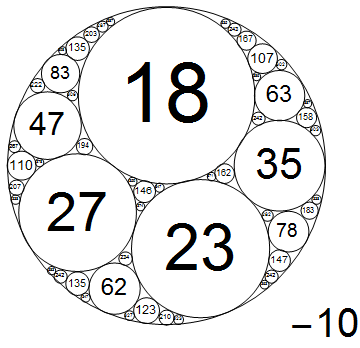}
\end{tabular}

\caption{Apollonian circle packing with initial cluster $(-10,18,23,27)$.}
\label{ApollonianPacking}
\end{figure}

We can formulate the precise statement of the local-global conjecture by realizing the Apollonian circle packing as the orbit of some initial cluster of circles under the action of a discrete, geometrically finite subgroup $\Gamma$ of the hyperbolic isometry group $\text{Isom}(\HH^3)$---indeed, the limit set of this group is the closure of the Apollonian gasket. Consequently, there has been been recent interest in studying analogs of the Apollonian gasket defined by the fact that they are limit sets of subgroups of $\text{Isom}(\HH^k)$. To be precise, we make the following definition due to Kontorovich and Nakamura \cite{KN}.

	\begin{definition}
	Let $\mathcal{P}$ be a set of oriented $n$-spheres in $\RR^{n + 1} \cup \{\infty\}$. We say that $\mathcal{P}$ is a \emph{packing} if
	
		\begin{enumerate}
		\item the interiors of $n$-spheres in $\mathcal{P}$ do not intersect, and
		\item the union of $n$-spheres in $\mathcal{P}$, together with their interiors, is dense in $\RR^{n + 1}$.
		\end{enumerate}
		
\noindent Furthermore, we say that $\mathcal{P}$ is \emph{integral} if the bends of all $n$-spheres in $\mathcal{P}$ are integers. Finally, we say that a packing $\mathcal{P}$ is \emph{crystallographic} if there exists a discrete, geometrically finite, hyperbolic reflection subgroup $\Gamma$ of $\text{Isom}(\HH^{n + 2})$ such that the limit set of $\Gamma$ is the closure of $\mathcal{P}$ in $\RR^{n + 1} \cup \{\infty\}$.
	\end{definition}

An important special case are the super-integral crystallographic packings, which have  the additional restriction that the superpacking of the original packing is also integral (see Section \ref{Superpacking Section} for definitions and motivation). There are many examples of super-integral crystallographic packings in the literature for $n = 1$---see \cite{GuettlerMallows}, \cite{StangeFuture}, and \cite{KN}. In particular, Kontorovich and Nakamura give a method for constructing an infinite family of super-integral crystallographic packings. In all these cases, there is a corresponding local-global conjecture, just as for the Apollonian circle packing. This conjecture is open for all such circle packings, but there are known density one results like the theorem of Bourgain and Kontorovich---see, for example, \cite{FuchsStangeZhang}. For $n = 2$, there is the generalization of the Apollonian circle packing to spheres due to Soddy \cite{Soddy}; additionally, Dias \cite{Dias} and Nakamura \cite{Nakamura} independently constructed the orthoplicial sphere packing. However, further examples are presently few in number. Unlike the $n = 1$ case, the associated local-global conjecture has been proven for some known packings. Kontorovich proved it for the Soddy packing \cite{Kontorovich}; building off that paper, Dias \cite{Dias} and Nakamura \cite{Nakamura} proved it for the orthoplicial sphere packing.

In the present paper, we present a method of constructing super-integral crystallographic packings for $n = 2$, and give an explicit list of ten examples satisfying certain properties---three of these are shown to be equivalent descriptions of packings already in the literature, while the remaining seven are entirely new. These are illustrated in Figure \ref{PackingGraphic}.

\begin{figure}
\begin{tabular}{ccccc}
\includegraphics[width=0.15\textwidth]{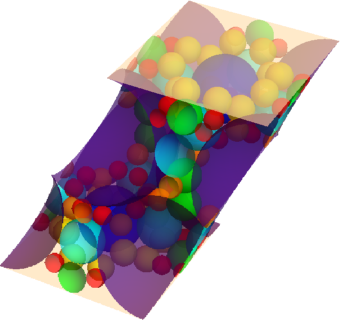} & \includegraphics[width=0.15\textwidth]{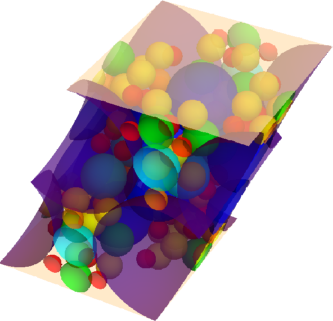} & \includegraphics[width=0.15\textwidth]{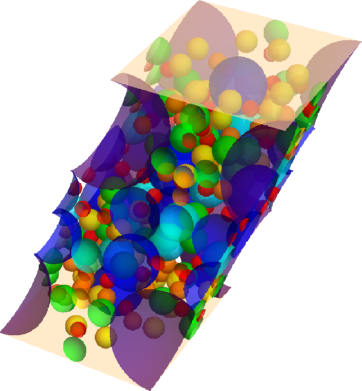} & \includegraphics[width=0.15\textwidth]{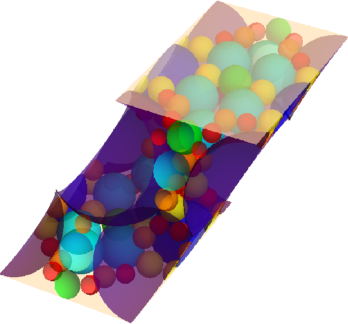} & \includegraphics[width=0.15\textwidth]{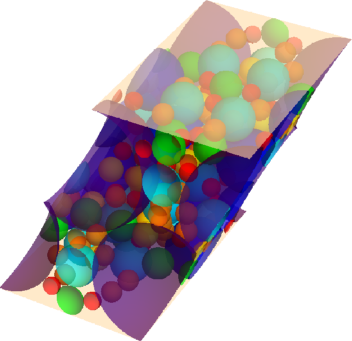} \\
\includegraphics[width=0.15\textwidth]{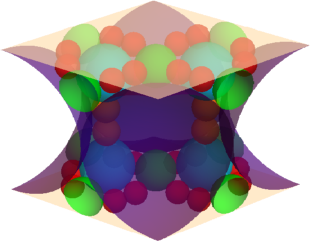} & \includegraphics[width=0.15\textwidth]{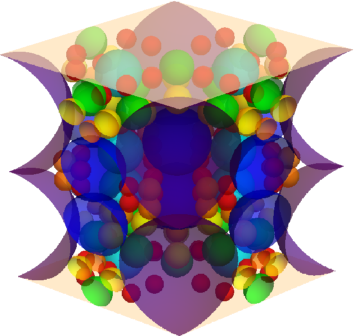} & \includegraphics[width=0.15\textwidth]{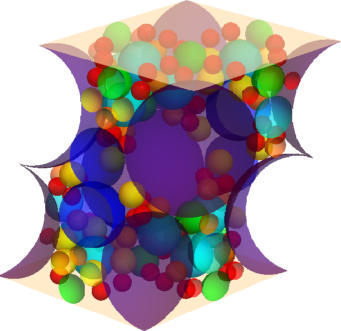} & \includegraphics[width=0.15\textwidth]{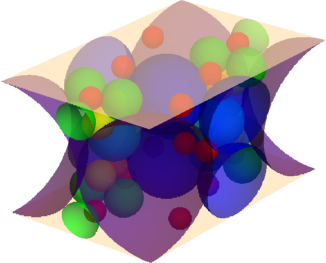} & \includegraphics[width=0.15\textwidth]{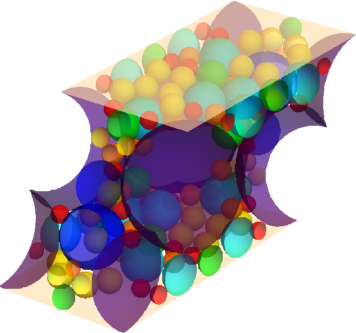}
\end{tabular}

\caption{From left to right, and top to bottom: super-integral crystallographic packings corresponding to quaternion algebras $\left(\frac{-1,-6}{\QQ}\right)$, $\left(\frac{-1,-7}{\QQ}\right)$, $\left(\frac{-1,-10}{\QQ}\right)$, $\left(\frac{-2,-5}{\QQ}\right)$, $\left(\frac{-2,-26}{\QQ}\right)$, $\left(\frac{-3,-1}{\QQ}\right)$, $\left(\frac{-3,-2}{\QQ}\right)$, $\left(\frac{-3,-15}{\QQ}\right)$, $\left(\frac{-7,-1}{\QQ}\right)$, $\left(\frac{-11,-143}{\QQ}\right)$.}
\label{PackingGraphic}
\end{figure}

\section{Summary of Main Results:}
In \cite{Stange2017}, Stange showed that the superpacking of the Apollonian circle packing is simply described as the orbit of $\mathbb{R}$ under the action of $SL(2,\ZZ[i])$. In analogy, she also considered the orbit of $\mathbb{R}$ under the action of other Bianchi groups $SL(2,\OO)$---here $\OO$ is the ring of integers of some imaginary quadratic field. It can be seen from Stange's work in \cite{StangeFuture} that if this orbit is a connected set, then it is the superpacking of an associated super-integral crystallographic packing. This gives a general strategy by which to attempt constructing super-integral crystallographic packings---look at the orbit of a fixed $n$-sphere under the action of some arithmetic subgroup of $\text{Isom}(\RR^{n + 2})$, and if the resulting collection satisfies the conditions for being a superpacking, search for an underlying integral crystallographic packing.

We replace the group $SL(2,\OO)$ with a corresponding group $SL^\ddagger(2,\OO)$, where $\OO$ is a maximal $\ddagger$-order of a rational quaternion algebra $H$ and $\ddagger$ is an orthogonal involution of $H$ (see Sections \ref{DaggerOrders} and \ref{Superpacking Section} for definitions). We then consider the orbit $\mathcal{S}_{\OO,j}$ of a fixed plane $\hat{S}_j$ under the action of the group $SL^\ddagger(2,\OO)$. This gives a collection of spheres that is a candidate to be the superpacking of an associated integral crystallographic packing, as long as this collection

	\begin{enumerate}
		\item has bends that are all integers after scaling by some fixed constant $C > 0$
		\item has only tangential intersections,
		\item is dense in $\RR^3$, and
		\item is connected.
	\end{enumerate}
	
\noindent We therefore want to find all isomorphism classes of maximal $\ddagger$-orders $\OO$ such that the corresponding sphere collection $\mathcal{S}_{\OO,j}$ satisfies these properties. However, due to various phenomena that do not occur in the $n = 1$ case (see Section \ref{WeirdIntersections} for examples), we prove a slightly weaker result. First, we replace the requirement that $\mathcal{S}_{\OO,j}$ be connected with the stronger requirement that it be \emph{tangency-connected}---that is, the graph with the spheres as vertices and edges corresponding to tangencies is connected. Secondly, we consider only a restricted set of maximal $\ddagger$-orders whose intersection with $\CC$ is Euclidean (see Section \ref{DefinitionOfEuclideanIntersection} for definitions). With those restrictions, we are able to fully classify all sphere packings satisfying the desired conditions, as encapsulated below.

\begin{theorem}\label{SuperpackingClassification}
Let $H$ be a rational, definite quaternion algebra with a maximal $\ddagger$-order $\OO$ whose intersection with $\CC$ is Euclidean. The sphere collection $\mathcal{S}_{\OO,j}$ is integral, tangential, dense, and tangency-connected for only a finite number of isomorphism classes of $H, \OO$, given below.

	\begin{minipage}{.45\textwidth}
	\begin{align*}
	\begin{array}{l|l}
	H & \OO \\ \hline
	\left(\frac{-1,-6}{\QQ}\right) & \ZZ \oplus \ZZ i \oplus \ZZ \frac{1 + i + j}{2} \oplus \ZZ \frac{j + ij}{2} \\
	\left(\frac{-1,-7}{\QQ}\right) & \begin{cases} \ZZ \oplus \ZZ i \oplus \ZZ \frac{1 + j}{2} \oplus \ZZ \frac{i + ij}{2} \\ \ZZ \oplus \ZZ i \oplus \ZZ \frac{i + j}{2} \oplus \ZZ \frac{1 + ij}{2} \end{cases} \\
	\left(\frac{-1,-10}{\QQ}\right) & \ZZ \oplus \ZZ i \oplus \ZZ \frac{1 + i + j}{2} \oplus \ZZ \frac{j + ij}{2} \\
	\left(\frac{-2,-5}{\QQ}\right) & \ZZ \oplus \ZZ i \oplus \ZZ \frac{1 + i + j}{2} \oplus \ZZ \frac{i + ij}{2} \\
	\left(\frac{-2,-26}{\QQ}\right) & \begin{cases} \ZZ \oplus \ZZ i \oplus \ZZ \frac{2 + i + j}{4} \oplus \ZZ \frac{i - j + ij}{4} \\ \ZZ \oplus \ZZ i \oplus \ZZ \frac{2 + i - j}{4} \oplus \ZZ \frac{i + j + ij}{4} \end{cases}
    \end{array}
    \end{align*}
    \end{minipage}%
    \begin{minipage}{0.45\textwidth}
    \begin{align*}
    \begin{array}{l|l}
	\left(\frac{-3,-1}{\QQ}\right) & \ZZ \oplus \ZZ \frac{1 + i}{2} \oplus \ZZ j \oplus \ZZ \frac{j + ij}{2} \\
	\left(\frac{-3,-2}{\QQ}\right) & \ZZ \oplus \ZZ \frac{1 + i}{2} \oplus \ZZ j \oplus \ZZ \frac{j + ij}{2} \\
	\left(\frac{-3,-15}{\QQ}\right) & \begin{cases} \ZZ \oplus \ZZ \frac{1 + i}{2} \oplus \ZZ \frac{i + j}{3} \oplus \ZZ \frac{3j + ij}{6} \\ \ZZ \oplus \ZZ \frac{1 + i}{2} \oplus \ZZ \frac{i - j}{3} \oplus \ZZ \frac{3j + ij}{6} \end{cases} \\
	\left(\frac{-7,-1}{\QQ}\right) & \ZZ \oplus \ZZ \frac{1 + i}{2} \oplus \ZZ j \oplus \ZZ \frac{j + ij}{2} \\
	\left(\frac{-11,-143}{\QQ}\right) & \begin{cases} \ZZ \oplus \ZZ \frac{1 + i}{2} \oplus \ZZ \frac{3i + j}{11} \oplus \ZZ \frac{11j + ij}{22} \\ \ZZ \oplus \ZZ \frac{1 + i}{2} \oplus \ZZ \frac{3i - j}{11} \oplus \ZZ \frac{11j + ij}{22} \end{cases}.
	\end{array}
	\end{align*}
    \end{minipage}
    
\noindent Any two of the above sphere collections $\mathcal{S}_{\OO,j}, \mathcal{S}_{\OO',j}$ are conformally equivalent if and only if $\OO,\OO'$ are orders over the same quaternion algebra.
\end{theorem}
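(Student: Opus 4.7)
The plan is to prove this in two phases: first, a finite case analysis producing the ten algebras and their orders, and second, the conformal equivalence classification. For phase one, I would begin by exploiting the Euclidean intersection hypothesis to reduce to finitely many candidates for $\OO \cap \CC$: by the classical classification, the Euclidean imaginary quadratic rings of integers are exactly $\ZZ[i]$, $\ZZ[\sqrt{-2}]$, $\ZZ[\omega]$, $\ZZ[(1+\sqrt{-7})/2]$, and $\ZZ[(1+\sqrt{-11})/2]$. Since $i \in \OO$ satisfies $i^2 = a$ in the standard presentation of $H = \left(\frac{a,b}{\QQ}\right)$, this pins $a$ down to one of $-1, -2, -3, -7, -11$, giving a complete list of permitted ``first columns'' for the table.

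The next step is to use the integrality condition on the bends of $\mathcal{S}_{\OO,j}$ to bound the discriminant (equivalently, $|b|$ given the fixed $a$). Concretely, after writing the action of $SL^\ddagger(2,\OO)$ on $\hat{S}_j$ explicitly, the bend of a translated sphere factors through the reduced norm on $\OO$, so demanding that every orbit bend lies in $\frac{1}{C}\ZZ$ forces congruence conditions on the entries of the generators of $\OO$; combined with maximality of $\OO$ as a $\ddagger$-order (as developed in Section \ref{DaggerOrders}) this pins down $b$ to finitely many values and pins down $\OO$ within each $H$ up to the usual ambiguity of reflecting $j \mapsto -j$, which accounts for the ``two-branch'' entries in the table. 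For each surviving candidate I would verify the tangentiality condition directly by checking that the intersection pattern of pairs of spheres $g\cdot \hat{S}_j$ involves only tangencies — this reduces to verifying a short list of quadratic identities on pairs of elements of $\OO$, exactly the sort of ``weird intersection'' analysis referred to in Section \ref{WeirdIntersections}.

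After integrality and tangentiality have carved the list down to a finite set, the remaining task is to rule out candidates that are either not dense in $\RR^3$ or not tangency-connected, and to confirm density and tangency-connectivity for the ten pairs that remain. For density I expect to exhibit a covering of a fundamental domain by translated spheres, while for tangency-connectivity I would produce an explicit finite chain of tangent spheres connecting any basic family of neighbors — in practice this means running the finite procedure of repeatedly applying generators of $SL^\ddagger(2,\OO)$ to $\hat{S}_j$ and checking that the resulting tangency graph has a spanning connected component that hits every $SL^\ddagger(2,\OO)$-coset. This is the step I expect to be the principal obstacle: showing non-density or non-connectivity for all the infinitely many rejected candidates requires a uniform obstruction, probably along the lines that when $|b|$ (or more precisely the reduced discriminant of $\OO$) is large relative to $|a|$, neighboring spheres in $\mathcal{S}_{\OO,j}$ are forced to have bounded-below gaps that preclude density.

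For the final conformal equivalence statement, I would argue both directions. If $\OO$ and $\OO'$ lie in isomorphic $H$, an isomorphism $H \to H'$ compatible with the involutions descends to a Möbius transformation of $\RR^3 \cup \{\infty\}$ that conjugates $SL^\ddagger(2,\OO)$ to a group commensurable with $SL^\ddagger(2,\OO')$ and sends $\hat{S}_j$ to $\hat{S}_{j'}$, hence carries $\mathcal{S}_{\OO,j}$ to $\mathcal{S}_{\OO',j'}$. Conversely, a conformal equivalence of the two sphere collections induces a conjugation of the Zariski closures of $SL^\ddagger(2,\OO)$ and $SL^\ddagger(2,\OO')$ inside $\Isom(\HH^4)$, and after identifying these closures with $H^\times/\QQ^\times$-type algebraic groups one recovers an isomorphism of quaternion algebras — a standard rigidity argument. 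Inspection of the ten entries of the table shows that different rows correspond to non-isomorphic $H$, so this establishes exactly one conformal class per listed quaternion algebra.
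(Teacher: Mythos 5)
Your overall two-phase architecture (finite case analysis, then conformal classification) matches the paper's, and your reduction of $m$ to $\{1,2,3,7,11\}$ via the Euclidean imaginary quadratic rings is exactly right. But the engine you propose for cutting the list down to finitely many $(H,\OO)$ does not work, and it inverts the actual logical structure. Integrality constrains nothing: Lemmas \ref{InversiveRational} and \ref{DefinitionOfInversiveCoordinates} show that the reduced bends lie in $\ZZ$ for \emph{every} maximal $\ddagger$-order, so no congruence conditions on $n$ arise from it. Density is likewise automatic whenever the Euclidean-algorithm machinery of Lemma \ref{EuclideanAlgorithm} applies, since spheres pass through every point $b^{-1}a$. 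Tangentiality, computed via strong approximation and the local images $\xi_3(SL^\ddagger(2,\OO_p))$ (Corollary \ref{DescribeWhenTangential} and Theorem \ref{WhenRationalAndTangential}), excludes only finitely many \emph{small} $n$---it holds for all sufficiently large $n$---so it cannot produce finiteness either. The condition that does all the work is tangency-connectivity: by Theorem \ref{IntersectionGraph} the components of the tangency graph biject with $SL^\ddagger(2,\OO)/E^\ddagger(2,\OO)$, and Theorem \ref{Slightness in R3} shows $E^\ddagger(2,\OO)=SL^\ddagger(2,\OO)$ precisely when $\OO$ covers $\RR^3$ by unit balls, a condition failing for all but finitely many $(m,n)$. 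Proving the failure direction is the hard part and requires the ghost-sphere construction (Lemma \ref{GhostSphereLemma}), the generation criterion of Theorem \ref{GeneralNonEqualCondition}, and a separate Cohn--Nica-style special-unimodular-pairs argument (Theorem \ref{Eleven Is The Worst}) for the residual $m=11$ family. Your proposed ``uniform obstruction'' gestures at the right geometric phenomenon---large $n$ creates gaps---but attaches it to density, which is the one property that never fails, and offers no mechanism for converting the geometric gap into infinitely many tangency components.

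The conformal-equivalence phase also has a gap in the converse direction. A conformal equivalence of the sphere \emph{collections} does not obviously conjugate the groups; one must first recover the group from the bare set of spheres, and your appeal to rigidity of Zariski closures skips that step. The paper instead uses a concrete conformal invariant, the set $\left\{b_{H,j}(\text{inv}(S_1),\text{inv}(S_2))\,\middle|\,S_1,S_2 \in \mathcal{S}_{\OO,j}\right\} = \xi_3\left(SL^\ddagger(2,\OO)\right)$, already tabulated in Theorem \ref{All Orders with Euclidean Intersection}; this separates all pairs except $\left(\frac{-3,-1}{\QQ}\right)$ and $\left(\frac{-7,-1}{\QQ}\right)$, both of which give $1+2\ZZ$. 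That last pair is separated only by the later structural results that one packing is a non-degenerate Boyd--Maxwell packing (Lemma \ref{BMGraph31}) while the other is not (Theorem \ref{NotBMGraph})---a distinction your argument cannot see. The forward direction also needs explicit conjugating elements, as the paper exhibits for the two orders in $\left(\frac{-1,-7}{\QQ}\right)$, since an abstract isomorphism of quaternion algebras with involution need not carry one listed order to the other.
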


\begin{figure}
\begin{tabular}{ccccc}
\includegraphics[width=0.15\textwidth]{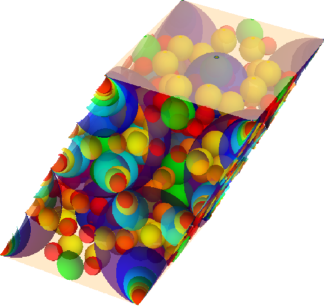} & \includegraphics[width=0.15\textwidth]{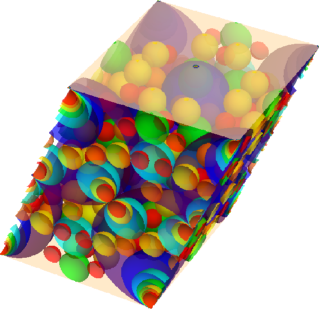} & \includegraphics[width=0.15\textwidth]{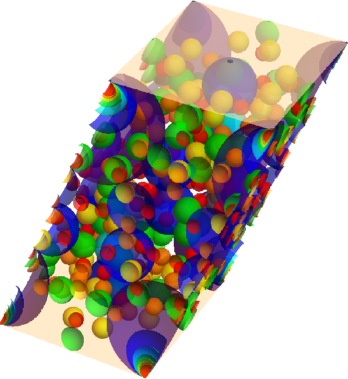} & \includegraphics[width=0.15\textwidth]{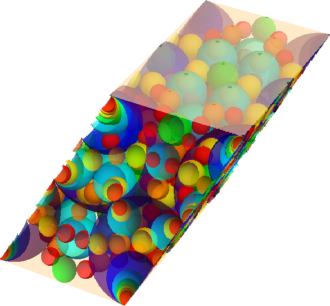} & \includegraphics[width=0.15\textwidth]{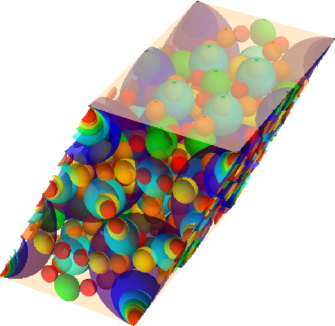} \\
\includegraphics[width=0.15\textwidth]{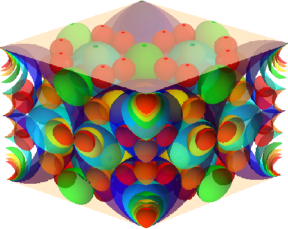} & \includegraphics[width=0.15\textwidth]{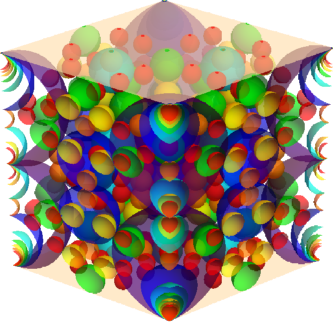} & \includegraphics[width=0.15\textwidth]{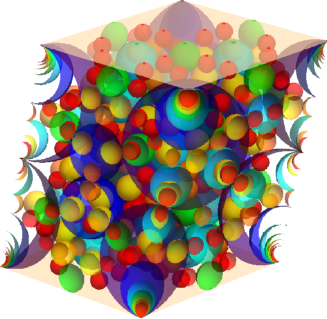} & \includegraphics[width=0.15\textwidth]{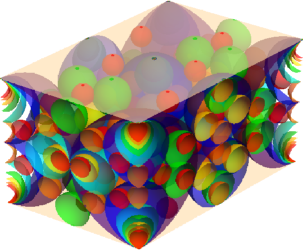} & \includegraphics[width=0.15\textwidth]{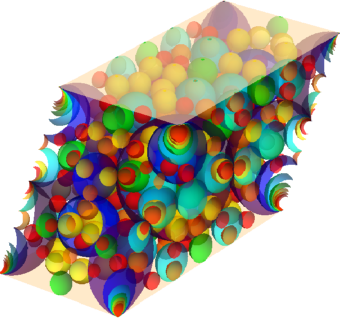}
\end{tabular}
\caption{From left to right: superpackings for the quaternion algebras $\left(\frac{-1,-6}{\QQ}\right)$, $\left(\frac{-1,-7}{\QQ}\right)$, $\left(\frac{-1,-10}{\QQ}\right)$, $\left(\frac{-2,-5}{\QQ}\right)$, $\left(\frac{-2,-26}{\QQ}\right)$, $\left(\frac{-3,-1}{\QQ}\right)$, $\left(\frac{-3,-2}{\QQ}\right)$, $\left(\frac{-3,-15}{\QQ}\right)$, $\left(\frac{-7,-1}{\QQ}\right)$, $\left(\frac{-11,-143}{\QQ}\right)$.}
\label{SuperPackingGraphic}
\end{figure}

On the other hand, we demonstrate that all of the sphere collections $\mathcal{S}_{\OO,j}$ that we claim are candidates to be superpackings of integral crystallographic packings in fact are such.

\begin{theorem}\label{SICP Classification}
The sphere collections $\overline{\mathcal{S}_{\OO,j}}$ corresponding to maximal $\ddagger$-orders

\begin{minipage}{0.45\textwidth}
	\begin{align*}
	\begin{array}{ll}
	\ZZ \oplus \ZZ i \oplus \ZZ \frac{1 + i + j}{2} \oplus \ZZ \frac{j + ij}{2} &\subset \left(\frac{-1, -6}{\QQ}\right) \\
	\ZZ \oplus \ZZ i \oplus \ZZ \frac{1 + j}{2} \oplus \ZZ \frac{i + ij}{2} &\subset \left(\frac{-1, -7}{\QQ}\right) \\
	\ZZ \oplus \ZZ i \oplus \ZZ \frac{1 + i + j}{2} \oplus \ZZ \frac{j + ij}{2} &\subset \left(\frac{-1, -10}{\QQ}\right) \\
	\ZZ \oplus \ZZ i \oplus \ZZ \frac{1 + i + j}{2} \oplus \ZZ \frac{i + ij}{2} &\subset \left(\frac{-2, -5}{\QQ}\right) \\
	\ZZ \oplus \ZZ i \oplus \ZZ \frac{2 + i + j}{4} \oplus \ZZ \frac{i - j + ij}{4} &\subset \left(\frac{-2, -5}{\QQ}\right)
	\end{array}
	\end{align*}
\end{minipage}%
\begin{minipage}{0.45\textwidth}
	\begin{align*}
	\begin{array}{ll}
	\ZZ \oplus \ZZ \frac{1 + i}{2} \oplus \ZZ j \oplus \ZZ \frac{j + ij}{2} &\subset \left(\frac{-3, -1}{\QQ}\right) \\
	\ZZ \oplus \ZZ \frac{1 + i}{2} \oplus \ZZ j \oplus \ZZ \frac{j + ij}{2} &\subset \left(\frac{-3, -2}{\QQ}\right) \\
	\ZZ \oplus \ZZ \frac{1 + i}{2} \oplus \ZZ \frac{i + j}{3} \oplus \ZZ \frac{3j + ij}{6} &\subset \left(\frac{-3, -15}{\QQ}\right) \\
	\ZZ \oplus \ZZ \frac{1 + i}{2} \oplus \ZZ j \oplus \ZZ \frac{j + ij}{2} &\subset \left(\frac{-7, -1}{\QQ}\right) \\
	\ZZ \oplus \ZZ \frac{1 + i}{2} \oplus \ZZ \frac{3i + j}{11} \oplus \ZZ \frac{11j + ij}{22} &\subset \left(\frac{-11, -143}{\QQ}\right)
	\end{array}
	\end{align*}
\end{minipage}
	
\noindent are the superpackings of super-integral crystallographic packings $\mathcal{P}_{\OO,j}$.
\end{theorem}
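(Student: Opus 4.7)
The strategy is, for each of the ten listed $\ddagger$-orders $\OO$, to exhibit an explicit integral crystallographic packing $\mathcal{P}_{\OO,j} \subset \mathcal{S}_{\OO,j}$ and then verify that $\overline{\mathcal{S}_{\OO,j}}$ is precisely its superpacking. From Theorem \ref{SuperpackingClassification} the collection $\mathcal{S}_{\OO,j}$ is already known to be integral, tangential, dense in $\RR^3$, and tangency-connected, which is exactly the data expected of a superpacking; the remaining task is to locate the underlying packing and the hyperbolic reflection group whose limit set is its closure.

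In each of the ten cases, I would first identify a minimal cluster $\mathcal{C} \subset \mathcal{S}_{\OO,j}$ of mutually tangent $2$-spheres, the three-dimensional analog of a Descartes configuration. Because $\mathcal{S}_{\OO,j}$ has only tangential intersections and is tangency-connected, such a cluster can be located by applying a short word in the generators of $SL^\ddagger(2,\OO)$ to the distinguished plane $\hat S_j$. I would then define $\Gamma_{\OO,j}$ as the hyperbolic reflection group generated by inversions in the dual spheres completing $\mathcal{C}$, and set $\mathcal{P}_{\OO,j}$ to be the $\Gamma_{\OO,j}$-orbit of $\mathcal{C}$.

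The packing and crystallographic properties of $\mathcal{P}_{\OO,j}$ must then be verified. Disjointness of interiors is inherited from the tangential-only property of $\mathcal{S}_{\OO,j}$ once we check that $\mathcal{P}_{\OO,j} \subset \mathcal{S}_{\OO,j}$, which can be done generator by generator. Density of the union of the spheres of $\mathcal{P}_{\OO,j}$ (together with their interiors) in $\RR^3$ follows from the standard inductive gap-filling argument: each interstitial region of the initial cluster is filled in a controlled way by the action of $\Gamma_{\OO,j}$, and the complement of the union is identified with the limit set. Integrality is automatic since the bends of $\mathcal{P}_{\OO,j}$ form a subset of those of $\mathcal{S}_{\OO,j}$. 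Discreteness and geometric finiteness of $\Gamma_{\OO,j}$ follow from realizing it inside the arithmetic lattice associated with $SL^\ddagger(2,\OO)$, and the identification of the limit set with $\overline{\mathcal{P}_{\OO,j}}$ is then standard once an explicit fundamental polyhedron is produced.

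The main obstacle will be showing that $\overline{\mathcal{S}_{\OO,j}}$ is exactly the superpacking of $\mathcal{P}_{\OO,j}$, rather than a strict enlargement of it. This amounts to identifying the symmetry supergroup of $\mathcal{P}_{\OO,j}$ with the image of $SL^\ddagger(2,\OO)$ acting on spheres, and then showing that every sphere of $\mathcal{S}_{\OO,j}$ lies in the supergroup-orbit of some sphere of $\mathcal{P}_{\OO,j}$. Several of the ten cases involve multiple non-isomorphic $\ddagger$-orders that collapse to a single conformal class of superpackings, and since the arithmetic torsion structure varies across the list, this matching step has to be carried out algebra by algebra.
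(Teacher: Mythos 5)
Your overall architecture (find an underlying packing inside $\mathcal{S}_{\OO,j}$, realize it as the limit set of a geometrically finite reflection group, then match superpackings) is the right one, but the central construction you propose is not the paper's and has a genuine gap. You build $\mathcal{P}_{\OO,j}$ as the orbit of a Descartes-type cluster of mutually tangent spheres under inversions in its dual spheres. That is precisely the Boyd--Maxwell paradigm, and the paper proves (Theorem \ref{NotBMGraph}) that seven of the ten packings are \emph{not} non-degenerate Boyd--Maxwell packings; in several cases the candidate dual-sphere reflections would include pairs whose product has infinite order, which is exactly the obstruction used there. So your uniform recipe cannot be expected to produce the full packing in most cases. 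The paper instead defines $\mathcal{P}_{\OO,j}$ intrinsically, as the maximal tangency-connected subset of $\hat{\mathcal{S}}_{\OO,j}$ containing $\hat{S}_j$ that does not straddle any sphere of $\hat{\mathcal{S}}_{\OO,j}$; this makes disjointness of interiors, integrality, and density immediate consequences of Theorem \ref{SuperpackingClassification} (so no ``inductive gap-filling argument'' is needed), and it decomposes $\hat{\mathcal{S}}_{\OO,j}$ into M\"obius-equivalent copies of $\mathcal{P}_{\OO,j}$. The reflection group $\Gamma$ is then chosen quite differently: it is generated by reflections in two pairs of parallel coordinate planes and in unit spheres centered at the lattice points $\ZZ[\varpi]$ and $\ZZ[\varpi]+\tau$, with an explicit finite-sided fundamental polyhedron (Lemma \ref{FundamentalPolyhedron}); transitivity of $\Gamma$ on the cusps $\QQ(i)$ and $\QQ(i)+\tau$ (Lemma \ref{TransitiveAction}) is what drives the identification of the limit set with $\overline{\mathcal{P}_{\OO,j}}$ (Lemma \ref{LimitSet}).

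Your treatment of the final matching step is also incomplete in one direction. You correctly flag that every sphere of $\mathcal{S}_{\OO,j}$ must lie in the supergroup orbit of $\mathcal{P}_{\OO,j}$; the paper gets this from the fact that the supergroup contains $\Gamma$ together with the translations by $1$, $\varpi$, $\tau$, which generate $PSL^\ddagger(2,\OO)\oplus\langle\varphi_1\rangle$. But the converse inclusion --- that the supergroup does not carry $\mathcal{P}_{\OO,j}$ \emph{outside} $\hat{\mathcal{S}}_{\OO,j}$ --- is the delicate point, and identifying the supergroup with ``the image of $SL^\ddagger(2,\OO)$'' will not do, since the supergroup a priori contains reflections through every sphere of the packing. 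The paper's Lemma \ref{InvariantUnderSuperGroup} handles this by composing an arbitrary supergroup element with elements of $\Gamma$ until it fixes both bounding planes, reducing to a finite check among Euclidean isometries permuting the spheres of minimal positive bend. You would need some substitute for this argument for your proof to close.
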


\noindent The proof is constructive---we show explicitly how to produce the desired packings $\mathcal{P}_{\OO,j}$. Furthermore, we prove that all but three of the packings are not conformally equivalent to other super-integral crystallographic packings that exist in the literature.

\subsection*{Acknowledgments:}
The author would like to thank Alex Kontorovich for suggesting this problem, providing insight into crystallographic packings, and giving helpful advice in the course of writing this paper. Additionally, the author thanks Yair Minsky and Katherine Stange for their comments on the geometry of limit sets and the structure of ghost circles, respectively.

\section{Notation:}

Throughout this paper, we use the following standard conventions.

\begin{tabular}{ll}
	$F$ & a local or global field. \\
    $\text{char}(F)$ & the characteristic of $F$. \\
    $\mathfrak{o}$ & the ring of integers of $F$. \\
	$H$ & a quaternion algebra over $F$. \\
	$x \mapsto \overline{x}$ & the standard involution on $H$. \\
	$\nrm(x) = x\overline{x}$ & the (reduced) norm map on $H$. \\
	$\tr(x) = x + \overline{x}$ & the (reduced) trace map on $H$. \\
	$H^0$ & the trace $0$ subspace of $H$. \\
    $\left(\frac{a,b}{F}\right)$ & the quaternion algebra generated by $i,j$, where $i^2 = a$, $j^2 = b$, $ij = -ji$. \\
    $\text{M\"{o}b}(\RR^n)$ & the group of M\"{o}bius transformations of $\RR^n \cup \{\infty\}$. \\
    $\Isom(\HH^n)$ & the group of isometries of $n$-dimensional hyperbolic space. \\
    $\Isom^0(\HH^n)$ & the group of orientation-preserving isometries of $n$-dimensional hyperbolic space.
\end{tabular}

\section{Orthogonal Involutions and $\ddagger$-Orders:}\label{DaggerOrders}

We review some basic facts about orthogonal involutions and the results of \cite{Sheydvasser2017}. Let $\ddagger$ an orthogonal involution on $H$---up to a change of basis, any such involution is of the form
	\begin{align*}
	(w + xi + yj + zij)^\ddagger = w + xi + yj - zij.
	\end{align*}
	
\noindent It is easy to see that $H$ is a direct product of the two subspaces $H^+$ and $H^-$, where
	\begin{align*}
	H^+ &= \left\{h \in H \middle| h^\ddagger = h\right\} \\
	H^- &= \left\{h \in H \middle| h^\ddagger = -h\right\}.
	\end{align*}
	
\noindent A homomorphism of quaternion algebras with involution is an $F$-linear ring homomorphism
	\begin{align*}
	\varphi: (H_1, \ddagger_1) \rightarrow (H_2, \ddagger_2)
	\end{align*}
	
\noindent such that
	\begin{align*}
	\varphi(x)^{\ddagger_2} &= \varphi\left(x^{\ddagger_1}\right) \ \forall x \in H_1.
	\end{align*}
	
\noindent We denote the automorphism group of $(H, \ddagger)$ by $GO(H, \ddagger)$. One easily checks that Noether's theorem implies there is an isomorphism
	\begin{align*}
	\left(F(\alpha)^\times \cup F(\alpha)^\times \beta\right)/F^\times &\rightarrow GO(H, \ddagger) \\
	x &\mapsto \left(h \mapsto xhx^{-1}\right),
	\end{align*}
	
\noindent where $\alpha \in H^-$ and $\beta \in H^0 \cap H^+ \backslash \{0\}$ such that $\alpha \beta = -\beta \alpha$. An important invariant of the orthogonal involution $\ddagger$ is its discriminant
	\begin{align*}
	\disc(\ddagger) &= \nrm\left(H^- \backslash \{0\}\right) \in F^\times/{F^\times}^2.
	\end{align*}

\noindent The discriminant uniquely determines the involution up to isomorphism---that is, given a quaternion algebra $H$ and two orthogonal involutions $\ddagger_1, \ddagger_2$, then there is an isomorphism $(H, \ddagger_1) \rightarrow (H, \ddagger_2)$ if and only if $\disc(\ddagger_1) = \disc(\ddagger_2)$. We shall be interested in considering orders inside the quaternion algebra $H$ that behave nicely with respect to $\ddagger$. With this motivation, we make the following definition.

	\begin{definition}
	Let $F$ be a local or global field. Let $H$ be a quaternion algebra over $F$ with involution $\ddagger$. A $\ddagger$-\emph{order} is an order of $H$ closed under $\ddagger$. A \emph{maximal} $\ddagger$-\emph{order} is a $\ddagger$-order not properly contained inside any other $\ddagger$-order of $H$.
	\end{definition}
	
\noindent We shall need the fact that maximal $\ddagger$-orders are characterized by their discriminant. Defining a map
	\begin{align*}
	\iota: F^\times/{F^\times}^2 &\rightarrow \left\{\text{square-free ideals of } \mathfrak{o}\right\} \\
	[\lambda] &\mapsto \bigcup_{\lambda \in [\lambda] \cap \mathfrak{o}} \lambda \mathfrak{o},
	\end{align*}
	
\noindent we can state this main condition as follows.

	\begin{theorem}\label{MainTheoremLastPaper}
	Given a quaternion algebra $H$ over a local or global field $F$, the maximal $\ddagger$-orders of $H$ are exactly the Eichler orders of the form $\OO \cap \OO^\ddagger$ with discriminant $\disc(H) \cap \iota(\disc(\ddagger))$.
	\end{theorem}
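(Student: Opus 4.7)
The plan is to first reduce the classification of maximal $\ddagger$-orders to the computation of intersections of the form $\OO \cap \OO^\ddagger$, and then carry out a place-by-place analysis to pin down the discriminant.

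First I would argue that every $\ddagger$-order $\Lambda$ is contained in some such intersection: pick any maximal order $\OO \supseteq \Lambda$, and note that $\Lambda = \Lambda^\ddagger \subseteq \OO^\ddagger$, so $\Lambda \subseteq \OO \cap \OO^\ddagger$. The intersection is itself an order, and is closed under $\ddagger$ since $\ddagger$ simply swaps the two factors, so it is a $\ddagger$-order. Consequently every maximal $\ddagger$-order must itself equal $\OO \cap \OO^\ddagger$ for some maximal order $\OO$, and in particular is an Eichler order.

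To compute the discriminant, I would pass to completions, since both being a maximal $\ddagger$-order and having a given discriminant can be checked locally. At a place $v$ where $H_v$ is a division algebra the unique maximal order is automatically $\ddagger$-stable and contributes the factor $\disc(H)$. At a split place, $H_v \cong \Mat_2(F_v)$ and maximal orders correspond to vertices of the Bruhat--Tits tree $\mathcal{T}_v$, on which $\ddagger_v$ acts by an isometric involution; the intersection $\OO \cap \OO^\ddagger$ is an Eichler order of level equal to the tree distance between $\OO$ and $\OO^\ddagger$, so maximizing the $\ddagger$-order amounts to minimizing this distance. The key local claim I would need is that $\ddagger_v$ fixes a vertex of $\mathcal{T}_v$ precisely when $\disc(\ddagger_v) \in \mathfrak{o}_v^\times (F_v^\times)^2$, and otherwise fixes only the midpoint of an edge; the latter situation forces the minimum distance to be exactly $1$, producing a single factor of the uniformizer at $v$ in the level, which matches the contribution of $\iota(\disc(\ddagger))$ at $v$.

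The main obstacle will be this local tree analysis, i.e., translating the algebraic invariant $\disc(\ddagger_v) \in F_v^\times / (F_v^\times)^2$ into the geometric statement about the fixed locus of $\ddagger_v$ on $\mathcal{T}_v$. The natural approach is to write $\ddagger_v$ explicitly as $X \mapsto g X^\ast g^{-1}$ for a symmetric $g \in GL_2(F_v)$ whose determinant represents $\disc(\ddagger_v)$, and then check by direct computation in the lattice picture of $\mathcal{T}_v$ which maximal orders are stabilized, and how $\OO^\ddagger$ sits relative to $\OO$ otherwise. Once this is done, assembling the contributions across all places yields the global discriminant $\disc(H) \cap \iota(\disc(\ddagger))$; maximality then follows, because any $\ddagger$-order strictly containing our candidate would be contained in some $\OO' \cap \OO'^\ddagger$ of strictly smaller discriminant, contradicting the minimality of the local tree distances just established.
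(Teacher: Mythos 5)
The paper does not actually prove this theorem---it is imported from the author's earlier work, with only the remark that the proof ``follows from considering the localizations $H_\mathfrak{p}$, $\OO_\mathfrak{p}$''---and your proposal follows exactly that local-global strategy, reducing to maximal $\ddagger$-orders in $H_v$ and analyzing the split places via the action of $\ddagger_v$ on the Bruhat--Tits tree. Your outline is sound: the reduction to intersections $\OO\cap\OO^\ddagger$ is correct, and the key local claim holds because $d([\Lambda],[g\Lambda^\vee])\equiv v(\det g)=v(\disc(\ddagger_v))\pmod 2$, so the involution preserves the bipartition of the tree (hence fixes a vertex) exactly when $\disc(\ddagger_v)$ is a unit times a square, and otherwise inverts an edge, forcing minimal level $\mathfrak{p}_v$.
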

	
\noindent The proof of this theorem follows from considering the localizations $H_\mathfrak{p} = H \otimes_F F_\mathfrak{p}$,	$\OO_\mathfrak{p} = \OO \otimes_{\mathfrak{o}} \mathfrak{o}_\mathfrak{p}$.

\section{Maximal $\ddagger$-orders with a Fixed Element}
In addition to the results of Section \ref{DaggerOrders}, we shall also need a description of maximal $\ddagger$-orders containing some fixed element $j \in H^0 \cap H^+$ of square-free integer norm. We now classify the localizations of such orders; we split into two cases for orders over $\QQ_p$ where $p$ is an odd prime and orders over $\QQ_2$. To start, we define an equivalence relation on $\ZZ_p$ lattices. Given a $\ZZ_p$ lattice $\Lambda$ of full rank in a two-dimensional quadratic space $V$ over $\QQ_p$, with bilinear form $b$, the dual lattice is defined as
	\begin{align*}
	\Lambda^\sharp = \left\{v \in V \middle| b(v, \Lambda) \subset \ZZ_p\right\}.
	\end{align*}
	
\noindent Given two such lattices $\Lambda_1, \Lambda_2$, we write $\Lambda_1 \sim \Lambda_2$ if and only if $\Lambda_1 = \lambda \Lambda_2$ or $\Lambda_1 = \lambda \Lambda_2^\sharp$ for some $\lambda \in \ZZ_p^\times$.

\begin{lemma}\label{MostPrimesAreUnnecessary}
Let $H$ be a rational quaternion algebra with involution $\ddagger$. Fix an element $j \in H^0 \cap H^+$ with square-free integral norm. For all odd primes $p$, there is a unique maximal $\ddagger$-order in $H_p$ that contains $j$, unless $p|\nrm(j)$, $p\nmid \disc(H)$, and $-\disc(\ddagger) = \left(\QQ_p^\times\right)^2$, in which case there are precisely two distinct maximal $\ddagger$-orders in $H_p$ that contain $j$.
\end{lemma}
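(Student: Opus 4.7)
The plan is to localize at each odd prime $p$ and analyze the action of $\ZZ_p[j]$ and $\ddagger$ on the Bruhat-Tits tree $\mathcal{T}_p$ of $\mathrm{PGL}_2(\QQ_p)$, invoking Theorem~\ref{MainTheoremLastPaper} to pin down the local discriminant of any maximal $\ddagger$-order. When $p \mid \disc(H)$, $H_p$ is a division algebra whose unique maximal order is automatically $\ddagger$-stable and contains the integral element $j$, so the count is $1$ (never the exceptional case). When $p \nmid \disc(H)$, $H_p \cong M_2(\QQ_p)$, and I identify maximal orders with vertices of $\mathcal{T}_p$ via endomorphism rings of lattice classes. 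A vertex $v$ corresponds to a maximal order containing $j$ iff $v$ lies in $\mathrm{Fix}(j)$, the fixed-vertex set of $\ZZ_p[j]$. Since $\nrm(j)$ is square-free, $\QQ_p[j]$ falls into exactly three types: (i) $\QQ_p \times \QQ_p$ if $-\nrm(j)$ is a unit square, (ii) the unramified quadratic extension if $-\nrm(j)$ is a unit non-square, or (iii) the ramified quadratic extension if $p \mid \nrm(j)$. Standard Bruhat-Tits theory then identifies $\mathrm{Fix}(j)$ as, respectively, an apartment (a bi-infinite line), a single vertex, or a pair of adjacent vertices.

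Since $j^\ddagger = j$, the tree involution $\ddagger$ preserves $\mathrm{Fix}(j)$, and by Theorem~\ref{MainTheoremLastPaper} each maximal $\ddagger$-order has local level at $p$ equal to $0$ or $1$ according to the parity of $v_p(\disc(\ddagger))$; level-$0$ orders correspond to $\ddagger$-fixed vertices of $\mathrm{Fix}(j)$, and level-$1$ orders correspond to $\ddagger$-swapped adjacent pairs in $\mathrm{Fix}(j)$. In case (ii) the lone vertex is trivially $\ddagger$-fixed, giving one order. In case (i) I would place $j$ in diagonal form and write $\ddagger$ as $M \mapsto AM^tA^{-1}$ with $A = \mathrm{diag}(1, a_0)$ (where $\disc(\ddagger) = [a_0]$), then verify by a direct lattice computation that $\ddagger$ acts on $\mathrm{Fix}(j) \cong \ZZ$ as the reflection $n \mapsto v_p(a_0) - n$; this reflection has a unique fixed point (a vertex if $v_p(a_0)$ is even, an edge midpoint otherwise), so again one maximal $\ddagger$-order. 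In case (iii) the involution $\ddagger$ permutes the two-element set $\mathrm{Fix}(j) = \{v_1, v_2\}$, so either $\ddagger$ swaps them (giving one maximal $\ddagger$-order, the level-$1$ Eichler order $\mathcal{O}_{v_1} \cap \mathcal{O}_{v_2}$) or $\ddagger$ fixes both (giving two maximal $\ddagger$-orders, namely the level-$0$ orders $\mathcal{O}_{v_1}$ and $\mathcal{O}_{v_2}$).

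The main obstacle is to identify which of the two options in case (iii) corresponds to the exceptional hypothesis. Writing $H_p \cong \left(\frac{a,b}{\QQ_p}\right)$ with $b = -\nrm(j)$ (so $v_p(b) = 1$), one has $\disc(\ddagger) = [ab]$, and the hypothesis $p \nmid \disc(H)$ translates to $(a, b)_p = 1$. The odd-prime Hilbert-symbol formula, combined with $v_p(b) = 1$, allows only the two possibilities $v_p(a) \in \{0, 1\}$ (the $v_p(a) = 1$ case requiring a specific Legendre-symbol condition on unit parts). A direct computation then shows that $v_p(\disc(\ddagger))$ is even (giving level $0$, i.e., the case where $\ddagger$ fixes both vertices) if and only if $-\disc(\ddagger)$ is a square in $\QQ_p^\times$; this is exactly the exceptional hypothesis of the lemma. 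Otherwise $v_p(\disc(\ddagger))$ is odd (giving level $1$, the swap case with one order), and the dichotomy claimed by the lemma follows.
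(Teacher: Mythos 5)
Your proposal reaches the correct conclusions, and its overall strategy is sound, but it takes a genuinely different route from the paper. The paper splits on whether the binary form $b_\ddagger = \mathrm{diag}(\lambda,1)$ attached to the involution is anisotropic or isotropic (equivalently, on whether $-\disc(\ddagger)$ is a square): in the anisotropic case it shows there is a \emph{unique} maximal $\ddagger$-order in all of $H_p$ by exhibiting the two candidate maximal lattices $\Lambda_1,\Lambda_p$ and checking $\Lambda_1\sim\Lambda_p$; in the isotropic case it uses the fact that there is a single isomorphism class of maximal $\ddagger$-orders to parametrize them all as $M\,\Mat(2,\ZZ_p)\,M^{-1}$ with $M$ ranging over coset representatives coming from $GO(H,\ddagger)$, and then computes by hand for which $M$ the conjugate contains $j$. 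You instead split on the type of the quadratic algebra $\QQ_p[j]$ and read the count off from the geometry of the fixed-point set $\mathrm{Fix}(j)$ on the Bruhat--Tits tree together with the induced action of $\ddagger$. Your route is more conceptual and makes transparent why the exceptional case is exactly ``$\QQ_p[j]$ ramified and local level $0$''; the paper's route is more computational but self-contained relative to the earlier paper's lattice bijection.

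Two steps need shoring up. First, in your case (ii) ($-\nrm(j)$ a non-square unit, so $\mathrm{Fix}(j)$ a single vertex) you declare the count to be one, but under your own dictionary the count would be \emph{zero} if the local level demanded by Theorem~\ref{MainTheoremLastPaper} were $1$, since a single vertex contains no $\ddagger$-swapped adjacent pair. You must rule this out: writing $\disc(\ddagger)=[\nrm(i)\nrm(j)]$ for $i\in H^0\cap H^+$ anticommuting with $j$, the splitting of $H_p$ combined with $v_p(\nrm(j))=0$ and $-\nrm(j)$ a non-square forces $v_p(\nrm(i))$, hence $v_p(\disc(\ddagger))$, to be even, so the level is indeed $0$. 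Second, in case (iii) the identification of ``level $0$'' with ``$\ddagger$ fixes both vertices of $\mathrm{Fix}(j)$'' is asserted parenthetically; it requires either the direct computation you allude to or an appeal to the existence of at least one maximal $\ddagger$-order containing $j$ (the paper asserts this existence without proof, so you are no worse off, but it is doing real work in your argument, since a mismatch between the level and the $\ddagger$-action on $\{v_1,v_2\}$ would again give a count of zero). With these points filled in, the proof is complete and correct.
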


\begin{proof}
There is always at least one maximal $\ddagger$-order containing $j$---it suffices to show that this order is unique. If $p$ ramifies, then by \cite[Theorem 4.1]{Sheydvasser2017} there is a unique maximal $\ddagger$-order. If $p$ is unramified, then $H_p$ is isomorphic to $\Mat(2,\QQ_p)$ with involution
	\begin{align*}
	\begin{pmatrix} a & b \\ c & d \end{pmatrix}^\ddagger &= \begin{pmatrix} a & c/\lambda \\ b\lambda & d \end{pmatrix},
	\end{align*}
	
\noindent for some square-free $\lambda \in \ZZ_p$. Define a bilinear form
	\begin{align*}
	b_\ddagger(x,y) &= x^T \begin{pmatrix} \lambda & 0 \\ 0 & 1 \end{pmatrix} y
	\end{align*}
	
\noindent on $\QQ_p^2$. By Theorem \cite[Theorem 8.1]{Sheydvasser2017}, there is a bijection
	\begin{align*}
	\varphi: \left\{\text{maximal lattices in } \QQ_p^2\right\} / \sim &\rightarrow \left\{\text{maximal } \ddagger\text{-orders of } \Mat(2,\QQ_p)\right\} \\
	[\Lambda] &\mapsto \End(\Lambda) \cap \End(\Lambda^\sharp).
	\end{align*}
	
\noindent By this correspondence, if $b_\ddagger$ is anisotropic, there can be at most two maximal $\ddagger$-orders---one corresponding to the maximal lattice
	\begin{align*}
	\Lambda_1 &= \left\{v \in \QQ_p^2\middle|b_\ddagger(v,v) \in \ZZ_p\right\} \\
	&= \ZZ_p\begin{pmatrix} 1 \\ 0 \end{pmatrix} \oplus \ZZ_p\begin{pmatrix} 0 \\ 1 \end{pmatrix},
	\end{align*}
	
\noindent and the other to the maximal lattice
	\begin{align*}
	\Lambda_p &= \left\{v \in \QQ_p^2\middle|b_\ddagger(v,v) \in p\ZZ_p\right\}\\
	&= \ZZ_p \begin{pmatrix} p/\lambda \\ 0 \end{pmatrix} \oplus \ZZ_p \begin{pmatrix} 0 \\ p \end{pmatrix}.
	\end{align*}
	
\noindent If $p\nmid \lambda$, it is clear that $\Lambda_1 \sim \Lambda_p$. If $p|\lambda$, then
	\begin{align*}
	\Lambda_1^\sharp &= \ZZ_p\begin{pmatrix} 1/p \\ 0 \end{pmatrix} \oplus \ZZ_p\begin{pmatrix} 0 \\ 1 \end{pmatrix} \\ &\sim \ZZ_p\begin{pmatrix} 1 \\ 0 \end{pmatrix} \oplus \ZZ_p\begin{pmatrix} 0 \\ p \end{pmatrix} \\ &= \Lambda_p.
	\end{align*}
	
\noindent Therefore, in both cases there is in fact exactly one maximal $\ddagger$-order. It remains to consider the case where $b_\ddagger$ is isotropic---this occurs when $-\lambda \in \left(\QQ_p^\times\right)^2$. Since $\lambda$ is only well-defined up to multiplication by squares, we can take $\lambda = -1$ without loss of generality. To complete the proof we must determine the number of maximal $\ddagger$-orders that contain a fixed element
	\begin{align*}
	j = \begin{pmatrix} s & t \\ -t & -s \end{pmatrix},
	\end{align*}
	
\noindent where $s,t\in \ZZ_p$ are coprime and not both zero. It is easy to check that $\Mat(2,\ZZ_p)$ is a maximal $\ddagger$-order, and by \cite[Theorem 8.1]{Sheydvasser2017}, there is only one isomorphism class of maximal $\ddagger$-orders in $H_p$. Ergo, we need only consider all maximal $\ddagger$-orders of the form $M\left(\Mat(2,\ZZ_p)\right)M^{-1}$, where
	\begin{align*}
	M &= \begin{cases} \begin{pmatrix} a & b \\ -b & -a \end{pmatrix} \\ \begin{pmatrix} a & b \\ b & a \end{pmatrix} \end{cases},
	\end{align*}
	
\noindent with $a,b \in \ZZ_p$ coprime, and not both zero. Let $r \geq 0$ be the integer such that $a^2 - b^2 \in p^r\ZZ_p^\times$. Since $a,b$ are coprime, we have that $a - b \in p^r \ZZ_p^\times$ or $a + b \in \ZZ_p^\times$. Depending on which one it is, we can decompose
	\begin{align*}
	M \in \begin{cases} \begin{pmatrix} \frac{p^r+1}{2} & \frac{1-p^r}{2} \\ \frac{1-p^r}{2} & \frac{p^r+1}{2}\end{pmatrix} GL(2,\ZZ_p) \\
	\begin{pmatrix} \frac{p^r+1}{2} & \frac{1-p^r}{2} \\ -\frac{1-p^r}{2} & -\frac{p^r+1}{2}\end{pmatrix} GL(2,\ZZ_p) \end{cases}.
	\end{align*}
	
\noindent Since conjugation by $GL(2,\ZZ_p)$ stabilizes $\Mat(2,\ZZ_p)$, it remains to determine for what elements
	\begin{align*}
	M = \begin{pmatrix} \frac{p^r+1}{2} & \frac{1-p^r}{2} \\ \pm\frac{1-p^r}{2} & \pm\frac{p^r+1}{2}\end{pmatrix}
	\end{align*}
	
\noindent we have $M^{-1} j M \in \Mat(2,\ZZ_p)$---it is easy to check that different choices of $r$ yield distinct maximal $\ddagger$-orders. By direct computation,
	\begin{align*}
	\begin{pmatrix} \frac{p^r+1}{2} & \frac{1-p^r}{2} \\ \frac{1-p^r}{2} & \frac{p^r+1}{2}\end{pmatrix}^{-1} \begin{pmatrix} s & t \\ -t & -s \end{pmatrix} \begin{pmatrix} \frac{p^r+1}{2} & \frac{1-p^r}{2} \\ \frac{1-p^r}{2} & \frac{p^r+1}{2}\end{pmatrix} &= \begin{pmatrix} \frac{s + t}{2p^r} + \frac{(s - t)p^r}{2} & \frac{s + t}{2p^r} + \frac{(t - s)p^r}{2} \\ -\frac{s + t}{2p^r} -\frac{(t - s)p^r}{2} & -\frac{s + t}{2p^r} - \frac{(s - t)p^r}{2} \end{pmatrix} \\
	\begin{pmatrix} \frac{p^r+1}{2} & \frac{1-p^r}{2} \\ -\frac{1-p^r}{2} & -\frac{p^r+1}{2}\end{pmatrix}^{-1} \begin{pmatrix} s & t \\ -t & -s \end{pmatrix} \begin{pmatrix} \frac{p^r+1}{2} & \frac{1-p^r}{2} \\ -\frac{1-p^r}{2} & -\frac{p^r+1}{2}\end{pmatrix} &= \begin{pmatrix} \frac{s - t}{2p^r} + \frac{(s + t)p^r}{2} & \frac{s - t}{2p^r} + \frac{(s + t)p^r}{2} \\ -\frac{s - t}{2p^r} - \frac{(s + t)p^r}{2} & -\frac{s - t}{2p^r} - \frac{(s + t)p^r}{2} \end{pmatrix}.
	\end{align*}
	
\noindent We see from this that if $s - t, s + t \in \ZZ_p^\times$, then $r = 0$, and there is just one maximal $\ddagger$-order. However, if one of them is in $p\ZZ_p^\times$, then we get two solutions, one with $r = 0$, and the other with $r = 1$, yielding two distinct maximal $\ddagger$-orders. This latter case happens precisely when $p|\nrm(u)$.
\end{proof}

\begin{lemma}\label{TwoIsNotThatBad}
Let $H$ be a quaternion algebra over $\QQ_2$ generated by $i,j$, where $i,j$ have norms in $\ZZ_2$. Let $m = \nrm(i)$, $n = \nrm(j)$. Define an equivalence relations on pairs $(a,b),(c,d) \in \ZZ_2^2$, by $(a,b) \approx (c,d)$ if and only if $a/c,b/d \in (\ZZ_2^\times)^2$. Then the maximal $\ddagger$-orders $\OO$ containing $j$ are the ones given in Table \ref{TwoOrders}.
\end{lemma}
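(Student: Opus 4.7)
The plan is to mirror the proof of Lemma \ref{MostPrimesAreUnnecessary} step by step, handling the arithmetic complications introduced by the fact that $\ZZ_2^\times/(\ZZ_2^\times)^2 \cong (\ZZ/2\ZZ)^2$ and $\QQ_2^\times/(\QQ_2^\times)^2\cong (\ZZ/2\ZZ)^3$ are strictly larger than their odd analogues. First, if $H$ is ramified at $2$, then by Theorem 4.1 of \cite{Sheydvasser2017} there is a unique maximal $\ddagger$-order in $H_2$, which must contain $j$ since at least one such order does; this accounts for the corresponding entries of Table \ref{TwoOrders}. So I may assume $H_2 \cong \Mat(2,\QQ_2)$, and after a change of basis that the involution takes the standard form $\left(\begin{smallmatrix} a & b \\ c & d\end{smallmatrix}\right)^\ddagger = \left(\begin{smallmatrix} a & c/\lambda \\ b\lambda & d\end{smallmatrix}\right)$ with $\lambda \in \ZZ_2$ square-free and $[\lambda] = [\disc(\ddagger)]$.

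In the split case, Theorem 8.1 of \cite{Sheydvasser2017} reduces the problem to classifying maximal $\ZZ_2$-lattices in $(\QQ_2^2, b_\ddagger)$ up to the relation $\sim$, where $b_\ddagger = \mathrm{diag}(\lambda, 1)$. When $b_\ddagger$ is anisotropic, i.e.\ when $-\lambda \not\in (\QQ_2^\times)^2$, the only candidate maximal lattices are $\Lambda_1 = \ZZ_2^2$ and $\Lambda_2 = \ZZ_2(2/\lambda,0) \oplus \ZZ_2(0,2)$; whether $\Lambda_1 \sim \Lambda_2$ depends on the $2$-adic valuation of $\lambda$ and on the square class of the corresponding unit, so I would enumerate these subcases directly. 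When $b_\ddagger$ is isotropic I take $\lambda = -1$ without loss of generality, and parametrize the maximal $\ddagger$-orders as $M \cdot \Mat(2,\ZZ_2) \cdot M^{-1}$ for $M$ of the form $\left(\begin{smallmatrix} a & b \\ \pm b & \pm a \end{smallmatrix}\right)$ just as in the odd case. For the fixed element $j = \left(\begin{smallmatrix} s & t \\ -t & -s \end{smallmatrix}\right)$ with $s,t\in\ZZ_2$, I would then compute $M^{-1}jM$ explicitly and impose $2$-adic integrality, reading off those values of $a,b$ that yield admissible orders.

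The main obstacle is the finer bookkeeping at $p = 2$. In Lemma \ref{MostPrimesAreUnnecessary}, the only dichotomy was whether $p \mid \lambda$; at $p = 2$ one must track the valuation and square-class of $\lambda$ separately, and similarly for $n = \nrm(j)$. Consequently the single parameter $r$ measuring the valuation of $a \pm b$ refines into a family of sub-parameters, each of which produces a potentially distinct maximal $\ddagger$-order. I would organize the case analysis by the $\approx$-equivalence class of $(m,n)$, which is precisely the indexing used in Table \ref{TwoOrders}, so that every table entry corresponds to a finite explicit enumeration of the admissible matrices $M$. Aside from this casework, the argument introduces no new ideas beyond those already present in the proof of Lemma \ref{MostPrimesAreUnnecessary}.
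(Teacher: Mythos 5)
Your overall skeleton (ramified case via Theorem 4.1, split case via the lattice correspondence, explicit conjugation in the remaining case) is the right starting point, but there is a genuine gap in how you divide the split case. You split according to whether $b_\ddagger$ is anisotropic or isotropic, and propose to use the lattice bijection of \cite[Theorem 8.1]{Sheydvasser2017} throughout the anisotropic case and the $M\,\Mat(2,\ZZ_2)\,M^{-1}$ parametrization (with $M = \left(\begin{smallmatrix} a & b \\ \pm b & \pm a\end{smallmatrix}\right)$) in the isotropic case. At $p = 2$ the correct dichotomy is $\lambda \bmod 4$, not isotropy: the lattice bijection of Theorem 8.1 is only available when $\lambda \not\equiv -1 \bmod 4$, and when $\lambda \equiv -1 \bmod 4$ (which includes \emph{anisotropic} cases such as $\lambda \equiv 3 \bmod 8$, where $-\lambda \equiv 5 \bmod 8$ is not a square) the paper must instead invoke \cite[Theorem 8.2]{Sheydvasser2017}, which asserts there are \emph{two isomorphism classes} of maximal $\ddagger$-orders, represented by $\OO_1 = \Mat(2,\ZZ_2)$ and $\OO_2 = \left(\begin{smallmatrix} 1 & -1 \\ 1 & 1 \end{smallmatrix}\right)\Mat(2,\ZZ_2)\left(\begin{smallmatrix} 1 & -1 \\ 1 & 1 \end{smallmatrix}\right)^{-1}$, distinguished by the invariant $\tr(\OO \cap H^+)$. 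Your matrices $M$ parametrize exactly the conjugations coming from $GO(H,\ddagger)$, so they can only ever produce orders isomorphic to $\Mat(2,\ZZ_2)$; the entire second isomorphism class is invisible to your enumeration. This is not a cosmetic omission: the paired entries in the last block of Table \ref{TwoOrders} (e.g.\ for $(m,n) \approx (-1,1)$, one order with $\tr(\OO\cap H^+) = 2\ZZ_2$ and one with $\tr(\OO\cap H^+) = \ZZ_2$) lie in different isomorphism classes, so your method would return only half of them.

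The paper's route around this is worth noting, since you will need some version of it: after Theorem 8.2 produces the two classes, one checks that $\OO_1 \cap H^+$ contains no element of norm in $2\ZZ_2^\times$, which decides (according to the parity of $m$) whether one or both classes admit a representative containing $j$; and then, rather than conjugating by explicit matrices, one observes that any isomorphism of maximal $\ddagger$-orders fixing $j$ must send $i$ to $\pm i$, so each isomorphism class contributes at most two orders containing $j$, related by conjugation by $j$. Your explicit computation of $M^{-1}jM$ would recover the orders within the class of $\Mat(2,\ZZ_2)$, but you still need a separate argument for the other class and for why the count per class is bounded. The anisotropic bookkeeping you describe ($\Lambda_1$ versus $\Lambda_2$, valuation and square class of $\lambda$) is fine where Theorem 8.1 actually applies, i.e.\ for $\lambda \equiv 1 \bmod 4$ and for $\lambda$ even.
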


	\begin{table}
	\begin{align*}\renewcommand\arraystretch{1.5}
	\begin{array}{l|l} \OO & \left(m,n\right)/\approx \\ \hline
	\ZZ_2 \oplus \ZZ_2 i \oplus \ZZ_2 \frac{i + j}{2} \oplus \ZZ_2 \frac{2 + 2j + ij}{4} & (-6,-6),(-2,6),(2,2),(6,-2) \\
	\ZZ_2 \oplus \ZZ_2 i \oplus \ZZ_2 \frac{i + j}{2} \oplus \ZZ_2 \frac{2 + ij}{4} & (-6,-2),(-2,-6),(2,6),(6,2) \\
	\ZZ_2 \oplus \ZZ_2 i \oplus \ZZ_2 \frac{1 + i + j}{2} \oplus \ZZ_2 \frac{i + ij}{2} & (-6,1),(-2,-3),(2,1),(6,-3) \\
	\ZZ_2 \oplus \ZZ_2 i \oplus \ZZ_2 \frac{1 + j}{2} \oplus \ZZ_2 \frac{i + ij}{2} & (-6,3),(-2,3),(2,3),(6,3) \\
	\ZZ_2 \oplus \ZZ_2 i \oplus \ZZ_2 j \oplus \ZZ_2 \frac{1 + i + j + ij}{2} & (-3,-3),(-3,1),(1,-3),(1,1) \\
	\ZZ_2 \oplus \ZZ_2 i \oplus \ZZ_2 \frac{1 + i + j}{2} \oplus \ZZ_2 \frac{j + ij}{2} & (-3,-2),(-3,6),(1,-6),(1,2) \\
	\ZZ_2 \oplus \ZZ_2 \frac{1 + i}{2} \oplus \ZZ_2 j \oplus \ZZ_2 \frac{j + ij}{2} & (3,-6),(3,-2),(3,2),(3,6) \\ \hdashline
		\begin{cases} \ZZ_2 \oplus \ZZ_2 \frac{1 + i}{2} \oplus \ZZ_2 j \oplus \ZZ_2 \frac{j + ij}{2} \\ \ZZ_2 \oplus \ZZ_2 i \oplus \ZZ_2 \frac{1 + j}{2} \oplus \ZZ_2 \frac{i + ij}{2} \end{cases} & (-1,-1),(-1,3),(3,-1),(3,3) \\
	\begin{cases} \ZZ_2 \oplus \ZZ_2 i \oplus \ZZ_2 \frac{2 + i + j}{4} \oplus \ZZ_2 \frac{i - j + ij}{4} \\ \ZZ_2 \oplus \ZZ_2 i \oplus \ZZ_2 \frac{2 - i + j}{4} \oplus \ZZ_2 \frac{i + j + ij}{4} \end{cases} & (-6,2),(-2,-2),(2,-6),(6,6) \\	\hdashline
	\ZZ_2 \oplus \ZZ_2 i \oplus \ZZ_2 \frac{1 + i + j}{2} \oplus \ZZ_2 \frac{i + ij}{2} & (-6,-3),(-2,1),(2,-3),(6,1) \\
	\ZZ_2 \oplus \ZZ_2 i \oplus \ZZ_2 \frac{1 + j}{2} \oplus \ZZ_2 \frac{i + ij}{2} & (-6,-1),(-2,-1),(2,-1),(6,-1) \\
	\ZZ_2 \oplus \ZZ_2 i \oplus \ZZ_2 \frac{1 + i + j}{2} \oplus \ZZ_2 \frac{j + ij}{2} & (-3,-6),(-3,2),(1,-2),(1,6) \\
	\ZZ_2 \oplus \ZZ_2 \frac{1 + i}{2} \oplus \ZZ_2 j \oplus \ZZ_2 \frac{j + ij}{2} & (-1,-6),(-1,-2),(-1,2),(-1,6) \\ \hdashline
	\begin{cases} \ZZ_2 \oplus \ZZ_2 i \oplus \ZZ_2 \frac{i + j}{4} \oplus \ZZ_2 	\frac{2 + ij}{4} \\ \ZZ_2 \oplus \ZZ_2 i \oplus \ZZ_2 \frac{i - j}{4} \oplus \ZZ_2 	\frac{2 + ij}{4} \end{cases} & (-6,6),(-2,2),(2,-2),(6,-6) \\
	\begin{cases} \ZZ_2 \oplus \ZZ_2 i \oplus \ZZ_2 \frac{i + j}{2} \oplus \ZZ_2 	\frac{1 + ij}{2} \\ \ZZ_2 \oplus \ZZ_2 i \oplus \ZZ_2 \frac{1 + j}{2} \oplus \ZZ_2 	\frac{i + ij}{2} \end{cases} & (-3,1),(-3,3),(1,-1),(1,3) \\
	\begin{cases} \ZZ_2 \oplus \ZZ_2 i \oplus \ZZ_2 \frac{i + j}{2} \oplus \ZZ_2 	\frac{1 + ij}{2} \\ \ZZ_2 \oplus \ZZ_2 \frac{1 + i}{2} \oplus \ZZ_2 j \oplus \ZZ_2 	\frac{j + ij}{2} \end{cases} & (-1,3),(-1,1),(3,-3),(3,1).
	\end{array}
	\end{align*}
	\caption{Orders in a quaternion algebra over $\QQ_2$.}
	\label{TwoOrders}
	\end{table}

\begin{proof}
A simple computation shows that all the given lattices are maximal $\ddagger$-orders, so it remains to show that these are the only possibilities. If $2$ ramifies, then there is a unique maximal $\ddagger$-order. This happens precisely when
	\begin{align*}
	(m,n) \approx &(-6, -6), (-6, -2), (-6, 1), (-6, 3), (-3, -3), (-3, -2), (-3, 1), (-3, 6), (-2, -6), \\
&(-2, -3), (-2, 3), (-2, 6), (1, -6), (1, -3), (1,1), (1, 2), (2, 1), (2, 2), (2, 3), (2, 6), \\
&(3, -6), (3, -2), (3, 2), (3, 6), (6, -3), (6, -2), (6, 2), (6, 3),
	\end{align*}
	
\noindent corresponding to the first part of the table. If $2$ is unramified, then $H$ is isomorphic to $\Mat(2,\QQ_2)$ with involution
	\begin{align*}
	\begin{pmatrix} a & b \\ c & d \end{pmatrix}^\ddagger &= \begin{pmatrix} a & c/\lambda \\ b\lambda & d \end{pmatrix},
	\end{align*}
	
\noindent for some square-free $\lambda \in \ZZ_2$. As before, we define a bilinear form
	\begin{align*}
	b_\ddagger(x,y) &= x^T \begin{pmatrix} \lambda & 0 \\ 0 & 1 \end{pmatrix} y
	\end{align*}
	
\noindent on $\QQ_2^2$. As long as $\lambda \not\equiv -1 \mod 4$, then by \cite[Theorem 8.1]{Sheydvasser2017}, there is a bijection
	\begin{align*}
	\varphi: \left\{\text{maximal lattices in } \QQ_2^2\right\} / \sim &\rightarrow \left\{\text{maximal } \ddagger\text{-orders of } \Mat(2,\QQ_2)\right\} \\
	[\Lambda] &\mapsto \End(\Lambda) \cap \End(\Lambda^\sharp).
	\end{align*}
	
\noindent In this case, $b_\ddagger$ is anisotropic, and therefore there are at most two maximal $\ddagger$-orders, corresponding to maximal lattices
	\begin{align*}
	\Lambda_1 &= \left\{v \in \QQ_2^2\middle|b_\ddagger(v,v) \in \ZZ_2\right\} \\
	\Lambda_2 &= \left\{v \in \QQ_2^2\middle|b_\ddagger(v,v) \in 2\ZZ_2\right\},
	\end{align*}

\noindent and the question is if these two lattices are equivalent or not. If $\lambda \equiv 1 \mod 4$, one can check that they are not. This happens precisely when
	\begin{align*}
	(m,n) \approx & (-1,-1),(-1,3),(3,-1),(3,3),(-6,2),(-2,-2),(2,-6),(6,6),
	\end{align*}
	
\noindent and is represented by the second part of the table. Otherwise, $-\lambda \equiv 2 \mod 4$. In this case, we have
	\begin{align*}
	\Lambda_1 &= \ZZ_2 \begin{pmatrix} 1 \\ 0 \end{pmatrix} \oplus \ZZ \begin{pmatrix} 0 \\ 1 \end{pmatrix} \\
	\Lambda_2 &= \ZZ_2 \begin{pmatrix} 1 \\ 0 \end{pmatrix} \oplus \ZZ \begin{pmatrix} 0 \\ 2 \end{pmatrix},
	\end{align*}
	
\noindent and
	\begin{align*}
	\Lambda_1^\sharp &= \ZZ_2 \begin{pmatrix} 1/2 \\ 0 \end{pmatrix} \oplus \ZZ \begin{pmatrix} 0 \\ 1 \end{pmatrix} \\
	&\sim \Lambda_2,
	\end{align*}
	
\noindent hence there is only one maximal $\ddagger$-order. This happens precisely when
	\begin{align*}
	(m,n) \approx & (-6, -3), (-6, -1), (-3, -6), (-3, 2), (-2, -1), (-2, 1), (-1, -6), \\
&(-1, -2), (-1, 2), (-1, 6), (1, -2), (1, 6), (2, -3), (2, -1), \\
&(6,-1), (6, 1),
	\end{align*}
	
\noindent and is represented by the third part of the table. Finally, we consider the case where $\lambda \equiv -1 \mod 4$, which occurs when
	\begin{align*}
	(m,n) \approx & (-6, 6), (-3, -1), (-3, 3), (-2, 2), (-1, -3), (-1, 1), (1, -1), \\
&(1, 3), (2, -2), (3, -3), (3, 1), (6, -6).
	\end{align*}
	
\noindent In this case, by \cite[Theorem 8.2]{Sheydvasser2017}, there are two isomorphism classes of maximal $\ddagger$-orders, represented by
	\begin{align*}
	\OO_1 &= \Mat(2,\ZZ_2) \\
	\OO_2 &= \begin{pmatrix} 1 & -1 \\ 1 & 1 \end{pmatrix} \Mat(2,\ZZ_2) \begin{pmatrix} 1 & -1 \\ 1 & 1 \end{pmatrix}^{-1}.
	\end{align*}
	
\noindent That these correspond to distinct isomorphism classes can be seen from the fact that
	\begin{align*}
	\tr\left(\OO_1 \cap H^+\right) &= 2 \ZZ_2 \\
	\tr\left(\OO_2 \cap H^+\right) &= \ZZ_2.
	\end{align*}
	
\noindent Note that $\OO_1 \cap H^+$ does not contain any elements with norm in $2\ZZ_2^\times$---from this, we conclude that if $2|m$, only one isomorphism class of maximal $\ddagger$-orders has representatives containing $j$. On the other hand, if $2 \nmid m$, then by inspection both isomorphism classes have maximal $\ddagger$-orders containing $j$.

Next, note that since $i$ is one of two solutions to $z^2 = -\nrm(j)$ and $jz = -zj$ in $H^+$, the other being $-i$. However, isomorphism of maximal $\ddagger$-orders must respect both polynomial relations and whether or not the element is in $H^+$ or not. Therefore, any isomorphism of maximal $\ddagger$-orders that fixes $j$ must either send $i$ to $i$ or $i$ to $-i$. Since every other basis element can be written in terms of $i$ and $j$, we see that in fact there can be at most two maximal $\ddagger$-orders in each isomorphism classes, related by conjugation by $j$. This gives all of the maximal $\ddagger$-orders listed in the table.
\end{proof}

\section{Superpackings and Arithmetic Groups}\label{Superpacking Section}

A fundamental problem in studying integral crystallographic packings is that it is possible to construct pathological examples that do not correspond to orbits of an arithmetic group in the way that the Apollonian circle packing does. One approach to fix this is to introduce the notion of a superpacking. We use the following definition of Kontorovich and Nakamura \cite{KN}.

	\begin{definition}
	Let $\mathcal{P}$ be an integral crystallographic packing. Let $\Gamma$ be the largest reflection group in $Isom(\HH^{n + 2})$ that stabilizes $\mathcal{P}$. The \emph{supergroup} of $\mathcal{P}$ is the smallest subgroup of $Isom(\HH^{n + 2})$ containing $\Gamma$ and all reflections through the spheres of $\mathcal{P}$. The \emph{superpacking} of $\mathcal{P}$ is the image of $\mathcal{P}$ under the action of the supergroup. We say that $\mathcal{P}$ is \emph{super-integral} if the curvatures of spheres in its superpacking are integers.
	\end{definition}

\noindent The term ``superpacking" comes from the work of Graham, Lagarias, Mallows, Wilks, and Yan \cite{GLMWY}, who used it to describe the specific case of the Apollonian circle packing---in that setting, the supergroup is the group generated by the generators of the Apollonian group and their transposes. It is known that for any super-integral crystallographic packing the supergroup is arithmetic \cite{KN}.

For example, Stange showed that the superpacking of the Apollonian circle packing is simply described as the orbit of $\mathbb{R}$ under the action of $SL(2,\ZZ[i])$ \cite{Stange2017}. In analogy, she also considered the orbit of $\mathbb{R}$ under the action of other Bianchi groups $SL(2,\OO)$---here $\OO$ is the ring of integers of some imaginary quadratic field. In \cite{StangeFuture}, Stange then showed that if this orbit is a connected set, then it is the superpacking of an associated super-integral crystallographic packing. This gives a general strategy by which to attempt constructing super-integral crystallographic packings---look at the orbit of a fixed $n$-sphere under the action of some arithmetic subgroup of $\text{Isom}(\RR^{n + 2})$, and if the resulting collection satisfies the conditions for being a superpacking, search for an underlying integral crystallographic packing.

For any $n$, there is an isomorphism $\text{Isom}^0(\RR^{n + 2}) \cong \text{M\"{o}b}(\RR^{n + 1})$. For the specific case of $n = 2$, there is a classical isomorphism due to Vahlen \cite{Vahlen1902} and popularized by Ahlfors \cite{Ahlfors1986}.
	\begin{align*}
    \text{M\"{o}b}(\RR^3) \cong \left\{\begin{pmatrix} a & b \\ c & d \end{pmatrix} \middle| a,b,c,d \in H_\RR, ab^\ddagger, cd^\ddagger \in H_\RR^+, ad^\ddagger - bc^\ddagger = 1\right\}/\{\pm 1\},
    \end{align*}
    
\noindent where
	\begin{align*}
    H_\RR :&= \left(\frac{-1,-1}{\RR}\right), \\
    \left(t + xi + yj + zij\right)^\ddagger :&= t + xi + yj - zij, \\
    H_\RR^+ :&= \left\{\alpha \in H_\RR\middle| \alpha = \alpha^\ddagger\right\}.
    \end{align*}
    
\noindent One checks that inverses in this group are given by
	\begin{align*}
    \begin{pmatrix} a & b \\ c & d \end{pmatrix}^{-1} = \begin{pmatrix} d^\ddagger & -b^\ddagger \\ -c^\ddagger & a^\ddagger \end{pmatrix}.
    \end{align*}
    
\noindent With this motivation, given a subring $R$ of a quaternion algebra $H$ we define the following sets.
	\begin{align*}
	SL^\ddagger(2,R) &= \left\{\begin{pmatrix} a & b \\ c & d \end{pmatrix} \middle| a,b,c,d \in R, \ ab^\ddagger \in H^+, \ cd^\ddagger \in H^+, \ ad^\ddagger - bc^\ddagger = 1 \right\}, \\
	PSL^\ddagger(2,R) &= SL^\ddagger(2,R)/\{\pm id\}.
	\end{align*}
	
\noindent Of course, these sets will be groups if and only if $R = R^\ddagger$, which we assume hereafter. These groups act on $H^+$ by
	\begin{align*}
	\begin{pmatrix} a & b \\ c & d \end{pmatrix}.z = (az + b)(cz + d)^{-1},
	\end{align*}
	
\noindent which we recognize as the usual M\"{o}bius action. Indeed, as noted above, $PSL^\ddagger(2,H_\RR) \cong \text{M\"{o}b}(\RR^3)$. Furthermore, if $H$ is a definite, rational quaternion algebra with some orthogonal involution $\ddagger$, then there is an embedding $PSL^\ddagger(2,R) \hookrightarrow \Isom(\HH^4)$. Letting $R$ be a maximal $\ddagger$-order $\OO$, we consider the action of $SL^\ddagger(2,\OO)$ on some plane in $H^+$. To be precise, we fix some element $j \in \OO \cap H^0 \cap H^+$ with square-free norm, and consider the plane
	\begin{align*}
	S_j &= \left\{h \in H^+ \middle|\tr(jh) = 0\right\} \cup \{\infty\}.
	\end{align*}
	
\noindent Since $j$ is the normal vector to $S_j$, it also makes sense to write $\hat{S}_j$ for the sphere in $H^+ \cup \{\infty\}$ with orientation such that $j$ points toward the interior of $\hat{S}_j$. We have a corresponding sphere packing, in analogy to the Schmidt arrangements studied in \cite{Stange2017}.
	
	\begin{definition}
	Let $H$ be a rational, definite quaternion algebra with maximal $\ddagger$-order $\OO$. Let $j$ be a square-free element of $\OO \cap H^0 \cap H^+$. We define $\mathcal{S}_{\OO,j}$ to be the orbit of $\hat{S}_j$ under the action of $PSL^\ddagger(2,\OO)$. We define $\hat{\mathcal{S}}_{\OO,j}$ to be the union of the oriented spheres in $\mathcal{S}_{\OO,j}$ and the same spheres with reversed orientation.
	\end{definition}
	
\noindent Our goal in the remainder of this paper is to determine the isomorphism classes of $\OO$ for which the sphere collection $\mathcal{S}_{\OO,j}$ is a superpacking of an integral crystallographic packing. Note that the superpacking of a crystallographic sphere packing
	\begin{enumerate}
    	\item has bends that are all integers (possibly after scaling by a constant, if we allow conformal transformations),
       \item has only tangential intersections
       \item is dense in $\RR^3$, and
       \item is connected.
    \end{enumerate}

\noindent Consequently, we shall first try to determine for what choices of $\OO$ the collection $\mathcal{S}_{\OO,j}$ satisfies the above properties.

\section{Inversive Coordinates:}\label{InversiveCoordinates}

Our first task is to show how to obtain the geometric data of $\gamma \hat{S}_j \in \mathcal{S}_{\OO,j}$ given $\gamma \in SL^\ddagger(2,\OO)$. Specifically, we seek to understand the bend, co-bend, and bend-center.

	\begin{definition}
	Given an oriented sphere $\hat{S}$ in $\RR^3$, we define its \emph{bend} $\kappa(S)$ to be $1/\text{radius}$, taken to be positive if $S$ is positively oriented, and negative otherwise. If $S$ is a plane, $\kappa(S) = 0$. The \emph{co-bend} $\kappa'(S)$ is the bend of the image of $S$ under the map $z \mapsto -z^{-1}$. If $S$ is not a plane, the \emph{bend-center} $\xi(S)$ is the product of the bend $\kappa(S)$ and the center of $S$. If $S$ is a plane, $\xi(S)$ is the unique unit normal vector to $S$, pointing in the direction of the interior of $S$.
	\end{definition}

\noindent It isn't hard to see that

	\begin{enumerate}
	\item $\kappa(S)\kappa'(S) = \left|\xi(S)\right|^2 - 1$, and
	\item $\kappa(\gamma \hat{S}_u),\kappa'(\gamma \hat{S}_u),\xi(\gamma \hat{S}_u)$ are continuous functions in $\gamma$.
	\end{enumerate}
	
\noindent We shall show that they are in fact rational functions in the coefficients of $\gamma$. To prove this, we shall need to make use of Hermitian forms over quaternion algebras.

We recall that a Hermitian form over a quaternion algebra $H$ is a bi-additive map
	\begin{align*}
	T: H^n \times H^n \rightarrow H
	\end{align*}
	
\noindent satisfying

	\begin{enumerate}
		\item $T(x,yh) = T(x,y)h$ for all $x,y \in H^n$ and $h \in H$, and
		\item $T(x,y) = \overline{T(y,x)}$.
	\end{enumerate}
	
\noindent Consider the set $H_\RR^2 \times H_\RR^2$. Interpreting the components as columns of a matrix in $\Mat(2,H_\RR)$, we can define the right action of $SL\left(2,\CC)\right)$ by
	\begin{align*}
	\left(x,y\right) \begin{pmatrix} a & b \\ c & d \end{pmatrix} &= \left(xa + yc, xb + yd\right).
	\end{align*}
	
\begin{lemma}\label{InvariantsOfHermitianForms}
The $j$-th and $ij$-th components of any Hermitian form $T$ on $H_\RR^2$ are invariant under the right action of $SL\left(2,\CC\right)$.
\end{lemma}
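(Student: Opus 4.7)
The plan is to expand $T$ directly on the transformed pair $(xa+yc,\, xb+yd)$ using bi-additivity and the Hermitian axioms, and then isolate the $j$- and $ij$-components. The algebraic facts I will use are: (i) every element of $H_\RR$ decomposes uniquely as $\alpha + \beta j$ with $\alpha,\beta \in \CC := \RR + \RR i$, so the $j,ij$-part is captured precisely by the coefficient $\beta j$; (ii) $j c = \overline{c}\,j$ for every $c \in \CC$, since $ji = -ij$ and $\overline{\cdot}$ restricted to $\CC$ is ordinary complex conjugation; and (iii) the Hermitian axioms force $T(x,x), T(y,y) \in \RR$ (from $T(x,x) = \overline{T(x,x)}$) and $T(y,x) = \overline{T(x,y)}$.

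Expanding gives
\begin{align*}
T(xa+yc,\, xb+yd) = \overline{a}\,T(x,x)\,b + \overline{a}\,T(x,y)\,d + \overline{c}\,T(y,x)\,b + \overline{c}\,T(y,y)\,d.
\end{align*}
Because $T(x,x)$ and $T(y,y)$ are real and hence central, the first and last summands equal $T(x,x)\,\overline{a}b$ and $T(y,y)\,\overline{c}d$ respectively, both of which lie in $\CC$ and therefore contribute nothing to the $j$- or $ij$-components. Only the cross terms can carry $j$-content.

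Writing $T(x,y) = \alpha + \beta j$ with $\alpha,\beta \in \CC$, so that $T(y,x) = \overline{\alpha} - \beta j$, and using $jd = \overline{d}\,j$ and $jb = \overline{b}\,j$ together with the commutativity of $\CC$, the cross terms simplify to
\begin{align*}
\overline{a}\,T(x,y)\,d + \overline{c}\,T(y,x)\,b = \bigl(\overline{a}\alpha d + \overline{c}\,\overline{\alpha}\, b\bigr) + \bigl(\overline{a}\,\overline{d} - \overline{c}\,\overline{b}\bigr)\,\beta\, j.
\end{align*}
The coefficient of $\beta j$ is $\overline{ad-bc}$, and since $g \in SL(2,\CC)$ satisfies $ad - bc = 1$, this collapses to $1$. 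Thus the $j,ij$-component of $T(xa+yc,\, xb+yd)$ is exactly $\beta j$, which matches that of $T(x,y)$.

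The argument is essentially mechanical once one commits to the decomposition $H_\RR = \CC \oplus \CC j$. The only potential pitfall is keeping track of signs and orderings when sliding $j$ past elements of $\CC$, but the single rule $jc = \overline{c}\,j$ handles this uniformly. The conceptual content is minimal: the diagonal terms $T(x,x), T(y,y)$ are real and cannot produce any $j$-content, while the off-diagonal terms conspire to expose the complex determinant of $g$, which is forced to $1$ by the $SL$-condition.
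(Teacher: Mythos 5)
Your proof is correct and follows essentially the same route as the paper's: expand $T(xa+yc,xb+yd)$ via the Hermitian axioms, discard the diagonal terms $T(x,x)\overline{a}b$ and $T(y,y)\overline{c}d$ as lying in $\CC$, and use the commutation rule for $j$ past $\CC$ to collapse the cross terms onto $ad-bc=1$. The only difference is cosmetic: you package the $j,ij$-part as $\beta j$ with $\beta\in\CC$ (so the determinant appears conjugated), whereas the paper factors $j$ to the left as $j(\alpha_2-\alpha_3 i)$ and sees $ad-bc$ directly.
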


\begin{proof}
Let $x, y \in H$, and
	\begin{align*}
	\gamma = \begin{pmatrix} a & b \\ c & d \end{pmatrix} \in SL\left(2,\CC\right).
	\end{align*}

\noindent We have that
\begin{align*}
T(xa + yc,xb + yd) &= T(xa + yc,x)b + T(xa + yc,y)d \\
&= \overline{a}T(x,x)b + \overline{c}T(y)(x)b + \overline{a}T(x,y)d + \overline{c}T(y,y)d \\
&= \underbrace{\left(T(x,x)\overline{a}b + T(y,y) \overline{c}d\right)}_{\in \CC} + \overline{c}T(y,x)b + \overline{a}T(x,y)d.
\end{align*}
	
\noindent As we are interested solely in the $j$-th and $ij$-th component, we can freely subtract by any element of $\CC$---therefore, it shall suffice to consider the part outside of the braces. For convenience, we write:
\begin{align*}
T(x,y) &= \alpha_0 + \alpha_1 i_\RR + \alpha_2 j_\RR + \alpha_3 i_\RR j_\RR,
\end{align*}
	
\noindent so that:
\begin{align*}
\overline{c}T(y,x)b &+ \overline{a}T(x,y)d \\
&= \overline{c} \left(\alpha_0 - \alpha_1 i_\RR - j_\RR(\alpha_2 - \alpha_3 i_\RR)\right) b + \overline{a} \left(\alpha_0 + \alpha_1 i_\RR + j_\RR(\alpha_2 - \alpha_3 i_\RR)\right)d \\
&= \underbrace{\left(\overline{c}(\alpha_0 - \alpha_1 i_\RR) b + \overline{a} (\alpha_0 + \alpha_1 i_\RR)d\right)}_{\in \CC} - j_\RR c(\alpha_2 - \alpha_3 i_\RR)b + j_\RR a(\alpha_2 - \alpha_3 i_\RR)d.
\end{align*}
	
\noindent Once again, we remove the piece in $\CC$, to see that:
\begin{align*}
- j c(\alpha_2 - \alpha_3 i_\RR) b + j_\RR a(\alpha_2 - \alpha_3 i_\RR) d &= j_\RR (ad - bc)(\alpha_2 - \alpha_3 i_\RR) \\ &= j_\RR(\alpha_2 - \alpha_3 i_\RR) \\ &= \alpha_2 j_\RR + \alpha_3 i_\RR j_\RR,
\end{align*}
	
which proves the lemma.
\end{proof}

Consider the real quaternion algebra $H_\RR$ with involution
	\begin{align*}
	(w + xi_\RR + yj_\RR + zi_\RR j_\RR)^\ddagger = w + xi_\RR + yj_\RR - zi_\RR j_\RR.
	\end{align*}
	
\noindent Let $\pi_1, \pi_{i_\RR}, \pi_{j_\RR}, \pi_{i_\RR j_\RR}$ denote the projection maps taking an element of $H_\RR$ to its real, $i$-th, $j$-th, and $ij$-th component respectively.

\begin{lemma}\label{InversiveRational}
Let $S = \gamma \hat{S}_{j_\RR}$ where $\gamma = \left(\begin{smallmatrix} a & b \\ c & d \end{smallmatrix}\right) \in SL^\ddagger(2,H_\RR)$. Then
\begin{align*}
\kappa(S) &= 2\pi_{j_\RR}\left(\overline{c} d\right) \\
\kappa'(S) &= 2\pi_{j_\RR}\left(\overline{a} b\right) \\
\xi(S) &= a j_\RR \overline{d} - b j_\RR \overline{c}
\end{align*}
\end{lemma}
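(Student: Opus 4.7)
The plan is to use the right-coset symmetry of the problem to narrow the work, and then compute the defining equation of the image sphere directly.

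I would first observe that the image $\gamma\hat{S}_{j_\RR}$ depends only on the right coset of $\gamma$ modulo $\mathrm{Stab}_{SL^\ddagger(2,H_\RR)}(\hat{S}_{j_\RR})$, and this stabilizer is the natural image of $SL(2,\CC)$ inside $SL^\ddagger(2,H_\RR)$ coming from $\CC \subset H_\RR$ (acting on $\hat{S}_{j_\RR} = \CC \cup \{\infty\}$ as the standard complex Möbius group). By Lemma~\ref{InvariantsOfHermitianForms}, applied to the Hermitian forms on $H_\RR^2$ with matrices $\left(\begin{smallmatrix}0&0\\0&1\end{smallmatrix}\right)$ and $\left(\begin{smallmatrix}1&0\\0&0\end{smallmatrix}\right)$ evaluated on the columns of $\gamma$, the scalars $\pi_{j_\RR}(\overline{c}d)$ and $\pi_{j_\RR}(\overline{a}b)$ are invariant under right multiplication by $SL(2,\CC)$. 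A short computation using $j_\RR\overline{\alpha} = \alpha j_\RR$ for $\alpha \in \CC$ shows $aj_\RR\overline{d} - bj_\RR\overline{c}$ is also invariant. Hence all three right-hand sides descend to functions of $\gamma\hat{S}_{j_\RR}$ alone. Moreover, taking $\sigma = \left(\begin{smallmatrix}0&-1\\1&0\end{smallmatrix}\right) \in SL^\ddagger(2,H_\RR)$, which implements the inversion $z \mapsto -z^{-1}$ defining the co-bend, we have $\sigma\gamma = \left(\begin{smallmatrix}-c&-d\\a&b\end{smallmatrix}\right)$ and $\kappa'(\gamma\hat{S}_{j_\RR}) = \kappa(\sigma\gamma\hat{S}_{j_\RR})$, so the co-bend formula is a direct consequence of the bend formula.

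To establish the bend and bend-center formulas, I would compute directly. Decompose $H_\RR = \CC \oplus \CC j_\RR$, writing every quaternion uniquely as $q_0 + q_1 j_\RR$ with $q_0, q_1 \in \CC$; the essential rules are $j_\RR q = \overline{q}j_\RR$ for $q \in \CC$, $(q_0 + q_1 j_\RR)^\ddagger = q_0 + \overline{q_1}j_\RR$, and $(p_0 + p_1 j_\RR)^{-1} = (\overline{p_0} - p_1 j_\RR)/(|p_0|^2 + |p_1|^2)$. Writing $w = w_0 + w_1 j_\RR \in H_\RR^+$ (so $w_0 \in \CC$ and $w_1 \in \RR$), the sphere $\gamma\hat{S}_{j_\RR}$ is the zero locus in $H_\RR^+$ of the $\CC j_\RR$-coefficient of $\gamma^{-1}w = (d^\ddagger w - b^\ddagger)(a^\ddagger - c^\ddagger w)^{-1}$. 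Expanding this in the above coordinates produces a real polynomial equation in $w_0, \overline{w_0}, w_1$, which can be put into the normalized inversive form $\kappa\nrm(w) - \tr(\xi\overline{w}) + \kappa' = 0$; candidate closed-form expressions for $\kappa$ and $\xi$ are then read off as the coefficient of $\nrm(w) = |w_0|^2 + w_1^2$ and of the linear monomials in $w_0, \overline{w_0}, w_1$, respectively.

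The main obstacle is the bookkeeping required to reduce the raw coefficients to the compact forms $2\pi_{j_\RR}(\overline{c}d)$ and $aj_\RR\overline{d} - bj_\RR\overline{c}$; the expressions emerging from the expansion look \emph{a priori} different, and matching them relies on the algebraic identities $ab^\ddagger = ba^\ddagger$, $cd^\ddagger = dc^\ddagger$, and $ad^\ddagger - bc^\ddagger = 1$ that come from $\gamma \in SL^\ddagger(2,H_\RR)$. The last relation, being an equation in $\RR$ rather than merely in $H_\RR^+$, supplies four real constraints on the components of $a,b,c,d$ in the basis $\{1, i_\RR, j_\RR, i_\RR j_\RR\}$, and these are exactly what is needed to identify the various complex-valued combinations arising as coefficients with their real-valued counterparts in the stated formulas. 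As a sanity check, at $\gamma = I$ all three formulas collapse to $\kappa = \kappa' = 0$ and $\xi = j_\RR$, which are the inversive coordinates of the plane $\hat{S}_{j_\RR}$.
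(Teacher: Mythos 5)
Your overall architecture matches the paper's in its key structural ingredient: both arguments rest on the fact that $\pi_{j_\RR}(\overline{c}d)$, $\pi_{j_\RR}(\overline{a}b)$, and $aj_\RR\overline{d}-bj_\RR\overline{c}$ are invariant under right multiplication by $SL(2,\CC)$ (via Lemma~\ref{InvariantsOfHermitianForms}), and both reduce the co-bend to the bend via the inversion $z\mapsto -z^{-1}$. Where you diverge is in how the formulas are actually verified. The paper exhibits, for every positively oriented non-planar sphere, an explicit representative $\gamma_0$ built from a translation by $z\in\CC$, a dilation by $\lambda\in\RR$, and a fixed Cayley-type matrix sending $\hat{S}_{j_\RR}$ to the unit sphere; it checks the formulas by hand on this two-parameter slice, and the $SL(2,\CC)$-invariance then carries the result to arbitrary $\gamma$. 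You instead propose expanding the defining equation of $\gamma\hat{S}_{j_\RR}$ for general $\gamma$ and reading off normalized coefficients, which makes the invariance step logically redundant (useful only as a consistency check) but front-loads a substantially heavier computation that you leave unexecuted. The paper's slice-plus-invariance route is what lets it get away with a one-line verification.

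There is one substantive gap in your version: describing $\gamma\hat{S}_{j_\RR}$ as a zero locus determines only the \emph{unoriented} sphere, so matching coefficients of $\kappa\,\nrm(w)-\tr(\xi\overline{w})+\kappa'=0$ pins down $(\kappa,\kappa',\xi)$ at best up to a simultaneous sign, even after using $ad^\ddagger-bc^\ddagger=1$ to fix the scale. Your sanity check at $\gamma=I$ does not resolve this, since the identity yields a plane with $\kappa=0$ and the sign of a plane's bend is vacuous. The paper addresses exactly this point by reducing to positively oriented spheres and separately computing the effect of the orientation-reversing element $\left(\begin{smallmatrix}0&j_\RR\\ j_\RR&0\end{smallmatrix}\right)$ on all three quantities. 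To close the gap in your approach you would need either that explicit orientation-reversal computation, or a connectedness-and-continuity argument on $SL^\ddagger(2,H_\RR)$ anchored at a single $\gamma$ whose image is a genuine (non-planar) sphere.
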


\begin{proof}
First, note that it suffices to consider only oriented spheres $S$ that are not planes, since $\kappa, \kappa',\xi$ are continuous. Secondly, without loss of generality we can assume that $S$ is positively oriented, since it is easy to see that
	\begin{align*}
	S_1 &= \begin{pmatrix} a & b \\ c & d \end{pmatrix} \hat{S}_{j_\RR} \\
	S_2 &= \begin{pmatrix} a & b \\ c & d \end{pmatrix} \begin{pmatrix} 0 & j_\RR \\ j_\RR & 0 \end{pmatrix} \hat{S}_{j_\RR} \\
	&= \begin{pmatrix} b j_\RR & a j_\RR \\ d j_\RR & c j_\RR \end{pmatrix}
	\end{align*}
	
\noindent are the same sphere, but with opposite orientations, and if
	\begin{align*}
	\kappa(S_1) &= 2\pi_{j_\RR}\left(\overline{c} d\right) \\
	\kappa'(S_1) &= 2\pi_{j_\RR}\left(\overline{a} b\right) \\
	\xi(S_1) &= a j_\RR \overline{d} - b j_\RR \overline{c},
	\end{align*}
	
\noindent then
	\begin{align*}
	\kappa(S_2) &= - \kappa(S_1) = 2\pi_{j_\RR}\left(\overline{d j_\RR} c j_\RR\right) \\
	\kappa'(S_2) &= -\kappa'(S_1) = 2\pi_{j_\RR}\left(\overline{b j_\RR} (a j_\RR)\right) \\
	\xi(S_2) &= -\xi(S_1) = b j_\RR j_\RR \overline{c j_\RR} - a j_\RR j_\RR\overline{d j_\RR}.
	\end{align*}
	
\noindent Finally, notice that if $\varphi(\alpha) = \alpha^{-1}$, then $\kappa'(S) = \kappa\left(\varphi(S)\right)$, which means that it suffices to prove the lemma for $\kappa(S), \xi(S)$ where $S$ is a positively oriented sphere that is not a plane. In this case, note that we can always choose some $z \in \CC, \lambda \in \RR$ such that
	\begin{align*}
	S &= \begin{pmatrix} 1 & z \\ & 1 \end{pmatrix} \begin{pmatrix} \lambda & \\ & \lambda^{-1} \end{pmatrix} \begin{pmatrix} \frac{1}{2} + \frac{j_\RR}{2} & \frac{1}{2} - \frac{j_\RR}{2} \\ -\frac{1}{2} - \frac{j_\RR}{2} & \frac{1}{2} - \frac{j_\RR}{2} \end{pmatrix} \hat{S}_{j_\RR} \\
	&= \begin{pmatrix} \left(\lambda - \frac{z}{\lambda}\right)\left(\frac{1}{2} + \frac{j_\RR}{2}\right) & \left(\lambda + \frac{z}{\lambda}\right)\left(\frac{1}{2} - \frac{j_\RR}{2}\right) \\ -\frac{1}{\lambda}\left(\frac{1}{2} + \frac{j_\RR}{2}\right) & \frac{1}{\lambda}\left(\frac{1}{2} - \frac{j_\RR}{2}\right) \end{pmatrix}\hat{S}_{j_\RR}.
	\end{align*}
	
\noindent This is because the first matrix sends $\hat{S}_{j_\RR}$ to the unit sphere, the second scales it by a factor of $\lambda^2$, and the last shifts the center to $z$. It is clear that
	\begin{align*}
	\kappa(S) &= \frac{1}{\lambda^2} \\
	\xi(S) &= \frac{z}{\lambda^2}.
	\end{align*}
	
\noindent On the other hand, if we take
	\begin{align*}
	\gamma &= \begin{pmatrix} a & b \\ c & d \end{pmatrix} = \begin{pmatrix} \left(\lambda - \frac{z}{\lambda}\right)\left(\frac{1}{2} + \frac{j_\RR}{2}\right) & \left(\lambda + \frac{z}{\lambda}\right)\left(\frac{1}{2} - \frac{j_\RR}{2}\right) \\ -\frac{1}{\lambda}\left(\frac{1}{2} + \frac{j_\RR}{2}\right) & \frac{1}{\lambda}\left(\frac{1}{2} - \frac{j_\RR}{2}\right) \end{pmatrix},
	\end{align*}
	
\noindent then
	\begin{align*}
	2\pi_{j_\RR}\left(\overline{c}d\right) &= -2\pi_{j_\RR}\left(\frac{1}{\lambda^2}\left(\frac{1}{2} - \frac{j_\RR}{2}\right)^2\right) \\
	&= -\frac{2}{\lambda^2}\pi_{j_\RR}\left(-\frac{j_\RR}{2}\right) \\
	&= \frac{1}{\lambda^2} = \kappa(\gamma\hat{S}_{j_\RR}), \\
	a j_\RR \overline{d} - b j_\RR \overline{c} &= \left(\lambda - \frac{z}{\lambda}\right)\frac{1 + j}{2} \frac{j}{\lambda} \frac{1 + j}{2} + \left(\lambda + \frac{z}{\lambda}\right)\frac{1 - j}{2} \frac{j}{\lambda} \frac{1 - j}{2} \\
	&= \frac{z}{\lambda^2} = \xi(\gamma\hat{S}_{j_\RR})
	\end{align*}
		
\noindent as desired. Therefore, to prove that
	\begin{align*}
	f_1(\gamma) :&= 2\pi_{j_\RR}\left(\overline{c}d\right) = \kappa(\gamma\hat{S}_{j_\RR}) \\
	f_2(\gamma) :&= a j_\RR \overline{d} - b j_\RR \overline{c} = \xi(\gamma\hat{S}_{j_\RR}),
	\end{align*}
	
\noindent regardless of the choice of $\gamma$, it shall suffice to show that
	\begin{align*}
	f_1\left(\gamma SL(2,\CC)\right) &= f_1(\gamma) \\
	f_2\left(\gamma SL(2,\CC)\right) &= f_2(\gamma).
	\end{align*}
	
\noindent We do this by defining Hermitian forms
	\begin{align*}
	T_1\left(\begin{pmatrix} a \\ c \end{pmatrix},\begin{pmatrix} b \\ d \end{pmatrix}\right) &= \overline{\begin{pmatrix} a & c \end{pmatrix}} \begin{pmatrix} 0 & 0 \\ 0 & 1 \end{pmatrix} \begin{pmatrix} b \\ d \end{pmatrix} \\
	&= \overline{c} d \\
	T_2\left(\begin{pmatrix} a \\ c \end{pmatrix},\begin{pmatrix} b \\ d \end{pmatrix}\right) &= \overline{\begin{pmatrix} a & c \end{pmatrix}} \begin{pmatrix} 0 & 1 \\ 1 & 0 \end{pmatrix} \begin{pmatrix} b \\ d \end{pmatrix} \\
	&= \overline{a}d + \overline{c}b \\
	T_3\left(\begin{pmatrix} a \\ c \end{pmatrix},\begin{pmatrix} b \\ d \end{pmatrix}\right) &= \overline{\begin{pmatrix} a & c \end{pmatrix}} \begin{pmatrix} 0 & i_\RR \\ -i_\RR & 0 \end{pmatrix} \begin{pmatrix} b \\ d \end{pmatrix} \\
	&= \overline{a}i_\RR d - \overline{c}i_\RR b \\
	T_4\left(\begin{pmatrix} a \\ c \end{pmatrix},\begin{pmatrix} b \\ d \end{pmatrix}\right) &= \overline{\begin{pmatrix} a & c \end{pmatrix}} \begin{pmatrix} 0 & j_\RR \\ -j_\RR & 0 \end{pmatrix} \begin{pmatrix} b \\ d \end{pmatrix} \\
	&= \overline{a}j_\RR d - \overline{c}j_\RR b \\
	T_5\left(\begin{pmatrix} a \\ c \end{pmatrix},\begin{pmatrix} b \\ d \end{pmatrix}\right) &= \overline{\begin{pmatrix} a & c \end{pmatrix}} \begin{pmatrix} 0 & i_\RR j_\RR \\ -i_\RR j_\RR & 0 \end{pmatrix} \begin{pmatrix} b \\ d \end{pmatrix} \\
	&= \overline{a}i_\RR j_\RR d - \overline{c}i_\RR j_\RR b.
	\end{align*}
		
\noindent One checks by inspection that
	\begin{align*}
	\pi_{j_\RR}(\overline{a}d + \overline{c}b) &+ \pi_{j_\RR}(\overline{a}i_\RR d + \overline{c}i_\RR b)i_\RR \\
	&+ \pi_{j_\RR}(\overline{a}j_\RR d + \overline{c}j_\RR b)j_\RR + \pi_{j_\RR}(\overline{a}i_\RR j_\RR d + \overline{c}i_\RR j_\RR b)i_\RR j_\RR\\
	&= a j_\RR \overline{d} - b j_\RR \overline{c}.
	\end{align*}
	
\noindent Since we have expressed both $\kappa$ and $\xi$ in terms of Hermitian forms, the formulas given are invariant under right multiplication by $SL(2,\CC)$, and we are done.
\end{proof}

Given a definite, rational quaternion algebra $H$ with orthogonal involution $\ddagger$ and a non-zero element $j \in H_0 \cap H^+$, there exists another $i \in H_0 \cap H^+$ such that $ij = -ji$, and therefore there is an embedding
	\begin{align*}
	(H, \ddagger, j) &\hookrightarrow (H_\RR, \ddagger, j_\RR) \\
	a + bi + cj + dij & \mapsto a + b \sqrt{\nrm(i)} i_\RR + c \sqrt{\nrm(j)} j_\RR + d \sqrt{\nrm(ij)} i_\RR j_\RR.
	\end{align*}

\noindent With this in mind, we define $\pi_j: H \rightarrow \QQ$ to be the projection map taking $x$ to the $j$-th coordinate of $x$. We also define the reduced bend, reduced co-bend, and reduced bend-center to be maps
	\begin{align*}
	\tilde{\kappa}_j\left(\begin{pmatrix} a & b \\ c & d \end{pmatrix}\right) &= 2\re(\overline{c} d j) = 2 \nrm(j)\pi_j\left(\overline{c} d\right) \\
	\tilde{\kappa'}_j\left(\begin{pmatrix} a & b \\ c & d \end{pmatrix}\right) &= 2\re(\overline{a} b j) = 2 \nrm(j) \pi_j\left(\overline{a} b\right) \\
	\tilde{\xi}_j\left(\begin{pmatrix} a & b \\ c & d \end{pmatrix}\right) &= a j \overline{d} - b j \overline{c}.
	\end{align*}
	
\noindent Using the above embedding to produce an embedding $SL^\ddagger(2,\OO) \hookrightarrow SL^\ddagger(2,H_\RR)$, we have
	\begin{align*}
	\kappa(\gamma \hat{S}_j) &= \tilde{\kappa}_j(\gamma)/\sqrt{\nrm(j)} \\
	\kappa'(\gamma \hat{S}_j) &= \tilde{\kappa'}_j(\gamma)/\sqrt{\nrm(j)} \\
	\xi(\gamma \hat{S}_j) &= \tilde{\xi}_j(\gamma)/\sqrt{\nrm(j)}.
	\end{align*}
	
\noindent We define a quadratic form on $\QQ^2 \times H^+$ by
	\begin{align*}
	q_{H,j}\left(\kappa, \kappa', \xi\right) = -\kappa\kappa' + \nrm(\xi).
	\end{align*}
	
\noindent This induces a symmetric bilinear form
	\begin{align*}
	b_{H,j}(x_1, x_2) = \frac{1}{2}\left(q_{H,j}(x_1 + x_2) - q_{H,j}(x_1) - q_{H,j}(x_2)\right).
	\end{align*}

	\begin{lemma}\label{DefinitionOfInversiveCoordinates}
	Let $H$ be a definite, rational quaternion algebra with orthogonal involution $\ddagger$ and maximal $\ddagger$-order $\OO$. Let $j \in \OO \cap H^0 \cap H^+$ have square-free norm. We have a map
	\begin{align*}
	\text{inv}: SL^\ddagger(2,H) &\rightarrow \QQ^2 \times H^+ \\
	\gamma & \mapsto \left(\tilde{\kappa}_j(\gamma), \tilde{\kappa'}_j(\gamma),\tilde{\xi}_j(\gamma)\right).
	\end{align*}
	
	\noindent The image of $\text{inv}$ is contained inside the hypercone
	\begin{align*}
	q_{H,j}(x) &= \nrm(j),
	\end{align*}
	
	\noindent and the image of $SL^\ddagger(2,\OO)$ is contained inside $\ZZ^2 \times \left(\OO \cap H^+\right)$. Furthermore, given $\gamma_1, \gamma_2 \in SL^\ddagger(2,\OO)$, we consider $x_i = \text{inv}(\gamma_i)$ and $S_i = \gamma_i \hat{S}_j$. If $S_1$ intersects $S_2$ but $S_1 \neq S_2$, and $\theta$ is the angle of intersection between them, then
	
	\begin{align*}
	\nrm(j)\cos(\theta) = b_{H,j}(x_1, x_2).
	\end{align*}
	\end{lemma}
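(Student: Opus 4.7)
The plan is to dispatch the three assertions in turn, using Lemma~\ref{InversiveRational} as the main bridge: the embedding $(H,\ddagger,j)\hookrightarrow(H_\RR,\ddagger,j_\RR)$ introduced just above absorbs a factor of $\sqrt{\nrm(j)}$ in each coordinate, so that $\kappa(\gamma\hat{S}_j)=\tilde{\kappa}_j(\gamma)/\sqrt{\nrm(j)}$, $\kappa'(\gamma\hat{S}_j)=\tilde{\kappa'}_j(\gamma)/\sqrt{\nrm(j)}$, and $\xi(\gamma\hat{S}_j)=\tilde{\xi}_j(\gamma)/\sqrt{\nrm(j)}$.

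For the hypercone containment, I would substitute these identities into the relation $\kappa(S)\kappa'(S)=|\xi(S)|^2-1$ noted at the start of this section (with $S=\gamma\hat{S}_j$) and clear the denominator $\nrm(j)$ to obtain $-\tilde{\kappa}_j(\gamma)\tilde{\kappa'}_j(\gamma)+\nrm(\tilde{\xi}_j(\gamma))=\nrm(j)$, which is precisely $q_{H,j}(\text{inv}(\gamma))=\nrm(j)$.

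For the integrality when $\gamma\in SL^\ddagger(2,\OO)$, I would first rewrite $\tilde{\kappa}_j(\gamma)=\tr(\overline{c}dj)$ and $\tilde{\kappa'}_j(\gamma)=\tr(\overline{a}bj)$; since $a,b,c,d,j\in\OO$ and $\OO$ is closed under standard conjugation and multiplication, both arguments of $\tr$ lie in $\OO$, whose elements have reduced trace in $\ZZ$. Clearly $\tilde{\xi}_j(\gamma)=aj\overline{d}-bj\overline{c}\in\OO$ by the same closure, so the only remaining point is that it is $\ddagger$-fixed. The cleanest verification is to observe that the embedding $H\hookrightarrow H_\RR$ identifies $H^+$ with the preimage of $H_\RR^+$, so it suffices to check that the image of $\tilde{\xi}_j(\gamma)$ in $H_\RR$ lies in $H_\RR^+\cong\RR^3$; and indeed that image is $\sqrt{\nrm(j)}\,\xi(\gamma\hat{S}_j)$, which is tautologically a (scalar multiple of a) bend-center of a Euclidean sphere. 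A purely algebraic check is also available by applying $\ddagger$ to $aj\overline{d}-bj\overline{c}$ and reorganizing using $ab^\ddagger=ba^\ddagger$, $cd^\ddagger=dc^\ddagger$, and $ad^\ddagger-bc^\ddagger=1$.

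For the angle formula, bilinearity of $b_{H,j}$ together with the homogeneity $\text{inv}(\gamma_i)=\sqrt{\nrm(j)}\,(\kappa(S_i),\kappa'(S_i),\xi(S_i))$ gives $b_{H,j}(x_1,x_2)=\nrm(j)\cdot b_{H,j}\bigl((\kappa_1,\kappa'_1,\xi_1),(\kappa_2,\kappa'_2,\xi_2)\bigr)$, so it suffices to prove
\begin{align*}
\cos(\theta) = -\tfrac{1}{2}\bigl(\kappa_1\kappa'_2+\kappa'_1\kappa_2\bigr)+\xi_1\cdot\xi_2,
\end{align*}
where $\xi_1\cdot\xi_2$ denotes the bilinear form associated to $\nrm$ on $H^+$ (which becomes the Euclidean dot product after embedding into $H_\RR^+$). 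This is the classical inversive-product formula for two oriented spheres in $\RR^3$, and can be verified directly: substitute $\kappa_i=1/r_i$, $\xi_i=c_i/r_i$, and $\kappa'_i=(|c_i|^2-r_i^2)/r_i$ into the right-hand side, then apply the law of cosines to the triangle with vertices $c_1,c_2$ and any intersection point. I expect the main obstacle to be pinning down the orientation convention for $\theta$ so that the sign comes out as stated; once it is matched to the conventions for oriented spheres $\hat{S}$ used throughout the paper, the identity falls out.
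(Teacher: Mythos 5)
Your argument is correct and takes the same route as the paper: the paper's entire proof consists of embedding $H$ into $H_\RR$ and observing that $\inv$ is (up to the factor $\sqrt{\nrm(j)}$) the classical inversive coordinate system on oriented spheres in $\RR^3$, deferring the standard properties to the reference [Kocik]. You have simply carried out the verifications that citation covers --- the identity $\kappa(S)\kappa'(S)=|\xi(S)|^2-1$ for the hypercone, integrality of reduced traces of elements of $\OO$ for the lattice containment, and the law-of-cosines computation for the inversive product --- so the only loose ends (the plane case and the orientation sign, which you flag) are handled by the same continuity argument the paper uses in Lemma \ref{InversiveRational}.
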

	
	\begin{proof}
	After embedding $H$ inside $H_\RR$ and doing a change of coordinates, we see that $\text{inv}$ really just describes inversive coordinates on the space of spheres in $\RR^3$, which have the desired properties---see, for example, the proof in \cite{Kocik}.
	\end{proof}
	
We can view the action of $SL^\ddagger(2,H)$ on oriented spheres in $H^+$ in another, equivalent way.

\begin{lemma}\label{AlternativeAction}
Let $H$ be a definite, rational quaternion algebra with orthogonal involution $\ddagger$. The group $SL^\ddagger(2,H)$ acts on the set
	\begin{align*}
	\mathcal{M}_{H,j} = \left\{M = \begin{pmatrix} a & b \\ c & d \end{pmatrix} \in \Mat(2,H) \middle| \overline{M}^T = M, ab^\ddagger, cd^\ddagger \in H^+\right\}
	\end{align*}
	
\noindent by
	\begin{align*}
	\gamma.M = \gamma M \overline{\gamma}^T.
	\end{align*}
	
\noindent Using the identification
	\begin{align*}
	\left(\kappa, \kappa', \xi\right) \leftrightarrow \begin{pmatrix} \kappa' & \xi \\ \overline{\xi} & \kappa \end{pmatrix},
	\end{align*}
	
\noindent this gives an action on the set of oriented spheres in $H^+$; this action is equivalent to the action already described.
\end{lemma}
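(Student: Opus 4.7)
The plan is to verify three claims in turn: that the map $\gamma.M = \gamma M \overline{\gamma}^T$ sends $\mathcal{M}_{H,j}$ to itself, that it is a group action, and that it agrees with the M\"{o}bius action on oriented spheres under the stated identification. Writing $N^* := \overline{N}^T$ for the conjugate-transpose of a matrix $N$ over $H$, the identities $(AB)^* = B^* A^*$ and $(N^*)^* = N$ immediately yield both the associativity $(\gamma_1\gamma_2).M = \gamma_1.(\gamma_2.M)$ and the preservation of the Hermitian condition, since $(\gamma M \gamma^*)^* = \gamma M^* \gamma^* = \gamma M \gamma^*$.

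For preservation of the side conditions $ab^\ddagger, cd^\ddagger \in H^+$, note first that on a Hermitian matrix $M = \left(\begin{smallmatrix} \kappa' & \xi \\ \overline{\xi} & \kappa \end{smallmatrix}\right)$ with $\kappa, \kappa' \in F$ and $c = \overline{\xi}$, both conditions reduce to the single requirement $\xi \in H^+$, using that the involutions $\overline{\cdot}$ and $\ddagger$ of $H$ commute (as can be checked on the basis $1,i,j,ij$) and so $H^+$ is stable under $\overline{\cdot}$. One then expands the entries of $\gamma M \gamma^*$ in terms of the entries of $\gamma = \left(\begin{smallmatrix} \alpha & \beta \\ \gamma_0 & \delta \end{smallmatrix}\right)$ and uses the defining relations $\alpha\beta^\ddagger, \gamma_0\delta^\ddagger \in H^+$ and $\alpha\delta^\ddagger - \beta\gamma_0^\ddagger = 1$ of $SL^\ddagger(2,H)$ to verify that the new $(1,2)$ entry $Y$ satisfies $Y^\ddagger = Y$. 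This is the main obstacle in the proof: it requires careful bookkeeping of how $\overline{\cdot}$ and $\ddagger$ interact on non-commuting products, and all three $SL^\ddagger$-identities must be used in a balanced way.

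For the equivalence of actions, we start from the base sphere $\hat{S}_j$, whose inversive coordinates $(\tilde{\kappa}, \tilde{\kappa}', \tilde{\xi}) = (0, 0, j)$ correspond to $M_0 = \left(\begin{smallmatrix} 0 & j \\ \overline{j} & 0 \end{smallmatrix}\right)$. A direct computation, using $\overline{j} = -j$, yields
\begin{align*}
\gamma M_0 \gamma^* = \begin{pmatrix} aj\overline{b} - bj\overline{a} & aj\overline{d} - bj\overline{c} \\ cj\overline{b} - dj\overline{a} & cj\overline{d} - dj\overline{c} \end{pmatrix}.
\end{align*}
Using the identity $2\re(cj\overline{d}) = 2\nrm(j)\pi_j(\overline{c}d)$, which follows from $\pi_j(q) = -\re(jq)/\nrm(j)$ together with cyclic invariance of $\re$, one checks that the diagonal entries coincide with $\tilde{\kappa'}_j(\gamma)$ and $\tilde{\kappa}_j(\gamma)$, while the $(1,2)$ entry is exactly $\tilde{\xi}_j(\gamma)$. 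By Lemma \ref{InversiveRational}, applied after the embedding $(H, \ddagger, j) \hookrightarrow (H_\RR, \ddagger, j_\RR)$, the matrix $\gamma M_0 \gamma^*$ therefore represents the sphere $\gamma\hat{S}_j$. Since the M\"{o}bius group acts transitively on oriented spheres in $H^+ \cup \{\infty\}$, every sphere is $\gamma'\hat{S}_j$ for some $\gamma'$, corresponding to $\gamma' M_0 \gamma'^*$, and the relation $\gamma.(\gamma' M_0 \gamma'^*) = (\gamma\gamma') M_0 (\gamma\gamma')^*$ matches $\gamma.(\gamma'\hat{S}_j) = (\gamma\gamma').\hat{S}_j$, establishing the equivalence of the two actions.
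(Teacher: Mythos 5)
Your proposal is correct and follows essentially the same route as the paper: verify that the Hermitian condition and the side conditions are preserved, then establish agreement of the two actions by computing $\gamma M_0 \overline{\gamma}^T$ on the base point $(0,0,j)$ and extending by equivariance. The one step you describe but do not actually carry out---that the $(1,2)$ entry of $\gamma M \overline{\gamma}^T$ lies in $H^+$ for a general element of $\mathcal{M}_{H,j}$---is precisely the step the paper also leaves implicit (it asserts without proof that matrices with real quasi-determinant and $ab^\ddagger, cd^\ddagger \in H^+$ are closed under multiplication), so your treatment is no less complete than the original.
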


\begin{proof}
Note that any element in $\mathcal{M}_{H,j}$ is of the form
	\begin{align*}
	\begin{pmatrix} \kappa' & \xi \\ \overline{\xi} & \kappa \end{pmatrix}
	\end{align*}
	
\noindent for some $\kappa',\kappa \in \QQ$, $\xi \in H^+$. The quasi-determinant of this matrix is just $q_{H,j}\left((\kappa,\kappa',\xi)\right)$. As the quasi-determinant is multiplicative whenever it is real, and the quasi-determinant of $\gamma M \overline{\gamma}^T$ is the quasi-determinant of $M$. Furthermore,
	\begin{align*}
	\overline{\left(\gamma M \overline{\gamma}^T\right)}^T = \gamma \overline{M}^T \overline{\gamma}^T = \gamma M \overline{\gamma}^T,
	\end{align*}
	
\noindent as desired. Finally, the subset of $\Mat(2,H)$ of matrices with real quasi-determinant and the relations $ab^\ddagger, cd^\ddagger \in H^+$ is closed under multiplication, so we conclude that $\gamma M \overline{\gamma}^T \in \mathcal{M}_{H,j}$, as claimed. More than that, because the quasi-determinant is preserved, and the quasi-determinant corresponds to the quadratic form $q_{H,j}$, we see that the cone $q_{H,j}(x) = n$ is preserved under the action, meaning that we have defined a valid action of $SL^\ddagger(2,H)$ on the oriented spheres in $H^+$. To see that this action agrees with the action already described, it suffices to check that it agrees on $\text{inv}(\hat{S}_j) = (0,0,j)$. Choose any element
	\begin{align*}
	\gamma = \begin{pmatrix} a & b \\ c & d \end{pmatrix} \in SL^\ddagger(2,H),
	\end{align*}
	
\noindent and note that
	\begin{align*}
	\begin{pmatrix} a & b \\ c & d \end{pmatrix} \begin{pmatrix} 0 & j \\ -j & 0 \end{pmatrix} \begin{pmatrix} \overline{a} & \overline{c} \\ \overline{b} & \overline{d} \end{pmatrix} &= \begin{pmatrix} -bj\overline{a} + aj\overline{b} & -bj\overline{c} + aj\overline{d} \\ -dj\overline{a} + cj\overline{b} & -dj\overline{c} + cj\overline{d} \end{pmatrix} \\
	&= \begin{pmatrix} \kappa'(\gamma) & \xi(\gamma) \\ \overline{\xi(\gamma)} & \kappa(\gamma) \end{pmatrix},
	\end{align*}
	
\noindent which agrees with the previous defined action.
\end{proof}

\begin{corollary}\label{AlgebraicGroup}
As algebraic groups over $\QQ$, $SL^\ddagger(2,H) \cong \text{Spin}(q_{H,j})$.
\end{corollary}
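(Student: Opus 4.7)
My plan is to use Lemma \ref{AlternativeAction} to construct an isogeny $\Phi: SL^\ddagger(2,H) \to SO(q_{H,j})$, verify its kernel is $\{\pm I\}$ by a centralizer calculation, match dimensions, and identify the resulting double cover with $\text{Spin}(q_{H,j})$ via a simply-connectedness argument. To begin, Lemma \ref{AlternativeAction} gives an algebraic action $\gamma \cdot M = \gamma M \overline{\gamma}^T$ of $SL^\ddagger(2,H)$ on $\mathcal{M}_{H,j}$ that preserves the quasi-determinant. Under the identification $\mathcal{M}_{H,j} \cong \QQ^2 \times H^+$ from that lemma, this quasi-determinant is precisely $q_{H,j}$, so I obtain a $\QQ$-algebraic group homomorphism $\Phi: SL^\ddagger(2,H) \to O(q_{H,j})$, and the image lies in $SO(q_{H,j})$ because $SL^\ddagger(2,H)$ is geometrically connected.

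Next I would compute $\ker \Phi$. Suppose $\gamma = \left(\begin{smallmatrix} a & b \\ c & d\end{smallmatrix}\right)$ acts trivially. Testing against $M = I$ forces $\gamma \overline{\gamma}^T = I$, while testing against matrices $\left(\begin{smallmatrix} 0 & \xi \\ \overline{\xi} & 0\end{smallmatrix}\right)$ for varying $\xi \in H^+$ forces $b = c = 0$ and $a \xi \overline{d} = \xi$ for every $\xi \in H^+$. Since $H^+$ contains $1, i, j$ and thus generates $H$ as a $\QQ$-algebra, its centralizer in $H$ is the center $\QQ$, pinning $a, d$ to scalars; the constraint $ad^\ddagger = 1$ then yields $\gamma = \pm I$. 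For the dimension count, $H^4$ has $\QQ$-dimension $16$, the relation $ad^\ddagger - bc^\ddagger = 1$ imposes $4$ conditions, and each of $ab^\ddagger, cd^\ddagger \in H^+$ imposes one further condition, giving $\dim SL^\ddagger(2,H) = 10 = \dim SO(q_{H,j})$. Thus $\Phi$ is a surjective isogeny with kernel of order $2$.

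The hard step is showing this double cover is actually the spin cover rather than some other intermediate isogeny. The cleanest route is to base-change to $\overline{\QQ}$: then $H$ splits as $\Mat(2,\overline{\QQ})$, and $SL^\ddagger(2,H) \times_\QQ \overline{\QQ}$ becomes a split connected semisimple group of dimension $10$ admitting a $2$-to-$1$ isogeny to $SO_5$. The classification of simple algebraic groups forces this to be the unique simply connected group of that type, namely $Sp_4 \cong \text{Spin}_5$, so $\Phi$ is the spin cover and $SL^\ddagger(2,H) \cong \text{Spin}(q_{H,j})$ as $\QQ$-algebraic groups. The main obstacle I anticipate is avoiding a black-box appeal to the classification here; a more explicit alternative would be to produce an invariant skew-symmetric form on a $4$-dimensional $H$-module built from the $\ddagger$-structure and thereby identify $SL^\ddagger(2,H)$ directly as a $\QQ$-form of $Sp_4$.
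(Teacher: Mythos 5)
Your proposal is correct and follows essentially the same route as the paper: both use Lemma \ref{AlternativeAction} to obtain a morphism $SL^\ddagger(2,H) \to SO(q_{H,j})$, identify it as a surjective double cover with kernel $\{\pm 1\}$, and conclude that it must be the spin cover. You simply supply the details (kernel computation, dimension count, identification of the cover via simple connectedness) that the paper dismisses with ``it isn't hard to check.''
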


\begin{proof}
By Lemma \ref{AlternativeAction}, we know that $SL^\ddagger(2,H)$ acts on the set of points $(\kappa, \kappa', \xi) \in \QQ^2 \times H^+$, preserving the quadratic form $q_{H,j}$. In other words, we have found a morphism $SL^\ddagger(2,H) \rightarrow SO(q_{H,j})$. It isn't hard to check that this morphism is surjective, with kernel $\{\pm 1\}$---in other words, $SL^\ddagger(2,H)$ is a double-cover of $SO(q_{H,j})$. But, up to isomorphism, $SO(q_{H,j})$ has a unique double cover; namely, $\text{Spin}(q_{H,j})$. This concludes the proof.
\end{proof}
	
\section{Sphere Intersections in $\mathcal{S}_{\OO,j}$:}\label{WeirdIntersections}

We have shown that for a sphere packing $\mathcal{S}_{\OO,j}$, there exists a constant $C = \sqrt{\nrm(j)}$ such that after rescaling by this constant the bends of all spheres in the packing are integers. We shall see later that showing $\mathcal{S}_{\OO,j}$ is dense in $\RR^3$ is straightforward. Consequently, the remainder of this paper will be devoted to determining under what conditions we can conclude that $\mathcal{S}_{\OO,j}$ is tangential and tangency-connected. We begin with a few preliminary results. Given $\gamma \in SL^\ddagger(2,\OO)$, we write
	\begin{align*}
	\xi_3(\gamma) = \pi_j\left(\tilde{\xi}_j(\gamma)\right)
	\end{align*}

\begin{lemma}\label{RationalIntersections}
Let $H = \left(\frac{-m,-n}{\QQ}\right)$ be a definite quaternion algebra, and $\OO \subset H$ be a maximal $\ddagger$-order. Then intersection curves between spheres in $\mathcal{S}_{\OO,j}$ are all $\QQ$-rational if and only if for every $\gamma \in SL^\ddagger(2,\OO)$, the equation
	\begin{align*}
	X^2 + mY^2 &= n\left(1 - \xi_3(\gamma)^2\right)
	\end{align*}
	
\noindent has a rational solution.
\end{lemma}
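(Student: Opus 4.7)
The plan is to reduce to intersections of the form $\hat{S}_j \cap \gamma\hat{S}_j$, derive the resulting conic in rational coordinates on $H^+$, and transform it by a $\QQ$-affine substitution into the conic displayed in the statement. Since $\mathcal{S}_{\OO,j}$ is a single $SL^\ddagger(2,\OO)$-orbit containing $\hat{S}_j$ and its M\"obius action preserves $\QQ$-rational points of $H^+$ (because $\gamma$ has entries in $H$), the intersection $\gamma_1\hat{S}_j \cap \gamma_2\hat{S}_j$ is $\QQ$-isomorphic to $\hat{S}_j \cap \gamma\hat{S}_j$ for $\gamma = \gamma_1^{-1}\gamma_2 \in SL^\ddagger(2,\OO)$. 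Since a smooth conic over $\QQ$ is $\QQ$-rational iff it carries a $\QQ$-point, it suffices to characterize when $\hat{S}_j \cap \gamma\hat{S}_j$ contains a $\QQ$-point of $H^+$.

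Next, fix $i \in H^0 \cap H^+$ with $ij = -ji$ and $\nrm(i) = m$, so that $\hat{S}_j \cap H^+ = \{w + xi : w, x \in \QQ\}$. Either by transferring the usual real sphere equation $\kappa|p|^2 - 2 p\cdot \xi + \kappa' = 0$ through the embedding from Section \ref{InversiveCoordinates} and clearing the $\sqrt{\nrm(j)}$ factors, or directly from Lemma \ref{DefinitionOfInversiveCoordinates}, the sphere $\gamma\hat{S}_j$ is cut out in $H^+$ by
$$\tilde{\kappa}_j(\gamma)\,\nrm(h) - \tr\bigl(h\,\overline{\tilde{\xi}_j(\gamma)}\bigr) + \tilde{\kappa'}_j(\gamma) = 0.$$
Writing $\tilde{\xi}_j(\gamma) = \alpha_0 + \alpha_1 i + \alpha_2 j$ (so $\alpha_2 = \xi_3(\gamma)$) and substituting $h = w + xi$ reduces this to $\tilde{\kappa}_j(w^2 + mx^2) - 2(w\alpha_0 + mx\alpha_1) + \tilde{\kappa'}_j = 0$.

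When $\tilde{\kappa}_j(\gamma) \neq 0$, completing the square and multiplying by $\tilde{\kappa}_j$ yields $(w\tilde{\kappa}_j - \alpha_0)^2 + m(x\tilde{\kappa}_j - \alpha_1)^2 = \alpha_0^2 + m\alpha_1^2 - \tilde{\kappa}_j\tilde{\kappa'}_j$, and the hypercone identity $\nrm(\tilde{\xi}_j) = n + \tilde{\kappa}_j\tilde{\kappa'}_j$ from Lemma \ref{DefinitionOfInversiveCoordinates} collapses the right-hand side to $n(1 - \xi_3(\gamma)^2)$. The $\QQ$-affine change of variables $X = w\tilde{\kappa}_j - \alpha_0$, $Y = x\tilde{\kappa}_j - \alpha_1$ is a bijection of $\QQ^2$, so it identifies rational points on the intersection conic with rational solutions of $X^2 + mY^2 = n(1 - \xi_3(\gamma)^2)$. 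In the degenerate case $\tilde{\kappa}_j(\gamma) = 0$, the hypercone identity reads $\nrm(\tilde{\xi}_j) = n$, whence $(X, Y) = (\alpha_0, \alpha_1)$ is an automatic rational solution, while the intersection reduces to the rational linear locus $w\alpha_0 + mx\alpha_1 = \tilde{\kappa'}_j/2$ in $\hat{S}_j$, which is either empty or contains $\QQ$-points.

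The main technical obstacle is the clean derivation of the sphere equation $\tilde{\kappa}_j\,\nrm(h) - \tr(h\,\overline{\tilde{\xi}_j}) + \tilde{\kappa'}_j = 0$ in rational coordinates on $H^+$, which requires careful bookkeeping of the $\sqrt{\nrm(j)}$ factors relating the standard real inversive coordinates on $H_\RR^+$ to their reduced rational counterparts. Once this equation is established, the rest is a routine manipulation: completing the square together with the hypercone identity transforms the intrinsic quadratic form on $H^+$ into the target conic in $X$ and $Y$.
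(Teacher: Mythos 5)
Your proposal is correct and follows essentially the same route as the paper: reduce to the case $\hat{S}_j \cap \gamma\hat{S}_j$ using that the $SL^\ddagger(2,\OO)$-action is birational over $\QQ$, write the intersection as a conic, pass to the displayed equation by a rational change of variables (your completion of the square plus the hypercone identity $\nrm(\tilde{\xi}_j) = n + \tilde{\kappa}_j\tilde{\kappa'}_j$ is exactly the computation the paper compresses into ``after a rational change of variables''), and invoke the fact that a conic over $\QQ$ is rational iff it has a rational point. The only cosmetic difference is that the paper removes the degenerate case $\tilde{\kappa}_j(\gamma)=0$ by an auxiliary transformation fixing $\hat{S}_j$, whereas you treat it directly; both are fine.
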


\begin{proof}
Since elements of $SL^\ddagger(2,\OO)$ act on $H^+$ as bi-rational maps, it suffices to prove the lemma in the case where one of the spheres is the plane $\hat{S}_j$. Furthermore, we can assume that the other sphere $S$ is not a plane. To see this, choose a point $z \in \OO \cap \hat{S}_j$ such that $-z^{-1} \notin S$. Then the transformation
	\begin{align*}
	\gamma' = \begin{pmatrix} 1 & 0 \\ z & 1 \end{pmatrix} \in SL^\ddagger(2,\OO)
	\end{align*}
	
\noindent sends $\hat{S}_j$ to $\hat{S}_j$ and $\infty \notin \gamma'S$. Fix $\gamma \in SL^\ddagger(2,\OO)$ such that $S = \gamma \hat{S}_j$. With the above assumptions, we know by basic geometry that the intersection curve is the set of points $a + bi$ such that
	\begin{align*}
	\nrm\left(a + bi - \frac{\tilde{\xi}(\gamma)}{\tilde{\kappa}(\gamma)}\right) = \frac{n}{\tilde{\kappa}(\gamma)^2}.
	\end{align*}
	
\noindent After a rational change of variables, this yields the curve
	\begin{align*}
	X^2 + mY^2 &= n\left(1 - \xi_3(\gamma)^2\right).
	\end{align*}
	
\noindent Since this is a conic section, it is rational if and only if it has at least one rational point.
\end{proof}

\begin{lemma}\label{IntersectionAngles}
Let $H = \left(\frac{-m,-n}{\QQ}\right)$ be a definite quaternion algebra, and $\OO \subset H$ be a maximal $\ddagger$-order. There exist two spheres in $\mathcal{S}_{\OO,j}$ intersecting at an angle $\theta$ if and only if there exists $\gamma \in SL^\ddagger(2,\OO)$ such that
	\begin{align*}
	\theta = \arccos\left(\xi_3(\gamma)\right).
	\end{align*}
\end{lemma}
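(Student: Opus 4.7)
The plan is to reduce, via the conformal action of $SL^\ddagger(2,\OO)$ on $H^+ \cup \{\infty\}$, to the case where one of the two intersecting spheres is $\hat{S}_j$ itself, and then apply Lemma \ref{DefinitionOfInversiveCoordinates} to express $\cos(\theta)$ in terms of the bilinear form $b_{H,j}$.

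Concretely, for the ``only if'' direction, suppose $S_1 = \gamma_1\hat{S}_j$ and $S_2 = \gamma_2\hat{S}_j$ meet at angle $\theta$. Since $PSL^\ddagger(2,\OO)$ acts by M\"obius transformations on $H^+ \cup \{\infty\}$, and M\"obius transformations preserve angles of intersection, setting $\gamma := \gamma_1^{-1}\gamma_2 \in SL^\ddagger(2,\OO)$, the spheres $\hat{S}_j$ and $\gamma\hat{S}_j$ also meet at angle $\theta$. Conversely, for the ``if'' direction, given any $\gamma \in SL^\ddagger(2,\OO)$ with $\xi_3(\gamma) \in [-1,1]$, the pair $\hat{S}_j,\gamma\hat{S}_j$ is a pair of spheres in $\mathcal{S}_{\OO,j}$. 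So it suffices to show that whenever $\hat{S}_j$ and $\gamma\hat{S}_j$ intersect, their angle of intersection is exactly $\arccos(\xi_3(\gamma))$.

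For this, I would first observe that $\inv(\text{id}) = (0, 0, j)$ by direct substitution in the definitions of $\tilde{\kappa}_j, \tilde{\kappa'}_j, \tilde{\xi}_j$. Polarizing $q_{H,j}(\kappa,\kappa',\xi) = -\kappa\kappa' + \nrm(\xi)$ gives
\[
b_{H,j}\bigl((\kappa_1,\kappa'_1,\xi_1),(\kappa_2,\kappa'_2,\xi_2)\bigr) = -\tfrac{1}{2}(\kappa_1\kappa'_2 + \kappa_2\kappa'_1) + \tfrac{1}{2}\tr(\xi_1 \overline{\xi_2}),
\]
so with $x_1 = (0,0,j)$ and $x_2 = \inv(\gamma)$ only the trace term survives: $b_{H,j}(x_1, x_2) = \tfrac{1}{2}\tr\bigl(j\,\overline{\tilde{\xi}_j(\gamma)}\bigr)$. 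Writing $\tilde{\xi}_j(\gamma) = \alpha + \beta i + \zeta j \in H^+$ and using $\overline{j} = -j$, $j^2 = -\nrm(j)$, $ji = -ij$, a short computation shows $\tr\bigl(j\,\overline{\tilde{\xi}_j(\gamma)}\bigr) = 2\zeta\,\nrm(j) = 2\nrm(j)\,\xi_3(\gamma)$. Lemma \ref{DefinitionOfInversiveCoordinates} then yields $\nrm(j)\cos(\theta) = \nrm(j)\,\xi_3(\gamma)$, hence $\theta = \arccos(\xi_3(\gamma))$.

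The main ``obstacle'' is purely bookkeeping: verifying $\inv(\text{id}) = (0,0,j)$ and carrying out the short quaternionic calculation for $\tr(j\,\overline{\xi})$ on an element of $H^+$. Once those are in place, both directions of the equivalence follow immediately from the conformal reduction together with the bilinear form formula.
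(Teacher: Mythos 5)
Your proposal is correct and follows essentially the same route as the paper: reduce to the case where one sphere is $\hat{S}_j$ using that the action is conformal, note $\inv(\mathrm{id}) = (0,0,j)$, and apply Lemma \ref{DefinitionOfInversiveCoordinates} to get $\nrm(j)\cos(\theta) = b_{H,j}(\inv(\gamma),(0,0,j)) = \nrm(j)\,\xi_3(\gamma)$. The paper simply states this last evaluation without the explicit polarization computation you supply.
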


\begin{proof}
Since the action of $SL^\ddagger(2,\OO)$ preserves angles, it suffices to consider the case where one of the spheres is the plane $\hat{S}_j$. Choose $\gamma \in SL^\ddagger(2,\OO)$ such that the other sphere $S = \gamma \hat{S}_j$. Letting $\theta$ be the angle between these two spheres; then from Lemma \ref{DefinitionOfInversiveCoordinates} we have
	\begin{align*}
	\cos(\theta) &= \frac{1}{\nrm(j)}b_{H,j}\left(\text{inv}(\gamma),\text{inv}(id)\right) \\ &= \frac{1}{\nrm(j)}b_{H,j}\left(\text{inv}(\gamma),(0,0,j)\right) = \xi_3(\gamma).
	\end{align*}
	
\noindent The claim follows.
\end{proof}

The unit group $\OO^\times$ has a standard embedding into $SL^\ddagger(2,\OO)$ by
	\begin{align*}
	\OO^\times &\hookrightarrow SL^\ddagger(2,\OO) \\ u &\mapsto \begin{pmatrix} u & 0 \\ 0 & {u^{-1}}^\ddagger \end{pmatrix},
	\end{align*}
	
\noindent which gives it an action by rotations on $H^+$.

\begin{lemma}\label{BasicImplication}
If intersections in $\mathcal{S}_{\OO,j}$ are tangential, then intersections in $\mathcal{S}_{\OO,j}$ are rational and the action of $\OO^\times$ preserves $\hat{S}_j$.
\end{lemma}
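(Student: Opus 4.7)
The plan is to handle the two conclusions of the lemma separately, each reducing to a direct application of the results from the previous section.

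For the rationality conclusion, by Lemma~\ref{IntersectionAngles} every intersection angle in $\mathcal{S}_{\OO,j}$ equals $\arccos \xi_3(\gamma)$ for some $\gamma \in SL^\ddagger(2,\OO)$, so tangentiality forces $\xi_3(\gamma)^2 = 1$ whenever $\hat{S}_j$ and $\gamma \hat{S}_j$ actually meet. The conic of Lemma~\ref{RationalIntersections} then degenerates to $X^2 + mY^2 = 0$, which admits the rational solution $(X,Y) = (0,0)$. The hypothesis of that lemma is therefore satisfied for every $\gamma$ producing an intersection, so all intersections in $\mathcal{S}_{\OO,j}$ are $\QQ$-rational.

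For the action of $\OO^\times$, I would fix $u \in \OO^\times$ and consider its image $\gamma_u := \text{diag}(u,(u^{-1})^\ddagger) \in SL^\ddagger(2,\OO)$ under the standard embedding displayed just before the lemma. As a M\"obius transformation $\gamma_u$ acts by $z \mapsto u z u^\ddagger$, and so fixes both $0$ and $\infty$. Since $\hat{S}_j$ passes through the origin (as $h = 0$ trivially satisfies $\tr(jh) = 0$), the image $\gamma_u \hat{S}_j$ is likewise a plane through the origin. Two distinct planes through the origin in $H^+ \cong \RR^3$ meet in an entire line --- certainly not a tangential intersection --- so tangentiality forces $\gamma_u \hat{S}_j = \hat{S}_j$. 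The one scenario not directly excluded by this argument, namely that $\gamma_u\hat{S}_j$ coincides with $\hat{S}_j$ as an unoriented plane but carries the reversed orientation, is also ruled out: two distinct elements of $\mathcal{S}_{\OO,j}$ sharing infinitely many points cannot meet tangentially.

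I anticipate no substantive obstacle: both halves of the lemma are essentially immediate readings of Lemmas~\ref{IntersectionAngles} and~\ref{RationalIntersections}. The one mild subtlety is the orientation check in the second step, which is handled by the same tangentiality argument; an alternative verification proceeds via $\tilde{\xi}_j(\gamma_u) = u j u^\ddagger$, noting that $\nrm(u) = 1$ for $u \in \OO^\times$ in a definite algebra forces $u j u^\ddagger \in \{\pm j\}$ once the underlying planes are known to coincide.
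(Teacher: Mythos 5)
Your proposal is correct and follows essentially the same route as the paper: tangentiality forces $\xi_3(\gamma)=\pm 1$ via Lemma~\ref{IntersectionAngles}, degenerating the conic of Lemma~\ref{RationalIntersections} to $X^2+mY^2=0$, which is trivially rational; and for the unit action one argues by contraposition that a unit not preserving $\hat{S}_j$ would produce two distinct planes through the origin meeting in a line, contradicting tangentiality. Your extra care with the reversed-orientation case is a small refinement the paper's one-line argument glosses over, but it does not change the substance.
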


\begin{proof}
If all intersections are tangential, then for any $\gamma \in SL^\ddagger(2,\OO)$ such that $\hat{S}_j$ intersects $\gamma \hat{S}_j$, we have that $\xi_3 = \pm 1$ by Lemma \ref{IntersectionAngles}, and therefore
	\begin{align*}
	X^2 + mY^2 = n\left(1 - \xi_3(\gamma)^2\right) = 0,
	\end{align*}
	
\noindent which is rational. If the action of $\OO^\times$ does not preserve $\hat{S}_j$, then there is a rotation $\gamma \in SL^\ddagger(2,\OO)$ about the origin such that $\hat{S}_j \neq \gamma\hat{S}_j$, and the intersection between these two planes in $\mathcal{S}_{\OO,j}$ must be a line.
\end{proof}

The converse of Lemma \ref{BasicImplication} is false. To see this, consider the quaternion algebra $H = \left(\frac{-5,-1}{\QQ}\right)$ and the maximal $\ddagger$-order
	\begin{align*}
	\OO = \ZZ \oplus \ZZ i \oplus \ZZ j \oplus \ZZ \frac{1 + i + j + ij}{2}.
	\end{align*}
	
\noindent It is easily checked that $\OO^\times = \langle j \rangle$, and since $j^2 = -1$, it follows that it acts on $H^+$ by half-turns, which preserve $\hat{S}_j$. Furthermore, for any
	\begin{align*}
	\gamma = \begin{pmatrix} a & b \\ c & d \end{pmatrix} \in SL^\ddagger(2,\OO),
	\end{align*}
	
\noindent we must have
	\begin{align*}
	\tilde{\xi}(\gamma) &\in \OO \cap H^+ = \ZZ \oplus \ZZ i \oplus \ZZ j,
	\end{align*}
	
\noindent from which we conclude that $\xi_3(\gamma) \in \ZZ$. Therefore, we need only to check that
	\begin{align*}
	X^2 + 5 Y^2 &= 1 \\
	X^2 + 5 Y^2 &= 0
	\end{align*}
	
\noindent are rational to conclude that intersections in $\mathcal{S}_{j,\OO}$ are rational. However, intersections in $\mathcal{S}_{j,\OO}$ are not generally tangential. Indeed, one checks that
	\begin{align*}
	\gamma = \begin{pmatrix} \frac{-1 + i + j - ij}{2} & \frac{1 + i + j + ij}{2} \\ \frac{1 + i - j - ij}{2} & 1 + j \end{pmatrix} \in SL^\ddagger(2,\OO),
	\end{align*}
	
\noindent and $\xi_3(\gamma) = 0$, which shows that the intersection curve is not a point.

For that matter, the statement that tangential intersections imply rational intersections is not vacuous---it is possible for the intersection curves of $\mathcal{S}_{j,\OO}$ to be non-rational. For example, consider $H = \left(\frac{-7,-1}{\QQ}\right)$ and the maximal $\ddagger$-order
	\begin{align*}
	\OO = \ZZ \oplus \ZZ i \oplus \ZZ \frac{i + j}{2} \oplus \ZZ \frac{1 + ij}{2}.
	\end{align*}
	
\noindent One checks that
	\begin{align*}
	\xi_3\left(\begin{pmatrix} 1 & \frac{i + j}{2} \\ \frac{i + j}{2} & 1 \end{pmatrix}\right) = \frac{1}{2},
	\end{align*}
	
\noindent but
	\begin{align*}
	X^2 + 7 Y^2 = \frac{3}{4}
	\end{align*}
	
\noindent admits no rational solutions. We will show later that for
	\begin{align*}
	H = \left(\frac{-7,-1}{\QQ}\right) &\text{ and } \OO = \ZZ \oplus \ZZ \frac{1 + i}{2} \oplus \ZZ j \oplus \ZZ \frac{j + ij}{2} \\
	H = \left(\frac{-1,-7}{\QQ}\right) &\text{ and } \OO = \begin{cases} \ZZ \oplus \ZZ i \oplus \ZZ \frac{i + j}{2} \oplus \ZZ \frac{1 + ij}{2} \\ \ZZ \oplus \ZZ i \oplus \ZZ \frac{1 + j}{2} \oplus \ZZ \frac{i + ij}{2} \end{cases}
	\end{align*}
	
\noindent all intersections are tangential. This shows that whether or not intersections are rational or tangential is not determined solely by the quaternion algebra or the choice of normal vector $j$, but depends crucially on the choice of maximal $\ddagger$-order $\OO$.

In contrast, for circle packings constructed as orbits of $\RR$ under the action of a Bianchi group $SL(2,\OO)$, Stange proved that all intersections are rational and furthermore are tangential if and only if the action of the unit group preserves $\RR$ \cite{Stange2017}. Moreover, in that context, even if the unit group does not preserve $\RR$, it is nevertheless true that there exist two spheres that intersect at angle $\theta$ if and only if there exists a unit $u \in \OO^\times$ such that
	\begin{align*}
	\cos(\theta) = \pi_i\left(\xi\left(\begin{pmatrix} u & 0 \\ 0 & u^{-1} \end{pmatrix}\right)\right).
	\end{align*}
	
\noindent We have shown that the corresponding statement for maximal $\ddagger$-orders is false, but we might nevertheless conjecture that all angles of intersection must be rational multiples of $\pi$. This is also false---let $H = \left(\frac{-m,-n}{\QQ}\right)$ be a rational quaternion algebra, $\OO$ be a maximal $\ddagger$-order, and $z \in \OO \cap H^+$. Then
	\begin{align*}
	\gamma = \begin{pmatrix} 1 & z \\ 0 & 1 \end{pmatrix}\begin{pmatrix} 1 & 0 \\ z - \tr(z) & 1 \end{pmatrix} &= \begin{pmatrix} 1 - \nrm(z) & z \\ z - \tr(z) & 1 \end{pmatrix} \in SL^\ddagger(2,\OO)
	\end{align*}
	
\noindent and $\xi_3(\gamma) = 1 - 2 \pi_j(z)^2 n$. So, for example, if we take $H = \left(\frac{-5,-5}{\QQ}\right)$ and the order
	\begin{align*}
	\OO = \ZZ \oplus \ZZ i \oplus \frac{2i + j}{5} \oplus \frac{5 + i + 3j + ij}{10},
	\end{align*}
	
\noindent then
	\begin{align*}
	1 - 2 \pi_j\left(\frac{2i + j}{5}\right)^2 5 &= \frac{3}{5},
	\end{align*}
	
\noindent and $\arccos(3/5)$ is not a rational multiple of $\pi$.

\section{Strong Approximation:}

What we have seen is that in order to determine the tangency structure of $\hat{S}_{\OO,j}$, we need to understand the set $\xi_3(SL^\ddagger(2,\OO))$. This can be accomplished by appealing to the strong approximation theorem for algebraic groups, from which we obtain the following theorem.

\begin{theorem}\label{StrongApproximation}
Let $\OO$ be a $\ddagger$-order of a rational quaternion algebra $H$, and $\Psi: SL^\ddagger(2,\OO) \rightarrow \ZZ^k$ a coordinate-wise polynomial map. Then
	\begin{align*}
	\Psi\left(SL^\ddagger(2,\OO)\right) = \bigcap_{p \text{ prime}} \Psi\left(SL^\ddagger(2,\OO_p)\right).
	\end{align*}
\end{theorem}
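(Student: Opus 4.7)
The containment $\Psi(SL^\ddagger(2,\OO)) \subseteq \bigcap_p \Psi(SL^\ddagger(2,\OO_p))$ is immediate, since each local group contains the global group. The nontrivial direction is the reverse inclusion, which I would prove via strong approximation.

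By Corollary~\ref{AlgebraicGroup}, $SL^\ddagger(2,H)$ is isomorphic to $\mathrm{Spin}(q_{H,j})$, a simply connected semisimple $\QQ$-algebraic group. In the setting of interest (where $H$ is definite, as in the rest of the paper), $\nrm$ is positive definite on the three-dimensional space $H^+$, while $-\kappa\kappa'$ contributes a hyperbolic plane; hence $q_{H,j}$ has signature $(4,1)$ over $\RR$ and $\mathrm{Spin}(q_{H,j})(\RR)$ is non-compact. The Kneser--Platonov strong approximation theorem therefore applies and asserts that $SL^\ddagger(2,\OO)$ is dense in $\prod_p SL^\ddagger(2,\OO_p)$ in the product of the $p$-adic topologies --- equivalently, for any finite set of primes $S$, any target tuple $(g_p)_{p \in S}$, and any precisions $N_p$, there exists $g \in SL^\ddagger(2,\OO)$ with $g \equiv g_p \pmod{p^{N_p}}$ for each $p \in S$.

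Given $v \in \bigcap_p \Psi(SL^\ddagger(2,\OO_p))$, I would pick witnesses $g_p \in SL^\ddagger(2,\OO_p)$ with $\Psi(g_p) = v$ and view $Y := \Psi^{-1}(v) \subset SL^\ddagger(2,H)$ as a closed $\QQ$-subvariety satisfying $Y(\ZZ_p) \neq \emptyset$ at every prime. Applying strong approximation to approximate the $g_p$'s by a global element $g \in SL^\ddagger(2,\OO)$, and using that $\Psi$ is polynomial with integer coefficients, yields $\Psi(g) \equiv v$ to arbitrarily high $p$-adic precision at any prescribed finite set of primes, with $\Psi(g) \in \ZZ^k$ automatic.

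The main obstacle is promoting these finite-precision congruences into the exact equality $\Psi(g) = v$ in $\ZZ^k$, since at primes outside the chosen finite set the residue of $\Psi(g)$ is not controlled by the approximation alone. I would bridge the gap by applying strong approximation relative to the fiber $Y$ rather than to the ambient group: at regular values of $\Psi$ the fiber $Y$ is a smooth $\QQ$-subscheme, and Hensel lifting converts each finite-precision local approximation into an exact $\ZZ_p$-point of $Y$; when the fiber happens to be a torsor, one instead notes that the stabilizer subgroup of $v$ inside $SL^\ddagger(2,H)$ is itself semisimple and simply connected of lower rank and so inherits strong approximation. Either route yields a global $\ZZ$-point $g \in Y(\ZZ) = \Psi^{-1}(v) \cap SL^\ddagger(2,\OO)$, producing $g \in SL^\ddagger(2,\OO)$ with $\Psi(g) = v$ and completing the argument.
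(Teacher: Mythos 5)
Your core argument coincides with the paper's: both use Corollary~\ref{AlgebraicGroup} to identify $SL^\ddagger(2,H)$ with $\text{Spin}(q_{H,j})$, observe that the real points are non-compact (the form has signature $(4,1)$ when $H$ is definite), and invoke Kneser--Platonov to conclude that $SL^\ddagger(2,\OO)$ is dense in $\prod_p SL^\ddagger(2,\OO_p)$. Up to that point the two proofs are identical.

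The divergence is in the final step, and that is where your proposal has a genuine problem. You rightly note that density only produces $g \in SL^\ddagger(2,\OO)$ with $\Psi(g) \equiv v$ to arbitrarily high $p$-adic precision at finitely many primes, not $\Psi(g) = v$; but neither of your proposed repairs closes this gap. Hensel lifting yields exact $\ZZ_p$-points of the fiber $Y = \Psi^{-1}(v)$ at finitely many primes --- yet you already had a $\ZZ_p$-point at \emph{every} prime by hypothesis (the witnesses $g_p$ themselves), and local solubility of an arbitrary closed subvariety at every place does not produce a global point, so this route is circular. The stabilizer route is also unsound: the fiber of a general coordinate-wise polynomial map is not a homogeneous space, and even when it is, strong approximation for the stabilizer requires non-compactness at the real place, which can fail here --- the stabilizer of a vector $v$ with $q_{H,j}(v) < 0$ has real points forming a compact group isogenous to $\text{Spin}(4)$, since the definite part of $q_{H,j}$ is four-dimensional. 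The paper itself finishes quite differently and more directly: it extends $\Psi$ to a continuous map $G(\hat{\ZZ}) \rightarrow \hat{\ZZ}^k$, uses that $\Psi\left(\prod_p SL^\ddagger(2,\OO_p)\right)$ is compact and that the image of the dense subgroup $SL^\ddagger(2,\OO)$ is dense in it, and concludes $\Psi(SL^\ddagger(2,\OO)) = \Psi\left(\prod_p SL^\ddagger(2,\OO_p)\right) \cap \ZZ^k$. If you want a fully airtight argument you should either adopt that closure-plus-integrality step and justify it for the specific maps to which the theorem is applied (where the local images are full congruence classes realized exactly by explicit global elements), or find a different mechanism; the two you sketch do not work as stated.
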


\begin{proof}
That
	\begin{align*}
	\Psi\left(SL^\ddagger(2,\OO)\right) \subset \bigcap_{p \text{ prime}} \Psi\left(SL^\ddagger(2,\OO_p)\right).
	\end{align*}
	
\noindent is evident. To prove containment the other way, we appeal to strong approximation. We consider $SL^\ddagger(2,\OO)$ as the set of integer points on an algebraic group $G$. By Corollary \ref{AlgebraicGroup}, we know that $G(\QQ) \cong \text{Spin}(q_{H,j})$. This group is almost simple and simply-connected. Therefore, by the Strong Approximation theorem proved in characteristic zero by Kneser \cite{Kneser1965} and Platonov \cite{Platonov1969}, we know that for any finite set of places $S$ of $\QQ$, $G(\QQ)$ is dense in $G(\mathbb{A}_S)$ if and only if
	\begin{align*}
	G_S = \prod_{\nu \in S} G(\QQ_\nu)
	\end{align*}
	
\noindent is not compact---here, $\mathbb{A}_S$ denotes the ring of $S$-adeles. In our case, we take $S$ to be the unique archimedian place of $\QQ$ and note that then
	\begin{align*}
	G_S = G(\RR) \cong SL^\ddagger(2,\RR),
	\end{align*}
	
\noindent which is certainly not compact, and so we can conclude that $G(\QQ)$ is dense inside
	\begin{align*}
	G(\mathbb{A}) = \left\{g = (g_p) \in \prod_{p \text{ prime}} G(\QQ_p) \middle| g_p \in G(\ZZ_p) \text{ for almost all } p\right\}.
	\end{align*}
	
\noindent However, since $S$ precisely consists of the archimedian places, this implies that $G(\ZZ) = SL^\ddagger(2,\OO)$ is dense inside of
	\begin{align*}
	G(\hat{\ZZ}) = \prod_p SL^\ddagger(2,\OO_p).
	\end{align*}
	
\noindent On the other hand, since $\Psi$ is coordinate-wise polynomial, it can be enlarged to a continuous map $\Psi: G(\hat{\ZZ}) \rightarrow \hat{\ZZ}^k$. Since $SL^\ddagger(2,\OO)$ is a dense subset, its image must be dense in the image of $\Psi$. However, this implies that
	\begin{align*}
	\Psi(SL^\ddagger(2,\OO)) = \Psi\left(\prod_p SL^\ddagger(2,\OO_p)\right) \cap \ZZ^k \supset \bigcap_{p \text{ prime}} \Psi\left(SL^\ddagger(2,\OO_p)\right),
	\end{align*}
	
\noindent which proves the claim.
\end{proof}

With this result in hand, it now suffices to determine the image of the $p$-adic localizations. We consider two cases, depending on whether or not $p$ is odd.

\begin{lemma}\label{LocalImageNotTwo}
Let $p$ be an odd prime and $H$ a quaternion algebra over $\QQ_p$ with involution $\ddagger$. Fix an element $j \in H^0 \cap H^+$ with square-free integer norm, and a maximal $\ddagger$-order $\OO$ in $H$ containing $j$. Then
	\begin{align*}
	\xi_3\left(SL^\ddagger(2,\OO)\right) = \begin{cases} \frac{1}{p}\ZZ_p & \text{if } j\OO j^{-1} \neq \OO \\ 1 + \nrm(j)\ZZ_p & \text{otherwise}. \end{cases}
	\end{align*}
\end{lemma}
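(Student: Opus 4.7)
The plan is to establish the two containments of $\xi_3(SL^\ddagger(2,\OO_p))$ in the claimed set separately. The principal inputs are the formula $\tilde\xi_j(\gamma) = aj\overline d - bj\overline c$ from Lemma \ref{InversiveRational} and the explicit descriptions of maximal $\ddagger$-orders containing $j$ from Lemma \ref{MostPrimesAreUnnecessary}. Since $\OO_p$ is closed under products and under the standard involution, and since $\tilde\xi_j(\gamma) \in H^+$ by Lemma \ref{AlternativeAction}, we obtain the a priori bound $\xi_3(\gamma) \in \pi_j(\OO_p \cap H^+)$. A short calculation using the form of $\OO_p$ from Lemma \ref{MostPrimesAreUnnecessary} shows that $\pi_j(\OO_p \cap H^+)$ equals $\tfrac{1}{p}\ZZ_p$ when $j\OO_p j^{-1} \neq \OO_p$ and $\ZZ_p$ otherwise; this immediately handles the upper bound in the two-order case.

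For the lower bound in both cases, the workhorse family is
\[
\gamma_{z,w} := \begin{pmatrix} 1+zw & z \\ w & 1 \end{pmatrix}, \qquad z, w \in \OO_p \cap H^+,
\]
which one checks directly lies in $SL^\ddagger(2,\OO_p)$ (using $(zwz)^\ddagger = zwz$ to see $ab^\ddagger = z + zwz \in H^+$). Using the identity $j\overline w - wj = 2n\,\pi_j(w)$ for $w \in H^+$, a direct computation yields
\[
\xi_3(\gamma_{z,w}) = 1 - 2n\,\pi_j(z)\,\pi_j(w).
\]
Because $p$ is odd, $2$ is a unit in $\ZZ_p$, and as $z, w$ vary independently the product $\pi_j(z)\pi_j(w)$ sweeps out $\pi_j(\OO_p\cap H^+) \cdot \pi_j(\OO_p\cap H^+)$. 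An elementary ideal computation shows this equals $\ZZ_p$ in the unique-order case (so $\xi_3$ covers $1 + n\ZZ_p$) and, using that $n = p\cdot(\text{unit})$, equals $\tfrac{1}{p^2}\ZZ_p$ in the two-order case (so $\xi_3$ covers $\tfrac{1}{p}\ZZ_p$).

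The remaining delicate step is the upper bound in the unique-order case when $p \mid n$, where the a priori bound only gives $\xi_3 \in \ZZ_p$ but we must sharpen to $\xi_3 \equiv 1 \pmod p$. The first step is the hyperboloid identity $q_{H,j}(\mathrm{inv}(\gamma)) = n$ from Lemma \ref{DefinitionOfInversiveCoordinates}: writing $\tilde\xi_j(\gamma) = \alpha_0 + \alpha_1 i + \xi_3 j$, this unfolds to
\[
n(\xi_3^2 - 1) = \tilde\kappa_j(\gamma)\,\tilde\kappa'_j(\gamma) - \alpha_0^2 - m\alpha_1^2 \in \ZZ_p,
\]
forcing $\xi_3^2 \equiv 1 \pmod p$, i.e., $\xi_3 \equiv \pm 1 \pmod p$. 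The main obstacle is excluding the sign $-1$. For this I plan to exploit the hypothesis $j\OO_p j^{-1} = \OO_p$: in the ramified case it forces $\OO_p j = j\OO_p$ to be the unique maximal two-sided ideal $\mathfrak P$ of $\OO_p$ (with $\mathfrak P^2 = p\OO_p$), and since both terms of $\tilde\xi_j(\gamma) = aj\overline d - bj\overline c$ lie in $\mathfrak P$, a comparison of the leading $\mathfrak P$-adic coefficient against the base case $\xi_3(\mathrm{id}) = 1$ should pin down the correct sign. In the unramified sub-case with $-\disc(\ddagger)$ non-square one would argue similarly, using the explicit anisotropic model of $\OO_p$ derived in the proof of Lemma \ref{MostPrimesAreUnnecessary}.
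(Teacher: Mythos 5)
Your lower-bound construction and your handling of the two-order case are correct, and in fact more uniform than the paper's: the family $\gamma_{z,w}$ specializes to the paper's witness $\left(\begin{smallmatrix} 1 & \lambda j \\ j & 1 + \lambda j^2\end{smallmatrix}\right)$ (take $z = \lambda j$, $w = j$), the identity $\xi_3(\gamma_{z,w}) = 1 - 2n\,\pi_j(z)\pi_j(w)$ checks out, and letting $z,w$ range over all of $\OO_p \cap H^+$ reaches $\tfrac{1}{p}\ZZ_p$ in the isotropic case without the paper's separate computation with diagonal unit matrices $\mathrm{diag}(\alpha, (\alpha^{-1})^\ddagger)$ and Hensel's lemma. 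Likewise the a priori bound $\tilde\xi_j(\gamma) \in \OO_p \cap H^+$, giving $\xi_3 \in \tfrac1p\ZZ_p$ when $j\OO_p j^{-1} \neq \OO_p$, matches the paper's observation that $pj\OO j^{-1} \subset \OO$.

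The gap is in the one place where all the content of the lemma lives: the containment $\xi_3 \in 1 + \nrm(j)\ZZ_p$ when $p \mid \nrm(j)$ and the order is unique, and neither of your two sub-steps is actually carried out. For the first, the hyperboloid identity only forces $\xi_3^2 \equiv 1 \pmod p$ if you already know $\tilde\kappa_j(\gamma)\tilde\kappa'_j(\gamma) - \alpha_0^2 - m\alpha_1^2 \in p^2\ZZ_p$; controlling $\alpha_0$ and $\alpha_1$ requires showing $\tilde\xi_j(\gamma) \in j\OO_p \cap H^+$ (not merely $\OO_p \cap H^+$), which you never establish, and even then $m\alpha_1^2$ fails to lie in $p^2\ZZ_p$ when $p$ also divides $\nrm(i)$. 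For the second, excluding the value $-1$ is stated only as something that ``should'' work, and it is precisely where the difficulty sits: for instance, for $\OO = \ZZ \oplus \ZZ\tfrac{1+i}{2} \oplus \ZZ j \oplus \ZZ\tfrac{3j+ij}{6} \subset \left(\frac{-3,-3}{\QQ}\right)$ one has $j\OO_3 j^{-1} = \OO_3$, yet the unit $u = \tfrac{3j+ij}{6}$ satisfies $uju^\ddagger = -j$, so $\xi_3 = -1$ is attained; any correct sign argument must therefore use the fine structure of $\OO_p$ (and this shows the sub-case $p \mid \nrm(i)$ needs separate attention in the statement itself). The missing idea is the one the paper's proof is built on: subtract $\pi_j\bigl((ad^\ddagger - bc^\ddagger)j\bigr) = 1$ to obtain $\xi_3(\gamma) = 1 + \re\bigl(a(j\overline{d}j^{-1} - d^\ddagger) - b(j\overline{c}j^{-1} - c^\ddagger)\bigr)$, and note that $j\overline{d}j^{-1} - d^\ddagger$ lies in $\OO \cap (\QQ_p j \oplus \QQ_p ij)$, whose pairing with $\OO$ under $\re$ lands in $\nrm(j)\ZZ_p$. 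This single identity, which exploits the normalization $ad^\ddagger - bc^\ddagger = 1$ that your upper-bound argument never invokes, delivers the integrality and the constant term $1$ simultaneously, with no residual sign to rule out; your $\mathfrak{P}$-adic ``leading coefficient'' plan, if pushed through, collapses to exactly this identity, but as written it is not a proof.
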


\begin{proof}
If
	\begin{align*}
	\gamma = \begin{pmatrix} a & b \\ c & d \end{pmatrix} \in SL^\ddagger(2,\OO),
	\end{align*}
	
\noindent then
	\begin{align*}
	\xi_3(\gamma) &= \pi_j\left(aj\overline{d} - bj\overline{c}\right) \\
	&= 1 + \pi_j\left((aj\overline{d} - bj\overline{c}) - (ad^\ddagger - bc^\ddagger)j\right) \\
	&= 1 + \pi_j\left(a(j\overline{d} - d^\ddagger j) - b(j\overline{c} - c^\ddagger j)\right)\\
	&= 1 + \pi_j\left(a(j\overline{d}j^{-1} - d^\ddagger)j - b(j\overline{c}j^{-1} - c^\ddagger)j\right)\\
	&= 1 + \re\left(a(j\overline{d}j^{-1} - d^\ddagger) - b(j\overline{c}j^{-1} - c^\ddagger)\right).
	\end{align*}
	
\noindent If $j\OO j^{-1} = \OO$, this implies that $j\overline{d}j^{-1} - d^\ddagger \in \OO$, with trace zero and no $i$ component. From this, we can conclude that $\xi_3(\gamma) \in 1 + \nrm(j)\ZZ_p$. If $j\OO j^{-1} \neq \OO$, then we can only assume that $p j\OO j^{-1} \subset \OO$, and so we only know that $\xi_3(\gamma) \in p^{-1}\ZZ_p$.

It is easy to show that $\xi_3(SL^\ddagger(2,\OO)) \supset 1 + \nrm(j)\ZZ_p$. To see this, it suffices to note that
	\begin{align*}
	\gamma &= \begin{pmatrix} 1 & \lambda j \\ j & 1 + \lambda j^2 \end{pmatrix} \in SL^\ddagger(2,\OO) \\
	\xi(\gamma) &= (1 + \lambda j^2)j + \lambda j^3 = (1 - 2\lambda \nrm(j))j
	\end{align*}
	
\noindent for any $\lambda \in \ZZ_p$. Therefore, it remains to prove that if $j \OO j^{-1} \neq \OO$, then $\xi_3(SL^\ddagger(2,\OO)) \supset p^{-1}\ZZ_p$. By Lemma \ref{MostPrimesAreUnnecessary}, if $j \OO j^{-1} \neq \OO$, then $\OO = \Mat(2,\ZZ_p)$ without loss of generality, where the involution is
	\begin{align*}
	\begin{pmatrix} a & b \\ c & d \end{pmatrix}^\ddagger = \begin{pmatrix} a & -c \\ -b & d \end{pmatrix}.
	\end{align*}
	
\noindent We conclude that there exist some $s,t \in \ZZ_p^\times$ such that
	\begin{align*}
	j = \begin{pmatrix} s & t \\ -t & -s \end{pmatrix},
	\end{align*}
	
\noindent and we choose an $\alpha \in \OO^\times$, so
	\begin{align*}
	\gamma = \begin{pmatrix} \alpha & 0 \\ 0 & {\alpha^{-1}}^\ddagger \end{pmatrix} \in SL^\ddagger(2,\OO).
	\end{align*}
	
\noindent Then
	\begin{align*}
	\xi(\gamma) &= \alpha j \overline{{\alpha^{-1}}^\ddagger} = \frac{\alpha j \alpha^\ddagger}{\nrm(\alpha)} \\
	\xi_3(\gamma) &= \re\left(\frac{\alpha j \alpha^\ddagger j^{-1}}{\nrm(\alpha)}\right).
	\end{align*}
	
\noindent Taking
	\begin{align*}
	\alpha = \begin{pmatrix} a & b \\ c & a \end{pmatrix},
	\end{align*}
	
\noindent we obtain
	\begin{align*}
	\xi_3(\gamma) = \frac{(b^2 + c^2)s^2 - 2bct^2 + 2a^2(s^2 - t^2)}{2 \left(s^2-t^2\right) \left(a^2-b c\right)}.
	\end{align*}
	
\noindent Note that $s^2 - t^2 = j^2 \in p\ZZ_p$, so in fact it shall suffice to show that we can find $a,b,c \in \ZZ_p$ such that for any $\lambda \in \ZZ_p^\times$,
	\begin{align*}
	(b^2 + c^2)s^2 - 2bct^2 + 2a^2(s^2 - t^2) &= \lambda\left(a^2-b c\right) \\
	a^2 - bc &\in \ZZ_p^\times.
	\end{align*}
	
\noindent This can be accomplished by Hensel's lemma. We shall require that $a^2 - bc = u^2 s^2/\lambda$, where $u \in \ZZ_p^\times$ will be chosen later. Then, noting that $s^t - t^2 = 0 \mod p$, we have
	\begin{align*}
	(b^2 + c^2)s^2 - 2bct^2 + 2a^2(s^2 - t^2) &= (b - c)^2 s^2 \mod p,
	\end{align*}
	
\noindent hence if we choose $c = b \pm u$, then
	\begin{align*}
	(b - c)^2 s^2 = u^2 s^2 = \lambda \frac{u^2 s^2}{\lambda} = \lambda\left(a^2-b c\right) \mod p.
	\end{align*}
	
\noindent Of course, this now requires that
	\begin{align*}
	a^2 - bc = a^2 - b(b \pm u) = a^2 \pm bu - b^2 = \frac{u^2 s^2}{\lambda} \mod p,
	\end{align*}
	
\noindent but we have free choice of $u$ and this is a quadratic form over $\ZZ/p\ZZ$, so this is satisfiable. We note that as long as $a \neq 0$, we can lift our solution to $\ZZ_p$ via Hensel's lemma, and so we are done.
\end{proof}

\begin{lemma}\label{TwoImage}
Let $H$ be a quaternion algebra over $\QQ_2$ with involution $\ddagger$. Let $\OO$ be a maximal $\ddagger$-order of $H$ containing an element $j \in H^0 \cap H^+$ of square-free integral norm. Then the image $\xi_3(SL^\ddagger(2,\OO))$ is as given in Table \ref{Two Image}.
\end{lemma}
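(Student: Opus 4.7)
The plan is to follow the same strategy as Lemma \ref{LocalImageNotTwo}, adapted to the more delicate local structure at $p=2$, and then reduce to a finite case analysis driven by Table \ref{TwoOrders}.

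First, I will reuse the algebraic identity derived in the proof of Lemma \ref{LocalImageNotTwo}, namely
\begin{align*}
\xi_3(\gamma) \;=\; 1 + \re\!\left(a\bigl(j\overline{d}j^{-1}-d^\ddagger\bigr) \;-\; b\bigl(j\overline{c}j^{-1}-c^\ddagger\bigr)\right),
\end{align*}
valid for any $\gamma=\left(\begin{smallmatrix}a&b\\ c&d\end{smallmatrix}\right)\in SL^\ddagger(2,\OO)$. The element $j\overline{x}j^{-1}-x^\ddagger$ is trace-free and lies in the $i$-orthogonal complement, so it is a $\ZZ_2$-multiple of $j$ (and hence contributes $\nrm(j)\ZZ_2$ to the $\re$ term) whenever $j\OO j^{-1}\subset\OO$, and otherwise contributes a controlled $2$-power denominator. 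This gives a first-pass upper bound for $\xi_3(SL^\ddagger(2,\OO))$ in every row of Table~\ref{TwoOrders}.

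Next, I will realize the claimed lower bound by producing explicit elements of $SL^\ddagger(2,\OO)$. The two building blocks from the odd-prime proof still work verbatim:
\begin{align*}
\begin{pmatrix} 1 & \lambda j\\ j & 1+\lambda j^2\end{pmatrix}\in SL^\ddagger(2,\OO),\qquad \xi_3\!\left(\cdot\right)=1-2\lambda\,\nrm(j),\ \lambda\in\ZZ_2,
\end{align*}
which already produces $1+\nrm(j)\ZZ_2$ inside the image, and the diagonal embedding $u\mapsto \operatorname{diag}(u,{u^{-1}}^\ddagger)$ for $u\in\OO^\times$, which produces $\xi_3=\re(u j u^\ddagger j^{-1})/\nrm(u)$. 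When $j\OO j^{-1}\neq \OO$, the table shows that (after $\approx$-normalizing) $\OO$ always contains an explicit half-integer basis element mixing $j$ with $1,i$ or $ij$, and from this basis element one reads off both the denominator and a residue representative modulo $2\nrm(j)$. For each row of Table \ref{TwoOrders} in which $j\OO j^{-1}\neq\OO$, I will write down one or two conjugation matrices (analogous to the $\alpha=\left(\begin{smallmatrix}a&b\\ c&a\end{smallmatrix}\right)$ trick in Lemma \ref{LocalImageNotTwo}) whose $\xi_3$-values surject onto the predicted coset structure.

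Finally, the Hensel lifting step that closed the odd case fails in the naive form over $\ZZ_2$, so instead of lifting a single congruence, I will use the dyadic version: once a candidate element gives the correct value modulo $8\nrm(j)$ (rather than modulo $p$), Hensel's lemma applied to the norm equation $ad^\ddagger-bc^\ddagger=1$ produces an actual element of $SL^\ddagger(2,\OO)$. This shift from mod $p$ to mod $8$ precision is where the real work lies. I expect the main obstacle to be neither the algebraic identity nor the existence of matrices realizing each residue class, but rather organizing the bookkeeping: Table \ref{TwoOrders} has roughly forty equivalence classes of pairs $(m,n)/\approx$, each producing a distinct value of $\nrm(j)\in\ZZ_2^\times\cup 2\ZZ_2^\times$ and a distinct conjugation behavior of $j$ on $\OO$, and for each the matching row of Table \ref{Two Image} must be verified by the same two-step procedure. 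The split rows (where there are two isomorphism classes of $\ddagger$-orders, or two conjugates under $j$) require an additional check that the two $\OO$'s give the same $\xi_3$-image, which follows from the fact that conjugation by $j$ preserves $SL^\ddagger(2,\OO)$ up to a sign on $\xi_3$ and so has no effect on the image as a set.
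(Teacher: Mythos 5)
Your overall architecture (reduce to the finite list of orders from Lemma \ref{TwoIsNotThatBad}, bound the image from above via the identity $\xi_3(\gamma)=1+\re(a(j\overline{d}j^{-1}-d^\ddagger)-b(j\overline{c}j^{-1}-c^\ddagger))$, then exhibit elements realizing the claimed set) is the right general shape, but there are two concrete gaps. First, the containment direction: when $j\OO j^{-1}=\OO$ the identity only yields $\xi_3(\gamma)\in 1+\nrm(j)\ZZ_2$ (and note $j\overline{x}j^{-1}-x^\ddagger$ lies in $\ZZ_2 j+\ZZ_2 ij$, not $\ZZ_2 j$, though this does not change the conclusion). That bound is strictly weaker than the table in most rows: the rows with image $1+4\ZZ_2$ have $\nrm(j)\in 2\ZZ_2^\times$, so the identity gives only $1+2\ZZ_2$, and the rows with image $1+2\ZZ_2$ have $\nrm(j)\in\ZZ_2^\times$, so the identity gives all of $\ZZ_2$. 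The extra factor of $2$ or $4$ is precisely the dyadic subtlety the lemma is about, and your sketch never closes it; the paper's proof obtains it by brute-force enumeration of $SL^\ddagger(2,\OO\otimes_\ZZ\ZZ/4\ZZ)$, which settles the image modulo $4$ in both directions at once.

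Second, the exhaustion step. Producing one matrix per residue class modulo $8\nrm(j)$ and then invoking Hensel's lemma on $ad^\ddagger-bc^\ddagger=1$ does not show that every element of, say, $1+4\ZZ_2$ is attained: Hensel lifts the unimodularity equation, but it gives you no control over the exact value of $\xi_3$ of the lifted matrix, only its congruence class, so you end up proving the image meets every residue class rather than equals the claimed set. The paper closes this with an exact identity you do not use: $\xi_3\bigl(\left(\begin{smallmatrix}1&0\\ \tau&1\end{smallmatrix}\right)\gamma\bigr)=\xi_3(\gamma)-\kappa(\gamma)\pi_j(\tau)$ for $\tau\in\OO\cap H^+$, so a single $\gamma$ whose reduced bend has the correct $2$-adic valuation sweeps out the entire arithmetic progression $\xi_3(\gamma)-\kappa(\gamma)\pi_j(\OO\cap H^+)$, and the mod-$4$ computation guarantees such a $\gamma$ exists. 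You would need either this identity or some substitute for it; without one, the "realize each residue and lift" plan does not prove equality with the sets in Table \ref{Two Image}. (Your closing remark about the split rows is essentially right: conjugation by $j$ carries $SL^\ddagger(2,\OO)$ to $SL^\ddagger(2,j\OO j^{-1})$ and fixes the $j$-component of $\xi$, so paired orders have the same image.)
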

	\begin{table}
	\begin{align*}\renewcommand\arraystretch{1.5}
	\begin{array}{l|l|l} \OO & \left(m,n\right)/\approx & \xi_3(SL^\ddagger(2,\OO)) \\ \hline
	\ZZ_2 \oplus \ZZ_2 i \oplus \ZZ_2 \frac{i + j}{2} \oplus \ZZ_2 \frac{2 + 2j + ij}{4} & (-6,-6),(-2,6),(2,2),(6,-2) & \frac{1}{2}\ZZ_2 \\
	\ZZ_2 \oplus \ZZ_2 i \oplus \ZZ_2 \frac{i + j}{2} \oplus \ZZ_2 \frac{2 + ij}{4} & (-6,-2),(-2,-6),(2,6),(6,2) & \frac{1}{2}\ZZ_2 \\
	\ZZ_2 \oplus \ZZ_2 i \oplus \ZZ_2 \frac{1 + i + j}{2} \oplus \ZZ_2 \frac{i + ij}{2} & (-6,1),(-2,-3),(2,1),(6,-3) & \frac{1}{2}\ZZ_2 \\
	\ZZ_2 \oplus \ZZ_2 i \oplus \ZZ_2 \frac{1 + j}{2} \oplus \ZZ_2 \frac{i + ij}{2} & (-6,3),(-2,3),(2,3),(6,3) & \frac{1}{2}\ZZ_2 \\
	\ZZ_2 \oplus \ZZ_2 i \oplus \ZZ_2 j \oplus \ZZ_2 \frac{1 + i + j + ij}{2} & (-3,-3),(-3,1),(1,-3),(1,1) & \ZZ_2 \\
	\ZZ_2 \oplus \ZZ_2 i \oplus \ZZ_2 \frac{1 + i + j}{2} \oplus \ZZ_2 \frac{j + ij}{2} & (-3,-2),(-3,6),(1,-6),(1,2) & \ZZ_2 \\
	\ZZ_2 \oplus \ZZ_2 \frac{1 + i}{2} \oplus \ZZ_2 j \oplus \ZZ_2 \frac{j + ij}{2} & (3,-6),(3,-2),(3,2),(3,6) & 1 + 4\ZZ_2 \\ \hdashline
		\begin{cases} \ZZ_2 \oplus \ZZ_2 \frac{1 + i}{2} \oplus \ZZ_2 j \oplus \ZZ_2 \frac{j + ij}{2} \\ \ZZ_2 \oplus \ZZ_2 i \oplus \ZZ_2 \frac{1 + j}{2} \oplus \ZZ_2 \frac{i + ij}{2} \end{cases} & (-1,-1),(-1,3),(3,-1),(3,3) & 1 + 2\ZZ_2 \\
	\begin{cases} \ZZ_2 \oplus \ZZ_2 i \oplus \ZZ_2 \frac{2 + i + j}{4} \oplus \ZZ_2 \frac{i - j + ij}{4} \\ \ZZ_2 \oplus \ZZ_2 i \oplus \ZZ_2 \frac{2 - i + j}{4} \oplus \ZZ_2 \frac{i + j + ij}{4} \end{cases} & (-6,2),(-2,-2),(2,-6),(6,6) & \frac{1}{4}\ZZ_2 \\	\hdashline
	\ZZ_2 \oplus \ZZ_2 i \oplus \ZZ_2 \frac{1 + i + j}{2} \oplus \ZZ_2 \frac{i + ij}{2} & (-6,-3),(-2,1),(2,-3),(6,1) & \frac{1}{2}\ZZ_2  \\
	\ZZ_2 \oplus \ZZ_2 i \oplus \ZZ_2 \frac{1 + j}{2} \oplus \ZZ_2 \frac{i + ij}{2} & (-6,-1),(-2,-1),(2,-1),(6,-1) & \frac{1}{2}\ZZ_2 \\
	\ZZ_2 \oplus \ZZ_2 i \oplus \ZZ_2 \frac{1 + i + j}{2} \oplus \ZZ_2 \frac{j + ij}{2} & (-3,-6),(-3,2),(1,-2),(1,6) & \frac{1}{2}\ZZ_2 \\
	\ZZ_2 \oplus \ZZ_2 \frac{1 + i}{2} \oplus \ZZ_2 j \oplus \ZZ_2 \frac{j + ij}{2} & (-1,-6),(-1,-2),(-1,2),(-1,6) & 1 + 4\ZZ_2 \\ \hdashline
	\begin{cases} \ZZ_2 \oplus \ZZ_2 i \oplus \ZZ_2 \frac{i + j}{4} \oplus \ZZ_2 	\frac{2 + ij}{4} \\ \ZZ_2 \oplus \ZZ_2 i \oplus \ZZ_2 \frac{i - j}{4} \oplus \ZZ_2 	\frac{2 + ij}{4} \end{cases} & (-6,6),(-2,2),(2,-2),(6,-6) & \frac{1}{4}\ZZ_2 \\
	\begin{cases} \ZZ_2 \oplus \ZZ_2 i \oplus \ZZ_2 \frac{i + j}{2} \oplus \ZZ_2 	\frac{1 + ij}{2} \\ \ZZ_2 \oplus \ZZ_2 i \oplus \ZZ_2 \frac{1 + j}{2} \oplus \ZZ_2 	\frac{i + ij}{2} \end{cases} & (-3,1),(-3,3),(1,-1),(1,3) & \frac{1}{2}\ZZ_2 \\
	\begin{cases} \ZZ_2 \oplus \ZZ_2 i \oplus \ZZ_2 \frac{i + j}{2} \oplus \ZZ_2 	\frac{1 + ij}{2} \\ \ZZ_2 \oplus \ZZ_2 \frac{1 + i}{2} \oplus \ZZ_2 j \oplus \ZZ_2 	\frac{j + ij}{2} \end{cases} & (-1,3),(-1,1),(3,-3),(3,1) & 1 + 2\ZZ_2.
	\end{array}
	\end{align*}
	\caption{Images $\xi_3(SL^\ddagger(2,\OO))$ for orders in quaternion algebras over $\QQ_2$.}
	\label{Two Image}
	\end{table}
	
\begin{proof}
By Lemma \ref{TwoIsNotThatBad}, the given orders are the only ones that need to be considered. As there are only finitely many possibilities, we can compute $\xi_3\left(SL^\ddagger(2,\OO \otimes_\ZZ \ZZ/4\ZZ)\right)$ for each of the given orders to conclude that the image of $\xi_3(SL^\ddagger(2,\OO))$ modulo $4$ surjects onto the given sets. To conclude that the entire image is represented, note that
	\begin{align*}
	\xi_3\left(\begin{pmatrix} 1 & 0 \\ \tau & 1 \end{pmatrix} \gamma\right) = \xi_3(\gamma) - \kappa(\gamma)\pi_j(\tau),
	\end{align*}
	
\noindent for any $\tau \in \OO \cap H^+$---by direct computation of $SL^\ddagger(2,\OO \otimes_\ZZ \ZZ/4\ZZ)$, we can be assured that we can always choose $\gamma$ such that this spans the entire set.
\end{proof}

Together with Theorem \ref{StrongApproximation}, Lemmas \ref{LocalImageNotTwo} and \ref{TwoImage} have a number of very useful corollaries.

\begin{corollary}\label{DescribeIntersectionSets}
Given a definite, rational quaternion algebra $H$ with involution $\ddagger$, an element $j \in H^0 \cap H^+$ with square-free integer norm, and a maximal $\ddagger$-order $\OO$ containing $j$, we have
	\begin{align*}
	\xi_3\left(SL^\ddagger(2,\OO)\right) = 1 + 2^\alpha\frac{\nrm(j)}{P^2}\ZZ,
	\end{align*}
	
\noindent where $P$ is the product of odd primes $p$ such that $j\OO_p j^{-1} \neq \OO_p$, and $\alpha \in \{-2,-1,0,1,2\}$ is chosen to match the corresponding entry in Table \ref{Two Image} for $\OO \otimes \ZZ_2$.
\end{corollary}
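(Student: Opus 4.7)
My plan is to apply Theorem \ref{StrongApproximation} to reduce the computation of $\xi_3(SL^\ddagger(2,\OO))$ to the intersection of its local images, and then read each local factor off from Lemmas \ref{LocalImageNotTwo} and \ref{TwoImage}. A minor technical point is that $\xi_3$ is $\QQ$-valued rather than $\ZZ$-valued; I would handle this by multiplying by a common denominator $D$ so that $D\xi_3$ lands in $\ZZ$, applying Theorem \ref{StrongApproximation} to $D\xi_3$, and dividing back to obtain
$$\xi_3(SL^\ddagger(2,\OO)) = \bigcap_{p \text{ prime}} \xi_3\bigl(SL^\ddagger(2,\OO_p)\bigr) \subset \QQ.$$

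For the local factors, I would first invoke Lemma \ref{MostPrimesAreUnnecessary} to note that $j\OO_p j^{-1} = \OO_p$ automatically holds for every odd prime $p \nmid \nrm(j)$, so the primes defining $P$ form a subset of the odd divisors of $\nrm(j)$; in particular $P$ is a squarefree divisor of $\nrm(j)$. Lemma \ref{LocalImageNotTwo} then gives, for odd $p$, the local image $\tfrac{1}{p}\ZZ_p$ if $p \mid P$, the image $1 + p\ZZ_p$ if $p \mid \nrm(j)$ and $p \nmid P$, and $\ZZ_p$ otherwise. At $p=2$, Lemma \ref{TwoImage} furnishes the local image directly as one of the sets $2^\beta \ZZ_2$ or $1 + 2^\beta \ZZ_2$ listed in Table \ref{Two Image}, and in every row of the table the number $1$ lies in this local image.

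Since $1 = \xi_3(\mathrm{id})$ lies in every local factor, the global intersection is an affine coset $1 + A$, where $A \subset \QQ$ is cut out by the $p$-adic valuation bounds $v_p(x) \geq e_p$, with $e_p = 1$ for odd $p \mid \nrm(j)/P$, $e_p = -1$ for $p \mid P$, $e_p = 0$ for the remaining odd primes, and $e_2$ determined by the relevant row of Table \ref{Two Image}. These bounds exhibit $A$ as the principal fractional ideal $c\ZZ$ with $v_p(c) = e_p$ for all $p$, and assembling the prime factors yields $c = 2^\alpha \nrm(j)/P^2$ for the unique $\alpha$ compatible with the $p=2$ entry of the table. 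The only delicate step in this program is the $p = 2$ case, since the local image depends genuinely on the isomorphism class of $\OO_2$ and on the $2$-adic valuation of $\nrm(j)$; once the table is read correctly, the rest is routine bookkeeping about $p$-adic valuations.
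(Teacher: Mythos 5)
Your proposal is correct and follows exactly the route the paper intends: the paper states this corollary as an immediate consequence of Theorem \ref{StrongApproximation} combined with Lemmas \ref{LocalImageNotTwo} and \ref{TwoImage}, which is precisely your local-to-global assembly of the $p$-adic images into the coset $1 + 2^\alpha \frac{\nrm(j)}{P^2}\ZZ$. Your extra care in clearing denominators so that Theorem \ref{StrongApproximation} (stated for $\ZZ^k$-valued polynomial maps) applies to the $\QQ$-valued $\xi_3$, and in noting via Lemma \ref{MostPrimesAreUnnecessary} that $P$ divides $\nrm(j)$, is a welcome tightening of details the paper leaves implicit.
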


\begin{corollary}\label{DescribeWhenTangential}
Spheres in $\mathcal{S}_{\OO,j}$ have only tangential intersections if and only if
	\begin{align*}
	2^\alpha\frac{\nrm(j)}{P^2} \geq 2.
	\end{align*}
\end{corollary}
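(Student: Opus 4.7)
The plan is to combine Lemma \ref{IntersectionAngles}, which identifies the angles of intersection among spheres in $\mathcal{S}_{\OO,j}$ with the values $\arccos(\xi_3(\gamma))$ for $\gamma \in SL^\ddagger(2,\OO)$, with the explicit description of the image $\xi_3(SL^\ddagger(2,\OO))$ supplied by Corollary \ref{DescribeIntersectionSets}. Once the image is known as a single coset of a cyclic subgroup of $\QQ$, the question of whether all intersections are tangential reduces to a one-line arithmetic question.

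First, I would translate the geometric statement: since two spheres in $\mathcal{S}_{\OO,j}$ intersect if and only if $|\xi_3(\gamma)| \leq 1$ and are tangent exactly when $\xi_3(\gamma) = \pm 1$, the requirement that every intersection be tangential is equivalent to
$$\xi_3\!\left(SL^\ddagger(2,\OO)\right) \cap (-1,1) \subset \{-1,1\},$$
which simplifies to the statement that the open interval $(-1,1)$ contains no value of $\xi_3$.

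Next, writing $d := 2^\alpha \nrm(j)/P^2$ for compactness, Corollary \ref{DescribeIntersectionSets} gives $\xi_3(SL^\ddagger(2,\OO)) = 1 + d\ZZ$. Asking that no element of $1 + d\ZZ$ lies in $(-1,1)$ is asking that there is no integer $k$ with $-2/d < k < 0$, and this holds if and only if $2/d \leq 1$, i.e., $d \geq 2$. This is precisely the inequality claimed.

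Because all the serious work—the strong approximation argument of Theorem \ref{StrongApproximation}, the local computations of Lemmas \ref{LocalImageNotTwo} and \ref{TwoImage}, and their assembly in Corollary \ref{DescribeIntersectionSets}—has already been done, there is no real obstacle here: the corollary is essentially a rephrasing of what was just proved. The only minor subtlety is that one has to allow both internal ($\xi_3 = 1$) and external ($\xi_3 = -1$) tangencies, but both count as tangential in the definition, so no case distinction is required.
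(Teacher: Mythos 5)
Your proposal is correct and follows exactly the route the paper intends: combine Lemma \ref{IntersectionAngles} (tangency $\Leftrightarrow$ $\xi_3 = \pm 1$, intersection $\Leftrightarrow$ $|\xi_3| \leq 1$) with the coset description $\xi_3(SL^\ddagger(2,\OO)) = 1 + 2^\alpha\nrm(j)P^{-2}\ZZ$ from Corollary \ref{DescribeIntersectionSets}, and observe that $1 + d\ZZ$ misses the open interval $(-1,1)$ precisely when $d \geq 2$. The arithmetic reduction to the nonexistence of an integer in $(-2/d, 0)$ is exactly right.
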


\begin{corollary}\label{DescribeWhenRational}
Spheres in $\mathcal{S}_{\OO,j}$ have only rational intersections if and only if
	\begin{align*}
	X^2 + \nrm(i) Y^2 = 2^\alpha k(2P^2 - 2^\alpha\nrm(j) k)
	\end{align*}
	
\noindent has rational solutions for all integers $0\leq k \leq 2^{1 - \alpha} P^2/\nrm(j)$.
\end{corollary}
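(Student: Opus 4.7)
The plan is to combine Lemma \ref{RationalIntersections} with Corollary \ref{DescribeIntersectionSets} and then do a bookkeeping computation. By Lemma \ref{RationalIntersections}, intersections in $\mathcal{S}_{\OO,j}$ are rational if and only if for every $\gamma \in SL^\ddagger(2,\OO)$ \emph{for which the two spheres $\hat{S}_j$ and $\gamma\hat{S}_j$ actually intersect}, the conic
\begin{align*}
X^2 + \nrm(i) Y^2 = \nrm(j)\bigl(1 - \xi_3(\gamma)^2\bigr)
\end{align*}
has a rational point. By Corollary \ref{DescribeIntersectionSets}, the values taken by $\xi_3$ on $SL^\ddagger(2,\OO)$ are exactly the rationals of the form
\begin{align*}
\xi_3(\gamma) = 1 + 2^\alpha \frac{\nrm(j)}{P^2} k, \qquad k \in \ZZ.
\end{align*}
So the entire family of conics to be checked is parameterized by $k \in \ZZ$.

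Next, I would do the algebra: factoring $1 - \xi_3^2 = (1-\xi_3)(1+\xi_3)$ and substituting the parameterization yields
\begin{align*}
\nrm(j)\bigl(1-\xi_3(\gamma)^2\bigr) = -\frac{2^\alpha \nrm(j)^2\, k}{P^4}\bigl(2P^2 + 2^\alpha \nrm(j)\, k\bigr).
\end{align*}
Clearing the rational square $(P^2/\nrm(j))^2$ by the change of variables $X \mapsto X\nrm(j)/P^2$, $Y \mapsto Y\nrm(j)/P^2$ (which does not affect the existence of a $\QQ$-point), and then replacing $k$ by $-k$ (which is harmless as $k$ ranges over all of $\ZZ$), the conic becomes
\begin{align*}
X^2 + \nrm(i)Y^2 = 2^\alpha k\bigl(2P^2 - 2^\alpha \nrm(j)\, k\bigr),
\end{align*}
matching the form in the statement.

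The only remaining point is the range of $k$ that must be tested, which is where the restriction $0 \leq k \leq 2^{1-\alpha}P^2/\nrm(j)$ comes from. By Lemma \ref{IntersectionAngles}, $\xi_3(\gamma) = \cos\theta$ where $\theta$ is the angle of intersection, so only $\gamma$ with $|\xi_3(\gamma)| \leq 1$ correspond to genuinely intersecting spheres; the condition of Lemma \ref{RationalIntersections} is vacuous otherwise. Translating $|\xi_3(\gamma)| \leq 1$ through the parameterization gives $-2^{1-\alpha}P^2/\nrm(j) \leq k \leq 0$ before the sign flip $k \mapsto -k$, hence $0 \leq k \leq 2^{1-\alpha}P^2/\nrm(j)$ after it. Observing that the conic has a $\QQ$-point iff it has any nonsingular real point (so the trivial endpoints $k=0$ and $k = 2^{1-\alpha}P^2/\nrm(j)$ are automatic), and that a conic over $\QQ$ is rational iff it has a single rational point, closes the argument. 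The whole proof is essentially a substitution, so I do not anticipate a serious obstacle — the main thing to get right is the scaling factor $P^4/\nrm(j)^2$ and the justification that only $k$ in the stated range need be tested.
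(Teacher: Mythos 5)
Your proposal is correct and follows essentially the same route as the paper: apply Lemma \ref{RationalIntersections}, substitute the parameterization of $\xi_3$ from Corollary \ref{DescribeIntersectionSets}, factor $1-\xi_3^2$, strip off the rational square $\nrm(j)^2/P^4$, and read off the range of $k$ from $|\xi_3|\leq 1$. One parenthetical remark is false as stated --- a conic over $\QQ$ with a nonsingular real point need not have a rational point (e.g.\ $X^2+Y^2=3$) --- but it is not needed, since at the endpoints $k=0$ and $k=2^{1-\alpha}P^2/\nrm(j)$ the right-hand side vanishes and $(X,Y)=(0,0)$ is already a rational solution.
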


\begin{proof}
By Lemma \ref{RationalIntersections}, intersections are rational if and only if
	\begin{align*}
	X^2 + \nrm(i)Y^2 = \nrm(j)\left(1 - \xi_3(\gamma)^2\right)
	\end{align*}
	
\noindent has rational solutions whenever $|\xi_3(\gamma)| \leq 1$. However, we know that
	\begin{align*}
	\xi_3(\gamma) = 1 - 2^\alpha\frac{\nrm(j)}{P^2}k
	\end{align*}
	
\noindent for some integer $k$, so
	\begin{align*}
	\nrm(j)\left(1 - \xi_3(\gamma)\right)^2 &= \nrm(j)\frac{2^\alpha \nrm(j)k(2P^2 - 2^\alpha\nrm(j)k)}{P^4} \\
	&= \frac{\nrm(j)^2}{P^4}\left(2^\alpha k(2P^2 - 2^\alpha\nrm(j) k)\right),
	\end{align*}
	
\noindent which concludes the proof.
\end{proof}

\section{Orders with Euclidean Intersection:}\label{DefinitionOfEuclideanIntersection}

Corollary \ref{DescribeWhenTangential} reduces determining whether intersections in $\mathcal{S}_{\OO,j}$ are tangential to a straightforward computation. This leaves the problem of determining when the packing $\mathcal{S}_{\OO,j}$ is tangency-connected---for this, we shall need to determine for what $\gamma \in SL^\ddagger(2,\OO)$ the image $\gamma \hat{S}_j$ intersects $\hat{S}_j$. By Lemma \ref{BasicImplication}, we know that tangencies occur at rational points of $\hat{S}_j$. However, for arbitrary groups $SL^\ddagger(2,\OO)$, the action on rational points of $\hat{S}_j$ is not transitive; as a result, it is unclear how to determine the tangency-structure in general. In fact, the intersection of $SL^\ddagger(2,\OO)$ with $SL(2,\CC)$ is generally going to be difficult to describe. To simplify the problem, we instead consider the special case where the intersection of $\OO$ with $\CC$ is a Euclidean ring.

\begin{definition}
Let $H = \left(\frac{-m,-n}{\QQ}\right)$ for some square-free positive integers $m,n$ such that the ring of integers of $\QQ(i)$ is a Euclidean ring---equivalently, when $m = 1,2,3,7,11$. We equip $H$ with the usual involution $\ddagger$. We say that a maximal $\ddagger$-order $\OO$ has \emph{Euclidean intersection} if $\OO \cap \QQ(i)$ is the ring of integers of $\QQ(i)$.
\end{definition}
	
\noindent We shall show that for orders with Euclidean intersection, the converse to Lemma \ref{BasicImplication} is true. We start by giving a classification of all such orders.

\begin{theorem}\label{All Orders with Euclidean Intersection}
Let $H = \left(\frac{-m,-n}{\QQ}\right)$ and let $\OO$ be a maximal $\ddagger$-order with Euclidean intersection. Then $\OO$ is one of the orders given in Table \ref{Orders with Euclidean intersection}, with the given image of $\xi_3(SL^\ddagger(2,\OO))$.
\end{theorem}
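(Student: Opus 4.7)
The plan is to exploit the local-global nature of $\ddagger$-orders. By Theorem~\ref{MainTheoremLastPaper}, a maximal $\ddagger$-order $\OO \subset H$ is completely determined by its collection of localizations $\OO_p$, and the Euclidean-intersection hypothesis itself localizes: $\OO \cap \QQ(i)$ equals the global ring of integers of $\QQ(i)$ if and only if, for every prime $p$, $\OO_p \cap \QQ_p(i)$ equals the corresponding local ring of integers. It therefore suffices to classify the local maximal $\ddagger$-orders containing $j$ that have the correct $\QQ_p(i)$-intersection, prime by prime, and then assemble compatible collections into global orders.

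At odd primes $p$, Lemma~\ref{MostPrimesAreUnnecessary} does nearly all of the work. If $p \nmid n = \nrm(j)$, there is a unique local maximal $\ddagger$-order containing $j$, and since both $\OO_p \cap \QQ_p(i)$ and the maximal order of $\QQ_p(i)$ are maximal $\ZZ_p$-orders inside the same two-dimensional algebra, they coincide automatically. For odd $p \mid n$ the same lemma offers either one or two local orders, the latter precisely when $p \nmid \disc(H)$ and $-mn$ is a square in $\QQ_p^\times$; I would walk through each $m \in \{1,2,3,7,11\}$ and the admissible square-free $n$, using the explicit generators supplied in that lemma to determine which of the candidate orders passes the Euclidean-intersection test. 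The set of odd primes at which $j \OO_p j^{-1} \neq \OO_p$ is exactly the exceptional set singled out here, and I record its product as the quantity $P$ of Corollary~\ref{DescribeIntersectionSets}.

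At $p = 2$ I invoke Lemma~\ref{TwoIsNotThatBad} and its accompanying table: the possible local maximal $\ddagger$-orders containing $j$ are tabulated by the class of $(m,n)$ in $\ZZ_2^2/\approx$, and I retain only those rows whose $\ZZ_2$-basis intersects $\QQ_2(i)$ in $\ZZ_2[i]$ or $\ZZ_2[(1+i)/2]$, according to whether $m \equiv 1,2$ or $m \equiv 3 \pmod 4$. A row-by-row inspection prunes the list; in rows offering two candidates, the Euclidean-intersection condition generically picks out one of them. The exponent $\alpha \in \{-2,-1,0,1,2\}$ of Corollary~\ref{DescribeIntersectionSets} is then read off the matching row of Table~\ref{Two Image}.

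Finally, each compatible choice of local data glues under Theorem~\ref{MainTheoremLastPaper} to a unique global maximal $\ddagger$-order with Euclidean intersection, and its $\ZZ$-basis is recovered from the local bases to yield the entries of the table. The image $\xi_3(SL^\ddagger(2,\OO)) = 1 + 2^\alpha (n/P^2)\ZZ$ follows immediately from Corollary~\ref{DescribeIntersectionSets}. The principal obstacle is combinatorial rather than conceptual: one must verify that no case has been omitted or double-counted across the five Euclidean values of $m$, the admissible $n$, the exceptional odd primes, and the sixteen $(m,n)$-rows at $p=2$. The one genuinely delicate point is confirming that at each ramified prime the Euclidean-intersection hypothesis rules out---rather than merely reduces the multiplicity of---local candidates, which requires inspecting the explicit generators produced in Lemmas~\ref{MostPrimesAreUnnecessary} and~\ref{TwoIsNotThatBad}.
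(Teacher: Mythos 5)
Your proposal is correct and follows essentially the same route as the paper: localize, apply Lemma \ref{MostPrimesAreUnnecessary} at odd primes and Lemma \ref{TwoIsNotThatBad} at $p=2$, glue the compatible local data, and read off $\xi_3(SL^\ddagger(2,\OO))$ from Corollary \ref{DescribeIntersectionSets}. The paper merely packages the odd-prime case-check more compactly by observing that the two-order phenomenon at an odd prime forces $p\mid m$, so only $p=2$ matters when $m=1,2$ and only $p=m$ (with the order at $2$ pinned down by the requirement that $(1+i)/2\in\OO$) when $m=3,7,11$.
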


	\begin{table}
	\begin{align*}
	\begin{array}{l|lll}
	m & \OO & \text{Conditions} & \xi_3(SL^\ddagger(2,\OO) \\ \hline
	m = 1 &	\ZZ \oplus \ZZ i \oplus \ZZ j \oplus \frac{1 + i + j + ij}{2} & \text{if } n \equiv 1 \mod 4 & 1 + n\ZZ \\
			& \ZZ \oplus \ZZ i \oplus \ZZ \frac{1 + i + j}{2} \oplus \ZZ \frac{j + ij}{2} & \text{if } n\equiv 2 \mod 4 & 1 + \frac{n}{2}\ZZ \\
			& \begin{cases} \ZZ \oplus \ZZ i \oplus \ZZ \frac{1 + j}{2} \oplus \ZZ \frac{i + ij}{2} \\ \ZZ \oplus \ZZ i \oplus \ZZ \frac{i + j}{2} \oplus \ZZ \frac{1 + ij}{2} \end{cases} & \text{if } n \equiv -1 \mod 4 & 1 + \frac{n}{2}\ZZ
		\\ \hline	
	m = 2 & \ZZ \oplus \ZZ i \oplus \ZZ \frac{1 + i + j}{2} \oplus \ZZ \frac{i + ij}{2} & \text{if } n \equiv 1 \mod 4 & 1 + \frac{n}{2}\ZZ\\
			& \ZZ \oplus \ZZ i \oplus \ZZ \frac{1 + j}{2} \oplus \ZZ \frac{i + ij}{2} & \text{if } n \equiv -1 \mod 4 & 1 + \frac{n}{2}\ZZ \\
			& \ZZ \oplus \ZZ i \oplus \ZZ \frac{i + j}{2} \oplus \ZZ \frac{2 + 2j + ij}{4} & \text{if } n/2 \equiv 1 \mod 8 & 1 + \frac{n}{4}\ZZ \\
			& \ZZ \oplus \ZZ i \oplus \ZZ \frac{i + j}{2} \oplus \ZZ \frac{2 + ij}{4} & \text{if } n/2 \equiv 3 \mod 8 & 1 + \frac{n}{4}\ZZ \\
			& \begin{cases} \ZZ \oplus \ZZ i \oplus \ZZ \frac{2 + i + j}{4} \oplus \ZZ \frac{i - j + ij}{4} \\ \ZZ \oplus \ZZ i \oplus \ZZ \frac{2 + i - j}{4} \oplus \ZZ \frac{i + j + ij}{4} \end{cases} & \text{if } n/2 \equiv -3 \mod 8 & 1 + \frac{n}{8}\ZZ \\
			& \begin{cases} \ZZ \oplus \ZZ i \oplus \ZZ \frac{i + j}{4} \oplus \ZZ \frac{2 + ij}{4} \\ \ZZ \oplus \ZZ i \oplus \ZZ \frac{i - j}{4} \oplus \ZZ \frac{2 + ij}{4} \end{cases} & \text{if } n/2 \equiv -1 \mod 8 & 1 + \frac{n}{8}\ZZ
		\\ \hline
	m = 3 & \ZZ \oplus \ZZ \frac{1 + i}{2} \oplus \ZZ j \oplus \ZZ \frac{j + ij}{2} & \text{if } 3\nmid n & 1 + 2n\ZZ \\
			& \ZZ \oplus \ZZ \frac{1 + i}{2} \oplus \ZZ j \oplus \ZZ \frac{3j + ij}{6} & \text{if } 3|n \text{ and } n/3 = 1 \mod 3 & 1 + \frac{2n}{3}\ZZ \\
			& \begin{cases} \ZZ \oplus \ZZ \frac{1 + i}{2} \oplus \ZZ \frac{i + j}{3} \oplus \ZZ \frac{3j + ij}{6} \\ \ZZ \oplus \ZZ \frac{1 + i}{2} \oplus \ZZ \frac{i - j}{3} \oplus \ZZ \frac{3j + ij}{6} \end{cases} & \text{if } 3|n \text{ and } n/3 = -1 \mod 3 & 1 + \frac{2n}{9}\ZZ \\ \hline
	m = 7 & \ZZ \oplus \ZZ \frac{1 + i}{2} \oplus \ZZ j \oplus \ZZ \frac{j + ij}{2} & \text{if } 7\nmid n & 1 + 2n\ZZ \\
			& \ZZ \oplus \ZZ \frac{1 + i}{2} \oplus \ZZ j \oplus \ZZ \frac{7j + ij}{14} & \text{if } 7|n & 1 + \frac{2n}{7}\ZZ \\ \hline
	m = 11 & \ZZ \oplus \ZZ \frac{1 + i}{2} \oplus \ZZ j \oplus \ZZ \frac{j + ij}{2} & \text{if } 11\nmid n & 1 + 2n\ZZ \\
			& \ZZ \oplus \ZZ \frac{1 + i}{2} \oplus \ZZ j \oplus \ZZ \frac{11j + ij}{22} & \text{if } 11|n \text{ and } \left(\frac{n/11}{11}\right) = 1 & 1 + \frac{2n}{11}\ZZ \\
			& \begin{cases} \ZZ \oplus \ZZ \frac{1 + i}{2} \oplus \ZZ \frac{3i + j}{11} \oplus \ZZ \frac{11j + ij}{22} \\ \ZZ \oplus \ZZ \frac{1 + i}{2} \oplus \ZZ \frac{3i - j}{11} \oplus \ZZ \frac{11j + ij}{22} \end{cases} & \text{if } 11|n \text{ and } n/11 \equiv 2 \mod 11 & 1 + \frac{2n}{11^2}\ZZ \\
			& \begin{cases} \ZZ \oplus \ZZ \frac{1 + i}{2} \oplus \ZZ \frac{4i + j}{11} \oplus \ZZ \frac{11j + ij}{22} \\ \ZZ \oplus \ZZ \frac{1 + i}{2} \oplus \ZZ \frac{4i - j}{11} \oplus \ZZ \frac{11j + ij}{22} \end{cases} & \text{if } 11|n \text{ and } n/11 \equiv -5 \mod 11 & 1 + \frac{2n}{11^2}\ZZ \\
			& \begin{cases} \ZZ \oplus \ZZ \frac{1 + i}{2} \oplus \ZZ \frac{2i + j}{11} \oplus \ZZ \frac{11j + ij}{22} \\ \ZZ \oplus \ZZ \frac{1 + i}{2} \oplus \ZZ \frac{2i - j}{11} \oplus \ZZ \frac{11j + ij}{22} \end{cases} & \text{if } 11|n \text{ and } n/11 \equiv -4 \mod 11 & 1 + \frac{2n}{11^2}\ZZ \\
			& \begin{cases} \ZZ \oplus \ZZ \frac{1 + i}{2} \oplus \ZZ \frac{5i + j}{11} \oplus \ZZ \frac{11j + ij}{22} \\ \ZZ \oplus \ZZ \frac{1 + i}{2} \oplus \ZZ \frac{5i - j}{11} \oplus \ZZ \frac{11j + ij}{22} \end{cases} & \text{if } 11|n \text{ and } n/11 \equiv -3 \mod 11 & 1 + \frac{2n}{11^2}\ZZ \\
			& \begin{cases} \ZZ \oplus \ZZ \frac{1 + i}{2} \oplus \ZZ \frac{i + j}{11} \oplus \ZZ \frac{11j + ij}{22} \\ \ZZ \oplus \ZZ \frac{1 + i}{2} \oplus \ZZ \frac{i - j}{11} \oplus \ZZ \frac{11j + ij}{22} \end{cases} & \text{if } 11|n \text{ and } n/11 \equiv -1 \mod 11 & 1 + \frac{2n}{11^2}\ZZ
	\end{array}
	\end{align*}
	\caption{All orders with Euclidean intersection.}
	\label{Orders with Euclidean intersection}
	\end{table}

\begin{proof}
For $m = 1$ and $m = 2$, there are no odd primes dividing $m$, and therefore by Lemma \ref{MostPrimesAreUnnecessary} there is only one maximal $\ddagger$-order in $H_p$ for every odd prime $p$. Consequently, maximal $\ddagger$-orders in $H$ are wholly determined by the orders specified in Lemma \ref{TwoIsNotThatBad}. For all other $m$, there is a unique maximal $\ddagger$-order in $H_2$ containing $(1 + i)/2$, specifically
	\begin{align*}
	\ZZ \oplus \ZZ \frac{1 + i}{2} \oplus \ZZ j \oplus \ZZ \frac{j + ij}{2}.
	\end{align*}
	
\noindent It therefore suffices to determine the behavior over $\QQ_m$. If $m = 7$, by a computation with the Hilbert symbol shows that either $7$ ramifies or $-\disc(\ddagger) \neq \left(\QQ_7^\times\right)^2$. Consequently, by Lemma \ref{MostPrimesAreUnnecessary}, there is but one maximal $\ddagger$-order in $H_p$ for every prime $p$. We conclude that the maximal $\ddagger$-orders given in the table are the only ones possible. If $m = 3,11$, there is again just one maximal $\ddagger$-order, unless $m|n$ and $n/m$ is not a square modulo $m$, in which case $m$ does not ramify, $-\disc(\ddagger) = \left(\QQ_m^\times\right)^2$, and consequently, there are two distinct maximal $\ddagger$-orders. Since $n/m$ is not a square modulo $m$, $-n/m$ is, which implies that there exists an integer $t$ such that $n/m + t^2 \equiv 0 \mod m$, and therefore the orders given in the table are well-defined. Finally, we compute the sets $\xi_3(SL^\ddagger(2,\OO))$ by appealing to Corollary \ref{DescribeIntersectionSets}.
\end{proof}

Orders with Euclidean intersection are very well-behaved---for example, for almost all of them, intersections in $\mathcal{S}_{\OO,j}$ are rational, and barring a small number of exceptions, they are in fact tangential.

\begin{theorem}\label{WhenRationalAndTangential}
Let $H = \left(\frac{-m,-n}{\QQ}\right)$ be a rational, definite quaternion algebra with a maximal $\ddagger$-order $\OO$ with Euclidean intersection. Then intersections in $\mathcal{S}_{\OO,j}$ are rational unless $m,n$ are on the following list.
	\begin{align*}
	\begin{array}{l|l}
	m & n \\ \hline
	3 & 6 \\
	11 & 22, 66, 77, 110
	\end{array}
	\end{align*}
	
\noindent Furthermore, intersections are tangential unless $m,n$ are on the following list.

	\begin{minipage}{.45\textwidth}
	\begin{align*}
	\begin{array}{l|l}
	m & n \\ \hline
	1 & 1,2,3 \\
	2 & 1, 2, 3, 6, 10, 14
	\end{array}
	\end{align*}
	\end{minipage}%
	\begin{minipage}{.45\textwidth}
	\begin{align*}
	\begin{array}{l|l}
	3 & 6 \\
	11 & 22, 66, 77, 110.
	\end{array}
	\end{align*}
	\end{minipage}
\end{theorem}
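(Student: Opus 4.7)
The plan is to view this theorem as a direct consequence of the local description in Corollaries \ref{DescribeIntersectionSets}, \ref{DescribeWhenTangential}, and \ref{DescribeWhenRational}, combined with the explicit tabulation in Theorem \ref{All Orders with Euclidean Intersection}. The key observation is that Table \ref{Orders with Euclidean intersection} gives, for every pair $(m,n)$ admitting a maximal $\ddagger$-order with Euclidean intersection, the exact value $c$ such that $\xi_3(SL^\ddagger(2,\OO)) = 1 + c\ZZ$, where in the notation of Corollary \ref{DescribeIntersectionSets} we have $c = 2^\alpha n/P^2$. So the whole proof reduces to reading off $c$ from the table and then checking two arithmetic conditions in each case.

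For the tangential statement, I would apply Corollary \ref{DescribeWhenTangential}, which says that intersections in $\mathcal{S}_{\OO,j}$ are tangential exactly when $c \geq 2$. Row by row through Table \ref{Orders with Euclidean intersection}, this is a one-line inequality on $n$: for $m=1$ the value of $c$ is $n$ or $n/2$ depending on $n \bmod 4$, yielding the exceptions $n \in \{1,2,3\}$; for $m=2$ the value of $c$ is $n/2$, $n/4$, or $n/8$ depending on $n \bmod 16$, yielding $n \in \{1,2,3,6,10,14\}$; for $m=3$ the values are $2n$, $2n/3$, $2n/9$, where square-freeness of $n$ together with the modular constraints on $n/3$ isolates only $n=6$; for $m=7$ no inequality is ever violated; and for $m=11$ the value $c = 2n/121$ in the non-square-residue branches yields the exceptions $n \in \{22,66,77,110\}$. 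Assembling these lists gives exactly the tangential exception table.

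For the rational statement, I would first observe that in every case where intersections are tangential ($c \geq 2$) the only value of $\xi_3$ with $|\xi_3|\leq 1$ is $\xi_3 = 1$, so the conic of Corollary \ref{DescribeWhenRational} degenerates to $X^2 + mY^2 = 0$ and is trivially rational. Thus the only candidates for rational-but-not-trivial failure are the tangential exceptions listed above. For each such pair, I would enumerate the (few) integers $k$ with $0 \leq k \leq 2^{1-\alpha}P^2/n$ and check whether the conic $X^2 + mY^2 = 2^\alpha k(2P^2 - 2^\alpha n k)$ is rationally solvable. By Hasse--Minkowski this reduces to a Hilbert-symbol check at the primes dividing $2m$ times the right-hand side; a direct computation then confirms solvability at every $k$ for the $(m,n)$ pairs with $m\in\{1,2\}$, while producing a Hilbert-symbol obstruction at some $k$ for $(3,6)$ and for $(11,n)$ with $n \in \{22,66,77,110\}$.

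The main obstacle is simply bookkeeping: the case analysis ramifies along the many subcases of Table \ref{Orders with Euclidean intersection}, and one must keep $\alpha$, $P$, and the residue conditions on $n$ aligned correctly so that the Hilbert symbols are evaluated at the right primes. Because both exception lists are finite and the relevant ranges of $k$ are small, the whole verification is purely mechanical once the correct $c$ is identified in each row, and can be completed by either hand computation or a short script.
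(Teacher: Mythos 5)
Your proposal follows essentially the same route as the paper: the paper's proof consists precisely of reading off the tangential exceptions from Corollary \ref{DescribeWhenTangential} (using the values of $\xi_3(SL^\ddagger(2,\OO))$ tabulated in Theorem \ref{All Orders with Euclidean Intersection}) and then applying Corollary \ref{DescribeWhenRational} to the finitely many remaining pairs, which is exactly your two-step reduction. Your additional remarks---that tangentiality forces $\xi_3=\pm 1$ so the rationality question only arises for the tangential exceptions, and that the residual conic checks are Hilbert-symbol computations---are correct fillings-in of details the paper leaves implicit.
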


\begin{proof}
It is a consequence of Corollary \ref{DescribeWhenTangential} that unless $m,n$ are on the given list then all intersections are tangential. For the remaining finite cases, we apply Corollary \ref{DescribeWhenRational}.
\end{proof}	

Furthermore, we can give a precise description of how two spheres intersect in $\mathcal{S}_{\OO,j}$ if $\OO$ has Euclidean intersection. We begin by looking at spheres intersecting $\hat{S}_j$ at $\infty$.

\begin{lemma}\label{IntersectionsWithPlane}
Let $H = \left(\frac{-m,-n}{\QQ}\right)$ be a definite quaternion algebra with a maximal $\ddagger$-order $\OO$ with Euclidean intersection. Suppose $S \in \mathcal{S}_{\OO,j}$ and $\infty \in S \cap \hat{S}_j$. Then,
	\begin{align*}
	S = \begin{pmatrix} u & 0 \\ 0 & \left(u^\ddagger\right)^{-1} \end{pmatrix} \begin{pmatrix} 1 & \tau \\ 0 & 1 \end{pmatrix}(\hat{S}_j),
	\end{align*}
	
\noindent for some $u \in \OO^\times$, $\tau \in \OO \cap H^+$.
\end{lemma}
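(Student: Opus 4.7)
The plan is to reduce to the case where $\gamma = \left(\begin{smallmatrix} a & b \\ c & d \end{smallmatrix}\right)$ is upper triangular, then read off $u$ and $\tau$ from its entries. Since $\infty \in S$ and only planes pass through $\infty$, $S$ is a plane, which is equivalent to $\gamma^{-1}(\infty) \in \hat{S}_j$. Suppose first that $c = 0$. Then the defining constraints of $SL^\ddagger(2,\OO)$ reduce to $a, b, d \in \OO$ with $ad^\ddagger = 1$ and $ab^\ddagger \in H^+$; the first forces $u := a \in \OO^\times$ and $d = (u^\ddagger)^{-1}$, and setting $\tau := u^{-1}b \in \OO$, the identity $(ab^\ddagger)^\ddagger = ab^\ddagger$ rearranges to $u\tau u^\ddagger = u\tau^\ddagger u^\ddagger$, which cancels to $\tau = \tau^\ddagger$, so $\tau \in \OO \cap H^+$. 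This yields exactly the factorization
\begin{align*}
\gamma = \begin{pmatrix} u & 0 \\ 0 & (u^\ddagger)^{-1} \end{pmatrix}\begin{pmatrix} 1 & \tau \\ 0 & 1 \end{pmatrix}.
\end{align*}

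For the case $c \neq 0$, I reduce to $c = 0$ by right-multiplying $\gamma$ by a suitable element of $SL(2, \OO \cap \QQ(i))$. Since $H$ is definite, $c$ is invertible in $H$, so $z := \gamma^{-1}(\infty) = -d^\ddagger(c^\ddagger)^{-1}$ is a well-defined element of $H$. A direct computation using the explicit form of $\ddagger$ identifies $\hat{S}_j \cap H^+$ with $\QQ(i)$, so $z \in \QQ(i)$. By the Euclidean-intersection hypothesis, $\mathfrak{o} := \OO \cap \QQ(i)$ is the full ring of integers of $\QQ(\sqrt{-m})$ and is a PID; hence $SL(2,\mathfrak{o})$ acts transitively on $\PP^1(\QQ(i)) = \QQ(i) \cup \{\infty\}$, and I can pick $\gamma_0 \in SL(2,\mathfrak{o})$ with $\gamma_0(\infty) = z$. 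Setting $\gamma' := \gamma\gamma_0$, we have $\gamma'(\infty) = \gamma(z) = \infty$.

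The key observation is that $\ddagger$ restricts to the identity on $\QQ(i)$ (because $j$ and $ij$ do not appear there), so the conditions defining $SL^\ddagger$ collapse on $SL(2,\mathfrak{o})$ to the ordinary determinant relation; hence $SL(2,\mathfrak{o}) \subseteq SL^\ddagger(2,\OO)$ and $\gamma' \in SL^\ddagger(2,\OO)$. Moreover, $\gamma_0 \in SL(2,\CC)$ stabilizes $\hat{S}_j$, as follows from the right-$SL(2,\CC)$-invariance of inversive coordinates in Lemma \ref{InvariantsOfHermitianForms}. Therefore $\gamma'\hat{S}_j = \gamma\hat{S}_j = S$, and the first-paragraph analysis applied to $\gamma'$ delivers the desired decomposition. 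The main obstacle is simply arranging these pieces: the nontrivial input is Euclidean intersection, which does double duty by giving both the containment $SL(2,\mathfrak{o}) \subseteq SL^\ddagger(2,\OO)$ and the transitivity of $SL(2,\mathfrak{o})$ on $\PP^1(\QQ(i))$.
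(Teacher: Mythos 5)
Your proof is correct and follows essentially the same route as the paper's: identify $\gamma^{-1}(\infty)$ as a rational point of $\hat{S}_j$, use the Euclidean (class number one) hypothesis to move it to $\infty$ via an element of $SL(2,\OO\cap\QQ(i))\subseteq SL^\ddagger(2,\OO)$ that stabilizes $\hat{S}_j$, and then factor the resulting upper-triangular matrix. Your explicit verification that $c=0$ forces $u\in\OO^\times$, $d=(u^\ddagger)^{-1}$, and $\tau=u^{-1}b\in\OO\cap H^+$ is a detail the paper merely asserts, but it is the same argument.
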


\begin{proof}
Choose $\gamma \in SL^\ddagger(2,\OO)$ such that $S = \gamma \hat{S}_j$. The point $p = \gamma^{-1}(\infty)$ must be a rational point of $\hat{S}_j$. Since $\OO \cap \QQ(i)$ is a Euclidean ring, the action of $SL\left(2,\OO \cap \QQ(i)\right)$ is transitive on the rational points of $\hat{S}_j$. Therefore, we can find an element $\gamma' \in SL^\ddagger\left(2,\OO \cap \QQ(i)\right)$ such that $\gamma'(p) = \infty$. We have
	\begin{align*}
	S &= \gamma\gamma'\left(\hat{S}_j\right)
	\end{align*}
	
\noindent and $\gamma\gamma'(\infty) = \infty$, hence
	\begin{align*}
	\gamma\gamma' &= \begin{pmatrix} u & * \\ 0 & \left(u^\ddagger\right)^{-1} \end{pmatrix} \\
	&=\begin{pmatrix} u & 0 \\ 0 & \left(u^\ddagger\right)^{-1} \end{pmatrix}\begin{pmatrix} 1 & \tau \\ 0 & 1 \end{pmatrix},
	\end{align*}
	
\noindent where $u \in \OO^\times$, $\tau \in \OO \cap H^+$.
\end{proof}

\begin{theorem}\label{IntersectionsOfSpheres}
Let $H = \left(\frac{-m,-n}{\QQ}\right)$ be a definite quaternion algebra with maximal $\ddagger$-order $\OO$ with Euclidean intersection. Choose any sphere $S \in \mathcal{S}_{\OO,j}$ and any rational point $z \in S$. There exist $a,b,c,d \in \OO$ such that $ac^{-1} = z$,
	\begin{align*}
	\begin{pmatrix} a & b \\ c & d \end{pmatrix} \in SL^\ddagger(2,\OO),
	\end{align*}
	
\noindent and
	\begin{align*}
	S = \begin{pmatrix} a & b \\ c & d \end{pmatrix}(\hat{S}_j).
	\end{align*}
	
\noindent Furthermore, the spheres intersecting $S$ at $z$ are given by
	\begin{align*}
	\begin{pmatrix} a & b \\ c & d \end{pmatrix}\begin{pmatrix} u & 0 \\ 0 & {u^\ddagger}^{-1} \end{pmatrix} \begin{pmatrix} 1 & \tau \\ 0 & 1 \end{pmatrix}\left(\hat{S}_j\right)
	\end{align*}
	
\noindent for $u \in \OO^\times$, $\tau \in \OO \cap H^+$.
\end{theorem}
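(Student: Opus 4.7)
The plan is to reduce the statement to Lemma \ref{IntersectionsWithPlane} via a change of coordinates sending $z$ to $\infty$. For the existence of the matrix, I would start with any $\gamma_0 \in SL^\ddagger(2,\OO)$ such that $\gamma_0(\hat{S}_j) = S$, which exists by definition of $\mathcal{S}_{\OO,j}$. Then $p := \gamma_0^{-1}(z)$ is a rational point of $\hat{S}_j$, i.e., an element of $\QQ(i) \cup \{\infty\}$. The Euclidean intersection hypothesis ensures that $\OO \cap \QQ(i)$ is the Euclidean ring of integers of $\QQ(i)$, which gives the inclusion $SL(2,\OO \cap \QQ(i)) \subset SL^\ddagger(2,\OO)$ together with transitivity of the smaller group on rational points---this is exactly the input that powered the proof of Lemma \ref{IntersectionsWithPlane}. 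I can therefore pick $\gamma_1 \in SL^\ddagger(2,\OO)$ with $\gamma_1(p) = \infty$ and set $\gamma := \gamma_0 \gamma_1^{-1} =: \begin{pmatrix} a & b \\ c & d \end{pmatrix}$. By construction $\gamma(\hat{S}_j) = \gamma_0(\hat{S}_j) = S$ and $\gamma(\infty) = z$, and evaluating the Möbius action at $\infty$ gives $ac^{-1} = z$, which is the first claim.

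For the classification part, note that $\gamma$ acts as a bijection on $\mathcal{S}_{\OO,j}$ (since $SL^\ddagger(2,\OO)$ preserves the packing by definition), sending $\infty$ to $z$. Consequently a sphere $S' \in \mathcal{S}_{\OO,j}$ contains $z$ if and only if $\gamma^{-1}(S') \in \mathcal{S}_{\OO,j}$ contains $\infty$. By Lemma \ref{IntersectionsWithPlane}, every sphere in $\mathcal{S}_{\OO,j}$ containing $\infty$ has the form
\begin{align*}
\begin{pmatrix} u & 0 \\ 0 & (u^\ddagger)^{-1} \end{pmatrix} \begin{pmatrix} 1 & \tau \\ 0 & 1 \end{pmatrix}(\hat{S}_j)
\end{align*}
for some $u \in \OO^\times$ and $\tau \in \OO \cap H^+$. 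Applying $\gamma$ to both sides yields precisely the parametrization in the theorem.

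The genuine content lies in the first paragraph: the Euclidean intersection hypothesis is exactly what allows us to absorb an arbitrary rational point on $\hat{S}_j$ into $\infty$ by an element of $SL^\ddagger(2,\OO)$. Everything after that is a formal consequence of Lemma \ref{IntersectionsWithPlane} and the bijectivity of the Möbius action on the set of spheres, so no real obstacle is expected; the only care needed is to observe that $\gamma$ sends the spheres of $\mathcal{S}_{\OO,j}$ through $\infty$ bijectively onto those through $z$, which is immediate from $\gamma(\infty) = z$ and the $SL^\ddagger(2,\OO)$-invariance of the packing.
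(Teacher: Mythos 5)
Your proposal is correct and follows essentially the same route as the paper: reduce to the plane $\hat{S}_j$, use the Euclidean intersection hypothesis to move the rational point $z$ to $\infty$ via an element of $SL\left(2,\OO \cap \QQ(i)\right)$ (which stabilizes $\hat{S}_j$), and then transport the classification of Lemma \ref{IntersectionsWithPlane} back by the resulting matrix. The only cosmetic difference is that you track the initial transformation $\gamma_0$ explicitly rather than assuming $S = \hat{S}_j$ without loss of generality.
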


\begin{proof}
We can assume without loss of generality that $S = \hat{S}_j$ since we can always apply a transformation $\gamma \in SL^\ddagger(2,\OO)$ to move $S$ back to $\hat{S}_j$. Choose $\gamma \in SL^\ddagger\left(2,\OO \cap \QQ(i)\right)$ such that $\gamma(\infty) = z$. Then $\gamma(\hat{S}_j) = \hat{S}_j$, and so
	\begin{align*}
	\gamma = \begin{pmatrix} a & b \\ c & d \end{pmatrix}
	\end{align*}
	
\noindent has the desired properties. Furthermore, if $S'$ is a sphere intersecting $\hat{S}_j$ at $z$, then $\gamma^{-1}(S')$ is a sphere intersecting $\hat{S}_j$ at $\infty$. However, by Lemma \ref{IntersectionsWithPlane} we must have
	\begin{align*}
	\gamma^{-1}(S') = \begin{pmatrix} u & 0 \\ 0 & {u^\ddagger}^{-1} \end{pmatrix} \begin{pmatrix} 1 & \tau \\ 0 & 1 \end{pmatrix}\left(\hat{S}_j\right),
	\end{align*}
	
\noindent whence
	\begin{align*}
	S' = \begin{pmatrix} a & b \\ c & d \end{pmatrix}\begin{pmatrix} u & 0 \\ 0 & {u^\ddagger}^{-1} \end{pmatrix} \begin{pmatrix} 1 & \tau \\ 0 & 1 \end{pmatrix}\left(\hat{S}_j\right).
	\end{align*}
	
\noindent Conversely, it is easy to see that any sphere of the given form must intersect $\hat{S}_j$ at $z$.
\end{proof}

Theorem \ref{IntersectionsOfSpheres} directly implies the converse to Lemma \ref{BasicImplication}, as promised.

\begin{corollary}\label{ImportantConverse}
Let $\OO$ be a maximal $\ddagger$-order with Euclidean intersection. If $\OO^\times$ preserves $\hat{S}_j$ and spheres in $\mathcal{S}_{\OO,j}$ intersect rationally, then all intersections in $\mathcal{S}_{\OO,j}$ are tangential.
\end{corollary}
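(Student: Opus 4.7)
The plan is to reduce the corollary to the explicit description of spheres through a rational point given by Theorem \ref{IntersectionsOfSpheres}, combined with the conformality of the $SL^\ddagger(2,\OO)$-action.

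I would take any two distinct intersecting spheres $S_1, S_2 \in \mathcal{S}_{\OO,j}$. The rationality hypothesis, via Lemma \ref{RationalIntersections}, tells us that the intersection conic $X^2 + mY^2 = n(1 - \xi_3^2)$ has a rational point whenever it has a real one, so from $S_1 \cap S_2 \neq \emptyset$ we obtain a rational point $z \in S_1 \cap S_2$ (with the $z = \infty$ case handled separately by Lemma \ref{IntersectionsWithPlane}). Applying Theorem \ref{IntersectionsOfSpheres} to $S_1$ at $z$ furnishes a $\gamma \in SL^\ddagger(2,\OO)$ with $S_1 = \gamma(\hat{S}_j)$ together with the representation
\[
S_2 = \gamma \begin{pmatrix} u & 0 \\ 0 & (u^\ddagger)^{-1} \end{pmatrix} \begin{pmatrix} 1 & \tau \\ 0 & 1 \end{pmatrix}(\hat{S}_j)
\]
for some $u \in \OO^\times$ and $\tau \in \OO \cap H^+$.

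Because $\gamma$ acts as a M\"obius transformation, it preserves angles, so the intersection type of $S_1$ and $S_2$ matches that of $\hat{S}_j$ with the image of $\hat{S}_j$ under the inner two-matrix factor. Chasing the action: the translation $z \mapsto z + \tau$ carries $\hat{S}_j$ to the parallel plane $\hat{S}_j + \tau$, and then the rotation $z \mapsto u z u^\ddagger$ takes this to $u \hat{S}_j u^\ddagger + u\tau u^\ddagger$. The hypothesis that $\OO^\times$ preserves $\hat{S}_j$ means precisely that $u \hat{S}_j u^\ddagger = \hat{S}_j$, so the image collapses to the parallel translate $\hat{S}_j + u\tau u^\ddagger$.

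Two parallel planes in $\RR^3 \cup \{\infty\}$ either coincide or meet tangentially at $\infty$; since $S_1 \neq S_2$, the latter holds, and the intersection of $S_1$ and $S_2$ is tangential. The main conceptual point is recognizing that the hypothesis on $\OO^\times$ is exactly what is needed to collapse the otherwise general motion $\begin{pmatrix} u & 0 \\ 0 & (u^\ddagger)^{-1} \end{pmatrix} \begin{pmatrix} 1 & \tau \\ 0 & 1 \end{pmatrix}$ into a mere parallel translate of $\hat{S}_j$; everything else is bookkeeping on top of Theorem \ref{IntersectionsOfSpheres}, so there is no serious computational obstacle to address.
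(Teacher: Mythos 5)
Your proof is correct and is precisely the deduction the paper intends: the paper offers no written proof beyond the remark that Theorem \ref{IntersectionsOfSpheres} directly implies the corollary, and your argument---locating a rational intersection point via Lemma \ref{RationalIntersections}, applying Theorem \ref{IntersectionsOfSpheres} there, and using the hypothesis on $\OO^\times$ to collapse the image to a parallel translate of $\hat{S}_j$ meeting it only at $\infty$---is exactly the omitted bookkeeping.
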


\section{The Group $E^\ddagger(2,\OO)$ and Covers of $\OO \cap H^+$:}

Having found a collection of orders that admit only tangential intersections, we turn to the requirement that $\mathcal{S}_{\OO,j}$ is tangency-connected. To this end, we define $E^\ddagger(2,\OO)$ to be the group generated by upper and lower triangular matrices in $SL^\ddagger(2,\OO)$. The group $E^\ddagger(2,\OO)$ is an analog of the group $E(2,\OO)$ generated by elementary matrices when $SL(2,\OO)$ is a Bianchi group. We also define the intersection graph of $\mathcal{S}_{\OO,j}$ as the graph where the vertices are spheres in $\mathcal{S}_{\OO,j}$, and any edges connect spheres that intersect.

\begin{theorem}\label{IntersectionGraph}
Let $\OO$ be a maximal $\ddagger$-order that has Euclidean intersection and such that $\mathcal{S}_{\OO,j}$ admits only rational intersections. Then the connected components of the intersection graph of $\mathcal{S}_{\OO,j}$ are in bijection with the coset space $SL^\ddagger(2,\OO)/E^\ddagger(2,\OO)$.
\end{theorem}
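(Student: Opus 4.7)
The plan is to define a map $\psi: SL^\ddagger(2,\OO)/E^\ddagger(2,\OO) \to \{\text{connected components of the intersection graph}\}$ by sending the coset $\gamma E^\ddagger(2,\OO)$ to the component of $\gamma\hat{S}_j$, and to verify it is a bijection. Surjectivity is immediate from transitivity of $SL^\ddagger(2,\OO)$ on $\mathcal{S}_{\OO,j}$. Well-definedness reduces, via the $SL^\ddagger(2,\OO)$-equivariance of the intersection relation, to showing that for every $\eta \in E^\ddagger(2,\OO)$ the sphere $\eta\hat{S}_j$ lies in the component of $\hat{S}_j$. This follows because each generator in $B^+$ (upper triangular) fixes $\infty \in \hat{S}_j$ and each generator in $B^-$ (lower triangular) fixes $0 \in \hat{S}_j$, so each generator $\eta_k$ produces a sphere $\eta_k\hat{S}_j$ that meets $\hat{S}_j$; telescoping along any factorization $\eta = \eta_1\cdots\eta_n$ yields a path $\hat{S}_j,\,\eta_1\hat{S}_j,\,\eta_1\eta_2\hat{S}_j,\,\ldots,\,\eta\hat{S}_j$ in the intersection graph.

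The key technical subclaim for injectivity is the containment $\text{Stab}_{SL^\ddagger(2,\OO)}(\hat{S}_j) \subset E^\ddagger(2,\OO)$. The computation of $\tr(jh)$ in the discussion preceding Lemma~\ref{BasicImplication} identifies $\hat{S}_j \cap H$ with $\QQ(i)$, so any stabilizer element $\gamma = \bigl(\begin{smallmatrix} a & b \\ c & d \end{smallmatrix}\bigr)$ with $c \neq 0$ sends $\infty$ to $ac^{-1} \in \QQ(i)$. Writing this image as $p/q$ in lowest terms in the Euclidean ring $\OO \cap \QQ(i)$, Bezout produces $\gamma' \in SL(2,\OO \cap \QQ(i))$ sending $\gamma(\infty)$ back to $\infty$, so that $\gamma'\gamma$ is upper triangular and thus lies in $B^+$. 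Because $\OO \cap \QQ(i)$ is Euclidean, $SL(2,\OO \cap \QQ(i))$ is generated by elementary matrices, which sit inside $E^\ddagger(2,\OO)$ (their $\ddagger$-conditions being trivial since $\QQ(i) \subset H^+$); hence $\gamma = (\gamma')^{-1}(\gamma'\gamma) \in E^\ddagger(2,\OO)$. The case $c = 0$ is immediate since then $\gamma$ is already upper triangular.

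Injectivity then follows by induction on the length of a path $\hat{S}_j = S_0, \ldots, S_n = \gamma\hat{S}_j$: I assume inductively that $S_i = \gamma_i\hat{S}_j$ with $\gamma_i \in E^\ddagger(2,\OO)$ and produce $\gamma_{i+1}$ with the same property. By the rational intersection hypothesis I may pick a rational point $z_i \in S_i \cap S_{i+1}$; Theorem~\ref{IntersectionsOfSpheres} then yields $\tilde\gamma_i \in SL^\ddagger(2,\OO)$ with $\tilde\gamma_i(\infty) = z_i$ and $\tilde\gamma_i\hat{S}_j = S_i$, together with $D_u, U_\tau \in B^+$ such that $S_{i+1} = \tilde\gamma_i D_u U_\tau \hat{S}_j$. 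Since $\gamma_i^{-1}\tilde\gamma_i$ stabilizes $\hat{S}_j$, it lies in $E^\ddagger(2,\OO)$ by the subclaim, and therefore $\gamma_{i+1} := \gamma_i(\gamma_i^{-1}\tilde\gamma_i)D_u U_\tau$ remains in $E^\ddagger(2,\OO)$. At the end, $S_n = \gamma_n\hat{S}_j = \gamma\hat{S}_j$ forces $\gamma_n\gamma^{-1} \in \text{Stab}(\hat{S}_j) \subset E^\ddagger(2,\OO)$, so $\gamma \in E^\ddagger(2,\OO)$; combined with equivariance this proves injectivity of $\psi$.

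The main obstacle is the stabilizer containment, which is precisely where the Euclidean intersection hypothesis is essential: without a Euclidean algorithm on $\OO \cap \QQ(i)$, the Bezout step fails, one cannot express an arbitrary element of $SL(2,\OO \cap \QQ(i))$ as a word in elementary matrices, and the Bruhat-style factorization of stabilizer elements through $E^\ddagger(2,\OO)$ breaks down; every other ingredient (Theorem~\ref{IntersectionsOfSpheres}, the explicit form of $B^\pm$, and the trace identification $\hat{S}_j \cap H = \QQ(i)$) is already in place.
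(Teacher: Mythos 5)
Your proof is correct and follows essentially the same route as the paper's: both rest on Theorem \ref{IntersectionsOfSpheres} to show that adjacent spheres differ by right multiplication by an element of $E^\ddagger(2,\OO)$, on the observation that the triangular generators send $\hat{S}_j$ to an intersecting sphere, and on the Euclidean-intersection hypothesis to place the stabilizer of $\hat{S}_j$ inside $E^\ddagger(2,\OO)$. Your write-up is simply more explicit than the paper's (which asserts the stabilizer claim without the Bruhat-style factorization you supply); the only blemish is the typo $\gamma_n\gamma^{-1}$ where the element stabilizing $\hat{S}_j$ is $\gamma^{-1}\gamma_n$, which does not affect the conclusion.
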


\begin{proof}
Note that given a sphere $S \in \mathcal{S}_{\OO,j}$, the element $\gamma \in SL^\ddagger(2,\OO)$ such that $S = \gamma(\hat{S}_j)$ is well-defined up to multiplication on the right by an element of $E\left(2,\OO \cap \QQ(i)\right) = SL\left(2,\OO \cap \QQ(i)\right)$. Therefore, by Theorem \ref{IntersectionsOfSpheres}, we know that if two vertices $S_1, S_2$ are connected by an edge and $S_1 = \gamma_1(\hat{S}_j)$, $S_2 = \gamma_2(\hat{S}_j)$, then there is an element $\gamma \in E^\ddagger(2,\OO)$ such that $\gamma_1 = \gamma_2 \gamma$. Therefore, if two spheres $S_1 = \gamma_1(\hat{S}_j)$, $S_2 = \gamma_2(\hat{S}_j)$ are in the same connected component of the intersection graph, this implies that $\gamma_1, \gamma_2$ are in the same coset of $SL^\ddagger(2,\OO)/E^\ddagger(2,\OO)$. On the other hand, given $\gamma \in E^\ddagger(2,\OO)$, it is easy to see that $\gamma(\hat{S}_j)$ is in the same connected component as $\hat{S}_{j}$, since all of the generators of $E^\ddagger(2,\OO)$ send $\hat{S}_j$ to a sphere that intersects $\hat{S}_j$. Ergo, we have a well-defined map
	\begin{align*}
	SL^\ddagger(2,\OO)/E^\ddagger(2,\OO) &\rightarrow \left\{\text{connected components of } \mathcal{S}_{\OO,j}\right\} \\
	\gamma E^\ddagger(2,\OO) &\mapsto \gamma E^\ddagger(2,\OO)(\hat{S}_{j_\RR}),
	\end{align*}
	
\noindent and this map is the desired bijection.
\end{proof}

Thus, whether or not a sphere packing is tangency-connected is wholly determined by whether or not $E^\ddagger(2,\OO) = SL^\ddagger(2,\OO)$. We shall see that this depends on whether or not unit balls centered on points in $\OO \cap H^+$ cover $H^+$---we shall say that $\OO$ \emph{covers} $\RR^3$ \emph{by unit balls}. If $\OO$ covers $\RR^3$ by unit balls, we have a division lemma.

\begin{lemma}\label{DivisionLemma}
Suppose $\OO$ is a $\ddagger$-order that covers $\RR^3$ by unit balls. Let $a,b \in \OO$ such that $\overline{b}a \in \OO \cap H^+$ and $b \neq 0$. Then there exists $q,r \in \OO \cap H^+$ such that $\nrm(r) < \nrm(b)$ and $a = bq + r$.
\end{lemma}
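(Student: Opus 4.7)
The strategy is a quaternionic Euclidean division. The hypothesis $\overline{b}a \in H^+$ together with $b^{-1} = \overline{b}/\nrm(b)$ and $\nrm(b)\in\QQ^\times$ immediately gives $\alpha := b^{-1}a \in H^+$. I would then apply the unit-ball covering assumption to $\alpha \in H^+ \cong \RR^3$ to obtain $q \in \OO\cap H^+$ with $\nrm(\alpha - q) < 1$, and set $r := a - bq = b(\alpha - q)$. Then $r \in \OO$, and multiplicativity of the reduced norm yields $\nrm(r) = \nrm(b)\,\nrm(\alpha - q) < \nrm(b)$, which is the required inequality.

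What remains is the verification $r \in \OO \cap H^+$. The natural identity is
\[
\overline{b}\cdot r \;=\; \overline{b}a \;-\; \nrm(b)\cdot q,
\]
whose right-hand side lies in $H^+$ (the first summand by hypothesis, the second because $q \in H^+$ and $\nrm(b) \in \QQ$). Unwinding $(\overline{b}a)^\ddagger = \overline{b}a$ yields the cleaner identity $ab^\ddagger = ba^\ddagger$, and the same manipulation applied to $r$ gives $rb^\ddagger = br^\ddagger$, so the symmetry condition automatically passes from $(a,b)$ to $(r,b)$.

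The main obstacle is to promote this symmetry to the stronger statement $r \in H^+$, rather than merely $\overline{b}r\in H^+$. A direct calculation shows that $r \in H^+$ holds exactly when $q$ is chosen in the affine subspace $\alpha + \ker(\phi_b)$, where $\phi_b\colon H^+ \to H^-$ is the $\QQ$-linear map $\phi_b(x) := bx - xb^\ddagger$, which satisfies $\phi_b(\alpha) = a - a^\ddagger$. To finish, I would first show that $(\OO\cap H^+) \cap (\alpha + \ker \phi_b)$ is non-empty by combining the integrality $a - a^\ddagger \in \OO \cap H^-$ with the maximality of $\OO$ (which pins down the image of $\phi_b|_{\OO\cap H^+}$), and then invoke the unit-ball covering restricted to the codimension-one subspace $\ker \phi_b \subset H^+$ to adjust $q$ so that $\nrm(\alpha - q) < 1$ is preserved. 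The integrality step is the delicate point, and I expect it to draw on the explicit classification in Theorem~\ref{All Orders with Euclidean Intersection}.
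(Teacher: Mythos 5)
Your first paragraph is exactly the paper's proof: set $\alpha=b^{-1}a=\overline{b}a/\nrm(b)\in H^{+}$, use the covering hypothesis to produce $q\in\OO\cap H^{+}$ with $\nrm(\alpha-q)<1$, put $r=a-bq=b(\alpha-q)$, and conclude $\nrm(r)=\nrm(b)\,\nrm(\alpha-q)<\nrm(b)$ by multiplicativity of the norm. The paper stops there: its proof makes no attempt to show $r\in H^{+}$, and you are right that the construction does not deliver it---for general $b\in\OO$ the $ij$-component of $b(\alpha-q)$ has no reason to vanish. What the construction does give, and what you verified, is $\overline{b}r=\overline{b}a-\nrm(b)q\in\OO\cap H^{+}$ (equivalently $rb^{\ddagger}=br^{\ddagger}$), and this weaker property is all that the sequel uses: in Lemma~\ref{EuclideanAlgorithm} only the quotients $q_{l}$ must lie in $H^{+}$, so that the matrices $\left(\begin{smallmatrix}0&1\\-1&q_{l}\end{smallmatrix}\right)$ belong to $SL^{\ddagger}(2,\OO)$, while the remainders $r_{l}$ need only satisfy the symmetry condition so that the next division step is legal. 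So the clause ``$r\in\OO\cap H^{+}$'' should be read as a misstatement for ``$r\in\OO$ with $\overline{b}r\in\OO\cap H^{+}$''; your argument already proves the lemma in the form in which it is actually used.

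Your final paragraph is therefore aimed at a claim you do not need, and the proposed method would not go through anyway. The covering hypothesis says that unit balls centered at \emph{all} points of $\OO\cap H^{+}$ cover $H^{+}\cong\RR^{3}$; it gives no control over the lattice points lying in the fixed affine slice $\alpha+\ker\phi_{b}$ (your reduction of $r\in H^{+}$ to $q\in\alpha+\ker\phi_{b}$ is itself correct). That intersection can be a lattice of rank at most two in the slice, or empty, and even when it has full rank there is no reason its points come within unit distance of $\alpha$; the ``integrality step'' you flagged as delicate is precisely the step that fails. Delete that paragraph; the first two constitute a complete and correct proof, matching the paper's, of the statement as it is actually used.
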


\begin{proof}
We have $b^{-1}a \in H^+$ and therefore we can find $q \in \OO \cap H^+$ such that $N\left(b^{-1}a - q\right) < 1$. But this means that we can rearrange
	\begin{align*}
	b^{-1}a &= q + (b^{-1}a - q)
	\end{align*}
	
\noindent to give
	\begin{align*}
	a &= bq + (a - bq) \\
	&= bq + r,
	\end{align*}
	
\noindent where $r = a - bq$, and so $\nrm(r) < \nrm(b)$.
\end{proof}

Using this lemma, we define an analog of the Euclidean algorithm.

\begin{lemma}\label{EuclideanAlgorithm}
Suppose $\OO$ is a $\ddagger$-order that covers $\RR^3$ by unit balls. Let $a,b \in \OO$ such that $\overline{b}a \in \OO \cap H^+$ and $b \neq 0$. Consider any sequence defined recursively by:

\begin{equation*}
	\begin{aligned}
	r_0 &= a \\
	r_1 &= b \\
	r_2 &= a - b q_1 \\
	&\vdots \\
	r_l &= r_{l - 2} - r_{l - 1}q_{l - 1},
	\end{aligned}\qquad
	\begin{aligned}
	s_0 &= 1 \\
	s_1 &= 0 \\
	s_2 &= s_0 - s_1 q_1 \\
	&\vdots \\
	s_l &= s_{l - 2} - s_{l - 1}q_{l - 1} 
	\end{aligned}\qquad
	\begin{aligned}
	t_0 &= 0 \\
	t_1 &= 1 \\
	t_2 &= t_0 - t_1 q_1 \\
	&\vdots \\
	t_l &= t_{l - 2} - t_{l - 1} q_{l - 1}
	\end{aligned}
\end{equation*}
	
\noindent where $q_l, r_l \in \OO \cap H^+$ and $q_l$ is chosen such that $nrm(b) > nrm(r_0) > nrm(r_1) > \ldots$. Then:

	\begin{enumerate}
		\item $r_l = 0$ for sufficiently large $l$.
		\item If $k$ is the smallest value for which $r_{k + 1} = 0$, then $r_k$ is a left gcd of $a$ and $b$.
		\item $a s_k + b t_k = r_k$, and $s_k \overline{t_k} \in \OO \cap H^+$.
		\item For all integers $l$ such that $k + 1 \geq l \geq 0$,
			\begin{align*}
			\begin{pmatrix} s_{l - 1} & (-1)^{l - 1} s_l \\ t_{l - 1} & (-1)^{l - 1} t_l \end{pmatrix} &\in E^\ddagger(2,\OO).
			\end{align*}
	\end{enumerate}
\end{lemma}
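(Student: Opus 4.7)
The plan is to establish all four claims by a coordinated induction on $l$, modeled on the classical extended Euclidean algorithm, with Lemma~\ref{DivisionLemma} as the only substantive arithmetic input. Claim~(1) is immediate because $\nrm$ maps $\OO$ into $\ZZ_{\geq 0}$ and the choice of $q_l$ via Lemma~\ref{DivisionLemma} forces the norms $\nrm(r_l)$ to strictly decrease, so the sequence must reach zero in finitely many steps. For claim~(2), one direction follows from~(3): $r_k = a s_k + b t_k$ is a left $\OO$-linear combination of $a$ and $b$, so every common left divisor of $a,b$ divides $r_k$ on the left. The reverse direction is back-substitution: $r_{k+1} = r_{k-1} - r_k q_k = 0$ gives $r_{k-1} = r_k q_k$, so $r_k$ left-divides $r_{k-1}$, and iterating $r_l = r_{l-2} - r_{l-1} q_{l-1}$ downward yields $r_k \mid r_l$ for all $l \leq k$, in particular $r_k \mid a$ and $r_k \mid b$.

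Claim~(3) is a direct induction: the base cases $l = 0, 1$ hold by the initial data, and the step is
\[ a s_l + b t_l = a(s_{l-2} - s_{l-1} q_{l-1}) + b(t_{l-2} - t_{l-1} q_{l-1}) = r_{l-2} - r_{l-1} q_{l-1} = r_l. \]
For the side assertion $s_k \overline{t_k} \in \OO \cap H^+$, I would prove the stronger claim $s_l \overline{t_l} \in H^+$ for all $l$ by simultaneous induction: expanding via the recursions gives
\[ s_l \overline{t_l} = s_{l-2} \overline{t_{l-2}} + \nrm(q_{l-1})\, s_{l-1} \overline{t_{l-1}} - \bigl(s_{l-2} \overline{q_{l-1}}\, \overline{t_{l-1}} + s_{l-1} q_{l-1} \overline{t_{l-2}}\bigr), \]
where the first two summands lie in $H^+$ by induction (using $\nrm(q_{l-1}) \in \RR$ and $\RR \cdot H^+ \subset H^+$), and the parenthesized cross-term is verified to be $\ddagger$-fixed using $q_{l-1} \in H^+$ and $\overline{H^+} = H^+$.

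Claim~(4) is the heart of the matter. Setting $M_l := \left(\begin{smallmatrix} s_{l-1} & (-1)^{l-1} s_l \\ t_{l-1} & (-1)^{l-1} t_l \end{smallmatrix}\right)$, I would verify by direct computation the recursion
\[ M_{l+1} = M_l \cdot \begin{pmatrix} 0 & (-1)^l \\ (-1)^{l-1} & q_l \end{pmatrix} = M_l \cdot \begin{pmatrix} 0 & (-1)^l \\ (-1)^{l-1} & 0 \end{pmatrix} \begin{pmatrix} 1 & (-1)^{l-1} q_l \\ 0 & 1 \end{pmatrix}. \]
The upper-triangular factor on the right lies in $SL^\ddagger(2,\OO)$ because $q_l \in H^+$, and the Weyl-type factor lies in $SL(2,\ZZ)$ and so can be written as a product of three elementary matrices such as $\left(\begin{smallmatrix} 1 & -1 \\ 0 & 1 \end{smallmatrix}\right) \left(\begin{smallmatrix} 1 & 0 \\ 1 & 1 \end{smallmatrix}\right) \left(\begin{smallmatrix} 1 & -1 \\ 0 & 1 \end{smallmatrix}\right)$, each of which is in $E^\ddagger(2,\OO)$. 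Since $M_1 = I$, induction then gives $M_l \in E^\ddagger(2,\OO)$ for every $l$.

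The principal obstacle is the $\ddagger$-compatibility bookkeeping: the alternating signs $(-1)^{l-1}$ in the second column of $M_l$ are precisely what make the quasi-determinant $a d^\ddagger - b c^\ddagger = +1$ rather than $-1$, and ensuring that each $M_l$ genuinely lies in $SL^\ddagger(2,\OO)$ (rather than merely in some larger overgroup) requires the invariants $s_{l-1} s_l^\ddagger,\, t_{l-1} t_l^\ddagger \in H^+$ to persist through the recursion. These follow by a parallel induction using the identity $(y x y^\ddagger)^\ddagger = y x y^\ddagger$ whenever $x \in H^+$, together with the $\ddagger$-invariance of $H^+$. Apart from this bookkeeping, every remaining step is a straightforward symbol-push modeled on the commutative extended Euclidean algorithm.
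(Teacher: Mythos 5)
Your overall architecture coincides with the paper's: claim (1) from the strict descent of integer norms, claim (2) by back-substitution, claim (3) by the obvious induction, and claim (4) by exhibiting $M_{l+1}$ as $M_l$ times a triangular/Weyl factor and inducting from $M_1 = I$; the paper uses the identity $\left(\begin{smallmatrix} s_{l-1} & -s_l \\ t_{l-1} & -t_l\end{smallmatrix}\right)\left(\begin{smallmatrix} 0 & 1 \\ -1 & q_l \end{smallmatrix}\right) = \left(\begin{smallmatrix} s_l & s_{l+1} \\ t_l & t_{l+1}\end{smallmatrix}\right)$, which is your decomposition up to signs. Your use of the B\'ezout identity from (3) to get ``every common left divisor divides $r_k$'' is a harmless variant of the paper's second induction. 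Also note that the ``$\ddagger$-compatibility bookkeeping'' you flag as the principal obstacle in (4) is automatic: since $\OO^\ddagger = \OO$, the set $SL^\ddagger(2,\OO)$ is a group, so once each triangular factor is checked to lie in $SL^\ddagger(2,\OO)$ (immediate, as the off-diagonal entries lie in $\OO \cap H^+$), every product $M_l$ lies in $SL^\ddagger(2,\OO)$ with no separate induction on the invariants $s_{l-1}s_l^\ddagger$, $t_{l-1}t_l^\ddagger$.

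The one genuine gap is your treatment of the sub-claim $s_k \overline{t_k} \in \OO \cap H^+$. Your expansion of $s_l\overline{t_l}$ is correct, and the first two summands are indeed handled by the inductive hypothesis, but the cross-term $s_{l-2}\overline{q_{l-1}}\,\overline{t_{l-1}} + s_{l-1}q_{l-1}\overline{t_{l-2}}$ being $\ddagger$-fixed does \emph{not} follow from $s_{l-1}\overline{t_{l-1}}, s_{l-2}\overline{t_{l-2}} \in H^+$ and $q_{l-1} \in H^+$ alone: applying the anti-automorphism $\ddagger$ to the cross-term produces expressions such as $\overline{t_{l-1}^\ddagger}\,\overline{q_{l-1}}\,s_{l-2}^\ddagger$, which your inductive hypothesis says nothing about. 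To close the induction you would have to carry additional mixed invariants (e.g.\ that $s_{l-2}x\overline{t_{l-1}} - s_{l-1}x\overline{t_{l-2}}$ lands in $H^+$ for the relevant $x$), at which point the argument is no longer the ``straightforward symbol-push'' you describe. The paper avoids this entirely by deriving $s_k\overline{t_k} \in \OO \cap H^+$ as an immediate consequence of claim (4): membership of $M_{k+1}$ in $SL^\ddagger(2,\OO)$, together with the Hermitian conditions satisfied by such a matrix and its inverse, already forces the required relation between $s_k$ and $t_k$. You should either adopt that route or spell out the enlarged system of inductive invariants.
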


\begin{proof}
The first part of the claim follows immediately from the fact that there are only finitely many integers between $\nrm(b)$ and $0$. For the second part, we first show that $r_k$ is a left divisor of $a$ and $b$. We prove this inductively, by showing that $r_k$ divides $r_{k'}$ for all $k' < k$.

Note that since $r_{k + 1} = 0$, we must have $r_{k - 1} = r_k q_{k + 1}$ and of course $r_k = r_k$, which proves the base cases. Now, assume that $r_k$ is a left divisor of $r_{k - l}$ and $r_{k - l + 1}$; we must show $r_k$ is a left divisor of $r_{k - l - 1}$. This is immediate, since
	\begin{align*}
	r_{k - l - 1} &= r_{k - l}q_{k - l} + r_{k - l + 1} \\
	&= r_k \left(r_{k - l}' q_{k - l} + r_{k - l + 1}'\right).
	\end{align*}
	
\noindent Next, we show that if $r$ is a left divisor of $a$ and $b$, then $r$ is a left divisor of $r_k$. We also prove this inductively, by showing that $r$ is a left divisor of $r_k$ for all $k$. The base cases of $a$ and $b$ is known by assumption, so it suffices to assume that $r$ is a left divisor of $r_{l - 1}$ and $r_{l - 2}$ and to prove that it is a left divisor of $r_l$. This is also immediate, since
	\begin{align*}
	r_l &= r_{l - 2} - r_{l - 1} q_{l - 1} \\
	r_l &= r\left(r_{l - 2}' - r_{l - 1}'q_{l - 1}\right).
	\end{align*}
	
\noindent To prove that $as_k + b t_k = r_k$, we prove $as_l + b t_l = r_l$ for all $l$ by induction. The base cases are easy to check
	\begin{align*}
	s_0 a + t_0 b &= a = r_0 \\
	s_1 a + t_1 b &= b = r_1
	\end{align*}
	
\noindent So, we assume that $as_{l - 1} + bt_{l - 1} = r_{l - 1}$, and $a s_{l - 2} + b t_{l - 2} = r_{l - 2}$ and show that claim holds for $l$. Indeed
	\begin{align*}
	a s_l + b t_l &= a\left(s_{l - 2} - s_{l - 1}q_{l - 1}\right) + b\left(t_{l - 2} - t_{l - 1}q_{l - 1}\right) \\
	&= \left(as_{l - 2} + bt_{l - 2}\right) - \left(as_{l - 1} + bt_{l - 1}\right)q_{l - 1} \\
	&= r_{l - 2} - r_{l - 1}q_{l - 1} \\
	&= r_l.
	\end{align*}
	
\noindent Next, note that
	\begin{align*}
	\begin{pmatrix} s_{l - 1} &  -s_l \\ t_{l - 1} & -t_l \end{pmatrix}\begin{pmatrix} 0 & 1 \\ - 1 & q_l \end{pmatrix} = \begin{pmatrix} s_l & s_{l + 1} \\ t_l & t_{l + 1} \end{pmatrix} \\
	\begin{pmatrix} s_{l - 1} &  s_l \\ t_{l - 1} & t_l \end{pmatrix}\begin{pmatrix} 0 & -1 \\ 1 & q_l \end{pmatrix} = \begin{pmatrix} s_l & -s_{l + 1} \\ t_l & -t_{l + 1} \end{pmatrix}.
	\end{align*}
	
\noindent Since 
	\begin{align*}
	\begin{pmatrix} s_0 & s_1 \\ t_0 & t_1 \end{pmatrix} = \begin{pmatrix} 1 & 0 \\ 0 & 1 \end{pmatrix} &\in E^\ddagger(2,\OO)
	\end{align*}
	
\noindent we must have
	\begin{align*}
	\begin{pmatrix} s_{l - 1} & (-1)^{l - 1} s_l \\ t_{l - 1} & (-1)^{l - 1} t_l \end{pmatrix} &\in E^\ddagger(2,\OO),
	\end{align*}
	
\noindent for all $l$. It is an immediate consequence that $s_k \overline{t_k} \in \OO \cap H^+$.
\end{proof}

Much like the existence of the standard Euclidean algorithm implies that $SL(2,\ZZ)$ is generated by elementary matrices, the existence of the algorithm defined by Lemma \ref{EuclideanAlgorithm} implies that $SL^\ddagger(2,\OO) = E^\ddagger(2,\OO)$.

\begin{lemma}\label{ConsequenceOfEuclideanAlgorithm}
If $\OO$ is a $\ddagger$-order that covers $\RR^3$ by unit balls, then $SL^\ddagger(2,\OO) = E^\ddagger(2,\OO)$.
\end{lemma}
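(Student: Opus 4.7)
The plan is to mirror the classical proof that $SL(2,R) = E(2,R)$ for a Euclidean ring $R$: given $\gamma = \begin{pmatrix} a & b \\ c & d \end{pmatrix} \in SL^\ddagger(2,\OO)$, I will use Lemma \ref{EuclideanAlgorithm} to reduce $\gamma$ by right-multiplication by an element of $E^\ddagger(2,\OO)$ to a lower triangular matrix, which lies in $E^\ddagger(2,\OO)$ by definition.

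The preparatory step is to check that Lemma \ref{EuclideanAlgorithm} actually applies to the first row $(a,b)$. Its hypothesis is $\overline{b}a \in \OO \cap H^+$, which is not part of the definition of $SL^\ddagger(2,\OO)$; what is given is $ab^\ddagger \in H^+$. I claim these two conditions are equivalent. Writing $a = a_0 + a_1 i + a_2 j + a_3 ij$ and similarly for $b$, a direct expansion in the basis $\{1,i,j,ij\}$ shows that the $ij$-component of $ab^\ddagger$ and that of $\overline{b}a$ both reduce to the same linear expression $a_1 b_2 - a_2 b_1 + a_3 b_0 - a_0 b_3$; since membership in $H^+$ is exactly the vanishing of the $ij$-component, the equivalence follows. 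Hence the algorithm applies to $(a,b)$ provided $b \neq 0$; if $b = 0$, then $\gamma$ is already lower triangular and there is nothing to prove.

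Applying Lemma \ref{EuclideanAlgorithm} to $(a,b)$ produces a matrix
$$M = \begin{pmatrix} s_k & (-1)^k s_{k+1} \\ t_k & (-1)^k t_{k+1} \end{pmatrix} \in E^\ddagger(2,\OO)$$
with $r := a s_k + b t_k$ a left gcd of $a$ and $b$, and with $a s_{k+1} + b t_{k+1} = 0$. Right-multiplying $\gamma$ by $M$ gives
$$\gamma M = \begin{pmatrix} r & 0 \\ c s_k + d t_k & (-1)^k(c s_{k+1} + d t_{k+1}) \end{pmatrix} \in SL^\ddagger(2,\OO),$$
and the $SL^\ddagger$ determinant relation applied to $\gamma M$ forces $r \in \OO^\times$. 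Thus $\gamma M$ is a lower triangular matrix inside $SL^\ddagger(2,\OO)$, hence lies in $E^\ddagger(2,\OO)$ by definition, and therefore $\gamma = (\gamma M)\,M^{-1} \in E^\ddagger(2,\OO)$.

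There is no substantive obstacle beyond the preliminary equivalence $ab^\ddagger \in H^+ \iff \overline{b}a \in H^+$, which is a short explicit computation on coefficients. The covering hypothesis on $\OO$ enters only through Lemma \ref{EuclideanAlgorithm}; everything else is a direct transcription of the standard Euclidean-ring argument for $SL(2,R) = E(2,R)$.
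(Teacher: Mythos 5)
Your proof is correct and follows essentially the same route as the paper's: both verify that $\overline{b}a \in \OO \cap H^+$ so that Lemma \ref{EuclideanAlgorithm} applies to the first row, and both then use the matrices from part (4) of that lemma to reduce $\gamma$ to a triangular element of $SL^\ddagger(2,\OO)$. Your organization of the reduction (right-multiplying by $M$ to kill the $b$-entry directly, with the coefficient check that $ab^\ddagger \in H^+$ is equivalent to $\overline{b}a \in H^+$) is a slightly more streamlined packaging than the paper's, which derives $\overline{b}a \in H^+$ from the formula for $\gamma^{-1}$ and passes through a companion matrix and its inverse, but the substance is identical.
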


\begin{proof}
Choose any element
	\begin{align*}
	\gamma = \begin{pmatrix} a & b \\ c & d \end{pmatrix} \in SL^\ddagger(2,\OO).
	\end{align*}
	
\noindent If $b = 0$, then it follows that $\gamma \in E^\ddagger(2,\OO)$. So, assume $b \neq 0$, and consider $\overline{b}a$. Since
	\begin{align*}
	\gamma^{-1} &= \pm\begin{pmatrix} d^\ddagger & -b^\ddagger \\ -c^\ddagger & a^\ddagger \end{pmatrix},
	\end{align*}
	
\noindent we must have
	\begin{align*}
	-b^\ddagger\left(a^{-1}\right)^{\ddagger} &\in H^+
	\end{align*}
	
\noindent and so $\overline{b}a \in \OO \cap H^+$. We can therefore apply the algorithm described in Lemma \ref{EuclideanAlgorithm}. Since $a d^\ddagger - b c^\ddagger = 1$, we see that the greatest common divisor $g$ of $a,b$ must be a unit in $\OO$. Therefore
	\begin{align*}
	\begin{pmatrix} a & b \\ -{g^{-1}}^\ddagger t_k^\ddagger & {g^{-1}}^\ddagger s_k^\ddagger \end{pmatrix} \in SL^\ddagger(2,\OO)
	\end{align*}

\noindent and since
	\begin{align*}
	\begin{pmatrix} 1 & 0 \\ z & 1 \end{pmatrix} \begin{pmatrix} a & b \\ -{g^{-1}}^\ddagger t_k^\ddagger & {g^{-1}}^\ddagger s_k^\ddagger \end{pmatrix} = \begin{pmatrix} a & b \\ c & d \end{pmatrix}
	\end{align*}
	
\noindent for some $z \in \OO \cap H^+$, it suffices to show that this new matrix is in $E^\ddagger(2,\OO)$. We can simplify this even further by noting that
	\begin{align*}
	\begin{pmatrix} a & b \\ -{g^{-1}}^\ddagger t_k^\ddagger & {g^{-1}}^\ddagger s_k^\ddagger \end{pmatrix}^{-1} &= \begin{pmatrix} s_k g^{-1} & -b^\ddagger \\ t_k g^{-1} & a^\ddagger\end{pmatrix} \\
	&= \begin{pmatrix} s_k & -b^\ddagger {g^{-1}}^\ddagger \\ t_k & a^\ddagger {g^{-1}}^\ddagger \end{pmatrix}\begin{pmatrix} g^{-1} & 0 \\ 0 & g^\ddagger \end{pmatrix},
	\end{align*}

\noindent so the problem is equivalent to showing that
	\begin{align*}
	\begin{pmatrix} s_k & -b^\ddagger {g^{-1}}^\ddagger \\ t_k & a^\ddagger {g^{-1}}^\ddagger \end{pmatrix} \in E^\ddagger(2,\OO).
	\end{align*}
	
\noindent But, since
	\begin{align*}
	\begin{pmatrix} s_k & -b^\ddagger {g^{-1}}^\ddagger \\ t_k & a^\ddagger {g^{-1}}^\ddagger \end{pmatrix} &= \begin{pmatrix} s_k & (-1)^k s_{k + 1} \\ t_k & (-1)^k t_{k + 1} \end{pmatrix}\begin{pmatrix} 1 & z \\ 0 & 1 \end{pmatrix}
	\end{align*}

\noindent for some $z \in \OO \cap H^+$, we are done.
\end{proof}

\begin{theorem}\label{UnitBallsTheorem}
If $\OO \subset \left(\frac{-m,-n}{\QQ}\right)$ is a maximal $\ddagger$-order with Euclidean intersection, then it covers $\RR^3$ by unit balls if and only if $(m,n)$ is on the following table.
	\begin{align*}
	\begin{array}{l|l}
	m & n \\ \hline
	1 & 1,2,3,6,7,10 \\
	2 & 1,2,3,5,6,10,14,26 \\
	3 & 1,2,6,15 \\
	7 & 1 \\
	11 & 22,66,77,110,143
	\end{array}
	\end{align*}
\end{theorem}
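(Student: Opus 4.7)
The plan is to reduce the covering condition to a concrete computation about the rank three lattice $L_\OO := \OO \cap H^+$ sitting inside $H^+ \otimes_\QQ \RR \cong \RR^3$, equipped with the positive definite quadratic form $\nrm|_{H^+}(x + yi + zj) = x^2 + my^2 + nz^2$. By definition $\OO$ covers $\RR^3$ by unit balls exactly when the covering radius of $L_\OO$ with respect to $\nrm$ is strictly less than $1$. First I would tabulate $L_\OO$ for each order in Theorem \ref{All Orders with Euclidean Intersection}: since $H^+$ is cut out by the vanishing of the $ij$-coefficient, $L_\OO$ is obtained by dropping any basis element of $\OO$ whose $ij$-component is nonzero, leaving a rank three sublattice whose Gram matrix is easily read off. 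When multiple maximal orders correspond to the same pair $(m,n)$, a direct check shows their lattices $L_\OO$ are related by a symmetry of the quadratic form (for instance $j \mapsto -j$), so the covering radius depends only on $(m,n)$.

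Second, I would use the elementary bound that any lattice covering $\RR^3$ by open unit balls must have covolume strictly less than $\frac{4\pi}{3}$, the volume of the unit ball. Computing the Gram matrix of $L_\OO$ in each family of Table \ref{Orders with Euclidean intersection} reveals that the covolume grows like a constant multiple of $\sqrt{n}$, with the constant depending on $m$ and on the congruence class of $n$ (for example, in the $m = 11$ family with $11 \mid n$ and $k^2 \equiv -n/11 \pmod{11}$, the covolume works out to $\sqrt{n/44}$). Imposing this covolume to be less than $\frac{4\pi}{3}$ gives an explicit upper bound on $n$ for each fixed $m \in \{1,2,3,7,11\}$, cutting the possibilities down to a finite list of candidate pairs.

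Third, for each remaining candidate I would compute the covering radius directly. For pairs $(m,n)$ on the claimed list I would exhibit an explicit covering by unit balls, equivalently verify that the deepest vertex of the Voronoi cell of $L_\OO$ has squared $\nrm$-length strictly less than $1$. For each candidate that survives the covolume bound but is not on the list, I would display an explicit deep hole $v \in H^+ \otimes \RR$ with $\min_{\lambda \in L_\OO} \nrm(v - \lambda) \geq 1$; these typically sit at highly symmetric rational points of the fundamental parallelepiped.

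The main obstacle is precisely this third step. Each positive verification and each negative refutation is a short closest-vector problem in a 3-dimensional quadratic lattice, but the borderline entries on the list, most notably $(11,143)$ and $(2,26)$ where the covolume is already $\sqrt{13}/2$, leave rather little slack. Both the positive coverings and the negative deep holes must therefore be constructed by careful analysis of the Voronoi cell rather than guessed, and a single arithmetic slip could produce a misclassified $(m,n)$; structuring the case analysis so that the Voronoi vertices can be located symbolically in each family, rather than numerically, is what makes the argument rigorous.
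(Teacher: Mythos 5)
Your plan is essentially the paper's proof: reduce the covering condition to the covering radius of the rank-three lattice $\OO \cap H^+$ under the norm form and determine, symbolically within each family of Theorem \ref{All Orders with Euclidean Intersection}, whether the deep hole lies within distance $1$ of the lattice --- the paper packages exactly this computation as the existence of a real sphere orthogonal to the unit spheres centered at lattice points (Table \ref{Ghost Spheres Table}), whose radius-squared is a rational function of $n$ whose sign gives the threshold. Your covolume pre-filter is correct but redundant once the deep hole is located symbolically per family, since that formula already yields the explicit bound on $n$ in one step.
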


	\begin{table}
	\arraycolsep=1.4pt\def\arraystretch{0.5}
	\begin{align*}
	\begin{array}{l|ll}
	m & \OO \cap H^+ & G \\ \hline
	m = 1 &	\ZZ \oplus \ZZ i \oplus \ZZ j & \sqrt{\frac{n}{n - 2}}\left(2,2,1 + i + j\right) \\
				& \ZZ \oplus \ZZ i \oplus \ZZ \frac{1 + i + j}{2} & \sqrt{\frac{n^2}{n^2 - 12n + 4}}\left(4,4, 2 + 2i + \frac{n - 2}{n}j\right) \\
				& \ZZ \oplus \ZZ i \oplus \ZZ \frac{1 + j}{2} & \sqrt{\frac{n^2}{n^2 - 10n + 1}}\left(4,4, 2 + 2i + \frac{n - 1}{n}j\right) \\
				& \ZZ \oplus \ZZ i \oplus \ZZ \frac{i + j}{2} & \sqrt{\frac{n^2}{n^2 - 10n + 1}}\left(4,4, 2 + 2i + \frac{n - 1}{n}j\right) \\ \hline
	m = 2 & \ZZ \oplus \ZZ i \oplus \ZZ \frac{1 + i + j}{2} & \sqrt{\frac{n^2}{(n - 1)(n - 9)}}\left(4,4, 2 + 2i + \frac{n - 3}{n}j\right) \\
				& \ZZ \oplus \ZZ i \oplus \ZZ \frac{1 + j}{2} & \sqrt{\frac{n^2}{n^2 - 6n + 1}}\left(4,4, 2 + 2i + \frac{n - 1}{n}j\right) \\
				& \ZZ \oplus \ZZ i \oplus \ZZ \frac{i + j}{2} & \sqrt{\frac{n^2}{n^2 - 8n + 4}}\left(4,4, 2 + 2i + \frac{n - 2}{n}j\right) \\
				& \ZZ \oplus \ZZ i \oplus \ZZ \frac{2 + i + j}{4} & \sqrt{\frac{n^2}{n^2 - 36n + 100}}\left(8,8, 4 + 4i + \frac{n - 10}{n}j\right) \\
				& \ZZ \oplus \ZZ i \oplus \ZZ \frac{i + j}{4} & \sqrt{\frac{n^2}{n^2 - 28n + 36}}\left(8,8, 4 + 4i + \frac{n - 6}{n}j\right) \\
	m = 3 & \ZZ \oplus \ZZ \frac{1 + i}{2} \oplus \ZZ j & \sqrt{\frac{3n}{3n - 8}}\left(2,2, 1 + \frac{i}{3} + j\right) \\
				& \ZZ \oplus \ZZ \frac{1 + i}{2} \oplus \ZZ \frac{i + j}{3} & \sqrt{\frac{n}{n - 24}}\left(6,6, 3 + i + j\right) \\ \hline
	m = 7 & \ZZ \oplus \ZZ \frac{1 + i}{2} \oplus \ZZ j & \sqrt{\frac{7n}{7n - 12}}\left(2,2, 1 + \frac{3i}{7} + j\right) \\ \hline
	m = 11 & \ZZ \oplus \ZZ \frac{1 + i}{2} \oplus \ZZ j & \sqrt{\frac{11n}{11n - 8}}\left(2,2, 1 + \frac{5i}{11} + j\right) \\
				& \ZZ \oplus \ZZ \frac{1 + i}{2} \oplus \ZZ \frac{3i + j}{11} & \sqrt{\frac{n^2}{(n - 22)(n - 198)}}\left(22,22, 11 + 5i + \frac{n - 66}{n}j\right) \\
				& \ZZ \oplus \ZZ \frac{1 + i}{2} \oplus \ZZ \frac{4i + j}{11} & \sqrt{\frac{n^2}{n^2 - 176n + 1936}}\left(22,22, 11 + 5i + \frac{n - 44}{n}j\right) \\
				& \ZZ \oplus \ZZ \frac{1 + i}{2} \oplus \ZZ \frac{2i + j}{11} & \sqrt{\frac{n^2}{(n - 22)(n - 198)}}\left(22,22, 11 + 5i + \frac{n - 66}{n}j\right) \\
				& \ZZ \oplus \ZZ \frac{1 + i}{2} \oplus \ZZ \frac{5i + j}{11} & \sqrt{\frac{n}{n - 88}}\left(22,22, 11 + 5i + j\right) \\
				& \ZZ \oplus \ZZ \frac{1 + i}{2} \oplus \ZZ \frac{i + j}{11} & \sqrt{\frac{n^2}{n^2 - 176n + 1936}}\left(22,22, 11 + 5i + \frac{n - 44}{n}j\right)
	\end{array}
	\end{align*}
	\caption{Spheres orthogonal to unit spheres centered at points of $\OO \cap H^+$.}
	\label{Ghost Spheres Table}
	\end{table}

\begin{proof}
Each of the orders with Euclidean intersection covers $\RR^3$ by unit balls if and only if there does not exist a sphere orthogonal to unit spheres centered at the lattice points of $\OO \cap H^+$. These spheres are compiled in Table \ref{Ghost Spheres Table}, with one representative for every pair $\OO,j\OO j^{-1}$. Note that each of these spheres exists if and only if the expression in the square root is positive---the values of $(m,n)$ given in the statement of the theorem are precisely those for which the inequality holds true.
\end{proof}

\section{The $SL^\ddagger(2,\OO) \neq E^\ddagger(2,\OO)$ Case:}

We need to determine when $SL^\ddagger(2,\OO) \neq E^\ddagger(2,\OO)$. We begin by giving a sufficient condition that applies for all maximal $\ddagger$-orders.

\begin{theorem}\label{GeneralNonEqualCondition}
Let $H$ be a definite rational quaternion algebra with involution $\ddagger$ and $\OO$ a maximal $\ddagger$-order. Let $\OO'$ be the $\ddagger$-order generated by the elements of $\OO \cap H^+$ and $\OO^\times$. If $\OO \neq \OO'$, then $E^\ddagger(2,\OO) \neq SL^\ddagger(2,\OO)$.
\end{theorem}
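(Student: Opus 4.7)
The plan is to introduce the subgroup $G' := SL^\ddagger(2,\OO) \cap \Mat(2,\OO')$, show $E^\ddagger(2,\OO) \subseteq G'$, and then exhibit an element of $SL^\ddagger(2,\OO) \setminus G'$.

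First, I would verify that every generator of $E^\ddagger(2,\OO)$ has entries in $\OO'$. By inspection, an upper triangular matrix in $SL^\ddagger(2,\OO)$ must take the form $\begin{pmatrix} u & u\tau \\ 0 & (u^\ddagger)^{-1} \end{pmatrix}$ with $u \in \OO^\times$ and $\tau \in \OO \cap H^+$, with an analogous description for lower triangular matrices. Both $\OO^\times$ and $\OO \cap H^+$ lie in $\OO'$ by construction, and since $\OO'$ is closed under multiplication (as a $\ddagger$-order), every entry of every generator lies in $\OO'$. Hence every product continues to have entries in $\OO'$, and $E^\ddagger(2,\OO) \subseteq G'$.

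Second, assuming $\OO \neq \OO'$, I would exhibit a matrix in $SL^\ddagger(2,\OO) \setminus G'$. Fix $x \in \OO \setminus \OO'$. Since $H$ is a division algebra, $x$ is invertible in $H$, so one may try to complete $x$ to a matrix $\gamma = \begin{pmatrix} x & b \\ c & d \end{pmatrix} \in SL^\ddagger(2,\OO)$ satisfying $xb^\ddagger, cd^\ddagger \in H^+$ and $xd^\ddagger - bc^\ddagger = 1$. The condition $xb^\ddagger \in H^+$ cuts out a three-dimensional $\QQ$-subspace $V_x = H^+(x^\ddagger)^{-1} \subseteq H$; the goal is to find $b \in V_x \cap \OO$ for which the left $\OO$-ideals $\OO b^\ddagger$ and $\OO x^\ddagger$ are coprime. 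Given such a $b$, the equation $xd^\ddagger - bc^\ddagger = 1$ is solvable for $c, d \in \OO$, and the final constraint $cd^\ddagger \in H^+$ can be enforced by a $\ddagger$-symmetric adjustment to the solution.

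The main obstacle is establishing the existence of this $b$. The cleanest route is via strong approximation: by Theorem \ref{StrongApproximation}, the image of the coordinate-wise polynomial map $SL^\ddagger(2,\OO) \to \OO$ sending $\gamma$ to its top-left entry equals the intersection of the local images $\pi_a(SL^\ddagger(2,\OO_p))$ over all primes $p$. Using the description of $\OO_p$ given by Lemmas \ref{MostPrimesAreUnnecessary} and \ref{TwoIsNotThatBad}, I would verify that the local image at each prime is all of $\OO_p$, so that every $x \in \OO$ arises as the top-left entry of some global $\gamma \in SL^\ddagger(2,\OO)$. Taking $x \in \OO \setminus \OO'$ then produces the desired element of $SL^\ddagger(2,\OO) \setminus E^\ddagger(2,\OO)$.
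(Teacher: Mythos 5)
Your first step---that every entry of every element of $E^\ddagger(2,\OO)$ lies in $\OO'$, so that $E^\ddagger(2,\OO) \subseteq SL^\ddagger(2,\OO) \cap \Mat(2,\OO')$---is correct and is exactly the observation the paper makes. The gap is in the second step, where you try to realize a chosen $x \in \OO \setminus \OO'$ \emph{exactly} as the top-left entry of an element of $SL^\ddagger(2,\OO)$. Two claims there are unjustified. First, the assertion that $cd^\ddagger \in H^+$ "can be enforced by a $\ddagger$-symmetric adjustment" does not hold for the natural adjustment: the solutions of $xd^\ddagger - bc^\ddagger = 1$ obtained from a fixed one by left multiplication by $\left(\begin{smallmatrix} 1 & 0 \\ \tau & 1\end{smallmatrix}\right)$, $\tau \in \OO \cap H^+$, replace $cd^\ddagger$ by $cd^\ddagger + \bigl(cb^\ddagger\tau + (cb^\ddagger\tau)^\ddagger\bigr) + \tau + \tau(xb^\ddagger)\tau$, which differs from $cd^\ddagger$ by an element of $H^+$; so the obstruction (the $H^-$-component of $cd^\ddagger$) is invariant under all such adjustments, and you would need to exploit the remaining one-parameter family of non-unipotent modifications of $(c,d)$ together with an integrality argument. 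Second, the local surjectivity of the top-left-entry map onto $\OO_p$ is the entire content of the argument and is left as "I would verify"; it requires checking, for every order type in Lemmas \ref{MostPrimesAreUnnecessary} and \ref{TwoIsNotThatBad}, that every non-unit $x\in\OO_p$ admits a $b \in \OO_p \cap H^+(x^\ddagger)^{-1}$ completing to a genuine $SL^\ddagger$ row, which is substantial case work and not obviously true (for a non-unit $x$ the easy completion $\bigl(\begin{smallmatrix} x & b \\ -(b^\ddagger)^{-1} & 0 \end{smallmatrix}\bigr)$ needs a \emph{unit} $b$ in the $3$-dimensional space $H^+(x^\ddagger)^{-1}$, which need not exist).

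The paper sidesteps both problems with a trick you are missing: it takes $u \in \OO\setminus\OO'$ with $N = \nrm(u)$ and observes that the diagonal matrix $\mathrm{diag}(u, \overline{u}^\ddagger)$ has $ab^\ddagger = cd^\ddagger = 0$ and pseudo-determinant $u\overline{u} = N \equiv 1 \bmod (N-1)$, hence lies in $SL^\ddagger\left(2,\OO\otimes_\ZZ \ZZ/(N-1)\ZZ\right)$. Surjectivity of the reduction map $SL^\ddagger(2,\OO) \to SL^\ddagger\left(2,\OO\otimes_\ZZ\ZZ/(N-1)\ZZ\right)$ (the same strong approximation input you invoke, but used only for a congruence) then produces a global $\gamma$ whose top-left entry is congruent to $u$, which suffices to place it outside $\OO'$. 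In other words, you only need the top-left entry modulo $N-1$, not on the nose, and arranging a congruence is far cheaper than solving the exact completion problem. If you want to salvage your approach, you should either adopt this reduction or supply a genuine proof of the local surjectivity statement together with a correct treatment of the $cd^\ddagger \in H^+$ constraint.
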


\begin{proof}
Choose any $u \in \OO \backslash \OO'$. Let $N = \nrm(u)$. Then
	\begin{align*}
	\begin{pmatrix} u & 0 \\ 0 & \overline{u}^\ddagger \end{pmatrix} \in \begin{cases} SL^\ddagger\left(2,\OO\right) & \text{if } N = 1 \\ SL^\ddagger\left(2,\OO \otimes_\ZZ \ZZ/(N - 1)\ZZ\right) & \text{otherwise.} \end{cases}
	\end{align*}
	
\noindent In either case, however, by appealing to the fact that
	\begin{align*}
	SL^\ddagger\left(2,\OO\right) \rightarrow SL^\ddagger\left(2,\OO \otimes_\ZZ \ZZ/(N - 1)\ZZ\right)
	\end{align*}
	
\noindent is surjective, we conclude that there exists an element
	\begin{align*}
	\gamma = \begin{pmatrix} a & b \\ c & d \end{pmatrix} \in SL^\ddagger(2,\OO)
	\end{align*}
	
\noindent with $a \in \OO \backslash \OO'$. However, clearly $E^\ddagger(2,\OO) \subset SL^\ddagger(2,\OO')$, so $\gamma \in SL^\ddagger(2,\OO) \backslash E^\ddagger(2,\OO)$.
\end{proof}

	\begin{table}
	\begin{align*}
	\begin{array}{l|ll}
	m & \OO & \text{Conditions} \\ \hline
	m = 2 & \ZZ \oplus \ZZ i \oplus \ZZ \frac{i + j}{2} \oplus \ZZ \frac{2 + 2j + ij}{4} & \text{if } n/2 \equiv 1 \mod 8, \ n > 2 \\
			& \ZZ \oplus \ZZ i \oplus \ZZ \frac{i + j}{2} \oplus \ZZ \frac{2 + ij}{4} & \text{if } n/2 \equiv 3 \mod 8, \ n > 6
		\\ \hline
	m = 7	& \ZZ \oplus \ZZ \frac{1 + i}{2} \oplus \ZZ j \oplus \ZZ \frac{7j + ij}{14} & \text{if } 7|n, \ n > 7
		\\ \hline
	m \equiv 3 \mod 8 & \ZZ \oplus \ZZ \frac{1 + i}{2} \oplus \ZZ j \oplus \ZZ \frac{mj + ij}{2m} & \text{if } m|n \text{ and } \left(\frac{n/m}{m}\right) = 1, \ n > m
	\end{array}
	\end{align*}
	\caption{Orders $\OO$ that are not generated by $\OO^\times$ and $\OO\cap H^+$.}
	\label{NonEqualOrderTable}
	\end{table}

\begin{corollary}\label{NonEqualOrders}
For all of the orders in Table \ref{NonEqualOrderTable}, $SL^\ddagger(2,\OO) \neq E^\ddagger(2,\OO)$.
\end{corollary}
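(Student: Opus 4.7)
The plan is to apply Theorem~\ref{GeneralNonEqualCondition} to each of the four rows of Table~\ref{NonEqualOrderTable}: for each such order $\OO$, I will exhibit a proper containment $\OO' \subsetneq \OO$, where $\OO'$ denotes the $\ddagger$-order generated by $\OO \cap H^+$ and $\OO^\times$.

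The first step is to compute $\OO \cap H^+$. Since the involution sends $ij$ to $-ij$ and fixes $1, i, j$, the $\ddagger$-fixed part of $\OO$ consists of those elements whose $ij$-coefficient vanishes. In every row of the table, exactly one of the four basis vectors has nonzero $ij$-coefficient (the ``quarter'' basis element such as $\frac{2+2j+ij}{4}$ or $\frac{mj+ij}{2m}$), so $\OO \cap H^+$ is the free $\ZZ$-module on the remaining three basis vectors. The second step is to verify $\OO^\times \subseteq \OO \cap H^+$. Because $H$ is definite, $\OO^\times$ is a finite subgroup of the norm-one elements; using the norm form $\nrm(w + xi + yj + zij) = w^2 + mx^2 + ny^2 + mnz^2$ together with $n > m$ (which holds in every row), the coefficients of $j$ and $ij$ of any unit are forced to vanish, so every unit lies in $\OO \cap H^+$. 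Hence $\OO'$ coincides with the subring of $H$ generated by $\OO \cap H^+$.

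The third step is to compute $\OO'$ explicitly. The key products, such as $i \cdot \frac{i+j}{2} = -1 + \frac{ij}{2}$ for the $m=2$ cases and $\frac{1+i}{2} \cdot j = \frac{j+ij}{2}$ for the $m=7$ and $m \equiv 3 \bmod 8$ cases, introduce $ij$-components with denominator exactly $2$. The modular conditions on $n$ and $m$ in the table ensure that squares of basis generators remain in $\OO'$; for instance, $\left(\frac{i+j}{2}\right)^2 = \frac{-2-n}{4}$ is an integer precisely when $n \equiv 2 \pmod{4}$, which holds in the $m=2$ rows. A routine closure computation then yields $\OO' = \ZZ \oplus \ZZ i \oplus \ZZ j \oplus \ZZ \frac{ij}{2}$ in the $m=2$ cases and $\OO' = \ZZ \oplus \ZZ \frac{1+i}{2} \oplus \ZZ j \oplus \ZZ \frac{j+ij}{2}$ otherwise; in particular, every element of $\OO'$ has $ij$-coefficient lying in $\frac{1}{2}\ZZ$.

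The conclusion is then immediate: the ``quarter'' basis element of $\OO$ has $ij$-coefficient $\frac{1}{4}$ (for $m=2$) or $\frac{1}{2m}$ with $m \geq 3$ (otherwise), and neither lies in $\frac{1}{2}\ZZ$. This witnesses $\OO \setminus \OO' \neq \varnothing$, and Theorem~\ref{GeneralNonEqualCondition} then gives $E^\ddagger(2,\OO) \neq SL^\ddagger(2,\OO)$. The main obstacle is purely bookkeeping: carefully verifying that the multiplicative closure of $\OO \cap H^+$ does not escape the denominator bound $\frac{1}{2}$ on $ij$-coefficients in any of the four cases, which amounts to checking a handful of product identities under the modular hypotheses stated in Table~\ref{NonEqualOrderTable}.
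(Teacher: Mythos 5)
Your proposal is correct and follows the paper's proof essentially verbatim: verify via the norm form that $\OO^\times \subset H^+$, observe that the $\ddagger$-order generated by $\OO \cap H^+$ therefore omits the fourth basis vector (its $ij$-coefficients are confined to $\tfrac{1}{2}\ZZ$), and invoke Theorem \ref{GeneralNonEqualCondition}. One small slip that does not affect the argument: in the $m=2$ cases your explicit $\OO'$ must contain $\tfrac{i+j}{2} \in \OO \cap H^+$, so it is $\ZZ \oplus \ZZ i \oplus \ZZ \tfrac{i+j}{2} \oplus \ZZ \tfrac{ij}{2}$ rather than $\ZZ \oplus \ZZ i \oplus \ZZ j \oplus \ZZ \tfrac{ij}{2}$, but the denominator bound on the $ij$-coefficient, and hence the conclusion, still holds.
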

	
\begin{proof}
It is easy to check by computing the norm forms that for all but finitely many $n$, $\OO^\times \cap H^+ = \OO^\times$. This reduces showing that the orders in the table satisfy this property to a finite computation. However, if $\OO^\times \cap H^+ = \OO^\times$, then by Theorem \ref{GeneralNonEqualCondition} it suffices to note that none of the above orders are generated by elements in $\OO \cap H^+$.
\end{proof}

Apart from this criterion that allows us to establish that $\mathcal{S}_{\OO,j}$ is tangency-disconnected, we also have a means of proving that it is topologically disconnected, using the concept of a "ghost sphere."

\begin{definition}
Let $\OO$ be a maximal $\ddagger$-order of a definite quaternion algebra over $\QQ$. A \emph{ghost sphere} is a sphere in $\RR^3$ that does not intersect any sphere in $\mathcal{S}_{\OO,j}$.
\end{definition}

The term ``ghost sphere" comes from Stange's terminology for circle packings \cite{Stange2017}. Since the ghost sphere does not belong to the packing, it defines a boundary splitting the packing into two disconnected components. The same is true of all of the translates of the ghost sphere, and so the existence of a ghost sphere automatically proves that $\mathcal{S}_{\OO,j}$ splits into infinitely many disconnected components.

\begin{lemma}\label{GhostSphereLemma}
Let $\OO$ be a maximal $\ddagger$-order of a rational quaternion algebra $H = \left(\frac{-m,-n}{\QQ}\right)$, such that $\OO$ has Euclidean intersection. Suppose that $\OO$ is not one of the orders listed in Table \ref{NonEqualOrderTable}, nor is
	\begin{align*}
	\OO = \ZZ \oplus \ZZ \frac{1 + i}{2} \oplus \ZZ \frac{ti + j}{11} \oplus \ZZ \frac{11j + ij}{22} \subset \left(\frac{-11,-n}{\QQ}\right).
	\end{align*}
	
\noindent If $\OO$ is does not cover $\RR^3$ by unit balls, then $\mathcal{S}_{\OO,j}$ contains a ghost sphere.
\end{lemma}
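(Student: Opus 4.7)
The plan is to take the sphere $G$ described by the row of Table \ref{Ghost Spheres Table} corresponding to $\OO \cap H^+$ as the candidate ghost sphere, and verify that $G$ is disjoint from every sphere in $\mathcal{S}_{\OO,j}$. By the proof of Theorem \ref{UnitBallsTheorem}, the hypothesis that $\OO$ does not cover $\RR^3$ by unit balls is equivalent to the expression under the square root in that row being positive, which makes $G$ a real sphere whose inversive coordinates $\inv(G)$ are explicitly given, and which by construction is orthogonal to every unit sphere centered at a point of $\OO \cap H^+$.

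The verification that $G$ is a ghost sphere amounts, via Lemma \ref{DefinitionOfInversiveCoordinates}, to showing $|b_{H,j}(\inv(G),\inv(\gamma))| > \nrm(j)$ for every $\gamma \in SL^\ddagger(2,\OO)$. I would reduce this infinite check to a finite one in two stages. First, by Lemma \ref{AlternativeAction} the bilinear form $b_{H,j}$ is covariant under the $SL^\ddagger(2,\OO)$-action, so I may translate $\gamma \hat{S}_j$ into a standard position using the upper/lower triangular generators of $E^\ddagger(2,\OO)$ coming from $\OO \cap H^+$ together with the rotations from $\OO^\times$. Second, in standard position, Lemma \ref{IntersectionsWithPlane} and Theorem \ref{IntersectionsOfSpheres} give an explicit parametrization of the spheres near $\hat{S}_j$, so that the base check reduces to showing $G$ does not intersect $\hat{S}_j$ itself nor any sphere tangent to $\hat{S}_j$ at a lattice point with unit curvature---both of which follow from the orthogonality property of $G$ recorded in Table \ref{Ghost Spheres Table}.

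The main obstacle is that the reduction leverages $E^\ddagger(2,\OO)$ together with the unit rotations, whereas $SL^\ddagger(2,\OO)$ may be strictly larger. When it is, the extra elements can produce spheres in $\mathcal{S}_{\OO,j}$ not reachable by the local generators, and the orthogonality condition is no longer sufficient to rule out an intersection with $G$. The orders in Table \ref{NonEqualOrderTable} are precisely the ones for which extra units create this problem---this is the content of Corollary \ref{NonEqualOrders}---and the exceptional order in $\left(\frac{-11,-n}{\QQ}\right)$ is the analogous arithmetic failure at the prime $11$, stemming from the two isomorphism classes of maximal $\ddagger$-orders observed in Lemma \ref{MostPrimesAreUnnecessary}. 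Excluding these cases is what makes the triangular/rotation reduction airtight and lets the orthogonality of $G$ propagate to the conclusion that $G$ is indeed a ghost sphere.
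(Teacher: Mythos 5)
There is a genuine gap: your reduction step is both circular and, as stated, not valid. You propose to move $\gamma\hat{S}_j$ into ``standard position'' using the upper/lower triangular generators and the unit rotations, i.e.\ using $E^\ddagger(2,\OO)$ (augmented by $\OO^\times$). But that only accounts for the $E^\ddagger(2,\OO)$-orbit of $\hat{S}_j$, whereas the lemma must control all of $\mathcal{S}_{\OO,j}=SL^\ddagger(2,\OO)\hat{S}_j$; and in the regime of the lemma ($\OO$ does \emph{not} cover $\RR^3$ by unit balls) one cannot assume $E^\ddagger(2,\OO)=SL^\ddagger(2,\OO)$ --- indeed the whole point of producing a ghost sphere is to prove, via Theorem \ref{IntersectionGraph}, that these groups differ. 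Moreover, even on the $E^\ddagger$-orbit the covariance of $b_{H,j}$ moves both arguments: $b_{H,j}(\inv(G),\inv(\gamma))=b_{H,j}(\inv(\delta G),\inv(\delta\gamma))$, and $\delta$ does not stabilize $G$, so one cannot reduce to checking $G$ against spheres in standard position. Finally, orthogonality of $G$ to the unit spheres centered at $\OO\cap H^+$ is not by itself sufficient: the paper's own example with $H=\left(\frac{-7,-7}{\QQ}\right)$ exhibits an order with Euclidean intersection for which the orthogonal sphere \emph{does} meet a sphere of $\mathcal{S}_{\OO,j}$, so some genuinely arithmetic input beyond orthogonality is required.

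The paper's actual mechanism is a strong approximation computation, which your proposal never invokes. One observes that $\gamma\mapsto \frac{2}{\hat{\kappa}}\,b_{H,j}\left(\hat{x},\inv(\gamma)\right)$ is a linear map with integer coefficients in the inversive coordinates of $\gamma$, hence by Theorem \ref{StrongApproximation} its image on $SL^\ddagger(2,\OO)$ equals the intersection of its images on the localizations $SL^\ddagger(2,\OO_p)$. These local images are computed prime by prime (using the local classification of Lemmas \ref{MostPrimesAreUnnecessary} and \ref{TwoIsNotThatBad} and the congruences forced on $\tilde{\kappa}_j,\tilde{\kappa}'_j,\tilde{\xi}_j$), yielding an explicit arithmetic progression as in Table \ref{GhostSphereNonIntersections}; one then checks directly that this progression, rescaled by the normalization constant of $\hat{x}$, avoids the interval $[-n,n]$, which by Lemma \ref{DefinitionOfInversiveCoordinates} is exactly the non-intersection condition. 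Your intuition about which orders must be excluded is roughly right in spirit, but the exclusions are not what ``makes the reduction airtight''; they mark the cases where this local-global computation either fails to give a progression missing $[-n,n]$ or is handled by entirely separate arguments (Corollary \ref{NonEqualOrders} and Theorem \ref{Eleven Is The Worst}).
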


\begin{proof}
If $\OO$ does not cover $\RR^3$ by unit balls, then there exists a sphere that is orthogonal to unit spheres centered on points in $\OO \cap H^+$, as defined in Table \ref{Ghost Spheres Table}. Under the given conditions, we shall prove that this sphere is a ghost sphere. Note that, in general, this sphere is defined by the fact that if
	\begin{align*}
	\OO = \ZZ \oplus \ZZ e_1 \oplus \ZZ e_2 \oplus \ZZ e_3,
	\end{align*}
	
\noindent and the inversive coordinates of this sphere are given by $\hat{x} = (\hat{\kappa}, \hat{\kappa}', \hat{\xi})$ then
	\begin{align*}
	b_{H,j}\left(\hat{x},(1,\alpha,e_1)\right) = b_{H,j}\left(\hat{x},(1,\beta,e_2)\right),
	\end{align*}
	
\noindent where $\alpha,\beta$ are integers such that
	\begin{align*}
	q_{H,j}\left((1,\alpha,e_1)\right) = q_{H,j}\left((1,\beta,e_2)\right) = 1.
	\end{align*}
	
\noindent From this, we conclude that
	\begin{align*}
	b_{H,j}\left(\hat{x},(\kappa,\kappa',a + b e_1 + c e_2)\right) &= b_{H,j}\left(\hat{x},(\kappa + \kappa' - a - b\alpha - c\beta)\right) \\
	&= -\frac{\hat{\kappa}}{2}\left(\kappa + \kappa' - a - b(\alpha + 1) - c(\beta + 1)\right).
	\end{align*}
	
\noindent Our goal is to use this to prove that for all $\gamma \in SL^\ddagger(2,\OO)$,
	\begin{align*}
	b_{H,j}\left(\hat{x},\inv(\gamma)\right) \notin [-n,n],
	\end{align*}
	
\noindent which is exactly equivalent to the sphere corresponding to the inversive coordinates $\hat{x}$ being a ghost sphere. From the above, we have shown that
	\begin{align*}
	\gamma \mapsto \frac{2}{\hat{\kappa}}b_{H,j}\left(\hat{x},\inv(\gamma)\right)
	\end{align*}
	
\noindent is a polynomial map---indeed, this map is linear, with integer coefficients. Consequently, we can completely determine its image by considering the localizations at various primes $p$. First, note that if $p$ is odd and it is not the case that $p|\nrm(j)$, $p\nmid \disc(H)$, and $-\disc(\ddagger) = \left(\QQ_p^\times\right)^2$, then we know there is a unique maximal $\ddagger$-order in $H_p$ containing $j$, and that is
	\begin{align*}
	\OO_p = \ZZ_p \oplus \ZZ_p i \oplus \ZZ_p j \oplus \ZZ_p \frac{ij}{p^k},
	\end{align*}
	
\noindent where $k$ is either $0$ or $1$, with the latter occurring precisely when $p|\nrm(i),\nrm(j)$. In either case, one checks that if $p|\nrm(j)$, then
	\begin{align*}
	\tilde{\kappa}_j'(\gamma) &= 2\re\left(a\overline{b}j\right) \in p\ZZ_p \\
	\tilde{\kappa}_j(\gamma) &= 2\re\left(c\overline{d}j\right) \in p\ZZ_p
	\end{align*}
	
\noindent and
	\begin{align*}
	\tilde{\xi}_j(\gamma) &= j + aj\overline{d} - bj\overline{c} - (ad^\ddagger - bc^\ddagger)j \\
	&= j + a\left((j\overline{d}j^{-1}\right)j - b\left((j\overline{c}j^{-1}\right)j \\
	&\in p\ZZ_p \oplus p \ZZ_p i \oplus \left(1 + p\ZZ_p\right)j.
	\end{align*}
	
\noindent We also know that $\tilde{\xi}_j(\gamma) = a + be_1 + ce_2$, and from this we compute that
	\begin{align*}
	c &\in \frac{1}{\pi_j(e_2)} + p \ZZ_p \\
	b &\in -\frac{\pi_i(e_2)}{\pi_i(e_1)\pi_j(e_2)} + p\ZZ_p \\
	a &\in \frac{\pi_i(e_1)\re(e_2) - \pi_i(e_2)\re(e_1)}{\pi_i(e_1)\pi_j(e_2)} + p\ZZ_p.
	\end{align*}
	
\noindent Taking all of this together, we have that
	\begin{align*}
	\frac{2}{\hat{\kappa}}b_{H,j}\left(\hat{x},\inv(\gamma)\right) &\in \frac{\pi_i(e_1)\re(e_2) - \pi_i(e_2)\re(e_1)}{\pi_i(e_1)\pi_j(e_2)} - \frac{\pi_i(e_2)(\alpha + 1)}{\pi_i(e_1)\pi_j(e_2)} + \frac{\beta + 1}{\pi_j(e_2)} + p\ZZ_p \\
	&\in \frac{\pi_i(e_1)(\re(e_2) + \beta + 1) - \pi_i(e_2)(\re(e_1) + \alpha + 1)}{\pi_i(e_1)\pi_j(e_2)} + p\ZZ_p.
	\end{align*}
	
\noindent For all other odd primes $p$, it shall suffice to note that
	\begin{align*}
	\frac{2}{\hat{\kappa}}b_{H,j}\left(\hat{x},\inv(\gamma)\right) \in \ZZ_p.
	\end{align*}
	
\noindent This leaves only $p = 2$, which we handle separately for all the various orders under consideration. Let us consider the case $m = 3$, $3|n$, $n \equiv -1 \mod 3$, where
	\begin{align*}
	\OO = \ZZ \oplus \ZZ \frac{1 + i}{2} \oplus \ZZ \frac{i + j}{3} \oplus \ZZ \frac{3j + ij}{3}.
	\end{align*}
	
\noindent First, we note that from the above discussion, we have
	\begin{align*}
	\frac{2}{\hat{\kappa}}b_{H,j}\left(\hat{x},\inv(\gamma)\right) = \frac{n - 6}{3} + \frac{n_2}{3}\ZZ,
	\end{align*}
	
\noindent where $n = 2^l n_2$ for some integers $l,n_2$, where $n_2$ is odd. One checks by computing $SL^\ddagger(2,\OO\otimes_{\ZZ} \ZZ/2^l\ZZ)$ that for all $\gamma \in SL^\ddagger(2,\OO)$,
	\begin{align*}
	\inv(\gamma) \in \left(2^{l + 1}\ZZ_{2^{l + 1}}, 2^{l + 1}\ZZ_{2^{l + 1}}, 2^{l + 1}\ZZ_{2^{l + 1}} \oplus 2^{l + 1}\ZZ_{2^{l + 1}} i \oplus (1 + 2^{l + 1}\ZZ_{2^{l + 1}}) j\right).
	\end{align*}
	
\noindent From this, we deduce that if $\tilde{\xi}_j(\gamma) = a + be_1 + ce_2$, then
	\begin{align*}
	c &= 1 \mod 2^{l + 1} \\
	b &= -2 \mod 2^{l + 1} \\
	a &= 1 \mod 2^{l + 1},
	\end{align*}
	
\noindent and therefore
	\begin{align*}
	\frac{2}{\hat{\kappa}}b_{H,j}\left(\hat{x},\inv(\gamma)\right) &= -(\kappa + \kappa') + a + b + \frac{n + 3}{9}c \\
	&= n + 2 \mod 2^{l + 1}.
	\end{align*}
	
\noindent However,
	\begin{align*}
	\frac{n - 6}{3} &= -n - 2 \mod 2^{l + 1} \\
	&= n + 2 \mod 2^{l + 1},
	\end{align*}
	
\noindent from which we conclude that
	\begin{align*}
	\frac{2}{\hat{\kappa}}b_{H,j}\left(\hat{x},\inv(\gamma)\right) = \frac{n - 6}{3} + \frac{2n}{3}\ZZ.
	\end{align*}
	
\noindent This implies that
	\begin{align*}
	b_{H,j}\left(\hat{x},\inv(\gamma)\right) &= \sqrt{\frac{n}{n - 24}}\left(\frac{n - 6}{3} + 2n\mathbb{Z}\right),
	\end{align*}
	
\noindent which has non-empty intersection with $[-n,n]$ only if
	\begin{align*}
	\sqrt{\frac{n}{n - 24}}\frac{n - 6}{3n} < 1.
	\end{align*}
	
\noindent One checks this never occurs, ergo the sphere with coordinates $\hat{x}$ is a ghost sphere. The arguments for the other cases are similar---we compile the results of the computations of the image of $b_{H,j}\left(\hat{x},\inv(\gamma)\right)$ in Table \ref{GhostSphereNonIntersections}, with one representative for each pair of orders $\OO, j\OO j^{-1}$ under consideration.
\end{proof}

	\begin{table}
	\begin{align*}
	\begin{array}{l|ll}
	m & \OO \cap H^+ & \text{Image} \\ \hline
	m = 1 &	\ZZ \oplus \ZZ i \oplus \ZZ j & \sqrt{\frac{n}{n - 2}}\left(n + 2n\ZZ\right) \\
				& \ZZ \oplus \ZZ i \oplus \ZZ \frac{1 + i + j}{2} & \sqrt{\frac{n^2}{n^2 - 12n + 4}}\left(2 - n + 2n\ZZ\right) \\
				& \ZZ \oplus \ZZ i \oplus \ZZ \frac{1 + j}{2} & \sqrt{\frac{n^2}{n^2 - 10n + 1}}\left(3 - n + 2n\ZZ\right) \\
				& \ZZ \oplus \ZZ i \oplus \ZZ \frac{i + j}{2} & \sqrt{\frac{n^2}{n^2 - 10n + 1}}\left(n - 1 + 2n\ZZ\right) \\ \hline
	m = 2 & \ZZ \oplus \ZZ i \oplus \ZZ \frac{1 + i + j}{2} & \sqrt{\frac{n^2}{(n - 1)(n - 9)}}\left(1 - n + 2n\ZZ\right) \\
				& \ZZ \oplus \ZZ i \oplus \ZZ \frac{1 + j}{2} & \sqrt{\frac{n^2}{n^2 - 6n + 1}}\left(3 - n + 2n\ZZ\right) \\
				& \ZZ \oplus \ZZ i \oplus \ZZ \frac{2 + i + j}{4} & \sqrt{\frac{n^2}{n^2 - 36n + 100}}\left(6 - n + 2n\ZZ\right) \\
				& \ZZ \oplus \ZZ i \oplus \ZZ \frac{i + j}{4} & \sqrt{\frac{n^2}{n^2 - 28n + 36}}\left(n - 6 + 2n\ZZ\right) \\
	m = 3 & \ZZ \oplus \ZZ \frac{1 + i}{2} \oplus \ZZ j & \sqrt{\frac{3n}{3n - 8}}\left(n + 2n\ZZ\right) \\
				& \ZZ \oplus \ZZ \frac{1 + i}{2} \oplus \ZZ \frac{i + j}{3} & \sqrt{\frac{n}{n - 24}}\left(\frac{n - 6}{3} + 2n\mathbb{Z}\right) \\ \hline
	m = 7 & \ZZ \oplus \ZZ \frac{1 + i}{2} \oplus \ZZ j & \sqrt{\frac{7n}{7n - 12}}\left(n + 2n\ZZ\right) \\ \hline
	m = 11 & \ZZ \oplus \ZZ \frac{1 + i}{2} \oplus \ZZ j & \sqrt{\frac{11n}{11n - 8}}\left(n + 2n\ZZ\right)
	\end{array}
	\end{align*}
	\caption{Images of $b_{H,j}\left(\hat{x},\inv(\gamma)\right)$ for $\gamma \in SL^\ddagger(2,\OO)$.}
	\label{GhostSphereNonIntersections}
	\end{table}

An obvious question to ask is why the proof of Lemma \ref{GhostSphereLemma} cannot be extended to other maximal $\ddagger$-orders. The surprising answer is that even for maximal $\ddagger$-orders with Euclidean intersection, the sphere orthogonal to the unit spheres centered at points in $\OO \cap H^+$ is not necessarily a ghost sphere. For instance, for the quaternion algebra $H = \left(\frac{-7,-7}{\QQ}\right)$ with order
	\begin{align*}
	\OO = \ZZ \oplus \ZZ \frac{1 + i}{2} \oplus \ZZ j \oplus \ZZ \frac{7j + ij}{14}
	\end{align*}
	
\noindent the sphere with center $\left(\frac{1}{2},\frac{3}{2\sqrt{7}},\frac{\sqrt{7}}{2}\right)$ and radius $\frac{\sqrt{37/7}}{2}$ is such a sphere, but it intersects $\gamma(\hat{S}_j)$, where
	\begin{align*}
	\gamma = \begin{pmatrix}
 1 + \frac{j}{2} + \frac{ij}{14} & 1-\frac{j}{2}+\frac{ij}{14}\\
 1-\frac{j}{2}+\frac{ij}{14} & -1-\frac{j}{2}-\frac{ij}{14}
\end{pmatrix} \in SL^\ddagger(2,\OO).
	\end{align*}
	
\noindent However, between Corollary \ref{NonEqualOrders} and Lemma \ref{GhostSphereLemma}, we have proved that if $\OO$ does not cover $\RR^3$ by unit balls, then $SL^\ddagger(2,\OO) \neq E^\ddagger(2,\OO)$ for all maximal $\ddagger$-orders with Euclidean intersection, with the sole exception of the case where
	\begin{align*}
	\OO = \ZZ \oplus \ZZ \frac{1 + i}{2} \oplus \ZZ \frac{ti + j}{11} \oplus \ZZ \frac{11j + ij}{22} \subset \left(\frac{-11,-n}{\QQ}\right),
	\end{align*}
	
\noindent where $11|n$ and $n/11$ is a non-square modulo $11$. We handle this case separately.

\section{Special Unimodular Pairs:}

Throughout this section, we shall take $\OO$ to be a maximal $\ddagger$-order
	\begin{align*}
	\OO = \ZZ \oplus \ZZ \frac{1 + i}{2} \oplus \ZZ \frac{ti + j}{11} \oplus \ZZ \frac{11j + ij}{22} \subset \left(\frac{-11, -n}{\QQ}\right),
	\end{align*}
	
\noindent where $n$ is a positive integer such that $11|n$ and $n/11$ is not a square modulo $11$, and $\OO$ does not cover $\RR^3$ by unit balls, unless otherwise specified. Our goal is to prove that $SL^\ddagger(2,\OO) \neq E^\ddagger(2,\OO)$ in this case, which we do by adapting Nica's proof that $SL(2,\OO) \neq E(2,\OO)$ if $\OO$ is an imaginary quadratic ring \cite{Nica}.

\begin{definition}
We call a pair $(a,b) \in \OO^2$ \emph{unimodular} if there exists a pair $(c,d) \in \OO^2$ such that
	\begin{align*}
	\begin{pmatrix} a & b \\ c & d \end{pmatrix} \in SL^\ddagger(2,\OO).
	\end{align*}
	
\noindent We denote the entire space of unimodular pairs by $\mathcal{U}(2,\OO)$. We call a unimodular pair $(a,b)$ \emph{special} if
	\begin{align*}
	\nrm(a) = \nrm(b) < \nrm(a \pm b)
	\end{align*}
	
\noindent and $a,b \in \OO \cap \QQ(j)$.
\end{definition}

\noindent The definitions here are essentially the same as Nica's, mutatis mutandis. Notice that $SL^\ddagger(2,\OO)$ has a right action on $\mathcal{U}(2,\OO)$, and since
	\begin{align*}
	\left\{\begin{pmatrix} 1 & x \\ 0 & 1 \end{pmatrix} \middle| x \in \OO \cap H^+\right\}/SL^\ddagger(2,\OO) = \mathcal{U}(2,\OO),
	\end{align*}
	
\noindent if $\mathcal{U}(2,\OO)$ splits into infinitely many orbits under the action of $E^\ddagger(2,\OO)$, then $E^\ddagger(2,\OO)$ is an infinite index subgroup of $SL^\ddagger(2,\OO)$. We shall prove that this is the case by showing that there exist special pairs that belong to different orbits under the action of $E^\ddagger(2,\OO)$. To this end, we begin with a pair of lemmas.

\begin{lemma}\label{Modification of Cohn}
Every element $\gamma \in E^\ddagger(2,\OO)$ can be written in the form
	\begin{align*}
	\gamma = \pm \begin{pmatrix} x_1 & 1 \\ -1 & 0 \end{pmatrix}\begin{pmatrix} x_2 & 1 \\ -1 & 0 \end{pmatrix}\ldots \begin{pmatrix} x_n & 1 \\ -1 & 0 \end{pmatrix},
	\end{align*}
	
\noindent where $x_1, x_2, \ldots x_n \in \OO \cap H^+$.
\end{lemma}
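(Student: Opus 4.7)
The plan is to reduce the lemma to two explicit matrix identities and then invoke the definition of $E^\ddagger(2,\OO)$. Define the abbreviation $T(x) := \begin{pmatrix} x & 1 \\ -1 & 0 \end{pmatrix}$, and note that $T(0)$ is the Weyl element with $T(0)^2 = -I$, so $T(0)^{-1} = -T(0)$. All three matrices $T(x), T(0), T(y)$ with $x,y \in \OO \cap H^+$ lie in $SL^\ddagger(2,\OO)$ (one checks the $ab^\ddagger, cd^\ddagger \in H^+$ conditions and the quasi-determinant directly), so the claim is really about how to write elements of $E^\ddagger(2,\OO)$ as products of these specific generators, up to the sign $\pm 1 = \pm T(0)^2$.

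The heart of the proof is the pair of identities
\begin{align*}
-T(-x)\,T(0) &= \begin{pmatrix} 1 & x \\ 0 & 1 \end{pmatrix}, &
-T(0)\,T(y) &= \begin{pmatrix} 1 & 0 \\ y & 1 \end{pmatrix},
\end{align*}
both valid for $x,y \in \OO \cap H^+$, and both verified by a two-by-two computation. These show that the unipotent upper- and lower-triangular matrices of $SL^\ddagger(2,\OO)$ can be rewritten as signed products of $T(x_i)$'s.

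To finish, I would argue that $E^\ddagger(2,\OO)$ is generated by these unipotent matrices together with the diagonal units $D(u) := \begin{pmatrix} u & 0 \\ 0 & (u^\ddagger)^{-1} \end{pmatrix}$ with $u \in \OO^\times$, by decomposing any upper-triangular $\begin{pmatrix} a & b \\ 0 & d \end{pmatrix} \in SL^\ddagger(2,\OO)$ as $D(a) \cdot \begin{pmatrix} 1 & a^{-1}b \\ 0 & 1 \end{pmatrix}$ (and dually for lower-triangulars), checking $a^{-1}b \in \OO \cap H^+$ from the condition $ab^\ddagger \in H^+$. For units $u$ lying in $H^+$ one has the quaternionic analogue of the classical identity
\[
D(u) = \begin{pmatrix} 1 & u \\ 0 & 1 \end{pmatrix}\begin{pmatrix} 1 & 0 \\ -u^{-1} & 1 \end{pmatrix}\begin{pmatrix} 1 & u \\ 0 & 1 \end{pmatrix} T(0)^{-1},
\]
which combined with the two identities above expresses $D(u)$ as a signed $T$-product. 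Substituting everything into an arbitrary word in the generators, concatenating adjacent factors of $T(0)$ and $T(0)^{-1}$ via $T(0)^2 = -I$, and gathering the signs out front, yields the desired normal form.

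The main technical obstacle is the diagonal-unit step: for $u \in \OO^\times$ with $u \notin H^+$, the identity above breaks down because $u$ need no longer equal $(u^\ddagger)^{-1}$ in the key telescoping cancellation. The way I would address this is to note that for the particular orders in play — in the present section, $\OO \subset \left(\tfrac{-11,-n}{\QQ}\right)$, which is definite — the unit group $\OO^\times$ is finite, and a direct inspection (using the positive-definiteness of the reduced norm) shows that every unit either lies in $H^+$ or is accounted for by absorbing a factor of $\pm I$ into the overall sign; thus only finitely many extra cases, each handled by an explicit short $T$-word, need to be checked.
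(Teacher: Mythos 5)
Your proof is correct, and it is essentially the argument the paper has in mind: the paper omits the proof entirely, stating only that it is ``essentially the same'' as Cohn's classical proof for $E(2,\OO)$ over imaginary quadratic rings, and your two identities $-T(-x)T(0) = \left(\begin{smallmatrix} 1 & x \\ 0 & 1\end{smallmatrix}\right)$, $-T(0)T(y) = \left(\begin{smallmatrix} 1 & 0 \\ y & 1\end{smallmatrix}\right)$ together with the diagonal-unit reduction are exactly Cohn's standard-form argument transplanted to the quaternionic setting. Your worry about units outside $H^+$ is moot for the orders of this section (one checks from the norm form that every unit of $\ZZ \oplus \ZZ\frac{1+i}{2} \oplus \ZZ\frac{ti+j}{11} \oplus \ZZ\frac{11j+ij}{22}$ lies in $\OO \cap H^+$), so the argument goes through as written.
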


\begin{lemma}\label{Small Norms are Rare}
For all $x \in \OO$, $\nrm(x) > 2$ if $x \neq 0,1$, and $\nrm(x) = 3$ if and only if $x = \pm (1 \pm i)/2$.
\end{lemma}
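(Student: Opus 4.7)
The plan is to decompose $H$ as a rank-two module over $F := \QQ(i)$ (with the convention $i^2 = -11$) and exploit the fact that $\OO \cap F = \ZZ[(1+i)/2]$ is the full ring of integers, whose norm form is completely classical. Writing any $x \in \OO$ as $x = \alpha + \beta j$ with $\alpha, \beta \in F$, the identity $j\beta = \overline{\beta} j$ (valid in any quaternion algebra of the form $\left(\tfrac{-11,-n}{\QQ}\right)$) gives the orthogonal decomposition $\nrm(x) = |\alpha|^2 + n|\beta|^2$, where $|\cdot|^2$ is the norm on $F$. Expanding $x$ in the $\ZZ$-basis $\{1,\,(1+i)/2,\,(ti+j)/11,\,(11j+ij)/22\}$ with integer coefficients $a,b,c,d$, one reads off $\beta = (2c+11d)/22 + (d/22)\,i$, hence $|\beta|^2 = \big((2c+11d)^2 + 11\,d^2\big)/484$.

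The central reduction is to show that $\nrm(x) \le 3$ forces $(c,d) = (0,0)$, and hence $\beta = 0$. The form $(2c+11d)^2 + 11\,d^2$ attains its minimum $4$ on $\ZZ^2\setminus\{(0,0)\}$ at $(c,d) = (\pm 1, 0)$, with next smallest values $12$ at $(\mp 5, \pm 1)$, then $44$ at $(\mp 11, \pm 2)$, and so on. The uniform bound $n|\beta|^2 \ge 4n/484$ already forces $\beta = 0$ once $n > 363$. For the finitely many smaller $n$ consistent with the section's hypotheses (multiples of $11$ with $n/11$ a non-square mod $11$, and with $\OO$ failing to cover $\RR^3$ by unit balls), I would enumerate the small lattice points $(c,d)$ for which $n|\beta|^2 \le 3$ and, for each, combine the complementary bound $|\alpha|^2 \le 3 - n|\beta|^2$ with the $\ZZ$-integrality of $(a,b)$ to exclude any $x$ with $\beta \neq 0$ and $\nrm(x) \in \{2,3\}$.

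Once $\beta = 0$, we have $x \in \ZZ[(1+i)/2]$, and the norm reduces to $\nrm(a + b(1+i)/2) = a^2 + ab + 3b^2 = (a + b/2)^2 + (11/4)\,b^2$. Direct inspection then finishes the argument: if $b = 0$ then $\nrm = a^2$ runs through the perfect squares $0,1,4,\ldots$; if $|b| \ge 2$ then $\nrm \ge 11$; and if $|b| = 1$ then $\nrm = (a \pm 1/2)^2 + 11/4 \ge 3$, with equality iff $a \in \{0, \mp 1\}$. So the form never represents $2$, represents $1$ only at $x = \pm 1$, and represents $3$ at exactly the four elements $\pm(1 \pm i)/2$.

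The main obstacle will be the small-$n$ case analysis: the lattice of $\beta$'s contains points as small as $|\beta|^2 = 3/121$ (coming from $(c,d) = (\mp 5, \pm 1)$ and its analogues), so the uniform bound $n|\beta|^2 > 3$ fails across a genuine range of admissible $n$. Pushing through these remaining orders will require a sharper argument that leverages the $\ddagger$-symmetry of $\OO$ (which pairs $(a,b,c,d)$ with $(a,b,c,-d)$) together with congruences on $n/11$ modulo small primes, both to narrow the viable integer coefficients and to keep the residual case-by-case verification manageable.
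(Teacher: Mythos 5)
Your decomposition $\nrm(x) = |\alpha|^2 + n|\beta|^2$ and the concluding analysis of the binary form $a^2+ab+3b^2$ on $\OO \cap \QQ(i)$ are both correct, and since the paper leaves this lemma as an exercise there is no written proof to compare against; this is surely the intended computation. The difficulty is the step you defer to a case analysis: the ``central reduction'' that $\nrm(x)\le 3$ forces $\beta=0$ is not merely laborious for the admissible $n\le 363$ --- it is false, and so is the lemma itself. Take $n = 231 = 11\cdot 21$: here $21\equiv -1 \pmod{11}$ is a non-square, $n$ is square-free, $t=1$, and $\OO$ does not cover $\RR^3$ by unit balls (it is absent from the list in Theorem \ref{UnitBallsTheorem}), so every hypothesis in force in this section is met; yet the basis element $(i+j)/11$ satisfies $\nrm\bigl((i+j)/11\bigr) = (11+231)/121 = 2$, contradicting the first assertion. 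Similarly, for $n=187$ ($t=4$) one has $\nrm\bigl((4i+j)/11\bigr) = (176+187)/121 = 3$ with $(4i+j)/11 \ne \pm(1\pm i)/2$, and for $n = 319$ ($t=2$) one has $\nrm\bigl((2i+j)/11\bigr)=3$. In general $\nrm\bigl((ti+j)/11\bigr) = (t^2+n/11)/11$, which is exactly the quantity your bound $n|\beta|^2 \ge n/121$ flags as potentially small; the enumeration you propose would uncover these elements rather than exclude them.

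It follows that no appeal to the $\ddagger$-symmetry or to congruences on $n/11$ can close the gap: the statement must be weakened (restricted to $n$ large enough, or restated with the additional norm-$2$ and norm-$3$ elements listed explicitly), and its use in Lemma \ref{Non-shrinking norms}, which needs $\nrm(x)\ge 3$ for all $x \in \OO\cap H^+$ outside $\{0,\pm 1\}$ together with a complete list of the norm-$3$ exceptions, must then be reexamined for these orders. You were right to identify the small-$n$ analysis as the main obstacle; what your proposal misses is that the obstacle is not a computational burden but an actual failure of the claim.
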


Lemma \ref{Modification of Cohn} was already shown to be true with $\OO$ replaced by an imaginary quadratic ring, and $E^\ddagger(2,\OO)$ replaced by $E(2,\OO)$ by Cohn \cite{Cohn1966}---the proof in our case is essentially the same, and so we omit it in the interest of brevity. The proof of Lemma \ref{Small Norms are Rare} follows via a straightforward computation of the norm form, which we leave as an exercise to the reader. As a consequence of the latter lemma, we have a restriction on how the norms of pairs in $\mathcal{U}(2,\OO)$ can change as a result of the action by $E^\ddagger(2,\OO)$.

\begin{lemma}\label{Non-shrinking norms}
Let $(a,b) \in \mathcal{U}(2,\OO)$, and consider
	\begin{align*}
	(a',b') = (a,b)\begin{pmatrix} x & 1 \\ -1 & 0 \end{pmatrix},
	\end{align*}
	
\noindent where $x \in \OO \cap H^+$. If $x \neq 0,\pm 1$ and $\nrm(a) > \nrm(b)$, then $\nrm(a') > \nrm(b')$. Additionally, if $x \neq 0$ and $(a,b)$ is special, then $\nrm(a') > \nrm(b')$.
\end{lemma}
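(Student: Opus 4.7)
The plan is to reduce both assertions to the geometric inequality $\nrm(x - a^{-1}b) > 1$ and then analyze it by cases. Using multiplicativity of the norm, when $a \neq 0$ we have
\[
\nrm(a') - \nrm(b') = \nrm(ax - b) - \nrm(a) = \nrm(a)\bigl(\nrm(x - a^{-1}b) - 1\bigr),
\]
so the claim is equivalent to $\nrm(x - a^{-1}b) > 1$. The degenerate case $a = 0$ is disposed of at the outset: unimodularity of $(0, b)$ forces $b \in \OO^\times$, so $\nrm(a') = 1 > 0 = \nrm(b')$.

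For the first assertion, $\nrm(a) > \nrm(b)$ yields $\nrm(a^{-1}b) < 1$, and Lemma~\ref{Small Norms are Rare} combined with $x \neq 0, \pm 1$ yields $\nrm(x) \geq 3$. I would expand
\[
\nrm(x - a^{-1}b) = \nrm(x) - \tr\!\bigl(x\,\overline{a^{-1}b}\bigr) + \nrm(a^{-1}b)
\]
and bound the cross term via the Cauchy--Schwarz inequality $|\tr(u\overline{v})| \leq 2\sqrt{\nrm(u)\nrm(v)}$. In the borderline regime where this estimate is not sharp enough to give the strict inequality, I would invoke the integrality of $\tr(ax\overline{b}) \in \ZZ$ together with the arithmetic gap from Lemma~\ref{Small Norms are Rare}---no element of $\OO$ has norm $2$---to rule out the marginal configurations.

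For the special case, $a, b \in \OO \cap \QQ(j) = \ZZ[j]$, so $a^{-1}b$ lies in the commutative subalgebra $\QQ(j)$ and has the form $P - Qj$ with $P^2 + nQ^2 = 1$; the strict inequalities $\nrm(a \pm b) > \nrm(a) = \nrm(b)$ translate directly to $|P| < 1/2$. For $x = x_0 + x_1 i + x_2 j \in \OO \cap H^+$ a direct computation gives
\[
\nrm\!\bigl(x - (P - Qj)\bigr) - 1 = \nrm(x) - 2 x_0 P + 2 n x_2 Q,
\]
so the claim reduces to $\nrm(x) > 2 x_0 P - 2 n x_2 Q$. Applying the weighted Cauchy--Schwarz inequality to the constraint $P^2 + nQ^2 = 1$ yields $|2 x_0 P - 2 n x_2 Q| \leq 2\sqrt{x_0^2 + n x_2^2} \leq 2\sqrt{\nrm(x)}$, which handles all $\nrm(x) \geq 5$. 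The finitely many remaining cases $\nrm(x) \in \{1, 3, 4\}$ are handled directly: for $x \in \{\pm 1, \pm 2\}$ and $x = \pm(1 \pm i)/2$ one has $x_2 = 0$, so the bound simplifies to $\nrm(x) > |2 x_0 P|$, which is immediate from $|P| < 1/2$.

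The main obstacle is the first assertion, because Cauchy--Schwarz alone yields $\nrm(x - a^{-1}b) \geq (\sqrt{\nrm(x)} - \sqrt{\nrm(b)/\nrm(a)})^2$, which need not exceed $1$ when $\nrm(b)$ is close to $\nrm(a)$. Closing this gap requires exploiting the discrete structure of $\OO$---specifically, the integrality of $\tr(ax\overline{b})$ and the norm-$2$ gap---to rule out the marginal cases. A secondary bookkeeping challenge in the special case is verifying that any norm-$4$ elements of $\OO \cap H^+$ with $x_2 \neq 0$ (whose existence depends on $n$) also satisfy the strict inequality, but this is a finite check constrained by the hypotheses $11 \mid n$ and $n/11$ non-square modulo $11$.
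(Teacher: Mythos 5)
Your reduction to $\nrm(x - a^{-1}b) > 1$ and your treatment of the special case are sound and in substance match the paper's: the paper likewise isolates $x = \pm 1$ (handled by the defining inequality $\nrm(a\pm b)>\nrm(a)$ of a special pair) and $x = \pm(1\pm i)/2$ (handled by an explicit coordinate computation equivalent to your $|P| < 1/2$ argument), and disposes of all larger $x$ by a norm estimate. The genuine problem is the first assertion, and your own ``main obstacle'' paragraph correctly locates it but does not resolve it. For $\nrm(x) \geq 4$ the triangle inequality $\sqrt{\nrm(x - v)} \geq \sqrt{\nrm(x)} - \sqrt{\nrm(v)} > 2 - 1 = 1$ already suffices, so the only troublesome case is $\nrm(x) = 3$, i.e.\ $x = \pm(1\pm i)/2$. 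There your Cauchy--Schwarz bound gives only $\nrm(x - a^{-1}b) \geq \bigl(\sqrt{3} - \sqrt{\nrm(b)/\nrm(a)}\bigr)^2$, which tends to $4 - 2\sqrt{3} \approx 0.54$ as $\nrm(b)/\nrm(a) \to 1$; the shortfall from the target value $1$ therefore scales like $0.46\,\nrm(a)$ after clearing denominators, i.e.\ it grows linearly in $\nrm(a)$. Neither the integrality of $\tr(ax\overline{b})$ nor the absence of norm-$2$ elements can absorb an unbounded shortfall: both are $O(1)$ corrections, and the ``marginal configurations'' form an infinite family (all unimodular $(a,b)$ with $4 - 2\sqrt{3} < \nrm(b)/\nrm(a) < 1$), not a finite list. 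Closing this case requires using the unimodularity of $(a,b)$ in an essential way---for instance, a congruence argument showing that $a^{-1}b$ cannot land in the lens $\{\nrm(v) < 1\} \cap \{\nrm(x - v) \leq 1\}$, which is nonempty exactly when $\nrm(x)=3$---and your proposal supplies no such argument.

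For what it is worth, you have put your finger on precisely the step where the paper's own proof is thinnest: the paper asserts $\nrm(ax - b) \geq \nrm(x)\nrm(a) - \nrm(b)$ and concludes at once, but that is the squared-norm version of the reverse triangle inequality with the square roots dropped, and it is not valid in general; the correct bound is the one you use, which is strictly weaker and insufficient at $\nrm(x)=3$. Declining to use the paper's inequality was the right instinct, but the consequence is that your write-up carries an openly unclosed gap in the first assertion rather than a complete proof.
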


\begin{proof}
Note that $(a',b') = (ax - b, a)$, hence
	\begin{align*}
	\nrm(a') = \nrm(ax - b) &\geq \nrm(x)\nrm(a) - \nrm(b).
	\end{align*}
	
\noindent If $x \neq 0,\pm 1$, then by Lemma \ref{Small Norms are Rare}, if $\nrm(a) > \nrm(b)$, then
	\begin{align*}
	\nrm(a') > 2\nrm(a) - \nrm(a) = \nrm(a) = \nrm(b').
	\end{align*}
	
\noindent On the other hand, if $(a,b)$ is a special pair, if $x \neq 0, \pm 1, \pm (1 \pm i)/2$, then
	\begin{align*}
	\nrm(a') \geq 3\nrm(a) - \nrm(a) = 2\nrm(a) > \nrm(b').
	\end{align*}
	
\noindent If $x = \pm 1$, then we note that
	\begin{align*}
	\nrm(a') = \nrm(a \pm b) > \nrm(a) = \nrm(b'),
	\end{align*}
	
\noindent by definition of the special pair. This leaves the case when $x = \pm (1 \pm i)/2$. Suppose that $a = a_0 + a_1 j$, $b = b_0 + b_1 j$, where $a_0,a_1,b_0,b_1 \in \QQ$. Then one directly computes that if $x = \pm (1 \pm i)/2$,
	\begin{align*}
	\nrm(a') - \nrm(b') &= \nrm(ax - b) - \nrm(a) \\
	&= \frac{7 a_0^2}{4} + \left(\frac{a_0}{2} \pm b_0\right)^2 + \frac{77 a_1^2 n}{4} + 11 n \left(\frac{a_1}{2} \pm b_1\right)^2 > 0,
	\end{align*}
	
\noindent and so we conclude that $\nrm(a') > \nrm(b')$ in this case as well.
\end{proof}

As a corollary, we obtain our desired result.

\begin{theorem}\label{Eleven Is The Worst}
$E^\ddagger(2,\OO)$ is an infinite index subgroup of $SL^\ddagger(2,\OO)$.
\end{theorem}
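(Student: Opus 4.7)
I would closely follow the strategy of Nica \cite{Nica} as adapted to this non-commutative setting. Via the identification
\[
\left\{\begin{pmatrix} 1 & x \\ 0 & 1 \end{pmatrix} \middle| x \in \OO \cap H^+\right\}\Big/ SL^\ddagger(2,\OO) \;=\; \mathcal{U}(2,\OO),
\]
together with the observation that the upper unitriangular subgroup sits inside $E^\ddagger(2,\OO)$, the index $[SL^\ddagger(2,\OO):E^\ddagger(2,\OO)]$ agrees with the number of $E^\ddagger(2,\OO)$-orbits on $\mathcal{U}(2,\OO)$. It is therefore enough to exhibit infinitely many orbits.

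My plan is to produce an infinite family of pairwise inequivalent special unimodular pairs. A natural candidate is $(c+dj,\,-c+dj)$ for positive integers $c,d$ lying in the range $d\sqrt{n/3} < c < d\sqrt{3n}$: the common norm is $N = c^2 + nd^2$, while $\nrm((c+dj)\pm(-c+dj))$ equals either $4c^2$ or $4nd^2$, so both strict inequalities $\nrm(a \pm b) > N$ hold and the pair is special. Unimodularity can be verified by solving a Bezout-type equation in $\ZZ[j] = \OO \cap \QQ(j)$ subject to mild coprimality and parity constraints on $c,d,n$; these are met by infinitely many $(c,d)$, and letting $d \to \infty$ produces infinitely many distinct values of $N$.

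The main work is then to show that special pairs of different norms lie in different $E^\ddagger(2,\OO)$-orbits. By Lemma \ref{Modification of Cohn}, every element of $E^\ddagger(2,\OO)$ is a signed product of matrices $M_x = \left(\begin{smallmatrix}x&1\\-1&0\end{smallmatrix}\right)$ with $x \in \OO \cap H^+$, and right multiplication by $M_x$ sends $(a,b)$ to $(ax-b,a)$. Combining Lemmas \ref{Small Norms are Rare} and \ref{Non-shrinking norms} I would establish a monotonicity property: starting from a special pair $(a,b)$ of norm $N$, any non-trivial application of some $M_x$ strictly pushes $\nrm(a)$ above $N$, and further applications with $\nrm(x) \geq 3$ preserve $\nrm(a) > \nrm(b) \geq N$. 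The matrices that could conceivably shrink a coordinate back down to norm $\leq N$ are $M_0$, $M_{\pm 1}$ and $M_{\pm(1\pm i)/2}$; ruling these out uses the hypothesis that $\OO$ does not cover $\RR^3$ by unit balls, since any such ``shrinking'' division would furnish the Euclidean-style algorithm ruled out by Theorem \ref{UnitBallsTheorem}.

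The hard part will be the detailed bookkeeping in this last step, in particular controlling the interaction of the small-norm matrices $M_0, M_{\pm 1}, M_{\pm(1\pm i)/2}$ when composed in an arbitrary sequence---one must track not just $\nrm(a_k)$ and $\nrm(b_k)$ individually but also their joint profile under the $M_0$-swap. Once monotonicity is secured, the minimal value $\nrm(a) = N$ attained on an orbit is an invariant, and the infinitude of distinct values of $N$ in our family of special pairs then yields the infinitude of orbits, from which $[SL^\ddagger(2,\OO):E^\ddagger(2,\OO)] = \infty$.
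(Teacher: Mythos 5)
Your plan follows essentially the same route as the paper: Cohn's factorization (Lemma \ref{Modification of Cohn}) together with the non-shrinking-norms argument shows that two special pairs in the same $E^\ddagger(2,\OO)$-orbit agree up to sign and the swap $(a,b)\mapsto(b,-a)$, so exhibiting infinitely many special pairs (which the paper obtains by citing Nica for the imaginary quadratic ring $\OO\cap\QQ(j)$, and which your explicit family $(c+dj,\,-c+dj)$ with $d\sqrt{n/3}<c<d\sqrt{3n}$ realizes) yields infinitely many orbits and hence infinite index. The only place you diverge is the ``hard part'' you flag: the paper does not invoke the failure of covering $\RR^3$ by unit balls at that step, but instead disposes of the small-norm matrices $M_{\pm 1}$ and $M_{\pm(1\pm i)/2}$ acting on a special pair via the defining inequality $\nrm(a\pm b)>\nrm(a)$ and a direct norm computation in Lemma \ref{Non-shrinking norms}---which is precisely the bookkeeping you were worried about, already packaged in the cited lemma.
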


\begin{proof}
By Lemmas \ref{Modification of Cohn} and \ref{Non-shrinking norms}, if $(a,b), (c,d)$ are both special and in the same orbit, then it must be that
	\begin{align*}
	(c,d) = \pm (a,b) \begin{pmatrix} 0 & 1 \\ -1 & 0 \end{pmatrix}^k
	\end{align*}
	
\noindent where $k = 0,1$---or, in other words, $(c,d) = (a,b), (-a,-b), (b,-a), (-b,a)$. It suffices to show that there are infinitely many special pairs to conclude that infinitely many special pairs belong to different orbits under the action of $E^\ddagger(2,\OO)$, proving that $E^\ddagger(2,\OO)$ is an infinite index subgroup of $SL^\ddagger(2,\OO)$. Since this is equivalent to finding special pairs in the imaginary quadratic ring $\OO \cap \QQ(j)$, we know that this has already been proved by Nica \cite{Nica}.
\end{proof}

This result finally allows us to characterize all of the orders for which $SL^\ddagger(2,\OO) = E^\ddagger(2,\OO)$.

\begin{theorem}\label{Slightness in R3}
Let $H$ be a rational quaternion algebra generated by $i,j$. Let $\OO$ be a maximal $\ddagger$-order of $H$ with Euclidean intersection. If $\OO$ covers $\RR^3$ by unit balls, then $SL^\ddagger(2,\OO) = E^\ddagger(2,\OO)$. Otherwise, $E^\ddagger(2,\OO)$ is an infinite index subgroup of $SL^\ddagger(2,\OO)$. The order $\OO$ covers $\RR^3$ by unit balls if and only if $m = \nrm(i), n = \nrm(j)$ are on the following list.
	\begin{align*}
	\begin{array}{l|l}
	m & n \\ \hline
	1 & 1,2,3,6,7,10 \\
	2 & 1,2,3,5,6,10,14,26 \\
	3 & 1,2,6,15 \\
	7 & 1 \\
	11 & 22,66,77,110,143.
	\end{array}
	\end{align*}
\end{theorem}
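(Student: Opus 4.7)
The plan is to assemble the structural results of the preceding sections. The explicit list of $(m,n)$ is immediate from Theorem \ref{UnitBallsTheorem}, so the substance of the theorem is the dichotomy between the ``covering'' case (giving equality) and the ``non-covering'' case (giving infinite index).

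If $\OO$ covers $\RR^3$ by unit balls, then Lemma \ref{ConsequenceOfEuclideanAlgorithm} directly yields $SL^\ddagger(2,\OO) = E^\ddagger(2,\OO)$, finishing one direction without further work.

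For the other direction, I would assume $\OO$ does not cover $\RR^3$ by unit balls and split into the three exhaustive subcases dictated by the earlier results. First, if $\OO$ is the exceptional order over $\left(\frac{-11,-n}{\QQ}\right)$ with $n/11$ a non-square modulo $11$, then Theorem \ref{Eleven Is The Worst} already supplies the infinite index conclusion. Second, if $\OO$ is a generic non-covering order---not in Table \ref{NonEqualOrderTable} and not the $(-11,-n)$ exception---then Lemma \ref{GhostSphereLemma} produces a ghost sphere $G$; every $SL^\ddagger(2,\OO)$-translate of $G$ is again a ghost sphere, and these translates partition $\mathcal{S}_{\OO,j}$ into infinitely many tangency-connected components, so by Theorem \ref{IntersectionGraph} the coset space $SL^\ddagger(2,\OO)/E^\ddagger(2,\OO)$ is infinite. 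Third, if $\OO$ lies in Table \ref{NonEqualOrderTable}, Corollary \ref{NonEqualOrders} yields $SL^\ddagger(2,\OO) \neq E^\ddagger(2,\OO)$.

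The main obstacle I anticipate is upgrading ``$\neq$'' to ``infinite index'' in the Table \ref{NonEqualOrderTable} case, since Corollary \ref{NonEqualOrders} only exhibits a single coset outside $E^\ddagger$. The cleanest resolution would be to produce an alternate ghost sphere tailored to these orders---the particular sphere constructed in the proof of Lemma \ref{GhostSphereLemma} fails for them, which is precisely why the lemma excludes them, but a different base configuration of unit balls centered at points of $\OO \cap H^+$ should suffice and then the Theorem \ref{IntersectionGraph} mechanism gives infinite index as before. Alternatively, one can exploit the containment $E^\ddagger(2,\OO) \subset SL^\ddagger(2,\OO')$, where $\OO'$ is the proper $\ddagger$-suborder generated by $\OO^\times \cup (\OO \cap H^+)$, and use a strong approximation lift in the spirit of Theorem \ref{GeneralNonEqualCondition} to realize infinitely many distinct cosets of $E^\ddagger(2,\OO)$ inside $SL^\ddagger(2,\OO)$ via their upper-left entries.
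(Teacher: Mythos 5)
Your assembly is the same as the paper's implicit one: Lemma \ref{ConsequenceOfEuclideanAlgorithm} for the covering direction, Theorem \ref{UnitBallsTheorem} for the explicit list, and the case split among Theorem \ref{Eleven Is The Worst}, Lemma \ref{GhostSphereLemma} combined with Theorem \ref{IntersectionGraph} (which is indeed applicable, since by Theorem \ref{WhenRationalAndTangential} every non-covering order with Euclidean intersection has only rational intersections), and Corollary \ref{NonEqualOrders} for the non-covering direction.

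The obstacle you flag in the Table \ref{NonEqualOrderTable} case is a genuine gap, and it is present in the paper itself: the discussion immediately preceding the theorem claims only $SL^\ddagger(2,\OO) \neq E^\ddagger(2,\OO)$ for those orders, while the theorem statement asserts infinite index. Neither of your proposed repairs closes it as written. The second cannot work in principle: $\OO'$ is a $\ddagger$-order of full rank containing $1$, hence of finite additive index in $\OO$, so $SL^\ddagger(2,\OO')$ contains a principal congruence subgroup of $SL^\ddagger(2,\OO)$ and is therefore of \emph{finite} index in it; consequently the containment $E^\ddagger(2,\OO) \subset SL^\ddagger(2,\OO')$---and any criterion that detects non-membership in $E^\ddagger(2,\OO)$ solely by the upper-left entry falling outside $\OO'$---can certify at most $[SL^\ddagger(2,\OO):SL^\ddagger(2,\OO')]$ distinct cosets, a finite number. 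The first repair is the right instinct but is not automatic: the paper's own example for $\left(\frac{-7,-7}{\QQ}\right)$ shows that the natural candidate (the sphere orthogonal to the unit spheres centered at points of $\OO \cap H^+$) can fail to be a ghost sphere for orders excluded from Lemma \ref{GhostSphereLemma}, so a tailored ghost sphere would require a genuinely new construction and verification, not merely a different base configuration. As it stands, both your proposal and the paper prove infinite index only in the ghost-sphere and $\left(\frac{-11,-n}{\QQ}\right)$ cases, and mere inequality for the orders of Table \ref{NonEqualOrderTable}.
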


Theorem \ref{Slightness in R3} should be seen an analog of the result of Cohn \cite{Cohn1966} that if $\OO$ is the ring of integers of an imaginary quadratic field, then either $\OO$ is a Euclidean domain and $E(2,\OO) = SL(2,\OO)$ or otherwise $E(2,\OO)$ is an infinite index subgroup of $SL(2,\OO)$. As a side note, this is a surprising result, since it is true that $SL(2,\OO) = E(2,\OO)$ if $\OO$ is the ring of integers of any algebraic number field that is not imaginary quadratic; the imaginary quadratic case seems to be special, and was given an elementary proof by Nica \cite{Nica}, with a small corrigendum made in \cite{Sheydvasser2016}.

\section{Classification of Superpackings:}

We are finally ready to give a proof of Theorem \ref{SuperpackingClassification}.

\begin{proof}[Proof of Theorem \ref{SuperpackingClassification}]
If $H$ is generated by $i,j$, we shall write $m = \nrm(i)$, $n = \nrm(j)$. By Theorems \ref{WhenRationalAndTangential}, \ref{IntersectionGraph}, and \ref{Slightness in R3}, we know that $\mathcal{S}_{\OO,j}$ has only rational intersections and is tangency-connected if and only $(m,n)$ are on the following list.

\begin{minipage}{.45\textwidth}
	\begin{align*}
	\begin{array}{l|l}
	m & n \\ \hline
	1 & 6,7,10 \\
	2 & 5,26
	\end{array}
	\end{align*}
\end{minipage}%
\begin{minipage}{.45\textwidth}
	\begin{align*}
	\begin{array}{l|l}
	3 & 1,2,15 \\
	7 & 1 \\
	11 & 143.
	\end{array}
	\end{align*}
\end{minipage}

\noindent By Lemma \ref{InversiveRational}, $\mathcal{S}_{\OO,j}$ is integral for any choice of $H$ and $\OO$. Since we know that $\OO$ covers $\RR^3$ by unit balls, we know that the extended Euclidean algorithm described in Lemma \ref{EuclideanAlgorithm} applies---consequently, at any point in $H^+$ that can be written as $b^{-1}a$ with $a,b \in \OO$, we can find a sphere in $\mathcal{S}_{\OO,j}$ passing through that point. Such points are dense in $\RR^3$, hence $\mathcal{S}_{\OO,j}$ is dense in $\RR^3$. Finally, using the explicit forms of $\OO$ compiled in Theorem \ref{All Orders with Euclidean Intersection}, we conclude that the only isomorphism classes satisfying the desired conditions are the ones listed. It remains to determine which of the sets $\mathcal{S}_{\OO,j}$ are conformally equivalent

We note that the two collections with $(m,n) = (1,7)$ are conformally equivalent, since if
	\begin{align*}
    \OO_1 &= \ZZ \oplus \ZZ i \oplus \ZZ \frac{1 + j}{2} \oplus \ZZ \frac{i + ij}{2} \\
    \OO_2 &= \ZZ \oplus \ZZ i \oplus \ZZ \frac{i + j}{2} \oplus \ZZ \frac{1 + ij}{2}
    \end{align*}
    
\noindent then
	\begin{align*}
    \begin{pmatrix} \frac{1 + i}{\sqrt{2}} & 0 \\ 0 & -\frac{1 - i}{\sqrt{2}} \end{pmatrix}SL^\ddagger(2,\OO_1)\begin{pmatrix} \frac{1 + i}{\sqrt{2}} & 0 \\ 0 & -\frac{1 - i}{\sqrt{2}} \end{pmatrix}^{-1} = SL^\ddagger(2,\OO_2),
    \end{align*}
    
\noindent and therefore
	\begin{align*}
    \begin{pmatrix} \frac{1 + i}{\sqrt{2}} & 0 \\ 0 & -\frac{1 - i}{\sqrt{2}} \end{pmatrix}SL^\ddagger(2,\OO_1)(\hat{S}_j) = SL^\ddagger(2,\OO_2)(\hat{S}_j).
    \end{align*}
		
\noindent Similarly, if $\OO' = j\OO j^{-1}$, then $\mathcal{S}_{\OO,j}$ and $\mathcal{S}_{\OO',j}$ are conformally equivalent. On the other hand, note that
	\begin{align*}
	\left\{b_{H,j}(\text{inv}(S_1),\text{inv}(S_2))\middle|S_1, S_2 \in \mathcal{S}_{\OO,j}\right\}  &= \left\{b_{H,j}\left(\text{inv}(-\hat{S_j}),\text{inv}(S_2)\right)\middle|S_1, S_2 \in \mathcal{S}_{\OO,j}\right\} \\
    &= \xi_3\left(SL^\ddagger(2,\OO)\right).
	\end{align*}
    
\noindent This set is clearly invariant under conformal transformations; furthermore, we already computed it in Theorem \ref{All Orders with Euclidean Intersection}, as summarized below.
	
	\begin{minipage}{.45\textwidth}
	\begin{align*}
    \begin{array}{l|l}
    H & \xi_3\left(SL^\ddagger(2,\OO)\right) \\ \hline
		\left(\frac{-1,-6}{\QQ}\right) & 1 + 3\ZZ \\
    \left(\frac{-1,-7}{\QQ}\right) & 1 + \frac{7}{2}\ZZ \\
    \left(\frac{-1,-10}{\QQ}\right) & 1 + 5\ZZ \\
    \left(\frac{-2,-5}{\QQ}\right) & 1 + \frac{5}{2}\ZZ \\
		\left(\frac{-2,-26}{\QQ}\right) & 1 + \frac{13}{4}\ZZ
		\end{array}
	\end{align*}
	\end{minipage} %
	\begin{minipage}{.45\textwidth}
		\begin{align*}
		\begin{array}{l|l}
    \left(\frac{-3,-1}{\QQ}\right) & 1 + 2\ZZ \\
    \left(\frac{-3,-2}{\QQ}\right) & 1 + 4\ZZ \\
		\left(\frac{-3,-15}{\QQ}\right) & 1 + \frac{10}{3}\ZZ \\
    \left(\frac{-7,-1}{\QQ}\right) & 1 + 2\ZZ \\
		\left(\frac{-11,-143}{\QQ}\right) & 1 + \frac{26}{11}\ZZ.
    \end{array}
    \end{align*}
	\end{minipage}
    
\noindent This proves that the only two additional sphere collections that can possibly be conformally equivalent are the collections with $(m,n) = (3,1)$ and $(m,n) = (7,1)$. However, it shall be proved in Lemma \ref{BMGraph31} that the sphere collection with $(m,n) = (3,1)$ is the superpacking of the Soddy sphere packing, which is a Boyd-Maxwell packing. In contrast, we shall prove in Theorem \ref{NotBMGraph} that the collection with $(m,n) = (7,1)$ is the superpacking of an integral crystallographic packing that is not Boyd-Maxwell.
\end{proof}

\section{$(\OO,j)$-Apollonian Packings:}\label{ApollonianPackings}

It remains to show that the sphere collections $\hat{\mathcal{S}}_{\OO,j}$ enumerated in Theorem \ref{SuperpackingClassification} are superpackings of corresponding integral crystallographic packings. Our approach is to introduce the notion of $(\OO,j)$-Apollonian packings; this terminology is taken from Stange \cite{StangeFuture}, where she uses a similar construction for the circle packings introduced in \cite{Stange2017}. We begin with a definition.

	\begin{definition}
	A collection of spheres $C$ \emph{straddles} a sphere $S$ if it intersects both the interior and exterior of $S$. We define an $(\OO,j)$-\emph{Apollonian packing} to be a maximal tangency-connected subset $C$ of spheres in $\hat{\mathcal{S}}_{\OO,j}$ such that $C$ does not straddle any spheres in $\hat{\mathcal{S}}_{\OO,j}$. We say two spheres $S_1, S_2 \in \hat{\mathcal{S}}_{\OO,j}$ are \emph{immediately tangent} if they don't straddle any spheres in $\hat{\mathcal{S}}_{\OO,j}$.
	\end{definition}
	
\noindent It is evident that $\hat{\mathcal{S}}_{\OO,j}$ is the disjoint union of all $(\OO,j)$-Apollonian packings, since every sphere in $\hat{\mathcal{S}}_{\OO,j}$ lies in some tangency-connected component, and if two such components intersect, then they must be equal. Furthermore, any two $(\OO,j)$-Apollonian packings are necessarily the same up to a M\"{o}bius transformation.
	
We shall write $\mathcal{P}_{\OO,j}$ to denote the unique $(\OO,j)$-Apollonian packing containing $\hat{S}_{j}$. Since $\mathcal{P}_{\OO,j}$ is a sub-packing of $\hat{\mathcal{S}}_{\OO,j}$, it is integral and spheres can only intersect tangentially---indeed, due to its construction, they can only intersect externally, which is to say that the interiors do not intersect. Furthermore, since $\hat{\mathcal{S}}_{\OO,j}$ is tangency-connected and dense in $\RR^3$, $\mathcal{P}_{\OO,j}$ must be dense in $\RR^3$. We wish to show that $\mathcal{P}_{\OO,j}$ is in fact a crystallographic packing. By the above discussion, it shall suffice to find a discrete subgroup of $\text{Isom}(\HH^4)$ admitting a convex fundamental polyhedron with finitely many sides, and whose limit set is the closure of $\mathcal{P}_{\OO,j}$. Let $\varpi, \tau \in \OO$ such that $\OO \cap H^+ = \ZZ \oplus \ZZ \varpi \oplus \ZZ \tau$, as in Table \ref{Orders with Euclidean intersection}. Define
	\begin{align*}
	\begin{split}
	\varphi_1(z) &= -\overline{z} \\
	\varphi_2(z) &= \frac{1}{\nrm(i)}i \overline{z} i \\
	\varphi_3(z) &= 2 - \overline{z} \\
	\varphi_4(z) &= \begin{cases} 2i + \varphi_2(z) & \text{if } \varpi = i \\ i + \varphi_2(z) & \text{if } \varpi = \frac{1 + i}{2} \end{cases} \\
	\varphi_5(z) &= \overline{z}^{-1}
		\end{split}
	\begin{split}
	\varphi_6(z) &= \varphi_5(z - 1) + 1 \\
	\varphi_7(z) &= \varphi_5(z - \varpi) + \varpi \\
	\varphi_8(z) &= \varphi_5(z - 1 - \varpi) + 1 + \varpi \\
	\varphi_9(z) &= \varphi_5(z - \tau) + \tau \\
	\varphi_{10}(z) &= \varphi_5(z - 1 - \tau) + 1 + \tau \\
	\varphi_{11}(z) &= \varphi_5(z - \varpi - \tau) + \varpi + \tau \\
	\varphi_{12}(z) &= \varphi_5(z - 1 - \varpi - \tau) + 1 + \varpi + \tau.
	\end{split}
	\end{align*}
	
\noindent The maps $\varphi_1, \varphi_2, \varphi_3,\varphi_4$ are reflections through the planes $x = 0$, $y = 0$, $x = 1$, $y = im(\varpi)$ respectively. The map $\varphi_5$ is a reflection through the unit sphere centered at $0$. The maps $\varphi_i$ for $i = 6, \ldots 12$ are reflections through translations of this sphere. Let $\Gamma$ be the group generated by the maps $\varphi_i$.
	
	\begin{lemma}\label{FundamentalPolyhedron}
	$\Gamma$ is a discrete subgroup of $\text{Isom}(\HH^4)$ admitting a convex fundamental polyhedron with finitely many sides.
	\end{lemma}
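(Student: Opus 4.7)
The plan is to realize $\HH^4$ as the upper half-space over $\RR^3 \cong H^+$ (using the embedding $H \hookrightarrow H_\RR$ to identify $H^+$ with $\RR^3$), exhibit an explicit convex polyhedron $\mathcal{F} \subset \HH^4$ bounded by twelve hyperbolic hyperplanes fixed by $\varphi_1, \ldots, \varphi_{12}$, and apply the Poincar\'e polyhedron theorem to conclude simultaneously that $\Gamma$ is discrete and that $\mathcal{F}$ is a fundamental domain.

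First I would observe that each $\varphi_k$ is manifestly a M\"obius transformation of $\RR^3 \cup \{\infty\}$: four are reflections through the affine planes $x = 0$, $y = 0$, $x = 1$, $y = \im(\varpi)$, and the remaining eight are inversions through unit spheres centered at the eight corners $\{0,1\} + \{0,\varpi\} + \{0,\tau\}$ of a fundamental parallelepiped for the lattice $\OO \cap H^+ = \ZZ \oplus \ZZ\varpi \oplus \ZZ\tau$. Each therefore extends uniquely to an isometry of $\HH^4$ by Poincar\'e extension, so $\Gamma \subset \Isom(\HH^4)$.

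Next I would define $\mathcal{F}$ as the intersection of the twelve closed hyperbolic half-spaces bounded by these hyperplanes, choosing the sides so that the trace of $\mathcal{F}$ on $\partial\HH^4$ is the vertical slab $\{0 \le x \le 1,\ 0 \le y \le \im(\varpi)\}$ with the eight open unit balls at the corner lattice points removed. By construction $\mathcal{F}$ is convex with at most twelve sides. The bulk of the work is verifying the hypotheses of the Poincar\'e polyhedron theorem: that $\varphi_1, \ldots, \varphi_{12}$ furnish a consistent side-pairing of $\mathcal{F}$ and that the dihedral angles around each codimension-two ridge sum in cycles to $2\pi/k$ for positive integers $k$. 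The geometry is favorable: the four bounding affine planes meet one another at right angles (or are parallel at infinity); each affine plane meets each hemisphere either orthogonally---because the hemispheres are centered at lattice vertices lying on the planes---or is disjoint from it; and two adjacent unit hemispheres meet orthogonally, tangentially on $\partial\HH^4$, or not at all, depending on the Euclidean distance between their centers in the lattice.

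I anticipate the main obstacle to be the ridge-by-ridge enumeration of the cycles in the Poincar\'e theorem, particularly around ridges that touch $\partial\HH^4$ tangentially (producing ideal vertices) and around any non-obvious cycles created where several hemispheres and bounding planes meet simultaneously. This is a finite combinatorial check, carried out case by case using the explicit data for $\OO \cap H^+$ from Table \ref{Orders with Euclidean intersection} and the standing hypothesis that $\OO$ covers $\RR^3$ by unit balls, which guarantees that the eight hemispheres genuinely cover the floor of the slab so that $\mathcal{F}$ is well-defined and bounded below. Once all cycle conditions are verified, Poincar\'e's theorem yields at once that $\Gamma$ is discrete in $\Isom(\HH^4)$ and that $\mathcal{F}$ is a convex fundamental polyhedron with finitely many sides.
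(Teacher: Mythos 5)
Your outline is recognizably close to the paper's---both exhibit an explicit convex polyhedron bounded by the fixed hypersurfaces of the $\varphi_k$---but the two arguments divide the labor very differently. The paper does \emph{not} invoke the Poincar\'e polyhedron theorem: it obtains discreteness essentially for free by observing that $\Gamma$ embeds in the group generated by $\varphi_1$, $\varphi_2$ and $PSL^\ddagger(2,\OO)$, which (since these commute) is $PSL^\ddagger(2,\OO)\oplus\langle\varphi_1\rangle$ or $PSL^\ddagger(2,\OO)\oplus\langle\varphi_1\rangle\oplus\langle\varphi_2\rangle$ and is discrete because $PSL^\ddagger(2,\OO)$ is; the polyhedron is then only needed for the finite-sidedness assertion. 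Your route makes \emph{both} conclusions depend on the cycle conditions, which you only ``anticipate'' verifying. Since that verification is the entire content of a Poincar\'e-theorem argument, deferring it is not a routine omission.

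More seriously, the geometric claims you offer to suggest the check is routine are false, and the check would in fact fail for the polyhedron you describe. The hemispheres centered at $0$ and at $1$ have centers at Euclidean distance $1$, so they intersect, and the dihedral angle of the region exterior to both unit balls along their common ridge is $2\pi/3$---neither orthogonal nor tangent, and not of the form $\pi/k$---so the reflection-cycle condition fails at that ridge and Poincar\'e's theorem cannot be applied to $\mathcal{F}$ as stated (the product of the two inversions is elliptic of order $3$, and the $2\pi/3$ wedge is two copies of the true fundamental wedge). Your polyhedron is also misdescribed when $\varpi=(1+i)/2$ (the cases $m=3,7,11$): the rectangle $0\le x\le 1$, $0\le y\le\im(\varpi)$ is not a fundamental parallelogram for $\ZZ\oplus\ZZ\varpi$; the point $1+\varpi$ lies outside it while $\varpi$ sits at the midpoint of its top edge at distance $1/2$ from the walls $x=0$ and $x=1$, so its hemisphere meets those walls at angle $\pi/3$ rather than orthogonally, and unit spheres at further lattice points such as $\varpi-1$ also reach into the slab---which is why the paper bounds its polyhedron by \emph{all} unit spheres centered at $\ZZ[\varpi]\cup(\ZZ[\varpi]+\tau)$ that meet the slab, not by eight chosen ones. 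Finally, eight unit balls cannot ``cover the floor of the slab,'' which is unbounded in the $j$-direction; the covering hypothesis concerns the full lattice $\OO\cap H^+$. To rescue your approach you would need to replace $\mathcal{F}$ by a genuine Coxeter chamber (subdividing until every dihedral angle is a submultiple of $\pi$) or else import discreteness from the arithmeticity of $PSL^\ddagger(2,\OO)$ as the paper does; as written, the proposal establishes neither discreteness nor the fundamental-domain claim.
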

	
	\begin{proof}
	Note that $\Gamma$ can be embedded inside the smallest subgroup containing $\varphi_1, \varphi_2$ and $PSL^\ddagger(2,\OO)$---which, since these commute, is nothing more than
		\begin{align*}
		\begin{cases} PSL^\ddagger(2,\OO) \oplus \langle \varphi_1 \rangle & \text{if } m = 1 \\ PSL^\ddagger(2,\OO) \oplus \langle \varphi_1 \rangle \oplus \langle \varphi_2 \rangle & \text{otherwise} \end{cases}.
		\end{align*}
		
	\noindent Consequently, it is discrete. Consider the convex polyhedron $P$ in $\HH^4$ defined as the set of all points $(x,y,z,t)$ such that $0 < x < 1$, $0 < y < \im(\varpi)$ and $(x,y,z,t)$ is on the exterior of every unit sphere centered at $\ZZ[\varpi]$ and $\ZZ[\varpi] + \tau$---note that these are geodesic spheres in $\HH^4$. As there are only finitely many such spheres that intersect the cone defined above, this polyhedron has finitely many sides. By inspection, all of the generators of $\Gamma$ are hyperbolic reflections through the sides of $P$. Furthermore, for every side of $P$, there is a corresponding reflection through that side. We conclude that $P$ is a fundamental domain of $\Gamma$.
	\end{proof}

	\begin{lemma}\label{TransitiveAction}
	$\Gamma$ acts transitively on $\QQ(i)$ and $\QQ(i) + \tau$.
	\end{lemma}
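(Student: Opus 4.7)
The plan is to exhibit, for each of $\QQ(i)$ and $\QQ(i)+\tau$, a subgroup of $\Gamma$ preserving the corresponding horizontal plane and restricting there to a transitive action on its rational points. The key observation is that once translations by the full lattice $\ZZ[\varpi]$ and the M\"obius inversion $z\mapsto -z^{-1}$ are both realized on the plane, the orientation-preserving part of the restriction contains $PSL(2,\ZZ[\varpi])$, whose transitivity on $\mathbb{P}^1(\QQ(i))$ is the classical result for Bianchi groups over Euclidean rings.

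For the first half, take $\Gamma_0=\langle\varphi_1,\ldots,\varphi_8\rangle$. The reflections $\varphi_1,\ldots,\varphi_4$ are through vertical planes and so preserve every horizontal slice of $\HH^4$, including $\hat S_j$; the reflections $\varphi_5,\ldots,\varphi_8$ are through unit spheres centered at the four lattice points $0,1,\varpi,1+\varpi\in\hat S_j$, and so they preserve $\hat S_j$ as well. Restricted to $\hat S_j\cong\CC$, direct computation shows that $\varphi_5\varphi_1$ acts as the M\"obius transformation $z\mapsto -z^{-1}$, corresponding to $S=\begin{pmatrix}0 & -1\\ 1 & 0\end{pmatrix}\in SL^\ddagger(2,\OO)$, while $\varphi_3\varphi_1$ and $\varphi_4\varphi_2$ are the translations by $2$ and $2\,\im(\varpi)\,i$ respectively. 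The composition $\varphi_k\varphi_5$ for each $k\in\{6,7,8\}$ restricts to an explicit M\"obius transformation (for example $\varphi_6\varphi_5:z\mapsto 1/(1-z)$), and multiplying by a suitable power of $S$ in $PSL_2$ recovers the elementary translation by $1$, $\varpi$, or $1+\varpi$; together with the translations already obtained, these generate the full translation lattice $\ZZ[\varpi]$.

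The orientation-preserving restriction of $\Gamma_0$ to $\hat S_j$ therefore contains $S$ together with $\begin{pmatrix}1 & t\\ 0 & 1\end{pmatrix}$ for every $t\in\ZZ[\varpi]$, so it contains all of $PSL(2,\ZZ[\varpi])$. For $\varpi\in\{i,(1+i)/2\}$ with $m\in\{1,2,3,7,11\}$, the ring $\ZZ[\varpi]$ is the Euclidean ring of integers of $\QQ(i)$, and the standard Euclidean-algorithm argument shows that $PSL(2,\ZZ[\varpi])$ acts transitively on $\mathbb{P}^1(\QQ(i))=\QQ(i)\cup\{\infty\}$. Composing two such elements, one sending $\infty\mapsto p_1$ and the other sending $p_2\mapsto\infty$, yields an element of $\Gamma_0$ sending $p_1\mapsto p_2$, so $\Gamma_0$ (and hence $\Gamma$) acts transitively on $\QQ(i)$.

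The second half is essentially identical with $\Gamma_1=\langle\varphi_1,\ldots,\varphi_4,\varphi_9,\ldots,\varphi_{12}\rangle$ in place of $\Gamma_0$. The generators $\varphi_1,\ldots,\varphi_4$ preserve $\hat S_j+\tau$ because they preserve every horizontal plane, while $\varphi_9,\ldots,\varphi_{12}$ are by construction inversions about the four points $\tau,1+\tau,\varpi+\tau,1+\varpi+\tau\in\hat S_j+\tau$. The Euclidean translation $w\mapsto w-\tau$ intertwines the restricted action of $\Gamma_1$ on $\hat S_j+\tau$ with that of $\Gamma_0$ on $\hat S_j$, so the previous paragraph also gives transitivity on $\QQ(i)+\tau$. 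The main technical step of the proof is the explicit bookkeeping needed to verify that the chosen compositions really do produce all of $\ZZ[\varpi]$ as translations (rather than merely a finite-index sublattice) together with the inversion $S$; once this is established, invoking classical Bianchi group transitivity closes both cases.
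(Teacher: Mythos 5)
Your high-level strategy---restrict everything to the invariant plane, generate enough M\"obius transformations there, and invoke transitivity of the Euclidean Bianchi group on $\mathbb{P}^1(\QQ(i))$---is a legitimate route and genuinely different from the paper's, which instead exhibits a fundamental polyhedron for $\langle\varphi_1,\dots,\varphi_8\rangle$ in $\HH^3$, matches it against the fundamental domain of $SL(2,\ZZ[\varpi])$, and reads off transitivity on cusps. However, the step you defer as ``bookkeeping'' is exactly where the argument breaks. Writing $T_w$ for $z\mapsto z+w$ and $S$ for $z\mapsto -z^{-1}$, a direct computation on $\hat S_j\cong\CC\cup\{\infty\}$ gives
\begin{align*}
\varphi_7\varphi_5 \;=\; \begin{pmatrix} 1-\nrm(\varpi) & \varpi \\ -\overline{\varpi} & 1 \end{pmatrix} \;=\; T_{\varpi}\,\bigl(S^{-1}T_{\overline{\varpi}}\,S\bigr),
\end{align*}
and similarly $\varphi_8\varphi_5=T_{1+\varpi}\bigl(S^{-1}T_{\overline{1+\varpi}}S\bigr)$: these are products of an upper and a lower unipotent, and no power of $S$ times them is a translation. (Your computation succeeds for $k=6$ only because $\overline{1}=1$, so the lower unipotent factor already lies in $\langle S,T_1\rangle$.) Isolating $T_{\varpi}$ from $\varphi_7\varphi_5$ presupposes having $T_{\overline{\varpi}}$, which is the very element you are trying to construct, so the extraction is circular. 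What your listed generators actually produce is the translation group $\ZZ+2\varpi\ZZ$ (from $T_1$, $\varphi_3\varphi_1$, and $\varphi_4\varphi_2$), which has index $2$ in $\ZZ[\varpi]$; its covering radius is $\geq 1$ in every case $m\in\{1,2,3,7,11\}$, so the Euclidean-algorithm descent cannot be run with these translations alone and the proof does not close as written.

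The gap is repairable without abandoning your framework, but it needs a different mechanism than generating $T_{\varpi}$. Observe first that the orbit of $\infty$ already contains all of $\ZZ[\varpi]$: the inversions $\varphi_5,\dots,\varphi_8$ send $\infty$ to $0,1,\varpi,1+\varpi$, which represent every coset of $\ZZ+2\varpi\ZZ$ in $\ZZ[\varpi]$, and the available translations fill in the rest. Consequently the inversion in the unit circle centered at \emph{any} $u\in\ZZ[\varpi]$ lies in the restricted group, being a translate of one of $\varphi_5,\dots,\varphi_8$ by an available translation. Given $p=\beta\alpha^{-1}\in\QQ(i)$ with $\alpha$ a non-unit, choose $u\in\ZZ[\varpi]$ with $\nrm(p-u)<1$ (possible since $\ZZ[\varpi]$ is norm-Euclidean); the inversion centered at $u$ sends $p$ to a point with denominator $\overline{\beta-u\alpha}$ of strictly smaller norm. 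Iterating lands in $\ZZ[\varpi]\cup\{\infty\}$, which gives transitivity on $\QQ(i)\cup\{\infty\}$ and hence on $\QQ(i)$. The same descent, applied to the conjugate configuration, handles $\QQ(i)+\tau$ (note that conjugating by $z\mapsto z-\tau$ moves the mirror planes of $\varphi_1,\dots,\varphi_4$, since $\tau$ has nonzero real and $i$-components in general, so the two restricted groups are not literally identified; but the identical argument applies). With this repair your proof is a valid, more explicit alternative to the paper's fundamental-domain argument.
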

	
	\begin{proof}
	Let $\Gamma'$ be the subgroup generated by $\varphi_i$ for $i = 1,\ldots 8$. Since it preserves $\QQ(i)$ considered with orientation, we can consider it as a subgroup of $\text{Isom}(\HH^3)$, and so it makes sense to consider its fundamental domain in $\HH^3$. One possible choice is the set of points $(x,y,z) \in \HH^3$ satisfying
	
		\begin{minipage}{.45\textwidth}
		\begin{align*}
		x^2 + y^2 + z^2 &> 1 \\
		(x - 1)^2 + y^2 + z^2 &> 1 \\
		x^2 + (y - \im(\varpi))^2 + z^2 &> 1
		\end{align*}
		\end{minipage}%
		\begin{minipage}{.45\textwidth}
		\begin{align*}
		(x - 1)^2 + (y - \im(\varpi))^2 + z^2 &> 1 \\
		1 > x &> 0 \\
		\im(\varpi) > y &> 0.
		\end{align*}
		\end{minipage}
	
\noindent Note that this region is the fundamental domain of $SL\left(2,\ZZ[\varpi]\right)$, unless $m = 1$ or $m = 3$, in which case it is the union of copies of this fundamental domain, which share a cusp. Furthermore, the action of $\Gamma'$ moves these copies of the fundamental domain of $SL\left(2,\ZZ[\varpi]\right)$ to adjacent copies of the fundamental domain. Consequently, $\Gamma'$ acts transitively on the cusps of the fundamental domain of $SL\left(2,\ZZ[\varpi]\right)$, which occur at every point of $\QQ(i)$. The proof that $\Gamma$ acts transitively on $\QQ(i) + \tau$ is similar---note that $\varphi_i$ for $i = 1, \ldots 4$, and $9, \ldots 12$ generate a group that preserves $\QQ(i) + \tau$ considered with orientation, so we can also consider it as a subgroup of $\text{Isom}(\HH^4)$. Its fundamental domain also contains copies of the fundamental domain of $SL\left(2,\ZZ[\varpi]\right)$, and so we are done.
	\end{proof}
	
	\begin{lemma}\label{LimitSet}
	The limit set of $\Gamma$ is the closure of $\mathcal{P}_{\OO,j}$.
	\end{lemma}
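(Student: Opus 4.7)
The plan is to establish both inclusions $\Lambda(\Gamma) \subseteq \overline{\mathcal{P}_{\OO,j}}$ and $\overline{\mathcal{P}_{\OO,j}} \subseteq \Lambda(\Gamma)$. Since $\Gamma$ is discrete, geometrically finite, and non-elementary (by Lemma \ref{FundamentalPolyhedron}, together with the abundance of parabolic and loxodromic elements produced by the translation lattice and the sphere inversions), $\Lambda(\Gamma)$ is the unique minimal nonempty closed $\Gamma$-invariant subset of $\partial\HH^4 \cong \RR^3 \cup \{\infty\}$.

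For the inclusion $\Lambda(\Gamma) \subseteq \overline{\mathcal{P}_{\OO,j}}$, I would first verify that $\Gamma$ preserves $\hat{\mathcal{S}}_{\OO,j}$, and consequently $\mathcal{P}_{\OO,j}$, as a set. The generators $\varphi_1, \ldots, \varphi_8$ each fix $\hat{S}_j$ setwise, whereas $\varphi_9, \ldots, \varphi_{12}$ send it to small spheres tangent to $\hat{S}_j$ at points of $\QQ(i) + \tau$; these target spheres lie in $\hat{\mathcal{S}}_{\OO,j}$ because they arise as $\gamma \hat{S}_j$ for explicit $\gamma \in PSL^\ddagger(2,\OO)$. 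Since the orientation-preserving products $\varphi_i \varphi_k$ are realized by Vahlen matrices in $SL^\ddagger(2,\OO)$, conjugation by each generator normalizes $PSL^\ddagger(2,\OO)$, so $\hat{\mathcal{S}}_{\OO,j}$ is $\Gamma$-invariant. Because $\mathcal{P}_{\OO,j}$ is the unique $(\OO,j)$-Apollonian packing containing $\hat{S}_j$ and the $\Gamma$-action preserves tangency-connectedness and non-straddling, $\mathcal{P}_{\OO,j}$ itself is $\Gamma$-invariant, so minimality of $\Lambda(\Gamma)$ gives the desired inclusion.

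For the reverse inclusion, observe that $\infty$ is a parabolic fixed point of $\Gamma$, stabilized by the rank-$2$ translation subgroup generated by $\varphi_3\varphi_1$ and $\varphi_4\varphi_2$, so $\infty \in \Lambda(\Gamma)$. Since $\varphi_5(\infty) = 0$ and $\varphi_9(\infty) = \tau$, Lemma \ref{TransitiveAction} implies that the $\Gamma$-orbit of $\infty$ contains $\QQ(i) \cup (\QQ(i) + \tau)$, which is dense in $\hat{S}_j \cup (\hat{S}_j + \tau)$; closedness of $\Lambda(\Gamma)$ then yields $\hat{S}_j \cup (\hat{S}_j + \tau) \subseteq \Lambda(\Gamma)$. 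For any other sphere $S \in \mathcal{P}_{\OO,j}$, the tangency points of $S$ with neighboring spheres are $\Gamma$-translates of the tangency points of $\hat{S}_j$ (by the $\Gamma$-invariance of the packing from the previous paragraph), hence lie in $\Lambda(\Gamma)$; their density in $S$ forces $S \subseteq \Lambda(\Gamma)$. Taking closures gives $\overline{\mathcal{P}_{\OO,j}} \subseteq \Lambda(\Gamma)$.

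The hard part will be establishing $\Gamma$-invariance of $\hat{\mathcal{S}}_{\OO,j}$ in full generality: while each generator's action on $\hat{S}_j$ itself is a direct calculation, showing that conjugation by each orientation-reversing $\varphi_i$ carries $SL^\ddagger(2,\OO)$ to itself requires careful bookkeeping in the Vahlen representation, and the argument will depend on the specific integral structure of the maximal $\ddagger$-order $\OO$.
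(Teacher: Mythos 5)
Your overall architecture (two inclusions, transitivity on $\QQ(i)$ and $\QQ(i)+\tau$, density of tangency points) resembles the paper's, and routing the inclusion $\Lambda(\Gamma) \subseteq \overline{\mathcal{P}_{\OO,j}}$ through minimality of the limit set is a legitimate alternative to the paper's argument, which instead confines the orbit of a base point to the slab between the two planes and to the exterior of every sphere of the packing. However, there is a genuine gap in your reverse inclusion. You assert that for an arbitrary sphere $S \in \mathcal{P}_{\OO,j}$ the tangency points of $S$ are $\Gamma$-translates of the tangency points of $\hat{S}_j$, ``by the $\Gamma$-invariance of the packing.'' Invariance only gives $\Gamma \cdot \{\hat{S}_j,\, -\hat{S}_j + \tau\} \subseteq \mathcal{P}_{\OO,j}$; it does not give the containment you actually need, namely that every sphere of $\mathcal{P}_{\OO,j}$ lies in the $\Gamma$-orbit of one of the two planes. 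That transitivity statement is the real content of the lemma, and the paper proves it by a propagation argument: certain sphere inversions among the generators carry each plane onto a sphere immediately tangent to it, and combining this with the transitive action on the rational points of each plane (Lemma \ref{TransitiveAction}) and with the fact that $\mathcal{P}_{\OO,j}$ is by definition a \emph{maximal tangency-connected} subset, one exhausts the whole packing by induction on tangency distance from the planes. Without such a step your argument only yields $\hat{S}_j \cup (-\hat{S}_j + \tau) \subseteq \Lambda(\Gamma)$.

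A secondary issue sits in your first inclusion: you deduce $\Gamma$-invariance of $\mathcal{P}_{\OO,j}$ from invariance of $\hat{\mathcal{S}}_{\OO,j}$ together with uniqueness of the $(\OO,j)$-Apollonian packing containing $\hat{S}_j$. That uniqueness argument applies only to generators fixing $\hat{S}_j$ setwise. The inversions $\varphi_9, \ldots, \varphi_{12}$ are centered at points of $\QQ(i) + \tau$ and send $\hat{S}_j$ to a bounded sphere, so for them you must instead observe that they fix the plane $-\hat{S}_j + \tau$ and verify that this second plane belongs to $\mathcal{P}_{\OO,j}$; otherwise you have only shown that $\Gamma$ permutes the Apollonian components of $\hat{\mathcal{S}}_{\OO,j}$, not that it preserves the particular component $\mathcal{P}_{\OO,j}$. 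This is fixable, but as written the step is incomplete, and since your entire first inclusion rests on it (and your second inclusion silently relies on the stronger transitivity), both halves need repair.
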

	
	\begin{proof}
	We shall fix the point $o = ij$ and consider the orbit $\Gamma o$. First, note that the limit set of $\Gamma$ must contain $\infty$, since
		\begin{align*}
		\left(\varphi_3 \circ \varphi_1\right)(z) = z + 2,
		\end{align*}
		
	\noindent and therefore
		\begin{align*}
		\left(\varphi_3 \circ \varphi_1\right)^n o \xrightarrow{n\longrightarrow \infty} \infty.
		\end{align*}
		
	\noindent By Lemma \ref{TransitiveAction}, this implies that the limit set of $\Gamma$ contains both $\hat{S}_{j}$ and $-\hat{S}_{j} + \tau$. By Lemma \ref{IntersectionsWithPlane}, we know that the intersection of any sphere of $\mathcal{S}_{\OO,j}$ with $\hat{S}_j$ happens at a rational point, and by Lemma \ref{TransitiveAction} we know that $\Gamma$ acts transitively on $\QQ(i)$. Since $-\hat{S}_j$ is immediately tangent to $\hat{S}_j$, we therefore conclude that the limit set of $\Gamma$ contains all spheres immediately tangent to $\hat{S}_j$. Similarly, since $\Gamma$ acts transitively on $\QQ(i) + \tau$, it follows that the limit set contains all spheres immediately tangent to $-\hat{S}_{j} + \tau$.
	
	The hyperbolic reflection $\varphi_4$ sends $-\hat{S}_j + \tau$ to the sphere
		\begin{align*}
		\begin{pmatrix} 0 & -1 \\ 1 & \tau \end{pmatrix}(\hat{S}_j),
		\end{align*}
		
	\noindent which is the sphere immediately tangent to $\hat{S}_j$ at $0$. Therefore, $\varphi_4$ sends spheres immediately tangent to $-\hat{S}_j + \tau$ to spheres immediately tangent to the sphere immediately tangent to $\hat{S}_j$ at $0$. Similarly, $\varphi_7$ sends $\hat{S}_j$ to the sphere immediately tangent to $-\hat{S}_j + \tau$ at $\tau$. Therefore, $\varphi_7$ sends spheres immediately tangent to $\hat{S}_j$ to spheres immediately tangent to the sphere immediately tangent to $-\hat{S}_j + \tau$ at $\tau$. Using the transitivity of the action on $\hat{S}_j$ and $-\hat{S}_j + \tau$, we can iterate this process, and we see that in fact we must have that the limit set of $\Gamma$ contains $\mathcal{P}_{\OO,j}$. It remains to show that closure of this packing is the entirety of the limit set.
	
	Let $T$ denote the set of points between $\hat{S}_j$ and $-\hat{S}_j + \tau$, inclusive. Consider the set $T \times [0,\infty)$ inside $\HH^4$ union its boundary. Since the generators of $\Gamma$ preserve $T \times [0,\infty)$ and we chose $o$ to lie inside this set, it follows that all the limit points of $\Gamma o$ lie in $T$. However, this shows that the limit points must be contained in the exterior of every sphere in $\mathcal{P}_{\OO,j}$, since we have shown that the action of $\Gamma$ can send any sphere in $\mathcal{P}_{\OO,j}$ to either $\hat{S}_j$ or $-\hat{S}_j + \tau$. Any point in the exterior of every sphere in $\mathcal{P}_{\OO,j}$ is contained in the topological closure of the sphere packing, and therefore we are done.
	\end{proof}
	
Lemmas \ref{FundamentalPolyhedron} and \ref{LimitSet} together prove that $\mathcal{P}_{\OO,j}$ is crystallographic.

\section{Equivalency of Constructions:}\label{ProofOfMainTheorem}

We now seek to show that $\hat{\mathcal{S}}_{\OO,j}$ is the super-packing of $\mathcal{P}_{\OO,j}$. Let $\widetilde{\mathcal{P}}_{\OO,j}$ denote the super-packing of $\mathcal{P}_{\OO,j}$. It is easy to see that $\hat{\mathcal{S}}_{\OO,j} \subset \widetilde{\mathcal{P}}_{\OO,j}$---this follows immediately from the fact that the super-group of $\widetilde{\mathcal{P}}_{\OO,j}$ has to contain $\Gamma$ as defined in the previous section, as well as the maps
	\begin{align*}
	z &\mapsto z + 1 \\
	z &\mapsto z + \varpi \\
	z &\mapsto z + \tau,
	\end{align*}
	
\noindent which is sufficient to generate the group $PSL^\ddagger(2,\OO) \oplus \langle\varphi_1\rangle$---the action of this group on $\mathcal{P}_{\OO,j}$ yields $\hat{\mathcal{S}}_{\OO,j}$. On the other hand, $\hat{\mathcal{S}}_{\OO,j}$ has an important invariance property.

	\begin{lemma}\label{InvariantUnderSuperGroup}
	The action of the super-group $\widetilde{\Gamma}$ preserves $\hat{\mathcal{S}}_{\OO,j}$.
	\end{lemma}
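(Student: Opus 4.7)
The plan is to verify invariance of $\hat{\mathcal{S}}_{\OO,j}$ under a generating set of $\widetilde{\Gamma}$. The super-group $\widetilde{\Gamma}$ is generated by a reflection subgroup containing $\Gamma$ (the group of Lemma \ref{FundamentalPolyhedron}) together with reflections $R_S$ through spheres $S \in \mathcal{P}_{\OO,j}$; I would handle these two families of generators separately.

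First, for the reflection-subgroup family: by the proof of Lemma \ref{FundamentalPolyhedron}, $\Gamma$ embeds into $G := PSL^\ddagger(2,\OO) \oplus \langle \varphi_1 \rangle$, with $\langle \varphi_2 \rangle$ adjoined when $\nrm(i) > 1$. The subgroup $PSL^\ddagger(2,\OO)$ preserves $\mathcal{S}_{\OO,j}$ by definition; and the involutions $\varphi_1(z) = -\bar z$ and $\varphi_2(z) = i \bar z i / \nrm(i)$ fix $\hat{S}_j$ setwise by direct computation (each sends the plane $\tr(jh)=0$ to itself) and normalize $PSL^\ddagger(2,\OO)$ because their conjugation action on a matrix in $SL^\ddagger(2,\OO)$ is given by a $\ddagger$-algebra automorphism of $\OO$ applied entrywise. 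Hence $G$, and therefore $\Gamma$, preserves $\hat{\mathcal{S}}_{\OO,j}$.

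Second, for each reflection $R_S$ with $S = g\hat{S}_j$ and $g \in G$, we have $R_S = g R_{\hat{S}_j} g^{-1}$; by the previous step the claim reduces to showing $R_{\hat{S}_j}$ preserves $\hat{\mathcal{S}}_{\OO,j}$. In the Vahlen formalism via $H \hookrightarrow H_\RR$, $R_{\hat{S}_j}$ is represented by $\text{diag}(i,i)$ and acts as $z \mapsto izi^{-1}$; entrywise conjugation of $\gamma \in SL^\ddagger(2,\OO)$ by this matrix applies the inner automorphism $\sigma_i(x) := ixi^{-1}$. Since $\sigma_i$ commutes with $\ddagger$ and preserves $H^+$, the $SL^\ddagger$ conditions transfer automatically, and $R_{\hat{S}_j} SL^\ddagger(2,\OO) R_{\hat{S}_j}^{-1} = SL^\ddagger(2,\sigma_i(\OO))$. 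A case-by-case inspection of the ten orders in Theorem \ref{SICP Classification} shows that $\sigma_i(\OO) = \OO$ in every case except those associated with $\left(\frac{-2,-26}{\QQ}\right)$, $\left(\frac{-3,-15}{\QQ}\right)$, and $\left(\frac{-11,-143}{\QQ}\right)$; in these remaining three cases $\sigma_i(\OO) = \OO'$, the other order listed alongside $\OO$ in Theorem \ref{SuperpackingClassification}. When $\sigma_i(\OO) = \OO$, $R_{\hat{S}_j}$ normalizes $PSL^\ddagger(2,\OO)$ and preserves $\mathcal{S}_{\OO,j}$ directly.

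The main obstacle is the twin-swap case, where I would need to establish the non-obvious identity $\hat{\mathcal{S}}_{\OO,j} = \hat{\mathcal{S}}_{\OO',j}$ of unoriented sphere collections. Because $\sigma_i$ is an isometric reflection fixing $\hat{S}_j$ pointwise, $\sigma_i(\mathcal{S}_{\OO,j}) = \mathcal{S}_{\OO',j}$ as oriented sphere collections, so the two are mutual reflections across $\hat{S}_j$; the required equality then follows once one exhibits, for each generator $\gamma' \in SL^\ddagger(2,\OO')$, an element $\gamma \in SL^\ddagger(2,\OO)$ with $\gamma' = \gamma s$ for some $s$ in the stabilizer of $\hat{S}_j$ in $PSL^\ddagger(2,H)$ (which is an extension of $PSL(2, \OO \cap \QQ(i))$ by orientation-reversing elements fixing $\hat{S}_j$ pointwise). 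Once this matching is done, $R_{\hat{S}_j}$ preserves $\hat{\mathcal{S}}_{\OO,j}$ in the twin-swap cases as well, and the proof is complete.
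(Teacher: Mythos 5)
Your treatment of the sphere-reflection generators is a reasonable alternative to the paper's argument: conjugating $R_S$ down to $R_{\hat{S}_j}$, identifying $R_{\hat{S}_j}$ with $z \mapsto izi^{-1}$, and observing that entrywise conjugation carries $SL^\ddagger(2,\OO)$ to $SL^\ddagger(2,i\OO i^{-1})$ are all correct, and in the cases $i\OO i^{-1} = \OO$ this does settle that family of generators. But the proposal has two genuine gaps. First, the reflection-group family: the supergroup is defined using the \emph{largest} reflection subgroup of $\Isom(\HH^4)$ stabilizing $\mathcal{P}_{\OO,j}$, and you only verify invariance for the specific group $G = PSL^\ddagger(2,\OO)\oplus\langle\varphi_1\rangle\oplus\langle\varphi_2\rangle$ containing the $\Gamma$ of Lemma \ref{FundamentalPolyhedron}. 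You never identify the full stabilizing reflection group or show it lies in $G$, and there is no a priori reason it should (the packings do admit extra symmetries in several cases). This is exactly where the paper's proof does its real work: it takes an \emph{arbitrary} element $g$ of the supergroup, uses Lemmas \ref{LimitSet} and \ref{TransitiveAction} to compose $g$ with elements of $\Gamma$ until it fixes both planes $-\hat{S}_j$ and $\hat{S}_j+\tau$ and a marked minimal-bend tangent sphere, concludes $g$ is one of finitely many Euclidean isometries, and checks those by hand. That normalization step is the missing idea in your first paragraph.

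Second, the twin-swap case is not closed. The identity $\hat{\mathcal{S}}_{\OO,j} = \hat{\mathcal{S}}_{\OO',j}$ for $\OO' = i\OO i^{-1} \neq \OO$ is precisely the content of the claim that $R_{\hat{S}_j}$ preserves $\hat{\mathcal{S}}_{\OO,j}$ in those cases, and your proposed route to it --- exhibiting for each $\gamma' \in SL^\ddagger(2,\OO')$ a factorization $\gamma' = \gamma s$ with $\gamma \in SL^\ddagger(2,\OO)$ and $s$ stabilizing $\hat{S}_j$ --- is a restatement of the desired equality of orbits, not an argument for it. Worse, the only ready source for this identity in the paper is Lemma \ref{InvariantUnderSuperGroup} itself (once the lemma is known, $R_{\hat{S}_j} = \sigma_i$ lies in the supergroup and the identity follows), so invoking it here risks circularity. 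To complete your approach you would need an independent proof of this orbit equality, e.g.\ via an explicit set of generators of $SL^\ddagger(2,\OO')$ modulo the stabilizer of $\hat{S}_j$; the paper's proof avoids the issue entirely because its finite check on Euclidean isometries never needs to distinguish the two twin orders.
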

	
	\begin{proof}
	By the proof of Lemma \ref{LimitSet}, we know that by the action of $\Gamma$, we can move any sphere in $\mathcal{P}_{\OO,j}$ to either $-\hat{S}_j$ or $\hat{S}_j + \tau$. Therefore, given any element $g \in \widetilde{\Gamma}$, by composing with elements of $\Gamma$, we can assume that $g$ sends $-\hat{S}_j$ to either $-\hat{S}_j$ or $\hat{S}_j + \tau$. Since the transformation $z \mapsto \overline{z} + \tau$ interchanges $-\hat{S}_j$ and $\hat{S}_j + \tau$, we can in fact assume that $g$ sends $-\hat{S}_j$ to $-\hat{S}_j$. Furthermore, we know that $\hat{S}_j + \tau$ gets mapped to a sphere tangent to $-\hat{S}_j$---the point of intersection must be rational, but by Lemma \ref{TransitiveAction} we know that $\Gamma$ contains a subgroup that has a well-defined, transitive action on the rational points of $-\hat{S}_j$. Consequently, by composing with an element of $\Gamma$, we can assume that $g$ send $-\hat{S}_j$ to $-\hat{S}_j$ and $\hat{S}_j + \tau$ to $\hat{S}_j + \tau$. The only transformations that satisfy this are Euclidean isometries.
	
	We consider the spheres immediately tangent to $-\hat{S}_j$ at $0$, $1$, $\varpi$, and $\tau$---these are all of minimal positive bend, and appear both in $\hat{\mathcal{S}}_{\OO,j}$ and $\mathcal{P}_{\OO,j}$. The action of $g$ must map them to spheres of minimal positive bend tangent to $-\hat{S}_j$. However, by composing with translations and reflections that preserve $\hat{\mathcal{S}}_{\OO,j}$ and $\mathcal{P}_{\OO,j}$, we can assume that the sphere tangent at $0$ is mapped back to itself. After that, there is only a finite number of possible Euclidean isometries that can possibly map the remaining three spheres to other spheres of minimal positive bend immediately tangent to $-\hat{S}_j$. It is an easy exercise to check that all such transformations that preserve $\mathcal{P}_{\OO,j}$ preserve $\hat{\mathcal{S}}_{\OO,j}$.
	\end{proof}
	
	\begin{corollary}
	$\widetilde{\mathcal{P}}_{\OO,j} = \hat{\mathcal{S}}_{\OO,j}$.
	\end{corollary}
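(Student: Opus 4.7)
The plan is to deduce the corollary by sandwiching $\widetilde{\mathcal{P}}_{\OO,j}$ between two copies of $\hat{\mathcal{S}}_{\OO,j}$, using the two facts that have already been assembled in the paper. The inclusion $\hat{\mathcal{S}}_{\OO,j} \subset \widetilde{\mathcal{P}}_{\OO,j}$ is explicitly noted just before Lemma \ref{InvariantUnderSuperGroup}: the super-group contains $\Gamma$ together with the translations $z \mapsto z + 1$, $z \mapsto z + \varpi$, $z \mapsto z + \tau$, and these are already enough to generate $PSL^\ddagger(2,\OO) \oplus \langle \varphi_1 \rangle$, whose orbit of $\mathcal{P}_{\OO,j}$ recovers all of $\hat{\mathcal{S}}_{\OO,j}$ (since $\mathcal{P}_{\OO,j}$ contains $\hat{S}_j$, and $\hat{\mathcal{S}}_{\OO,j}$ is by definition the orbit of $\hat{S}_j$ under $PSL^\ddagger(2,\OO)$, with orientations).

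For the reverse inclusion, I would write $\widetilde{\mathcal{P}}_{\OO,j} = \widetilde{\Gamma} \cdot \mathcal{P}_{\OO,j}$, where $\widetilde{\Gamma}$ is the super-group. Since $\mathcal{P}_{\OO,j}$ is, by its definition as an $(\OO,j)$-Apollonian packing, a subcollection of $\hat{\mathcal{S}}_{\OO,j}$, we have
\begin{align*}
\widetilde{\mathcal{P}}_{\OO,j} \;=\; \widetilde{\Gamma} \cdot \mathcal{P}_{\OO,j} \;\subset\; \widetilde{\Gamma} \cdot \hat{\mathcal{S}}_{\OO,j}.
\end{align*}
Now Lemma \ref{InvariantUnderSuperGroup} asserts precisely that the action of $\widetilde{\Gamma}$ preserves $\hat{\mathcal{S}}_{\OO,j}$, so $\widetilde{\Gamma} \cdot \hat{\mathcal{S}}_{\OO,j} = \hat{\mathcal{S}}_{\OO,j}$, and combining the two inclusions gives the equality.

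There is no real obstacle left, since Lemma \ref{InvariantUnderSuperGroup} has already absorbed the hard work (identifying all sphere-preserving Euclidean isometries and checking that every element of $\widetilde{\Gamma}$ can be reduced, modulo $\Gamma$, to such an isometry fixing both $-\hat{S}_j$ and $\hat{S}_j + \tau$). The corollary is therefore a one-line consequence, and the proof in the paper would just make the two inclusions explicit as above.
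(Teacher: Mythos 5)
Your proof is correct and follows essentially the same route as the paper: both arguments rest on the already-established inclusion $\hat{\mathcal{S}}_{\OO,j} \subset \widetilde{\mathcal{P}}_{\OO,j}$ together with Lemma \ref{InvariantUnderSuperGroup}, using the fact that the superpacking is the orbit of $\mathcal{P}_{\OO,j} \subset \hat{\mathcal{S}}_{\OO,j}$ under the super-group, which preserves $\hat{\mathcal{S}}_{\OO,j}$. Your write-up merely makes the two inclusions slightly more explicit than the paper does.
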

	
	\begin{proof}
	As we have already observed, $\widetilde{\mathcal{P}}_{\OO,j}$ contains $\hat{\mathcal{S}}_{\OO,j}$---it must therefore be the union of orbits of $\hat{\mathcal{S}}_{\OO,j}$ under the action of the super-group. However, since $\hat{\mathcal{S}}_{\OO,j}$ is preserved by the super-group, we conclude that these two sets are in fact equal.
	\end{proof}
	
	We have thus proved Theorem \ref{SICP Classification}, as promised.
	
\section{Boyd-Maxwell Packings and the Orthoplicial Packing:}\label{DistinctPackings}

Our final task is to show that the packings that we have constructed are not all conformally equivalent to packings already in the literature, specifically the orthoplicial packing and Boyd-Maxwell packings such as the Soddy packing. We begin by briefly recalling the definition of a Boyd-Maxwell packing. Let $W$ be a group generated by a finite set of generators $S$ and relations
	\begin{align*}
    (st)^{m_{s,t}} = 1, \ \forall s,t \in S
    \end{align*}
    
\noindent where $m_{s,s} = 1$ and if $s \neq t$ then $m_{s,t} \geq 2$ or $\infty$. Then we say that $(W,S)$ is a Coxeter system, and associate to it a matrix $B$ such that
	\begin{align*}
    B_{s,t} = \begin{cases} \cos\left(\frac{\pi}{m_{s,t}}\right) & \text{if } m_{s,t} < \infty \\ 1 & \text{otherwise} \end{cases}
    \end{align*}

\noindent and a Coxeter diagram, where the vertices are elements of $S$, and we include an edge between $s,t \in S$ if $m_{s,t} > 1$---furthermore, if $m_{s,t} > 2$, then we label the edge by $m_{s,t}$.

Define $V$ to be real vector space with basis $e_s$. Then the matrix $B$ defines a bilinear form on $V$, and the group $W$ is isomorphic to a subgroup of $O_B(V)$. If $B$ has signature $(n - 1,1)$, then $V$ is a Lorentzian space, and we can identify the hyperboloid
	\begin{align*}
    \left\{x \in V \middle| B(x,x) = 1\right\}
    \end{align*}
    
\noindent with the collection of spheres in $\RR^{n - 2}$. With this convention, we identify $W$ with a subgroup of $\Isom(\HH^{n - 1})$ generated by spheres corresponding to the vectors $e_s$. Let $\{f_s\}$ denote the dual basis of $e_s$, and consider the sets
	\begin{align*}
    \Omega &= \bigcup_{s \in S} W.f_s \\
    \Omega_r &= \left\{x \in \Omega \middle| B(x,x) > 0\right\}.
    \end{align*}
    
\noindent Normalizing the vectors in $\Omega_r$ to have norm $1$, we can identify them with a collection of spheres in $\RR^{n - 2}$---if the spheres in this collection have disjoint interiors, we call it a Boyd-Maxwell packing. If, furthermore, $\Omega = \Omega_r$, we call it a non-degenerate Boyd-Maxwell packing. These were originally introduced by Boyd \cite{Boyd}; Maxwell proved under what conditions this collection of spheres is disjoint, and attempted to give a full classification \cite{Maxwell}. Maxwell's list was eventually amended by Chen and Labb\'{e} \cite{Chen2015}, which we shall use extensively. We begin by showing that three of the super-integral crystallographic packings that we have defined are in fact non-degenerate Boyd-Maxwell packings.

\begin{lemma}\label{BMGraph16}
For $H = \left(\frac{-1,-6}{\QQ}\right)$, the corresponding $(\OO,j)$-Apollonian sphere packing is a non-degenerate Boyd-Maxwell packing corresponding to the Coxeter diagram below.
	\begin{center}
	\includegraphics[width=0.2\textwidth]{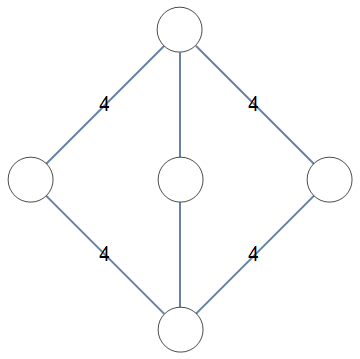}
	\end{center}
\end{lemma}

\begin{proof}
Consider $-\hat{S}_j$, $\hat{S}_j + \tau$, and the three spheres in $\mathcal{P}_{\OO,j}$ tangent to $-\hat{S}_j$ at $0$, $1$, and $i$. By inspection, the reflections through the dual spheres are given by
	\begin{align*}
		\phi_1(z) &= -\overline{z} \\
		\phi_2(z) &= -i\overline{z}i \\
		\phi_3(z) &= \frac{2}{j^2}\left(\frac{j - ij}{2}(z^\ddagger - 1 - i)\frac{j + ij}{2}\right) \\
		\phi_4(z) &= \frac{1 + i + j}{2} + \left(\overline{z} - \frac{1 - i - j}{2}\right)^{-1} \\
		\phi_5(z) &= \varphi_3\left(\left(z - 1 - i\right)\left((1 - i)z - 1\right)^{-1}\right).
	\end{align*}
	
\noindent Note that $\phi_3$ is the reflection through the plane $x + y = 1$---given this, it is clear that each of these reflections preserve $\mathcal{S}_{\OO,j}$. Indeed, since each reflection moves one sphere to a sphere that is immediately tangent to one of the spheres in the configuration, the orbit of the initial sphere cluster must be a subset of $\mathcal{P}_{\OO,j}$. However, the group generated by the $\phi_i$ is a Coxeter group, with relations
	\begin{align*}
	\begin{array}{lll}
	\left(\phi_1 \phi_2\right)^2 = id & \left(\phi_1 \phi_5\right)^4 = id & \left(\phi_2 \phi_5\right)^4 = id \\
	\left(\phi_1 \phi_3\right)^4 = id & \left(\phi_2 \phi_3\right)^4 = id & \left(\phi_3 \phi_4\right)^2 = id \\
	\left(\phi_1 \phi_4\right)^3 = id & \left(\phi_2 \phi_4\right)^3 = id & \left(\phi_4 \phi_5\right)^2 = id.
	\end{array}
	\end{align*}
	
\noindent Therefore, the orbit of the initial sphere cluster is a Boyd-Maxwell packing corresponding to the given Coxeter diagram, and since this packing is non-degenerate, it is maximal \cite{Chen2015}, meaning that no further oriented spheres can be added to the collection such that the interiors do not intersect with any of the existing oriented spheres. Ergo, this Boyd-Maxwell packing is equal to $\mathcal{P}_{\OO,j}$, as claimed.
\end{proof}

\begin{lemma}\label{BMGraph31}
For $H = \left(\frac{-3,-1}{\QQ}\right)$, the corresponding $\OO$-Apollonian sphere packing is the non-degenerate Boyd-Maxwell packing corresponding to the Coxeter diagram below.
	\begin{center}
	\includegraphics[width=0.2\textwidth]{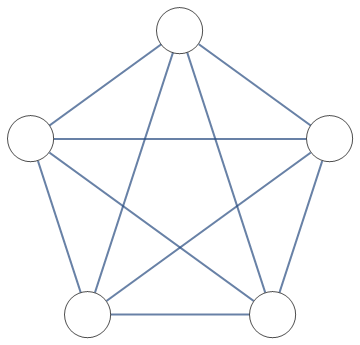}
	\end{center}
\end{lemma}

We note that this Boyd-Maxwell packing is precisely the Soddy packing \cite{Soddy}.

\begin{proof}[Proof of Lemma \ref{BMGraph31}]
Consider $-\hat{S}_j$, $\hat{S}_j + \tau$, and the three spheres in $\mathcal{P}_{\OO,j}$ tangent to $-\hat{S}_j$ at $0$, $1,$, and $\frac{1 + i}{2}$. Let $\phi_r$ denote the rotation of $-\hat{S}_j$ around the point $\frac{3 + i}{6}$ by $\pi/3$ degrees---this rotation cyclically permutes the three aforementioned spheres tangent to $-\hat{S}_j$ and preserves $\mathcal{P}_{\OO,j}$. We note also that the reflection through the plane $z = 1/2$ switches $-\hat{S}_j$ and $\hat{S}_j + \tau$ while preserving $\mathcal{P}_{\OO,j}$, and that the transformation $z \mapsto -z^{-1}$ switches $\hat{S}_j + \tau$ with the sphere tangent at $0$ while preserving $\mathcal{P}_{\OO,j}$.

Since the sphere reflection $z \mapsto -\overline{z}$ preserves $\mathcal{P}_{\OO,j}$, reflections through all the images of this sphere under the above transformations preserve $\mathcal{P}_{\OO,j}$. However, this includes the dual spheres to the above sphere cluster. By symmetry, it is clear that all of the dual spheres intersect at an angle of $\pi/6$, and consequently they correspond to the Boyd-Maxwell packing with the given Coxeter diagram. Since this is a maximal packing and is a subset of $\mathcal{P}_{\OO,j}$, it is in fact equal to $\mathcal{P}_{\OO,j}$.
\end{proof}

\begin{lemma}\label{BMGraph32}
For $H = \left(\frac{-3,-2}{\QQ}\right)$, the corresponding $\OO$-Apollonian sphere packing is the non-degenerate Boyd-Maxwell packing corresponding to the Coxeter diagram below.
	\begin{center}
	\includegraphics[width=0.2\textwidth]{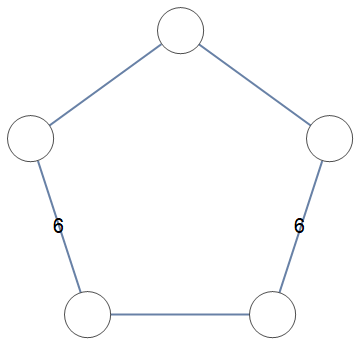}
	\end{center}
\end{lemma}

\begin{proof}[Proof of Lemma \ref{BMGraph32}]
Let $\phi_1, \phi_2, \phi_3$ be plane reflections, where the plane corresponding to $\phi_1$ passes through $0$, with normal vector $(1 - i)/2$; the plane corresponding to $\phi_2$ passes through $0$, with normal vector $i$; and the plane corresponding to $\phi_3$ passes through $i$, with normal vector $3 + i$. By inspection, all three of these plane reflections preserve $\mathcal{P}_{\OO,j}$. Define as well sphere reflections
	\begin{align*}
	\phi_4(z) &= \left(\overline{z - i}\right)^{-1} + i \\
	\phi_5(z) &= \left(\overline{z - j}\right)^{-1} + j.
	\end{align*}
	
\noindent These are also manifestly transformations that preserve $\mathcal{P}_{\OO,j}$. Thus the group generated by $\phi_1,\ldots \phi_5$ preserves $\mathcal{P}_{\OO,j}$. These are generators are the dual spheres to spheres corresponding to
	\begin{align*}
	(0,0,-j), (0,4,j), (4,0,j), (4,8,4+3j), (8,12,6 + 2i + 5j),
	\end{align*}
	
\noindent which are checked to be elements of $\mathcal{P}_{\OO,j}$. The orbit of this initial configuration is therefore a subset of $\mathcal{P}_{\OO,j}$. However, this orbit is the Boyd-Maxwell packing with the given Coxeter digram, and as this is a maximal packing, it is in fact equal to $\mathcal{P}_{\OO,j}$.
\end{proof}

Our claim is that aside from these three exceptions, all the other packings $\mathcal{P}_{\OO,j}$ are neither non-degenerate Boyd-Maxwell packings nor the orthoplicial packing. To show this, we first prove the following technical lemma.

\begin{lemma}\label{No Non Trivial Reflections}
Let $\mathcal{P}_{\OO,j}$ be an $(\OO,j)$-Apollonian packing. If
	\begin{align*}
	\OO \nsubseteq \left(\frac{-1,-6}{\QQ}\right), \left(\frac{-1,-10}{\QQ}\right), \left(\frac{-3,-1}{\QQ}\right),\left(\frac{-3,-2}{\QQ}\right),
	\end{align*}
	
\noindent then the only plane reflections preserving $\mathcal{P}_{\OO,j}$ are normal to either $1$ or $i$.
\end{lemma}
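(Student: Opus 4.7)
My plan is to reduce plane reflections $R$ preserving $\mathcal{P}_{\OO,j}$ to two geometric subcases and then perform a finite check.

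First I would observe that $R$ permutes the planes of $\mathcal{P}_{\OO,j}$, which are all parallel to $\hat{S}_j$ (with normal $\pm j_\RR$ via the embedding of $H^+$ into $\RR^3$). Hence $R(j_\RR) = \pm j_\RR$, so the reflecting plane is either \emph{vertical}, containing the $j_\RR$-direction with normal in the $\RR$-span of $1$ and $i_\RR$, or \emph{horizontal}, perpendicular to $j_\RR$. In the horizontal case, $R$ acts trivially on the $\CC$-coordinate and reflects the $j_\RR$-component about some value $c$. By Lemma \ref{IntersectionsWithPlane}, each plane of $\mathcal{P}_{\OO,j}$ carries a tangency pattern shifted from the one on $\hat{S}_j$ by $\pi_\CC(t)$, so preservation by $R$ requires both the height set and the accompanying shifts to be symmetric about $y = c$. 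Direct inspection of the $\tau$'s in Table \ref{Orders with Euclidean intersection} shows this holds only for the four excluded algebras.

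In the vertical case, $R$ restricts to a Euclidean reflection $R_\CC$ on $\CC$ and acts trivially on the $j_\RR$-direction; let $n \in H \cap \CC$ denote its normal. Conjugation of a translation $T_t \in PSL^\ddagger(2,\OO)$ by $R$ yields $T_{R(t)}$, which must also lie in $PSL^\ddagger(2,\OO)$; hence $R(\OO \cap H^+) = \OO \cap H^+$. Restricting to the $\CC$-part forces $R_\CC(\ZZ[\varpi]) = \ZZ[\varpi]$, cutting the normal direction down to finitely many candidates (at most four when $\varpi = i$, at most six when $\varpi = (1+i)/2$). The further condition $R_\CC(\pi_\CC(\tau)) - \pi_\CC(\tau) \in \ZZ[\varpi]$, computed explicitly against the $\tau$ from Table \ref{Orders with Euclidean intersection}, then eliminates all candidates except those normal to $\pm 1, \pm i$ for $\OO$ not in the excluded list.

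The hard part will be the hexagonal cases ($\varpi = (1+i)/2$), particularly for $\left(\frac{-3,-15}{\QQ}\right)$, where the lattice test alone allows non-axial candidates such as $\pm(1\pm i)$. These must be excluded by a finer compatibility check: I would conjugate by $R$ a non-translation generator of $PSL^\ddagger(2,\OO)$ --- such as the inversion $\iota = \left(\begin{smallmatrix} 0 & -1 \\ 1 & 0 \end{smallmatrix}\right)$ or a unit matrix $\left(\begin{smallmatrix} u & 0 \\ 0 & (u^\ddagger)^{-1} \end{smallmatrix}\right)$ with $u \in \OO^\times$ --- and verify that the resulting M\"{o}bius transformation's entries lie outside $\OO$, so that $R$ fails to normalize the supergroup and therefore cannot preserve $\mathcal{P}_{\OO,j}$. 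Combined with the finite enumeration above, this rules out all residual candidates and completes the classification.
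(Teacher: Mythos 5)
Your vertical-case strategy is essentially the paper's: reduce the reflection to a symmetry of the lattice $\ZZ[\varpi]$, then use the offset of the tangency structure on the second plane $\hat{S}_j + \tau$ to kill the extra symmetries of the square and hexagonal lattices. But the way you get lattice invariance has a gap. You conjugate a translation $T_t$ by $R$ and assert the result lies in $PSL^\ddagger(2,\OO)$; preserving $\mathcal{P}_{\OO,j}$ gives you at best that $R$ normalizes the supergroup, and you would additionally need to know that every translation in the supergroup is translation by an element of $\OO \cap H^+$ --- neither point is established in your sketch. The paper avoids this entirely by observing that $R$ must preserve the set of tangency points of the spheres of smallest bend tangent to $-\hat{S}_j$, which is an intrinsic geometric invariant of the packing and is exactly the lattice $\ZZ[\varpi]$ (up to translation). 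Your argument should be rerouted through that observation.

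Two further concrete problems. First, in the horizontal case your claim that ``direct inspection of the $\tau$'s shows this holds only for the four excluded algebras'' is false: for $\left(\frac{-7,-1}{\QQ}\right)$ one has $\tau = j$, so the tangency lattices on the two planes are both $\ZZ[\varpi]$ with zero horizontal shift, and your symmetry criterion is satisfied even though this algebra is not on the excluded list; your test therefore fails to rule out a horizontal reflection there. (The paper sidesteps this case entirely by asserting at the outset that the normal vector must lie in $S_j$, so it offers you no help here either.) Second, your endgame for the hexagonal case is only a plan: you are right that the lattice-plus-offset test does not eliminate all non-axial reflections for $\left(\frac{-3,-15}{\QQ}\right)$ --- this is a genuine and well-spotted difficulty, and it is the point where the paper instead checks directly that the smallest-bend spheres tangent to $\frac{i+j}{3} + \hat{S}_j$ are not preserved --- but the fix you propose (conjugating a non-translation generator and verifying its entries fall outside $\OO$) is not carried out, and it again presupposes the unproven identification of the stabilizer of $\mathcal{P}_{\OO,j}$ with a normalizer of $PSL^\ddagger(2,\OO)$. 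Until the horizontal case is repaired and the hexagonal computation is actually performed, the proof is incomplete.
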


\begin{proof}
Any plane reflection preserving $\mathcal{P}_{\OO,j}$ must preserve the two planes that it contains, and must therefore must have a normal vector in $S_j$. It must also preserve the set of spheres of smallest bend tangent to $-\hat{S}_j$, and in particular must preserve the set of their tangencies. If $i^2 \neq -1,-3$, then this set is a rectangular lattice whose symmetry group is generated by reflections through planes normal to $1$ and $i$. If $i^2 = -1$, then it is a square lattice, which is also preserved by reflections through the diagonals. However, one checks by inspection that if also $j^2 = -7$, then the set of spheres of smallest bend tangent to $\frac{1 + j}{2} + \hat{S}_j$ is not preserved by such reflections. If $i^2 = -3$, then we have a triangular lattice, which is preserved under reflections through the sides of the equilateral triangles that it is composed of. However, if we also have $j^2 = -15$, then one checks that the set of spheres of smallest bend tangent to $\frac{i + j}{3} + \hat{S}_j$ is not preserved by such reflections.
\end{proof}

\begin{theorem}\label{NotBMGraph}
If $\OO \nsubseteq \left(\frac{-1,-6}{\QQ}\right), \left(\frac{-3,-1}{\QQ}\right), \left(\frac{-3,-2}{\QQ}\right)$, then $\mathcal{P}_{\OO,j}$ is neither a non-degenerate Boyd-Maxwell packing nor the orthoplicial packing.
\end{theorem}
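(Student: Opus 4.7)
The plan is to argue by contradiction, assuming $\mathcal{P}_{\OO,j}$ is conformally equivalent to a candidate packing $\mathcal{P}'$ (either a non-degenerate Boyd--Maxwell packing from the Chen--Labb\'e classification \cite{Chen2015}, or the orthoplicial packing) and then transporting the reflection structure of $\mathcal{P}'$ back to $\mathcal{P}_{\OO,j}$ so as to contradict Lemma \ref{No Non Trivial Reflections}.

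As a preliminary filter, I would compare the conformal invariant $\xi_3(SL^\ddagger(2,\OO))$ computed in Theorem \ref{All Orders with Euclidean Intersection} against the set of intersection-angle cosines appearing in the super-packing of each candidate $\mathcal{P}'$. Since this set is determined by the Coxeter data of $\mathcal{P}'$ and is preserved under conformal equivalence, it eliminates the great majority of candidates immediately, leaving only finitely many diagrams to inspect in detail.

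For each surviving candidate, I would fix the conformal equivalence $\varphi : \mathcal{P}' \to \mathcal{P}_{\OO,j}$ by a two-step normalization: first choose $\varphi$ so that $\varphi(\infty) = \infty$ (forcing $\varphi$ to be a similarity and in particular to map planes to planes), then arrange $\varphi$ so that a distinguished pair of parallel planes of $\mathcal{P}'$ is sent to $-\hat{S}_j$ and $\hat{S}_j + \tau$. Every candidate $\mathcal{P}'$ contains such a pair (corresponding to two opposite faces of its fundamental cluster), so this normalization is well-defined up to a finite ambiguity. Under this normalization, any reflection in $\text{Aut}(\mathcal{P}')$ through a plane that stabilizes the chosen parallel pair setwise transports to a plane reflection in $\text{Aut}(\mathcal{P}_{\OO,j})$. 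Lemma \ref{No Non Trivial Reflections} then forces the normal of every such transported plane to be proportional to $1$ or $i$. On the other hand, the Coxeter data of each candidate $\mathcal{P}'$ prescribes cluster planes with normal vectors at other angles, yielding the desired contradiction.

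Finally, the case $H = \left(\frac{-1,-10}{\QQ}\right)$ is not covered directly by Lemma \ref{No Non Trivial Reflections} and requires a separate, direct enumeration: the additional plane reflections preserving $\mathcal{P}_{\OO,j}$ can be read off the symmetries of the minimal-bend spheres tangent to $-\hat{S}_j$ and to $\frac{1 + i + j}{2} + \hat{S}_j$, producing a finite explicit list that can be matched against the cluster reflections of each remaining candidate. The main obstacle is the normalization step: a general conformal map sends planes to spheres, so Lemma \ref{No Non Trivial Reflections} cannot be invoked naively; the hard part is verifying that every surviving candidate $\mathcal{P}'$ contains a canonical pair of parallel planes in the role of $-\hat{S}_j, \hat{S}_j + \tau$, so that after the normalization above, cluster reflections genuinely correspond to plane reflections of $\mathcal{P}_{\OO,j}$ rather than to sphere reflections.
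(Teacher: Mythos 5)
Your proposal follows the same broad route as the paper (contradiction, transport of the candidate's generating reflections, Lemma \ref{No Non Trivial Reflections}, and the congruence invariant as a filter), but the step that actually produces the contradiction is absent. The paper's mechanism is a count: the initial cluster of a non-degenerate Boyd--Maxwell packing of $\RR^3$ has five spheres and each generator fixes all cluster spheres but one, so once a tangent pair of cluster spheres is placed at the two parallel planes $-\hat{S}_j$ and $\hat{S}_j + \tau$, exactly three generators fix both planes. A sphere inversion preserving two distinct parallel planes is necessarily a reflection in a plane orthogonal to both (a genuine sphere orthogonal to a plane has its center on that plane, and cannot lie on two parallel planes at once) --- this observation dissolves the ``main obstacle'' you flag, and it is also why your normalization is misdirected: one does not get to ``choose $\varphi$ so that $\varphi(\infty) = \infty$''; rather one moves the tangent cluster pair onto the two planes using an automorphism of $\mathcal{P}_{\OO,j}$. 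With Lemma \ref{No Non Trivial Reflections} restricting the three normals to $\pm 1, \pm i$, the pigeonhole forces two of the three reflections to be through parallel planes, so their product has infinite order, contradicting the finiteness of all $m_{s,t}$ in the Chen--Labb\'e classification. Your substitute --- ``the Coxeter data prescribes cluster planes with normal vectors at other angles'' --- does not yield a contradiction: three plane reflections with normals in $\{\pm 1, \pm i\}$ can pairwise meet at right angles, which is consistent with many Coxeter diagrams, so angle data alone proves nothing.

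Two further gaps. The orthoplicial packing is named as a candidate in your first sentence and then never treated; the reflection argument does not apply to it, and the paper excludes it by a different invariant (its bends occupy only certain classes modulo $4$, whereas every class modulo $4$ occurs among the bends of each $\mathcal{P}_{\OO,j}$). And for $H = \left(\frac{-1,-10}{\QQ}\right)$, Lemma \ref{No Non Trivial Reflections} genuinely fails --- the diagonal reflections of the square tangency lattice cannot be ruled out there --- so four normal directions are admissible, three generators need not contain a parallel pair, and your proposed ``matching against cluster reflections'' will not force a contradiction. For that case the congruence filter must do all of the work: the paper shows the inner products $b_{H,j}$ between spheres of $\mathcal{P}_{\OO,j}$ lie in $\pm 1 + 5\ZZ$ and checks directly that no diagram on the Chen--Labb\'e list \cite{Chen2015} is compatible. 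You do propose this filter, but only as a preliminary reduction, without recognizing that in the one case where the reflection argument breaks it must carry the entire proof.
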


\begin{proof}
First, we prove that all these $(\OO,j)$-Apollonian packings are not non-degenerate Boyd-Maxwell packings. Suppose otherwise, and choose any two tangent spheres in the initial configuration. Without loss of generality, these two spheres are the two planes in $\mathcal{P}_{\OO,j}$, in which case there are three generators of the Coxeter group preserving those two planes. Those generators must correspond to plane reflections, which by Lemma \ref{No Non Trivial Reflections} must either be normal to either $1$ or $i$ if $i^2 \neq -1,-3$. As there are three reflections, at least two of them must be reflections through parallel planes---however, this is impossible, as it would mean that there would be two generators $g_1, g_2$ such that $g_1 g_2$ is of infinite order, which does not occur for Boyd-Maxwell sphere packings, per the classification given by Chen and Labb\'{e} \cite{Chen2015}. This leaves the case where $i^2 = -1$, $j^2 = -10$, but we note that the set
	\begin{align*}
	B &= \left\{b_{H,j}(\text{inv}(S_1),\text{inv}(S_2))\middle|S_1, S_2 \in \mathcal{P}_{\OO,j}\right\} \\
    &\subset \left\{\pm\xi_3(\gamma)\middle| \gamma \in SL^\ddagger(2,\OO)\right\},
	\end{align*}

\noindent is invariant under conformal transformations and must satisfy certain congruence restrictions---to be precise, we know that it must be a subset of $\pm 1 + 5\ZZ$. Ergo, we can further restrict the class of possible non-degenerate Maxwell-Boyd packings by computing its first few terms for each Maxwell-Boyd packing in the list given in \cite{Chen2015}. By direct computation, there are no non-degenerate Maxwell-Boyd packings that satisfy the desired condition. It remains to show that none of the $(\OO,j)$-Apollonian packings are conformally equivalent to the orthoplicial packing. However, as discussed in \cite{Dias,Nakamura}, there is a congruence restriction on the bends of the orthoplicial packing---only certain congruence classes modulo $4$ appear as bends. By inspection, for all of the $(\OO,j)$-Apollonian packings, all congruence classes modulo $4$ are represented by bends, and therefore they are not conformally equivalent to the orthoplicial packing.
\end{proof}

\bibliography{Quaternionic}
\bibliographystyle{alpha}
\end{document}